\documentclass[a4paper,10pt,final ]{amsart}
\usepackage[utf8x]{inputenc}
\usepackage{fouriernc, amsmath, amsthm, amsfonts,stmaryrd, amssymb,epsfig,enumerate, wrapfig, scalerel, tikz,  color, tensor, accents, fancybox, rotating, comment, afterpage, changepage, colortbl,  pdfpages, datetime, fancyhdr, float, xlop}
\usepackage{hyperref}
\hypersetup{
    pdftoolbar=true,        
    pdfmenubar=true,        
    pdffitwindow=false,     
    pdfstartview={FitH},    
    pdfauthor={Louis-Hadrien Robert and Emmanuel Wagner},     
    pdfsubject={Symmetric Khovanov--Rozansky link homologies},
    pdftitle={Symmetric Khovanov--Rozansky link homologies},   
    pdfcreator={Louis-Hadrien Robert and Emmanuel Wagner},   
    pdfproducer={Louis-Hadrien Robert and Emmanuel Wagner}, 
    pdfkeywords={}, 
    pdfnewwindow=true,      
    colorlinks=true,       
    linkcolor=red,          
    citecolor=teal,        
    filecolor=magenta,      
    urlcolor=violet,          
    linkbordercolor=red,
    citebordercolor=teal,
    urlbordercolor=violet,  
    linktocpage=true
}
\usepackage{setspace}
\setstretch{1.15}
\usepackage{showkeys}
\usetikzlibrary{arrows}
\usetikzlibrary{decorations.markings}
\usetikzlibrary{decorations}
\usetikzlibrary{patterns}
\usetikzlibrary{matrix}
\usepackage{tikz-3dplot}
\newcounter {res}[section]
\numberwithin{res}{section}
\newtheorem{thm}[res]{Theorem}
\newtheorem*{theo}{Theorem}
\newtheorem*{ques}{Question}

\newtheorem{lem}[res]{Lemma}
\newtheorem{prop}[res]{Proposition}
\newtheorem{cor}[res]{Corollary}

\theoremstyle{definition}
\newtheorem{notation}[res]{Notation}
\newtheorem{dfn}[res]{Definition}
\newtheorem{rmk}[res]{Remark}
\newtheorem{exa}[res]{Example}

\newtheorem{cjc}[res]{Conjecture}
\newcommand{\FA}{\mathcal{T}}
\newcommand{\NN}{\ensuremath{\mathbb{N}}} 
\newcommand{\ZZ}{\ensuremath{\mathbb{Z}}} 
\newcommand{\CC}{\ensuremath{\mathbb{C}}} 
\newcommand{\QQ}{\ensuremath{\mathbb{Q}}}
\newcommand{\RR}{\ensuremath{\mathbb{R}}} 

\newcommand{\PP}{\ensuremath{\mathbb{P}}}

\renewcommand{\SS}{\ensuremath{\mathbb{S}}} 

\newcommand {\Id}{\operatorname{Id}}
\newcommand{\SP}[1]{\ensuremath{R_{#1}}}

\newcommand{\Ker}{\mathop{\mathrm{Ker}}\nolimits}
\newcommand{\Hom}{\mathop{\mathrm{Hom}}}
\newcommand{\rY}{\ensuremath{\rotatebox[origin=c]{180}{\NB{$Y$}}}}

\newcommand{\id}{\mathrm{Id}}
\newcommand{\rot}{\mathrm{rot}}
\newcommand{\rk}{\mathrm{rk}}
\newcommand{\ie}{i.~e.~}
\newcommand{\F}{\mathcal{F}}
\newcommand{\listk}[1]{\ensuremath{\underline{#1}}}

\newcommand{\degext}{\ensuremath{\mathrm{deg}^{\Lambda}}}
\newcommand{\degD}{\ensuremath{\mathrm{deg}^{D}}}
\newcommand{\degR}{\ensuremath{\mathrm{deg}^{r}}}
\newcommand{\degT}{\ensuremath{\mathrm{deg}^{T}}}

\newcommand{\du}{\ensuremath{\boldsymbol{\bigsqcup}}} 

\newcommand{\syf}{\ensuremath{\mathcal{S}}} 
\setlength{\marginparwidth}{1.2in}
\let\pipi\pi
\renewcommand{\pi}{s}

\renewcommand\marginpar[1]{}

\newcommand\eqdef{\ensuremath{\stackrel{\textrm{def}}{=}}}
\newcommand\kup[1]{\left\langle #1 \right\rangle}
\input{llanglerrangle}
\newcommand\kups[1]{\left\llangle #1 \right\rrangle_N}
\newcommand\kupss[1]{\left\llangle #1 \right\rrangle}

\newcommand\kupsc[1]{\left\llangle #1 \right\rrangle^{\mathrm{sc}}}
\newcommand{\TL}{\ensuremath{\mathsf{TLF}}}
\newcommand{\DLF}{\ensuremath{\mathsf{DLF}}}
\newcommand{\BimS}{\ensuremath{\mathsf{BimS}}}

\newcommand{\Foam}{\ensuremath{\mathsf{Foam}}}
\newcommand{\BS}{\ensuremath{\mathcal{B}}}
\newcommand{\IE}{\ensuremath{\mathcal{F}_\infty}}
\newcommand{\IED}{\ensuremath{\mathcal{F}^D_\infty}}
\newcommand{\IET}{\ensuremath{\mathcal{F}^T_\infty}}

\newcommand{\qbina}[2]{\ensuremath
\begin{bmatrix}
  #1 \\
 #2
\end{bmatrix}
}
\newcommand{\qbinil}[2]{\ensuremath
\left[\begin{smallmatrix}
  #1 \\
 #2
\end{smallmatrix} \right]
}
\newcommand{\qbin}[2]{\ensuremath
\begin{bmatrix}
  #1 + #2 \\
#1 \quad #2
\end{bmatrix}
}

\newcommand{\imagesfolder}{.}
\newcommand{\gll}{\ensuremath{\mathfrak{gl}}}
\newcommand{\sll}{\ensuremath{\mathfrak{sl}}}

\newcommand{\Xing}{\NB{\scalebox{1.3}{\ensuremath{\times}}}}
\newcommand{\Col}{\ensuremath{\mathbb{P}}}
\newcommand{\NB}[1]{\ensuremath{\vcenter{\hbox{#1}}}}

\newcommand{\HH}{\ensuremath{H\!\!H}}
\newcommand{\HN}{\ensuremath{H\!\!N}}
\newcommand{\HT}{\ensuremath{H\!T}}
\newcommand{\HNT}{\ensuremath{H\!\!N\!T}}
\newcommand{\KH}{\ensuremath{K\!\!H}}

\newcommand{\MS}{\ensuremath{\mathcal{MS}}}

\newcommand{\Sym}{\ensuremath{\mathrm{Sym}}}
\newcommand{\ann}{\ensuremath{\mathcal{A}}}
\newcommand{\Vin}{\ensuremath{\mathcal{V}}}

\def\co{\colon\thinspace}
\title{Symmetric Khovanov--Rozansky link homologies}
\newcommand{\digona}{\ensuremath{\vcenter{\hbox{\tikz[scale=0.4]{
\coordinate (B) at (0,0);
\coordinate (V1) at (0,0.5);
\coordinate (V2) at (0,2.5);
\coordinate (T) at (0,3);
\draw[white] (0, -0.5) -- (0, 3.5);
\draw[->] (B) -- (V1) node[midway, left] {\tiny{$m+n$}};
\draw[->] (V2) -- (T) node[midway, left] {\tiny{$m+n$}};
\draw[->] (V1) .. controls +(+0.5, +0.5) and +(+0.5, -0.5).. (V2) node[midway, right] {\tiny{$n$}};
\draw[->] (V1) .. controls +(-0.5, +0.5) and +(-0.5, -0.5).. (V2) node[midway, left] {\tiny{$m$}};
}}}}}

\newcommand{\verta}{\ensuremath{\vcenter{\hbox{\tikz[scale=0.4]{
\coordinate (B) at (0,0);
\coordinate (T) at (0,3);
\draw[white] (0, -0.5) -- (0, 3.5);
\draw[->] (B) -- (T) node[midway, right] {\tiny{$m+n$}};
}}}}}

\newcommand{\digonb}{\ensuremath{\vcenter{\hbox{\tikz[scale=0.4]{
\coordinate (B) at (0,0);
\coordinate (V1) at (0,0.5);
\coordinate (V2) at (0,2.5);
\coordinate (T) at (0,3);
\draw[white] (0, -0.5) -- (0, 3.5);
\draw[->] (B) -- (V1) node[midway, left] {\tiny{$m$}};
\draw[->] (V2) -- (T) node[midway, left] {\tiny{$m$}};
\draw[<-] (V1) .. controls +(+0.5, +0.5) and +(+0.5, -0.5).. (V2) node[midway, right] {\tiny{$n$}};
\draw[->] (V1) .. controls +(-0.5, +0.5) and +(-0.5, -0.5).. (V2) node[midway, left] {\tiny{$m+n$}};
}}}}}

\newcommand{\vertb}{\ensuremath{\vcenter{\hbox{\tikz[scale=0.4]{
\coordinate (B) at (0,0);
\coordinate (T) at (0,3);
\draw[white] (0, -0.5) -- (0, 3.5);
\draw[->] (B) -- (T) node[midway, right] {\tiny{$m$}};
}}}}}

\newcommand{\stgamma}{\ensuremath{\vcenter{\hbox{\tikz[scale=0.3]{
\coordinate (B) at (0,0);
\coordinate (V1) at (0,1);
\coordinate (V2) at (1,2);
\coordinate (T1) at (-2,3);
\coordinate (T2) at (0,3);
\coordinate (T3) at (2,3);
\draw[>-] (B) -- (V1) node [at start, below] {\tiny{$i+j+k$}};
\draw[->] (V1) -- (T1) node [at end, above] {\tiny{$i$}};
\draw[->] (V1)  -- (V2) node[midway, right] {\tiny{$j+k$}};
\draw[->] (V2) -- (T2) node[at end, above] {\tiny{$j$}};
\draw[->] (V2) -- (T3) node[at end, above] {\tiny{$k$}};
}}}}}

\newcommand{\stgammaprime}{\ensuremath{\vcenter{\hbox{\tikz[scale=0.3]{
\coordinate (B) at (0,0);
\coordinate (V1) at (0,1);
\coordinate (V2) at (-1,2);
\coordinate (T1) at (-2,3);
\coordinate (T2) at (0,3);
\coordinate (T3) at (2,3);
\draw[>-] (B) -- (V1) node [at start, below] {\tiny{$i+j+k$}};
\draw[->] (V1) -- (T3) node [at end, above] {\tiny{$k$}};
\draw[->] (V1)  -- (V2) node[midway, left] {\tiny{$i+j$}};
\draw[->] (V2) -- (T1) node[at end, above] {\tiny{$i$}};
\draw[->] (V2) -- (T2) node[at end, above] {\tiny{$j$}};
}}}}}

\newcommand{\stgammar}{\ensuremath{\vcenter{\hbox{\tikz[scale=0.3]{
\coordinate (B) at (0,0);
\coordinate (V1) at (0,1);
\coordinate (V2) at (1,2);
\coordinate (T1) at (-2,3);
\coordinate (T2) at (0,3);
\coordinate (T3) at (2,3);
\draw[<-] (B) -- (V1) node [at start, below] {\tiny{$i+j+k$}};
\draw[-<] (V1) -- (T1) node [at end, above] {\tiny{$i$}};
\draw[-<] (V1)  -- (V2) node[midway, right] {\tiny{$j+k$}};
\draw[-<] (V2) -- (T2) node[at end, above] {\tiny{$j$}};
\draw[-<] (V2) -- (T3) node[at end, above] {\tiny{$k$}};
}}}}}

\newcommand{\stgammaprimer}{\ensuremath{\vcenter{\hbox{\tikz[scale=0.3]{
\coordinate (B) at (0,0);
\coordinate (V1) at (0,1);
\coordinate (V2) at (-1,2);
\coordinate (T1) at (-2,3);
\coordinate (T2) at (0,3);
\coordinate (T3) at (2,3);
\draw[<-] (B) -- (V1) node [at start, below] {\tiny{$i+j+k$}};
\draw[-<] (V1) -- (T3) node [at end, above] {\tiny{$k$}};
\draw[-<] (V1)  -- (V2) node[midway, left] {\tiny{$i+j$}};
\draw[-<] (V2) -- (T1) node[at end, above] {\tiny{$i$}};
\draw[-<] (V2) -- (T2) node[at end, above] {\tiny{$j$}};
}}}}}
\newcommand{\squarea}{\ensuremath{\vcenter{\hbox{\tikz[scale=0.4]{
\coordinate (B1) at (-1,0);
\coordinate (B2) at (1,0);
\coordinate (C1) at (-1,1);
\coordinate (D1) at (-1,2);
\coordinate (C2) at (1,1);
\coordinate (D2) at (1,2);
\coordinate (T1) at (-1,3);
\coordinate (T2) at (1,3);
\draw[->] (B1) -- (C1) node[at start, below] {\tiny{$1$}};
\draw[->] (D1) -- (C1) node[midway, left   ] {\tiny{$m$}};
\draw[->] (D1) -- (T1) node[at end , above ] {\tiny{$1$}};
\draw[->] (C2) -- (B2) node[at end, below] {\tiny{$m$}};
\draw[->] (C2) -- (D2) node[midway, right] {\tiny{$1$}};
\draw[->] (T2) -- (D2) node[at start, above] {\tiny{$m$}};
\draw[->] (D2) -- (D1) node[midway, above] {\tiny{$m+1$}};
\draw[->] (C1) -- (C2) node[midway, below] {\tiny{$m+1$}};
}}}}}

\newcommand{\twoverta}{\ensuremath{\vcenter{\hbox{\tikz[scale=0.4]{
\coordinate (B1) at (-1,0);
\coordinate (T1) at (-1,3);
\coordinate (B2) at (1,0);
\coordinate (T2) at (1,3);
\draw[->] (B1) -- (T1) node[midway, left] {\tiny{$1$}};
\draw[->] (T2) -- (B2) node[midway, right] {\tiny{$m$}};
}}}}}

\newcommand{\doubleYa}{\ensuremath{\vcenter{\hbox{\tikz[scale=0.4]{
\coordinate (B1) at (-1,0);
\coordinate (T1) at (-1,3);
\coordinate (C) at (0,1);
\coordinate (D) at (0,2);
\coordinate (B2) at (1,0);
\coordinate (T2) at (1,3);
\draw[->] (B1) -- (C) node[at start, below] {\tiny{$1$}};
\draw[->] (C) -- (B2) node[at end, below] {\tiny{$m$}};
\draw[->] (D) -- (C) node[midway, left] {\tiny{$m-1$}};
\draw[->] (T2) -- (D) node[at start, above] {\tiny{$m$}};
\draw[->] (D) -- (T1) node[at end, above] {\tiny{$1$}};
}}}}}

\newcommand{\squareb}{\ensuremath{\vcenter{\hbox{\tikz[scale=0.55]{
\coordinate (B1) at (-1,0);
\coordinate (B2) at (1,0);
\coordinate (C1) at (-1,1);
\coordinate (D1) at (-1,2);
\coordinate (C2) at (1,1);
\coordinate (D2) at (1,2);
\coordinate (T1) at (-1,3);
\coordinate (T2) at (1,3);
\draw[->] (B1) -- (C1) node[at start, below] {\tiny{$1$}};
\draw[->] (C1) -- (D1) node[midway, left   ] {\tiny{$l+n$}};
\draw[->] (D1) -- (T1) node[at end , above ] {\tiny{$l$}};
\draw[->] (B2) -- (C2) node[at start, below] {\tiny{$m+l-1$}};
\draw[->] (C2) -- (D2) node[midway, right] {\tiny{$m-n$}};
\draw[->] (D2) -- (T2) node[at end, above] {\tiny{$m$}};
\draw[->] (D1) -- (D2) node[midway, above] {\tiny{$n$}};
\draw[->] (C2) -- (C1) node[midway, below] {\tiny{$l+n-1$}};
}}}}}

\newcommand{\squarec}{\ensuremath{\vcenter{\hbox{\tikz[xscale=0.65, yscale=0.55]{
\coordinate (B1) at (-1,0);
\coordinate (B2) at (1,0);
\coordinate (C1) at (-1,1);
\coordinate (D1) at (-1,2);
\coordinate (C2) at (1,1);
\coordinate (D2) at (1,2);
\coordinate (T1) at (-1,3);
\coordinate (T2) at (1,3);
\draw[->] (B1) -- (C1) node[at start, below] {\tiny{$n$}};
\draw[->] (C1) -- (D1) node[midway, left   ] {\tiny{$n+k $}};
\draw[->] (D1) -- (T1) node[at end , above ] {\tiny{$m$}};
\draw[->] (B2) -- (C2) node[at start, below] {\tiny{$m+l$}};
\draw[->] (C2) -- (D2) node[midway, right] {\tiny{$m+l-k$}};
\draw[->] (D2) -- (T2) node[at end, above] {\tiny{$n+l$}};
\draw[->] (D1) -- (D2) node[midway, above] {\tiny{$n+k-m$}};
\draw[->] (C2) -- (C1) node[midway, below] {\tiny{$k$}};
}}}}}

\newcommand{\squared}{\ensuremath{\vcenter{\hbox{\tikz[yscale=0.55, xscale=0.65]{
\coordinate (B1) at (-1,0);
\coordinate (B2) at (1,0);
\coordinate (C1) at (-1,1);
\coordinate (D1) at (-1,2);
\coordinate (C2) at (1,1);
\coordinate (D2) at (1,2);
\coordinate (T1) at (-1,3);
\coordinate (T2) at (1,3);
\draw[->] (B1) -- (C1) node[at start, below] {\tiny{$n$}};
\draw[->] (C1) -- (D1) node[midway, left   ] {\tiny{$m-j$}};
\draw[->] (D1) -- (T1) node[at end , above ] {\tiny{$m$}};
\draw[->] (B2) -- (C2) node[at start, below] {\tiny{$m+l$}};
\draw[->] (C2) -- (D2) node[midway, right] {\tiny{$n+l+j$}};
\draw[->] (D2) -- (T2) node[at end, above] {\tiny{$n+l$}};
\draw[->] (D2) -- (D1) node[midway, above] {\tiny{$j$}};
\draw[->] (C1) -- (C2) node[midway, below] {\tiny{$n+j-m$}};
}}}}}

\newcommand{\squaree}{\ensuremath{\vcenter{\hbox{\tikz[yscale=0.55, xscale=0.65]{
\coordinate (B1) at (-1,0);
\coordinate (B2) at (1,0);
\coordinate (C1) at (-1,1);
\coordinate (D1) at (-1,2);
\coordinate (C2) at (1,1);
\coordinate (D2) at (1,2);
\coordinate (T1) at (-1,3);
\coordinate (T2) at (1,3);
\draw[->] (B1) -- (C1) node[at start, below] {\tiny{$k+s$}};
\draw[->] (C1) -- (D1) node[midway, left   ] {\tiny{$k$}};
\draw[->] (D1) -- (T1) node[at end , above ] {\tiny{$k-r$}};
\draw[->] (B2) -- (C2) node[at start, below] {\tiny{$l-s$}};
\draw[->] (C2) -- (D2) node[midway, right] {\tiny{$l$}};
\draw[->] (D2) -- (T2) node[at end, above] {\tiny{$l+r$}};
\draw[<-] (D2) -- (D1) node[midway, above] {\tiny{$r$}};
\draw[->] (C1) -- (C2) node[midway, below] {\tiny{$s$}};
}}}}}

\newcommand{\webHe}{\ensuremath{\vcenter{\hbox{\tikz[yscale=0.55, xscale=0.65]{
\coordinate (B1) at (-1,0);
\coordinate (B2) at (1,0);
\coordinate (C1) at (-1,1.5);
\coordinate (D1) at (-1,1.5);
\coordinate (C2) at (1,1.5);
\coordinate (D2) at (1,1.5);
\coordinate (T1) at (-1,3);
\coordinate (T2) at (1,3);
\draw[->] (B1) -- (C1) node[at start, below] {\tiny{$k+s$}};
\draw[->] (D1) -- (T1) node[at end , above ] {\tiny{$k-r$}};
\draw[->] (B2) -- (C2) node[at start, below] {\tiny{$l-s$}};
\draw[->] (D2) -- (T2) node[at end, above] {\tiny{$l+r$}};
\draw[->] (C1) -- (C2) node[midway, below] {\tiny{$r+s$}};
}}}}}

\newcommand{\doubleYb}{\ensuremath{\vcenter{\hbox{\tikz[scale=0.4]{
\coordinate (B1) at (-1,0);
\coordinate (T1) at (-1,3);
\coordinate (C) at (0,1);
\coordinate (D) at (0,2);
\coordinate (B2) at (1,0);
\coordinate (T2) at (1,3);
\draw[->] (B1) -- (C) node[at start, below] {\tiny{$1$}};
\draw[<-] (C) -- (B2) node[at end, below] {\tiny{$m+l-1$}};
\draw[<-] (D) -- (C) node[midway, left] {\tiny{$l+m$}};
\draw[<-] (T2) -- (D) node[at start, above] {\tiny{$m$}};
\draw[->] (D) -- (T1) node[at end, above] {\tiny{$l$}};
}}}}}

\newcommand{\bigHb}{\ensuremath{\vcenter{\hbox{\tikz[scale=0.4]{
\coordinate (B1) at (-1,0);
\coordinate (T1) at (-1,3);
\coordinate (M1) at (-1,1.5);
\coordinate (M2) at (1,1.5);
\coordinate (B2) at (1,0);
\coordinate (T2) at (1,3);
\draw[->] (B1) -- (M1) node[at start, below] {\tiny{$1$}};
\draw[->] (B2) -- (M2) node[at start, below] {\tiny{$m+l-1$}};
\draw[->] (M2) -- (M1) node[midway, above] {\tiny{$l-1$}};
\draw[->] (M2) -- (T2) node[at end, above] {\tiny{$m$}};
\draw[->] (M1) -- (T1) node[at end, above] {\tiny{$l$}};
}}}}}





\allowdisplaybreaks
 \author{Louis-Hadrien Robert}
 \author{Emmanuel Wagner}
\tikzset{>=latex}
\tikzset{->-/.style={decoration={
  markings,
  mark=at position .5 with {\arrow{>}}},postaction={decorate}}}
\tikzset{-<-/.style={decoration={
  markings,
  mark=at position .5 with {\arrow{<}}},postaction={decorate}}}
\newcommand{\todo}[1]{}
\begin{document}

\begin{abstract}
We provide a finite-dimensional categorification of the symmetric evaluation of $\mathfrak{sl}_N$-webs using foam technology. As an output we obtain a symmetric link homology theory categorifying the link invariant associated to symmetric powers of the standard representation of $\mathfrak{sl}_N$. The construction is  made in an equivariant setting. We prove also that there is a spectral sequence from the Khovanov--Rozansky triply graded link homology to the symmetric one and provide along the way a foam interpretation of Soergel bimodules. 
\end{abstract}

\maketitle

\tableofcontents

\section{Introduction}
\label{sec:introduction}

In \cite{RW1}, we provided a combinatorial evaluation of the foams underlying the (exterior) colored Khovanov--Rozansky link homologies \cite{zbMATH05278251, zbMATH05343986, zbMATH05656519, MR2710319, MR2491657, pre06302580, yonezawa2011}. See \cite{MR3787560} for an overview. This formula was the keystone to provide a down-to-earth treatment of these homologies, completely similar to the one in Khovanov's original paper \cite{MR2124557} or in his $\mathfrak{sl}_3$ paper \cite{MR2100691}. Immediate consequences of this formula were used by Ehrig--Tubbenhauer--Wedrich \cite{FunctorialitySLN} to prove functoriality of these homologies.\\

The present paper grew up as an attempt to provide a similar formula for foams underlying link homologies categorifying the Reshetikhin--Turaev invariants of links corresponding to symmetric powers of the standard representation of quantum $\mathfrak{sl}_N$. Providing manageable definitions of these link homologies is one of the keys of the program aiming at categorifying quantum invariants of $3$-manifolds. 

The first such link homologies were provided by Khovanov for the colored Jones polynomial \cite{MR2124557} (see as well \cite{MR2462446}). There are nowadays many definitions of link homologies, e.~g.\ using categorified projectors \cite{SSWenzl, FSS, CKWenzl, RWenzl, CautisClasp, CH} or using spectral sequences \cite{MR3709661} (see below for more details concerning this last one). They also fit in the higher representations techniques developed by Webster \cite{2013arXiv1309.3796W}.\\

In addition, it has been conjectured in \cite{GGS} that there exist symmetries between the categorification of Reshetikhin--Turaev invariants arising from exterior powers and symmetric powers of the standard representation of quantum $\mathfrak{sl}_N$. It has been proved by Tubbenhauer--Vaz--Wedrich at a decategorified level \cite{tubbenhauer2015super}. Moreover the work of Rose--Tubbenhauer \cite{RoseTub},  Queffelec--Rose \cite{QueffelecRoseAnnular2017} and Queffelec--Rose--Sartori \cite{InPrep} made clear that the planar graphical calculus underlying the description of the symmetric powers of the standard representation of quantum $\mathfrak{sl}_N$ is for a large part similar to the one for to the exterior powers. In the exterior case it was developed by Murakami--Ohtsuki--Yamada~\cite{MR1659228} (called in this paper the \emph{exterior} MOY calculus). Queffelec,  Rose and Sartori proved that the invariants only differ by the initializations on colored circles \cite{QueffelecRoseAnnular2017, InPrep}.  They work in an annular setting. We call it the \emph{symmetric} MOY calculus.\\

An attentive reader may have noticed that we spoke about an attempt. Let us explain why one cannot provide such a formula in the symmetric case at the level of generality we had in \cite{RW1} and therefore need to restrict the setup.\\

Foams are 2-dimensional CW-complexes which are naturally cobordisms between trivalent graphs (see below for an example). 
The closed formula of \cite{RW1} provided a singular TQFT using the universal construction \cite{MR1362791}. In this construction a facet of the foam is decorated with elements of a Frobenius algebra which is attached to the circle colored with the same color. Since the work of Bar-Natan and Khovanov the existence of the non-degenerate pairing on the algebra is rephrased in a topological type relation, known as the neck-cutting relation. Moreover, the TQFT feature also forces the evaluation of a planar graph times a circle to be the dimension of the vector space, the universal construction associates to the planar graph. We emphasized in \cite{RW1} that if such a construction works, not only circles are associated Frobenius algebras but all planar graphs which have a symmetry axis. In addition, the co-unit, for degree reasons, should be non-zero only on the maximal degree elements.  All the previous properties would be forced if one could obtain a closed formula providing a categorification of the symmetric MOY calculus. Elementary computations show a functor categorifying the symmetric MOY calculus cannot satisfies such properties essentially for degree reasons.\\




This is why we work in an annular setting. The drawback is that we cannot deal with general link diagrams. The benefit is that we can use part of the technology developed by Queffelec--Rose~\cite{QueffelecRoseAnnular2017}. We obtain an evaluation in this restricted case and apply a restricted universal construction to obtain the following result:

\begin{theo}
There exists a finite-dimensional categorification of the symmetric MOY calculus yielding a categorification of the Reshetikhin--Turaev invariants of links corresponding to symmetric powers of the standard representation of quantum $\mathfrak{sl}_N$.
\end{theo}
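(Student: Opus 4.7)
The plan is to follow the three-stage template of \cite{RW1}: first define a combinatorial evaluation of closed symmetric foams, then run a restricted universal construction to produce finite-dimensional state spaces for symmetric MOY webs, and finally cube-of-resolve crossings to produce a link homology. As stressed in the introduction, a naive TQFT on arbitrary planar foams is impossible for degree reasons, so the foams must live in a thickened annulus as in the setting of Queffelec--Rose(--Sartori). This restriction is still enough to access link invariants, because every link admits a presentation as the annular closure of a tangle, and the Markov/Reidemeister moves that govern such presentations are all local in the annulus.

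For the first stage, one writes down a formula $\kupss{\cdot}$ on closed annular symmetric foams that (i) recovers the symmetric MOY evaluation on $\Gamma\times S^1$ up to normalisation, (ii) is invariant under the expected local foam relations and (iii) vanishes on enough foams to factor through the pairing used in the universal construction. The natural Ansatz is a state sum over admissible colorings of facets, parallel to the exterior state sum of \cite{RW1} but weighted by the symmetric Schur polynomials naturally associated to the symmetric MOY calculus. The validity of the local foam relations is then checked case by case, reducing each to a polynomial identity among symmetric functions.

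For the second stage, the universal construction associates to a symmetric MOY web $\Gamma$ the quotient $F(\Gamma)$ of the free module on foams bounding $\Gamma$ modulo the radical of the pairing $(F,G)\mapsto \kupss{\overline{F}\cup_\Gamma G}$. The crucial technical assertion is that $\dim F(\Gamma)<\infty$. The plan is to exhibit an explicit finite spanning set by a normal form for annular foams (dot-migration and the restricted neck-cutting that does survive in the annular setting), and then match its size against the graded dimension of the symmetric MOY evaluation of $\Gamma$. This simultaneously yields finiteness and the correct decategorification.

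For the third stage, assign to each crossing the standard two-term complex of symmetric elementary webs from \cite{QueffelecRoseAnnular2017}, take the total complex of the cube of resolutions, and apply the 2-functor built from $F$. Reidemeister invariance and invariance under the annular closure moves are then proved by producing explicit homotopy equivalences whose components reduce to local foam identities, each provable from the evaluation formula as in \cite{RW1}. The resulting bigraded homology categorifies the symmetric Reshetikhin--Turaev invariant by construction. The main obstacle is unquestionably the finite-dimensionality claim: unlike the exterior case, where finiteness dropped out for free from the closed formula, here the failure of neck-cutting along arbitrary symmetry axes means one must control by hand the towers of dots coming from symmetric power facets. The annular restriction is used decisively at this step to truncate these towers and produce the normal form.
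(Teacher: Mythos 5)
Your outline identifies the right scaffolding (annular restriction, universal construction, cube of resolutions), but it has genuine gaps at the two points where the actual work lies. First, the evaluation. You propose a fresh state sum over colorings of symmetric foams "weighted by symmetric Schur polynomials" and plan to verify the local relations case by case. The paper does not (and arguably cannot) proceed this way: no new combinatorial evaluation of symmetric foams is introduced. Instead, a vinyl $\SS_k$-foam-$\SS_k$ $F$ is capped off to a closed foam $\mathrm{cl}(F)$, evaluated with the \emph{exterior} $\sll_k$-formula of \cite{RW1} — note $k$ is the level (number of tracks), not $N$ — and the result, a symmetric polynomial in $k$ variables, is fed into the counit $\epsilon_{N,k}$ of the Frobenius algebra $M_{N,k}=A_k/(J_{N,k}\cap A_k)$, which reads off the coefficient of $m_{\rho(k,N-1)}$. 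The two facts that make this work are (i) a vinyl foam is determined up to $\infty$-equivalence by $\kup{\mathrm{cl}(F)}_k$ (Lemma~\ref{lem:Nbiggerk}), and (ii) the counit is multiplicative under disjoint union because monomial symmetric functions concatenate well (Lemma~\ref{lem:monomialwellbehave}), which is what gives monoidality and hence finiteness and the correct graded rank for arbitrary vinyl graphs via the Queffelec--Rose algorithm. Your plan never isolates a candidate Frobenius algebra for the $k$-labeled circle, and without one the "match the spanning set against the symmetric MOY evaluation" step has nothing to match against.

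Second, invariance. You assert that all moves, including the closure/Markov stabilization, follow from "explicit homotopy equivalences whose components reduce to local foam identities." The braid relations and fork-slides do go through foam-theoretically, but the stabilization move does not: this is precisely why the paper translates everything into Soergel bimodules, realizes the invariant as $\HT(\HN(\HH^N_\bullet(\BS(R(D)))))$, and proves a stabilization lemma (Lemma~\ref{lem:stab}) by an explicit analysis of short exact sequences of bicomplexes carrying the Hochschild differential, the topological differential, and the extra differential $d^N$ — an argument that moreover requires inverting $2$ and needs Proposition~\ref{prop:all-in-deg-0} to collapse the relevant spectral sequence. It also needs the Mackaay--Sto\v{s}i\'c--Vaz trick to reduce colored stabilization to the $1$-labeled case, which forces the braid-relation step to be proved for knotted MOY graphs, not just links. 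Without this algebraic detour (or a substitute for it), your third stage does not yield a link invariant.
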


We call these link homologies \emph{symmetric} (colored) Khovanov--Rozansky link homologies. The construction applies in particular to the case where the representation is the standard representation of quantum $\mathfrak{sl}_N$. As shown on an example (Section~\ref{sec:an-example-comp}), it provides in this case a different categorification of the (uncolored)-$\mathfrak{sl}_N$-link invariants than those of Khovanov and Khovanov--Rozansky.  This observation is due to Queffelec--Rose--Sartori; 
actually they discuss in \cite{InPrep} how the annular link homologies constructed by Queffelec and Rose in \cite{QueffelecRoseAnnular2017} can be specialized to be invariant under Reidemeister I and give link invariants in  the 3-sphere.\\

Whereas the definition of the link homologies can be made only using the language of foams and symmetric polynomial, the proof of invariance at the moment requires a more algebraic treatment. This algebraic treatment uses Soergel bimodules and makes explicit the comparison with the work of Cautis \cite{MR3709661} \footnote{This strategy of proof was discussed with H. Queffelec and D. Rose.}. Cautis constructs a differential $d_N$ on the Hochschild homology of Soergel bimodules compatible with the differential of the Rickard complex such that the total homology provides a categorification of Reshetikhin--Turaev invariants of links corresponding to symmetric powers of the standard representation of quantum $\mathfrak{sl}_N$. We provide here  an explicit version of the additional differential in an equivariant setting. The non-equivariant case is studied by Cautis~\cite{MR3709661} and investigate by Queffelec--Rose--Sartori~\cite{InPrep}. Hence, one consequence of our proof of invariance is the following.

\begin{theo}
There exists a spectral sequence whose first page is isomorphic to the (unreduced) colored triply graded link homology converging to the symmetric Khovanov--Rozansky link homologies.
\end{theo}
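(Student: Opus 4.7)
The plan is to realize the symmetric Khovanov--Rozansky homology as the total cohomology of a bicomplex whose spectral sequence, for an appropriate filtration, has first page equal to the colored triply graded homology. Given a colored braid $\beta$ closing to the link $L$, let $\mathcal{R}(\beta)$ denote its Rouquier complex of Soergel bimodules, and let $\HH(\mathcal{R}(\beta))$ be the bigraded complex obtained by applying Hochschild homology termwise; its cohomology for the induced Rouquier differential $d_R$ is, by definition, the (unreduced) colored triply graded link homology of $L$. The construction of the symmetric link homology outlined in the introduction (and carried out in the body of the paper via the foam interpretation of Soergel bimodules) equips $\HH(\mathcal{R}(\beta))$ with a second, $N$-dependent differential $d_N$ that anticommutes with $d_R$ and raises the Hochschild degree by one; it produces the symmetric Khovanov--Rozansky homology as the cohomology of the total differential $d_R + d_N$.

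I would then filter $\HH(\mathcal{R}(\beta))$ by the Hochschild degree: $d_R$ preserves this filtration, whereas $d_N$ strictly raises it. Hence on the $E_0$ page the only surviving differential is $d_R$, and $E_1$ is canonically isomorphic to $H^*(\HH(\mathcal{R}(\beta)), d_R)$, the colored triply graded link homology of $L$. The differential on $E_1$ is the descendant of $d_N$. Since for a fixed braid word the Rouquier complex has finite length and each Hochschild group appearing in $\HH(\mathcal{R}(\beta))$ is finite-dimensional in every fixed multigrading---this being precisely the finite-dimensionality statement of the first theorem---the filtration is bounded and the spectral sequence converges to the total cohomology, which is the symmetric Khovanov--Rozansky homology, as required.

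The main obstacle, and essentially the technical heart of the paper, is to construct $d_N$ in the first place and to verify its required properties in the equivariant setting: that it descends from the foam model to Hochschild homology of Soergel bimodules, that it squares to zero, that it anticommutes with $d_R$, and that the resulting total homology is invariant under the Markov moves on $\beta$. These ingredients are exactly what the foam interpretation of Soergel bimodules established earlier in the paper, together with the explicit foam formula for $d_N$, is designed to provide; once they are in place, the spectral sequence is the standard one of a bounded filtered complex and the theorem reduces to the usual convergence argument.
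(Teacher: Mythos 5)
Your proposal is correct and is essentially the argument the paper leaves implicit: filter the bicomplex $\HH^N_\bullet(\BS(R(D)))$, carrying the anticommuting differentials $d_T$ and $d^N$, by Hochschild degree, so that $E_1$ is the unreduced colored triply graded homology with $E_1$-differential induced by $d^N$, and boundedness gives convergence to the total cohomology $\HNT$. The one point you pass over a little quickly is the identification of $\HNT$ with the symmetric homology $\widehat{S}(D)$: the paper does not \emph{define} the symmetric invariant as the total cohomology of this bicomplex, but rather via the foam functor, which algebraically is $\HT\circ\HN_0$ (Proposition~\ref{prop:alg-eq-foam}); the equality $\HNT=\HT\circ\HN_0$ is then a theorem, proved in Section~\ref{sec:chain-complexes} using the \emph{complementary} filtration (first $d^N$, then $d_T$) together with Proposition~\ref{prop:all-in-deg-0} (that $\HN$ is concentrated in $H$-degree $0$), which forces that other spectral sequence to degenerate at $E_2$. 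Once that identification is cited your spectral sequence argument is complete and coincides with the intended one.
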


We would like to stress that one can also see on the same picture the spectral sequences converging to the (exterior) colored Khovanov--Rozansky link homology \cite{MR3447099, 2016arXiv160202769W}. Hence in some sense the only differential missing from the perspective of the work of Dunfield--Gukov--Rasmussen is $d_0$ which seems to be tackled by Dowlin \cite{2017arXiv170301401D}.

The definition of the link homologies in this paper, starts with the links presented as closures of braids, hence regarding functoriality questions it only makes sense to consider braid-like movie moves.
It is an immediate consequence of our definitions and the work of Ehrig--Tubbenhauer--Wedrich concerning functoriality of the (exterior) colored Khovanov--Rozansky link homologies that the following holds.

\begin{theo}
The symmetric Khovanov--Rozansky link homologies  are functorial with respect to braid-like movie moves.
\end{theo}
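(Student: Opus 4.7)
The plan is to bootstrap this statement from the functoriality of the exterior colored Khovanov--Rozansky link homologies established by Ehrig--Tubbenhauer--Wedrich. First I would recall how the symmetric colored Khovanov--Rozansky homology is packaged in this paper: to a braid $\beta$ whose closure is the link $L$ one assigns the Rickard complex (built from foams or equivalently from Soergel bimodules in $\BimS$), one then takes its Hochschild-type annular closure, and finally one turns on the additional differential $d_N$ from the preceding theorem. Filtering the resulting bicomplex by the homological degree coming from the Rickard complex, the first page is precisely the (unreduced) colored triply graded link homology of $L$, which is the very setting in which Ehrig--Tubbenhauer--Wedrich work.

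Next I would organize the verification one elementary braid-like movie move at a time. For each such move, Ehrig--Tubbenhauer--Wedrich produce an explicit foam (equivalently, Soergel bimodule) cobordism whose induced chain map between the two Rickard complexes agrees, up to sign, with the identity on the triply graded homology. Crucially, these cobordisms are defined at the level of the underlying categories $\FoamN$ and $\BimS$, that is, before the annular closure is performed and before $d_N$ is introduced. Consequently their induced maps are natural with respect to the Hochschild closure and automatically intertwine the differentials $d_N$ on source and target, because $d_N$ is a natural transformation of the identity functor on the annular category.

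Finally, I would invoke the standard comparison statement for filtered chain complexes: a map which induces a $\pm$ isomorphism on the first page of the associated spectral sequence induces a $\pm$ isomorphism on the total homology. Combined with the previous step, this yields functoriality with respect to each elementary braid-like movie move, and hence with respect to any composition of such moves.

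The main obstacle I anticipate is the compatibility of the Ehrig--Tubbenhauer--Wedrich chain maps with $d_N$. While it follows formally from naturality once one presents these maps as foam cobordisms living in the annular category, one has to check carefully that their local models, stated in the planar setting, descend properly to the annular closure, and that no hidden correction term appears on higher pages of the spectral sequence when interpolating between the exterior triply graded page and the symmetric limit.
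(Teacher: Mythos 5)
Your overall strategy is the right one — the paper itself states this theorem with essentially no proof, asserting it to be an immediate consequence of the definitions together with the Ehrig--Tubbenhauer--Wedrich functoriality — and the mechanism you describe (the ETW movie-move chain homotopies are local foam/Soergel-bimodule cobordisms, so they persist through the Hochschild closure and commute with the extra differential $d^N$) is indeed the intended substance. However, your final step is not the right tool and leaves a genuine gap.

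The issue is the invocation of ``the standard comparison statement for filtered chain complexes: a map which induces a $\pm$ isomorphism on the first page of the associated spectral sequence induces a $\pm$ isomorphism on the total homology.'' Functoriality up to sign requires more than that the movie-move map be an isomorphism: for a movie that is a loop from a diagram to itself, one must show the induced endomorphism of the symmetric homology is literally $\pm\mathrm{id}$. The spectral-sequence comparison only controls the associated graded. If $f$ is a bicomplex endomorphism inducing $\pm\mathrm{id}$ on $E_1$ and commuting with every $d_r$, then by induction $f=\pm\mathrm{id}$ on every $E_r$ and hence on $E_\infty$; but $E_\infty$ is only $\mathrm{gr}\,H(\mathrm{Tot})$, so this argument yields $f=\pm\mathrm{id}+(\textrm{nilpotent})$ on $H(\mathrm{Tot})$, not $f=\pm\mathrm{id}$.

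What actually closes the argument is the observation you make earlier but do not fully exploit: the ETW chain homotopy $H$ is itself a map of (complexes of) Soergel bimodules, and $d^N$ acts on the Koszul-resolution factor only, so $H$ commutes strictly with $d^N$. With the Koszul sign in the total differential $D=d_T+(-1)^{\bullet}d^N$, the commutation $H d^N=d^N H$ gives
\[
D H + H D = (d_T H + H d_T) + (-1)^{\bullet}\bigl(Hd^N - d^N H\bigr) = f - (\pm\mathrm{id}),
\]
so $H$ remains a chain homotopy in the total complex. This gives $\bar f=\pm\mathrm{id}$ on the symmetric homology directly, with no residual nilpotent term and no appeal to a spectral-sequence comparison theorem. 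Put differently: the relevant structure to transport from ETW is the \emph{homotopy}, not just the induced map on the $E_1$ page. Your proposal records the compatibility of the chain maps with $d^N$ but not of the homotopies; once you do the latter, the spectral-sequence step can be dropped entirely, and the residual caveats you flag about higher pages disappear as well.

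One more small remark: the ETW functoriality result is for the (exterior) colored $\mathfrak{sl}_N$ homology, obtained by applying a foam functor to the Rickard complex, whereas the symmetric homology in this paper is built from the Hochschild (Koszul) closure of the Rickard complex, plus $d^N$. So what is really being transported is not the statement that the movie-move maps act by $\pm\mathrm{id}$ on the triply graded homology, but rather the existence of the underlying foam/bimodule homotopies at the pre-closure level. Phrasing it in terms of the triply graded homology as the first page of a spectral sequence introduces the indirection whose pitfalls were described above.
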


One very important feature it that our construction works in an equivariant setting and will allow with a little more work to define Rasmussen type invariants for braids. In another direction, the fact that we are restricted to braid closures naturally suggest that one will obtain Morton--Franks--Williams type inequalities in this case (see Wu \cite{WuMFW}). The interactions between the two previous directions seem to us worth pursuing.\\

To conclude, the construction is done over rationals and we think the following question deserves attention.

\begin{ques}
Can one make the content of this paper work over integers?
\end{ques}

The main obstruction so far is that in our proof of invariance we need to invert $2$, just like for the stabilization in the triply graded homology~\cite{1203.5065, MR3687104}. The strategy adopted by Krasner~\cite{MR2726290} might be a starting point.\\

{\bf Outline of the paper.} The paper is divided as follows. In the first section we develop the symmetric MOY calculus. 
In the second section, we provide the needed definitions of the restricted class of foams we will be working with: disk-like and vinyl foams. We also give an overview on foams and the closed formula of \cite{RW1}. In the third section we explain how to think of Soergel bimodules as spaces of disk-like foams and part of their homologies as vinyl foams. In the fourth and central section we define an evaluation of vinyl foams, providing a categorification of the symmetric MOY calculus. The rest of the section is devoted to rephrasing it in terms of an additional differential on the Hochschild homology of Soergel bimodules. The algebraic description is used in the fifth section to prove the invariance of the link homologies. The link homologies are constructed using the well-known Rickard complexes.

There are as well three appendices. The first one deals with the representation theory of quantum $\gll_N$. The second is a reminder on Koszul resolutions and  
contains some technical homological lemmas. The third one present some inspiring algebraic geometry. 
 \\
{\bf Acknowledgments.} The authors thank Christian Blanchet, François Costantino, Hoel Queffelec, David Rose and Paul Wedrich for interesting discussions and Mikhail Khovanov and Daniel Tubbenhauer for their comments on a previous version of this work. They thank also the Newton Institute for Mathematical Sciences, the University of Burgundy and Marie Fillastre for their logistical support. L.-H.~R. was supported by SwissMAP.\\



\section{MOY graphs}
\label{sec:moy-graphs}

\begin{dfn}[\cite{MR1659228}]
  \label{dfn:abstractMOYgraph}
  An \emph{abstract MOY graph} is a finite oriented graph $\Gamma = (V_\Gamma,E_\Gamma)$ with a labeling of its edges : $\ell : E_\Gamma \to \NN_{>0}$ such that:
  \begin{itemize}
  \item the vertices are either univalent (we call this subset of vertices the \emph{boundary} of $\Gamma$ and denote it by $\partial\Gamma$) or trivalent (these are the \emph{internal vertices}),
  \item the flow given by labels and orientations is preserved along 
 the trivalent vertices, meaning that every trivalent vertex follows one of the two models (\emph{merge} and \emph{split} vertices) drawn here.
\[
\tikz{\begin{scope}
\draw[->] (0,0) -- (0,0.5) node [at end, above] {$a+b$};  
\draw[>-] (-0.5, -0.5) -- (0,0) node [at start, below] {$a$};  
\draw[>-] (+0.5, -0.5) -- (0,0) node [at start, below] {$b$};  
\begin{scope}[xshift = 5cm]
  \draw[-<] (0,0) -- (0,0.5) node [at end, above] {$a+b$};  
\draw[<-] (-0.5, -0.5) -- (0,0) node [at start, below] {$a$};  
\draw[<-] (+0.5, -0.5) -- (0,0) node [at start, below] {$b$};  
\end{scope}
\end{scope}}
\]
\end{itemize}
The univalent vertices are either sinks or sources. We call the first \emph{positive boundary points} and the later \emph{negative boundary points}. 
\end{dfn}
\begin{rmk}
  \label{rmk:0edgebigsmall}
  \begin{enumerate}
  \item Sometimes it will be convenient to allow edges labeled by $0$. However, this edges should be thought as ``irrelevant''. We simply delete them to recover the original definition.
  \item Each internal vertex has three adjacent edges. The label of one of these edges is strictly greater than the other two. This edge is called the \emph{big} edge relative to this vertex. The two other edges are called the \emph{small} edges relative to this vertex.
  \end{enumerate}
\end{rmk}



\begin{dfn}
  \label{dfn:MOY}
  A \emph{MOY graph} is the image of an abstract MOY graph $\Gamma$  by a smooth embedding
in the $[0,1] \times [0,1]$ such that:
  \begin{itemize}
  \item All the oriented tangent lines at vertices are equal\footnote{In pictures which follows we may forget about this technical condition, since it is clear that we can always deform the embedding  locally so that this condition is fulfilled}.
\[
\tikz{\begin{scope}
\draw[->] (0,0) -- (0,0.5) node [at end, above] {$a+b$};  
\draw[>-] (-0.5, -0.5) .. controls + (0,0) and +(0, -0.3) ..  (0,0) node [at start, below] {$a$};  
\draw[>-] (+0.5, -0.5) .. controls + (0,0) and +(0, -0.3) .. (0,0) node [at start, below] {$b$};  
\begin{scope}[xshift = 5cm]
  \draw[-<] (0,0) -- (0,0.5) node [at end, above] {$a+b$};  
\draw[<-] (-0.5, -0.5) .. controls + (0,0) and +(0, -0.3) .. (0,0) node [at start, below] {$a$};  
\draw[<-] (+0.5, -0.5) .. controls + (0,0) and +(0, -0.3) .. (0,0) node [at start, below] {$b$};  
\end{scope}
\end{scope}}
\]
  \item The boundary of $\Gamma$ is contained in $[0,1]\times \{0,1\}$.
  \item $]0,1[\times \{0,1\} \cap \Gamma = \partial \Gamma$.
  \item the tangent lines at the boundary points of $\Gamma$ are vertical, (that is, collinear with $
    \left(\begin{smallmatrix}
      0 \\1
    \end{smallmatrix} \right)
$).  
  \end{itemize}
In what follows it will be convenient to speak about \emph{the tangent vector} at a point $p$ of the graph $\Gamma$  (or more precisely of its image in $[0,1]\times [0,1]$). By this we mean the only vector which is tangent to $\Gamma$, has norm 1 and whose orientation agrees with the one of $\Gamma$. Note that the condition on the embedding on vertices ensure that it is well-defined everywhere. 

\begin{notation}
  \label{not:listk}
  In what follows, $\listk{k}$ always denotes a finite sequence of integers (the empty sequence is allowed). If $\listk{k}= (k_1, \dots, k_l)$,  $l$ is the \emph{length} of $\listk{k}$ and $\sum_{i=1}^lk_i$ is the \emph{level} of $\listk{k}$. If $\listk{k}$ is a sequence of length 1 and level $k$, we abuse notation and write $k$ instead of $\listk{k}$. 
\end{notation}

 If we want to specify the boundary of a MOY graph $\Gamma$, we will speak about $\listk{k}_1$-MOY graph-$\listk{k}_0$ (see the example in Figure~\ref{fig:exMOY} to understand the notations). If a MOY graph has an empty boundary we say that it is \emph{closed}.

If $\Gamma$ is an $\listk{k}_1$-MOY graph-$\listk{k}_0$, we denote by $-\Gamma$ the $(-\listk{k}_1)$-MOY graph-$(-\listk{k}_0)$, which is obtained from $\Gamma$ by reversing all orientations.
\end{dfn}

\begin{rmk}
  \label{rmk:MOYgraphisopy}
  MOY graphs are regarded up to ambient isotopy fixing the boundary. This fits into a category where objects are finite sequences of signed and labeled points in $]0,1[$, and morphisms are MOY graphs. The composition is then given by concatenation and rescaling (see Figure~\ref{fig:exMOY}). 
\end{rmk}
\begin{figure}
  \centering
  \begin{tikzpicture}[xscale =0.7, yscale=0.8]
   {\tiny \begin{scope}
\coordinate (B1) at (1, 0);
\coordinate (B2) at (2, 0);
\coordinate (B3) at (3, 0);
\coordinate (B4) at (4, 0);
\coordinate (T1) at (1.5, 4);
\coordinate (T2) at (2.5, 4);
\coordinate (T3) at (3.5, 4);
\coordinate (A1) at (1.5,0.5);
\coordinate (A2) at (2,1.5);
\coordinate (A3) at (2.5,2.5);
\coordinate (A4) at (3,3);
\coordinate (A5) at (3,3.5);
\draw[->-]  (B1) .. controls +(0,0.2) and + (0, -0.2) .. (A1) node[midway,above] {$2$};
\draw[->-]  (B2) .. controls +(0,0.2) and + (0, -0.2) .. (A1) node[midway,above] {$3$};
\draw[->-]  (B3) .. controls +(0,0.5) and + (0, -0.2) .. (A2) node[midway,left] {$1$};
\draw[->-]  (B4) .. controls +(0,0.5) and + (0, -0.2) .. (A4) node[midway,left] {$2$};
\draw[->-]  (A1) .. controls +(0,0.2) and + (0, -0.2) .. (A2) node[midway,left] {$5$};
\draw[->-]  (A2) .. controls +(0,0.2) and + (0, -0.2) .. (A3) node[midway,left] {$6$};
\draw[->-]  (A3) .. controls +(0,0.2) and + (0, -0.5) .. (T1) node[midway,left] {$4$};
\draw[->-]  (A3) .. controls +(0,0.2) and + (0, -0.2) .. (A4) node[midway,below] {$2$};
\draw[->-]  (A4) .. controls +(0,0.2) and + (0, -0.2) .. (A5) node[midway,left] {$4$};
\draw[->-]  (A5) .. controls +(0,0.2) and + (0, -0.2) .. (T2) node[midway,left] {$3$};
\draw[->-]  (A5) .. controls +(0,0.2) and + (0, -0.2) .. (T3) node[midway,right] {$1$};
\end{scope}

\begin{scope}[xshift= 6cm]
\coordinate (B1) at (1.5, 0);
\coordinate (B2) at (2.5, 0);
\coordinate (B3) at (3.5, 0);
\coordinate (T1) at (1, 4);
\coordinate (T2) at (2, 4);
\coordinate (T3) at (3, 4);
\coordinate (T4) at (4, 4);
\coordinate (A1) at (3,0.5);
\coordinate (A2) at (3,1.3);
\coordinate (A3) at (1.5,1.5);
\coordinate (A4) at (2.5,2.5);
\coordinate (A5) at (2.5,3);
\draw[->-]  (B1) .. controls +(0,0.2) and + (-0.2, 0) .. (A3) node[midway,left] {$4$};
\draw[->-]  (B2) .. controls +(0,0.2) and + (0, -0.2) .. (A1) node[midway,above] {$3$};
\draw[->-]  (B3) .. controls +(0,0.2) and + (0, -0.2) .. (A1) node[midway,above] {$1$};
\draw[->-]  (A1) .. controls +(0,0.2) and + (0, -0.2) .. (A2) node[midway,left] {$4$};
\draw[->-]  (A3) .. controls +(0.2,0) and + (0, -0.2) .. (A4) node[midway,left] {$6$};
\draw[->-]  (A2) .. controls +(0,0.2) and + (0, -0.2) .. (A4) node[midway,left] {$1$};
\draw[->-]  (T1) .. controls +(0,0.2) and + (-0.2, 0) .. (A3) node[midway,left] {$2$};
\draw[->-]  (A2) .. controls +(0,0.2) and + (0, -0.2) .. (T4) node[midway,left] {$3$};
\draw[->-]  (A4) .. controls +(0,0.2) and + (0, -0.2) .. (A5) node[midway,left] {$7$};
\draw[->-]  (A5) .. controls +(0,0.2) and + (0, -0.2) .. (T2) node[midway,left] {$2$};
\draw[->-]  (A5) .. controls +(0,0.2) and + (0, -0.2) .. (T3) node[midway,right] {$5$};
\end{scope}

\begin{scope}[xshift=12cm, yscale =0.5]
\coordinate (B1) at (1, 0);
\coordinate (B2) at (2, 0);
\coordinate (B3) at (3, 0);
\coordinate (B4) at (4, 0);
\coordinate (T1) at (1.5, 4);
\coordinate (T2) at (2.5, 4);
\coordinate (T3) at (3.5, 4);
\coordinate (A1) at (1.5,0.5);
\coordinate (A2) at (2,1.5);
\coordinate (A3) at (2.5,2.5);
\coordinate (A4) at (3,3);
\coordinate (A5) at (3,3.5);
\draw[->-]  (B1) .. controls +(0,0.2) and + (0, -0.2) .. (A1) node[midway,above] {$2$};
\draw[->-]  (B2) .. controls +(0,0.2) and + (0, -0.2) .. (A1) node[midway,above] {$3$};
\draw[->-]  (B3) .. controls +(0,0.5) and + (0, -0.2) .. (A2) node[midway,left] {$1$};
\draw[->-]  (B4) .. controls +(0,0.5) and + (0, -0.2) .. (A4) node[midway,left] {$2$};
\draw[->-]  (A1) .. controls +(0,0.2) and + (0, -0.2) .. (A2) node[midway,left] {$5$};
\draw[->-]  (A2) .. controls +(0,0.2) and + (0, -0.2) .. (A3) node[midway,left] {$6$};
\draw[->-]   (A3) .. controls +(0,0.2) and + (0, -0.5) .. (T1) node[left] {$4$};
\draw[->-]  (A3) .. controls +(0,0.2) and + (0, -0.2) .. (A4) node[midway,below] {$2$};
\draw[->-]  (A4) .. controls +(0,0.2) and + (0, -0.2) .. (A5) node[midway,left] {$4$};
\draw[->-]   (A5) .. controls +(0,0.2) and + (0, -0.2) .. (T2) node[left] {$3$};
\draw[->-]   (A5) .. controls +(0,0.2) and + (0, -0.2) .. (T3) node[right] {$1$};
\end{scope}

\begin{scope}[xshift= 12cm, yshift=2cm, yscale =0.5]
\coordinate (B1) at (1.5, 0);
\coordinate (B2) at (2.5, 0);
\coordinate (B3) at (3.5, 0);
\coordinate (T1) at (1, 4);
\coordinate (T2) at (2, 4);
\coordinate (T3) at (3, 4);
\coordinate (T4) at (4, 4);
\coordinate (A1) at (3,0.5);
\coordinate (A2) at (3,1.3);
\coordinate (A3) at (1.5,1.5);
\coordinate (A4) at (2.5,2.5);
\coordinate (A5) at (2.5,3);
\draw       (B1) .. controls +(0,0.2) and + (-0.2, 0) .. (A3);
\draw       (B2) .. controls +(0,0.2) and + (0, -0.2) .. (A1);
\draw       (B3) .. controls +(0,0.2) and + (0, -0.2) .. (A1);
\draw[->-]  (A1) .. controls +(0,0.2) and +(0, -0.2) .. (A2) node[midway,left] {$4$};
\draw[->-]  (A3) .. controls +(0.2,0) and +(0, -0.2) .. (A4) node[midway,left] {$6$};
\draw[->-]  (A2) .. controls +(0,0.2) and +(0, -0.2) .. (A4) node[midway,left] {$1$};
\draw[->-]  (T1) .. controls +(0,0.2) and +(-0.2, 0) .. (A3) node[midway,left] {$2$};
\draw[->-]  (A2) .. controls +(0,0.3) and +(0, -0.3) .. (T4) node[midway,left] {$3$};
\draw[->-]  (A4) .. controls +(0,0.2) and +(0, -0.2) .. (A5) node[midway,left] {$7$};
\draw[->-]  (A5) .. controls +(0,0.2) and +(0, -0.2) .. (T2) node[midway,left] {$2$};
\draw[->-]  (A5) .. controls +(0,0.2) and +(0, -0.2) .. (T3) node[midway,right] {$5$};
\end{scope} }
  \end{tikzpicture}
  \caption{Examples of a MOY graphs: a $(4,3,1)$-MOY graph-$(2,3,1,2)$, a $(-2,2,5,3)$-MOY graph-$(4,3,1)$ and their concatenation.}
  \label{fig:exMOY}
\end{figure}
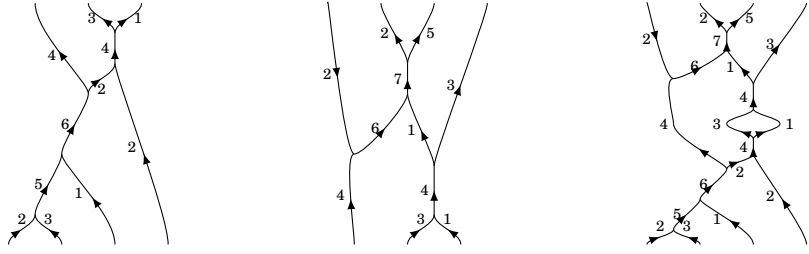

\begin{dfn}
  \label{dfn:rotational}
  Let $\Gamma$ be a closed MOY graph. The \emph{rotational} of $\Gamma$ is the sum of the rotational of the oriented circles appearing in the cabling of $\Gamma$. The \emph{rotational} of a circle is $+1$ if it winds counterclockwisely and $-1$ if it winds clockwisely.  It is denoted by $\mathrm{rot}(\Gamma)$. This definition is illustrated in Figure~\ref{fig:rotMOY}.
\end{dfn}
\begin{figure}
  \centering
  \begin{tikzpicture}[yscale = 0.4]
{\tiny    \begin{scope}
  \coordinate (A) at (0,0);
  \coordinate (B) at (0,2);
  \coordinate (C) at (0,4);
  \coordinate (D) at (0,6);
  \node at (0, -1.5) {$\Gamma$};
  \draw[->] (A) -- (B) node[midway, left] {$4$};
  \draw[->] (C) -- (D) node[midway, left] {$4$};
  \draw[->] (B) .. controls +(0.5,0.5) and +(0.5,-0.5) .. (C) node[midway, left] {$3$};
  \draw[->] (B) .. controls +(-0.5,0.5) and +(-0.5,-0.5) .. (C) node[midway, left] {$1$};
  \draw[->] (D) .. controls +(1.5,1.5) and +(1.5,-1.5) .. (A) node[midway, left] {$2$};
  \draw[->] (D) .. controls +(-1.5,1.5) and +(-1.5,-1.5) .. (A) node[midway, left] {$2$};
\end{scope}

\begin{scope}[xshift = 5cm]
  \coordinate (A1) at (-0.15,0);
  \coordinate (A2) at (-0.05,0);
  \coordinate (A4) at (+0.15,0);
  \coordinate (A3) at (+0.05,0);
  \coordinate (B1) at (-0.15,2);
  \coordinate (B2) at (-0.05,2);
  \coordinate (B4) at (+0.15,2);
  \coordinate (B3) at (+0.05,2);
  \coordinate (C1) at (-0.15,4);
  \coordinate (C2) at (-0.05,4);
  \coordinate (C4) at (+0.15,4);
  \coordinate (C3) at (+0.05,4);
  \coordinate (D1) at (-0.15,6);
  \coordinate (D2) at (-0.05,6);
  \coordinate (D4) at (+0.15,6);
  \coordinate (D3) at (+0.05,6);
  \draw[->] (A1) -- (B1);
  \draw[->] (A2) -- (B2);
  \draw[->] (A3) -- (B3);
  \draw[->] (A4) -- (B4);
  \draw[->] (C1) -- (D1);
  \draw[->] (C2) -- (D2);
  \draw[->] (C3) -- (D3);
  \draw[->] (C4) -- (D4);
  \draw (B1) .. controls +(-0.5,0.5) and +(-0.5,-0.5) .. (C1);
  \draw (B2) .. controls +(0.5,0.5) and +(0.5,-0.5) .. (C2);
  \draw (B3) .. controls +(0.5,0.5) and +(0.5,-0.5) .. (C3);
  \draw (B4) .. controls +(0.5,0.5) and +(0.5,-0.5) .. (C4); 
   \draw[->] (D3) .. controls +(1.65,1.9) and +(1.65,-1.9) .. (A3);
   \draw[->] (D4) .. controls +(1.4,1.3) and +(1.4,-1.3) .. (A4);
   \draw[->] (D1) .. controls +(-1.4,1.3) and +(-1.4,-1.3) .. (A1);
   \draw[->] (D2) .. controls +(-1.65,1.9) and +(-1.65,-1.9) .. (A2);
\end{scope} }
  \end{tikzpicture}
  \caption{The rotational of the closed MOY graph depicted on the left is equal to $2-2=0$. }
  \label{fig:rotMOY}
\end{figure}
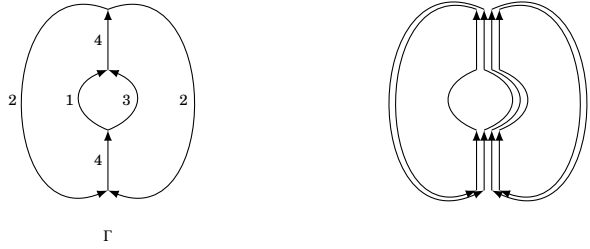

\subsection{MOY calculi}
\label{sec:moy-calculi}

In their seminal paper, Murakami, Ohtsuki and Yamada \cite{MR1659228} gave a combinatorial definition of the colored $U_q(\sll_N)$ framed link invariant. For clarity we refer to this construction as \emph{exterior} MOY calculus since it calculates the Reshetikhin--Turaev invariant of a framed link labeled with exterior powers of $V$ the standard representation of $U_q(\sll_N)$. 
We denote by $\kup{\bullet}_N$ this invariant (or simply by $\kup{\bullet}$ when there is no ambiguity about $N$).  This goes in two steps. We consider a framed link diagram labeled with integers between $0$ and $N$ (one should think about an integer $a$ as representing $\Lambda^a_qV$) and we replace every crossing by a formal $\ZZ[q,q^{-1}]$-linear combination of planar graphs following the formulas:
\begin{align}
\label{eq:extcrossplus}\kup{\scriptstyle{\NB{\tikz[scale=0.6]{\begin{scope}
  \draw[->] (1, -1) -- (-1, 1) node[near end, left] {\tiny{${m}$}};
  \fill[white] (0,0) circle (2mm);
  \draw[->] (-1, -1) -- (1, 1) node[near end, right] {\tiny{${n}$}};
\end{scope}}}}} &= \sum_{k= \max(0, m-n)}^m (-1)^{m-k}q^{k-m}\kup{\!\!\NB{\tikz[scale=0.9]{\begin{scope}
\coordinate (A) at (-1,-1);
\coordinate (B) at (1,-1);
\coordinate (C) at (1,1);
\coordinate (D) at (-1,1);
\coordinate (a) at (-.5,-.5);
\coordinate (b) at (.5,-.5);
\coordinate (c) at (.5,.5);
\coordinate (d) at (-.5,.5);
\draw[->] (A) -- (a) node[at start, below] {\tiny{$n$}};
\draw[->] (c) -- (C) node[at end, above ] {\tiny{$n$}};
\draw[->] (B) -- (b) node[at start , below ] {\tiny{$m$}};
\draw[->] (d) -- (D) node[at end, above] {\tiny{$m$}};
\draw[<-] (c) -- (d) node[midway, above] {\tiny{$n+k-m$}};
\draw[<-] (a) -- (b) node[midway, below] {\tiny{$k$}};
\draw[->] (a) -- (d) node[midway, left] {\tiny{$n+k$}};
\draw[->] (b) -- (c) node[midway, right] {\tiny{$m-k$}};
\end{scope}}}\!\!}\\
\label{eq:extcrossminus}
\kup{\scriptstyle{\NB{\tikz[scale=0.6]{\begin{scope}
  \draw[->] (-1, -1) -- (1, 1) node[near end, right] {\tiny{${n}$}};
  \fill[white] (0,0) circle (2mm);
  \draw[->] (1, -1) -- (-1, 1) node[near end, left] {\tiny{${m}$}};
\end{scope}}}}} &= \sum_{k= \max(0, m-n)}^m (-1)^{m-k}q^{m-k} \kup{\!\!\NB{\tikz[scale=0.9]{\begin{scope}
\coordinate (A) at (-1,-1);
\coordinate (B) at (1,-1);
\coordinate (C) at (1,1);
\coordinate (D) at (-1,1);
\coordinate (a) at (-.5,-.5);
\coordinate (b) at (.5,-.5);
\coordinate (c) at (.5,.5);
\coordinate (d) at (-.5,.5);
\draw[->] (A) -- (a) node[at start, below] {\tiny{$n$}};
\draw[->] (c) -- (C) node[at end, above ] {\tiny{$n$}};
\draw[->] (B) -- (b) node[at start , below ] {\tiny{$m$}};
\draw[->] (d) -- (D) node[at end, above] {\tiny{$m$}};
\draw[<-] (c) -- (d) node[midway, above] {\tiny{$n+k-m$}};
\draw[<-] (a) -- (b) node[midway, below] {\tiny{$k$}};
\draw[->] (a) -- (d) node[midway, left] {\tiny{$n+k$}};
\draw[->] (b) -- (c) node[midway, right] {\tiny{$m-k$}};
\end{scope}}}\!\!}.
\end{align}
The first crossing in the formula is said to have \emph{type} $(m,n, +)$, the second to have \emph{type} $(m,n, -)$.

Finally we can evaluate these planar graphs (which are closed MOY graphs) using the following identities and their mirror images:
\begin{align}\label{eq:extrelcircle}
  \kup{\vcenter{\hbox{\tikz[scale= 0.5]{\draw[->] (0,0) arc(0:360:1cm) node[right] {\small{$\!k\!$}};}}}}=
\begin{bmatrix}
  N \\ k
\end{bmatrix}
\end{align}
\begin{align} \label{eq:extrelass}
   \kup{\stgamma} = \kup{\stgammaprime}
 \end{align}
\begin{align} \label{eq:extrelbin1} 
\kup{\digona} = \arraycolsep=2.5pt
  \begin{bmatrix}
    m+n \\ m
  \end{bmatrix}
\kup{\verta}
\end{align}
\begin{align} \label{eq:extrelbin2}
\arraycolsep=2.5pt
\kup{\digonb} = 
  \begin{bmatrix}
    N-m \\ n
  \end{bmatrix}
\kup{\vertb} 
\end{align}

\begin{align}
 \kup{\squarea} = \kup{\twoverta} + [N-m-1]\kup{\doubleYa} \label{eq:extrelsquare1}
\end{align}

\begin{align}
\kup{\squareb}=\!
  \begin{bmatrix}
    m-1 \\ n
  \end{bmatrix}
\kup{\bigHb}  +
\!\begin{bmatrix}
  m-1 \\n-1
\end{bmatrix}
\kup{\doubleYb} \label{eq:extrelsquare2}
\end{align}
\begin{align}
  \kup{\squarec}= \sum_{j=\max{(0, m-n)}}^m\begin{bmatrix}l \\ k-j \end{bmatrix}
 \kup{\squared}\label{eq:extrelsquare3}
\end{align}

In the previous formulas, we used quantum integers and quantum binomials. These are symmetric Laurent polynomials in $q$ defined by $[k]:= \frac{q^{k}- q^{-k}}{q- q^{-1}}$ and 
\[
  \begin{bmatrix}
    l \\ k
  \end{bmatrix}=
\frac{\prod_{i=0}^{k-1} [l-i]}{[k]!}
\quad \textrm{where } [i]! = \prod_{j=1}^i [j].
\]

The first formal proof that these relations are enough to compute has been written by Wu \cite{pre06302580} and is based on a result of Kauffman and Vogel \cite[Appendix 4]{zbMATH06000869}. In particular this shows that there is a unique evaluation of MOY graphs which satisfies these relations. The coherence of these relations follows from the representation theoretic point of view. For more details we refer to \cite{MR1659228,zbMATH05656519} and to Appendix~\ref{sec:quant-link-invar}.

As pointed out by \cite{tubbenhauer2015super}, a similar story applies when one think about the integers  labeling the strands  of a link (which are now only required to be positive) as representing $q$-symmetric powers of $V$ (i. e. $\mathrm{Sym}_q^\bullet V$). We denote by $\kups{\bullet}$ this invariant (or simply by $\kups{\bullet}$ when there is no ambiguity about $N$). This yields what we call the \emph{symmetric} MOY calculus. 
\begin{align}
\label{eq:symcrossplus}
\kups{\scriptstyle{\NB{\tikz[scale=0.6]{}}}} &= \sum_{k= \max(0, m-n)}^m (-1)^{m-k}q^{k-m}\kups{\!\!\NB{\tikz[scale=0.9]{\begin{scope}
\coordinate (A) at (-1,-1);
\coordinate (B) at (1,-1);
\coordinate (C) at (1,1);
\coordinate (D) at (-1,1);
\coordinate (a) at (-.5,-.5);
\coordinate (b) at (.5,-.5);
\coordinate (c) at (.5,.5);
\coordinate (d) at (-.5,.5);
\draw[->] (A) -- (a) node[at start, below] {\tiny{$n$}};
\draw[->] (c) -- (C) node[at end, above ] {\tiny{$n$}};
\draw[->] (B) -- (b) node[at start , below ] {\tiny{$m$}};
\draw[->] (d) -- (D) node[at end, above] {\tiny{$m$}};
\draw[<-] (c) -- (d) node[midway, above] {\tiny{$n+k-m$}};
\draw[<-] (a) -- (b) node[midway, below] {\tiny{$k$}};
\draw[->] (a) -- (d) node[midway, left] {\tiny{$n+k$}};
\draw[->] (b) -- (c) node[midway, right] {\tiny{$m-k$}};
\end{scope}}}\!\!}\\
\label{eq:symcrossminus}
\kups{\scriptstyle{\NB{\tikz[scale=0.6]{}}}} &= \sum_{k= \max(0, m-n)}^m (-1)^{m-k}q^{m-k} \kups{\!\!\NB{\tikz[scale=0.9]{\begin{scope}
\coordinate (A) at (-1,-1);
\coordinate (B) at (1,-1);
\coordinate (C) at (1,1);
\coordinate (D) at (-1,1);
\coordinate (a) at (-.5,-.5);
\coordinate (b) at (.5,-.5);
\coordinate (c) at (.5,.5);
\coordinate (d) at (-.5,.5);
\draw[->] (A) -- (a) node[at start, below] {\tiny{$n$}};
\draw[->] (c) -- (C) node[at end, above ] {\tiny{$n$}};
\draw[->] (B) -- (b) node[at start , below ] {\tiny{$m$}};
\draw[->] (d) -- (D) node[at end, above] {\tiny{$m$}};
\draw[<-] (c) -- (d) node[midway, above] {\tiny{$n+k-m$}};
\draw[<-] (a) -- (b) node[midway, below] {\tiny{$k$}};
\draw[->] (a) -- (d) node[midway, left] {\tiny{$n+k$}};
\draw[->] (b) -- (c) node[midway, right] {\tiny{$m-k$}};
\end{scope}}}\!\!}.
\end{align}
The formulas for evaluating MOY graphs become:
\begin{align}\label{eq:symrelcircle}
  \kups{\vcenter{\hbox{\tikz[scale= 0.5]{\draw[->] (0,0) arc(0:360:1cm) node[right] {\small{$\!k\!$}};}}}}=
\begin{bmatrix}
  N +k -1 \\ k
\end{bmatrix}
\end{align}
\begin{align} \label{eq:symrelass}
   \kups{\stgamma} = \kups{\stgammaprime}
 \end{align}
\begin{align} \label{eq:symrelbin1} 
\kups{\digona} = \arraycolsep=2.5pt
  \begin{bmatrix}
    m+n \\ m
  \end{bmatrix}
\kups{\verta}
\end{align}
\begin{align} \label{eq:symrelbin2}
\arraycolsep=2.5pt
\kups{\digonb} = 
  \begin{bmatrix}
    N+m+n-1 \\ n
  \end{bmatrix}
\kups{\vertb} 
\end{align}

\begin{align}
 \kups{\squarea} = \kups{\twoverta} + [N+m+1]\kups{\doubleYa} \label{eq:relsquare1}
\end{align}

\begin{align}
\kups{\squareb}=\!
  \begin{bmatrix}
    m-1 \\ n
  \end{bmatrix}
\kups{\bigHb}  +
\!\begin{bmatrix}
  m-1 \\n-1
\end{bmatrix}
\kups{\doubleYb} \label{eq:symrelsquare2}
\end{align}
\begin{align}
  \kups{\squarec}= \sum_{j=\max{(0, m-n)}}^m\begin{bmatrix}l \\ k-j \end{bmatrix}
 \kups{\squared}\label{eq:symrelsquare3}
\end{align}

\begin{rmk}\label{rmk:normalisation-sym-MOY}  \label{rmk:minusN}
  \begin{enumerate}
  \item The proof of computability and uniqueness of
    Wu~\cite{pre06302580} still works in the symmetric case. As before
    consistency follows from the representation theoretic point of
    view. We describe explicitly in Appendix~\ref{sec:quant-link-invar} the $U_q(\gll_N)$-intertwiners between priducts of symmetric powers of the standard $U_q(\gll_N)$-module. One can check by brute force computation that these morphisms satisfy the identities defining the symmetric MOY calculus. 
  \item 
We chose a normalization making the polynomial associated by
    the symmetric MOY calculus with any planar graph a Laurent
    polynomial with positive coefficients. 
    The skein formula we use  for the crossings is not compatible with the choices made for the
    exterior MOY calculus. The braiding in the exterior MOY calculus
    is given by an $R$ matrix of $U_q(\sll_N)$, while for the
    symmetric MOY calculus it is given by its inverse. Hence the two
    calculi we present here cannot be merge into one bicolor calculus
    as it is done in \cite{tubbenhauer2015super}.
  \item In \cite{tubbenhauer2015super}, there is an overall sign which we choose to remove here. For recovering this sign one should multiply our symmetric evaluation of a MOY graph $\Gamma$ by $(-1)^{\mathrm{rot}(\Gamma)}$ (see Definition~\ref{dfn:rotational}). 
  \item Up to this sign the formulas are actually the same as the ones of the exterior MOY calculus applied to $-N$.
Since they do not involve $N$, the identities~(\ref{eq:extrelass}), (\ref{eq:extrelbin1}), (\ref{eq:extrelsquare2}) and (\ref{eq:extrelsquare3}) of the exterior MOY calculus are the same as the identities~(\ref{eq:symrelass}), (\ref{eq:symrelbin1}), (\ref{eq:symrelsquare2}) and (\ref{eq:symrelsquare3})  of the symmetric MOY calculus. 
  \item In order to turn the framed invariants $\kup{\bullet}_N$ and $\kups{\bullet}$ into invariants of unframed links, one needs to renormalize them. For any link diagram $D$, we define:
    \begin{align*}
      RT^{\Lambda}_N(D) &= (-1)^{e(D)} q^{k^{\Lambda}(D)} \kup{D}_N \quad \textrm{and}\\
      RT^{S}_N(D)      &=  q^{k^{S}(D)} \kups{D},
    \end{align*}
    where $e(D)$ (resp.\ $k^{\Lambda}(D)$, resp.\ $k^{S}(D)$) is the sum over all crossing $x$ of $D$ of $e_x$ (resp.\ $k^{\Lambda}_x$, resp.\ $k^{S}_x$)  defined by:
\[ (k^{\Lambda}_x, k^{S}_x , e_x) =
  \begin{cases}
    (m(N+1-m), -m(m+N-1) , -m)            & \textrm{if $x$ is of type $(m,m,+)$, } \\ 
    (-m(N+1-m), m(m+N-1), +m)            & \textrm{if $x$ is of type $(m,m,-)$,} \\ 
    (0,0,0)               & \textrm{else.}
  \end{cases}
\]

  \end{enumerate}
\end{rmk}

While the case $N=1$ is trivial in the exterior MOY calculus, it is not in the symmetric MOY calculus. However, the symmetric evaluation of a MOY graph $\Gamma$ for $N=1$ is especially simple.

\begin{lem}
  \label{lem:N1symmetriceval}
  Let $\Gamma$ be a MOY graph. For every vertex $v$ of $\Gamma$, let us denote by $W(v)$ the element of $\NN[q, q^{-1}]$ given by the formulas:
\[
W\left(\NB{\tikz[scale =0.8]{ 
\draw[->] (0,0) -- (0,0.5) node [at end, above] {$a+b$};  
\draw[>-] (-0.5, -0.5) -- (0,0) node [at start, below] {$a$};  
\draw[>-] (+0.5, -0.5) -- (0,0) node [at start, below] {$b$};}}\right) := \qbina{a+b}{a} =:
W\left(\NB{\tikz[scale =0.8]{ 
\draw[-<] (0,0) -- (0,0.5) node [at end, above] {$a+b$};  
\draw[<-] (-0.5, -0.5) -- (0,0) node [at start, below] {$a$};  
\draw[<-] (+0.5, -0.5) -- (0,0) node [at start, below] {$b$};}}\right).
\]

The following identities hold in $\ZZ[q, q^{-1}]$:
\begin{align*}
\kupss{\Gamma}_{N=1} &=
\prod_{v \textrm{ split}} W(v) =  \prod_{v \textrm{ merge}}W(v) = \left(\prod_{v \textrm{ vertex of $\Gamma$}} W(v) \right)^{1/2}.
\end{align*}
\end{lem}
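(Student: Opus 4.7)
My plan is to prove $\kupss{\Gamma}_{N=1} = \prod_{v \text{ split}} W(v)$ by showing that the right-hand side, viewed as an invariant $E$ of closed MOY graphs, satisfies every defining relation of the symmetric MOY calculus at $N=1$; Wu's uniqueness result \cite{pre06302580} will then force $E = \kupss{\cdot}_{N=1}$. The formula with merge vertices will follow by applying the same argument to the orientation reversal $-\Gamma$, using that reversing orientations swaps splits and merges while preserving the form of the relations (and hence the symmetric evaluation). The geometric-mean identity is then automatic, since
\[
\Bigl(\prod_{v \text{ split}} W(v)\Bigr) \Bigl(\prod_{v \text{ merge}} W(v)\Bigr) = \prod_{v} W(v)
\]
and the two factors are equal.

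The base case is the circle relation (\ref{eq:symrelcircle}), which at $N=1$ becomes $\qbina{k}{k}=1$, matching the empty product. Associativity (\ref{eq:symrelass}) is reflected in the identity $\qbina{a+b}{a}\qbina{a+b+c}{a+b} = \qbina{b+c}{b}\qbina{a+b+c}{b+c} = [a+b+c]!/([a]![b]![c]!)$, so that $E$ is preserved under the associativity rewrite. Both digon relations (\ref{eq:symrelbin1})--(\ref{eq:symrelbin2}) specialize at $N=1$ to multiplication by $\qbina{m+n}{m}$, exactly the $W$-weight of the split vertex introduced when a digon is inserted into a graph (the vertical-line side carries no vertex and contributes $1$).

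The main obstacle is the verification of the square relations (\ref{eq:relsquare1})--(\ref{eq:symrelsquare3}) at $N=1$; these are not term-by-term multiplicative, since their right-hand sides are sums of several graphs. My approach is to factor out the $W$-contributions shared by the configurations appearing on both sides of each relation and to reduce, after substituting $N=1$, to an identity of Gaussian polynomials verifiable by iterated $q$-Pascal recursions. Each check is elementary, but there are several cases to dispatch.

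Once these local checks are in place, the induction on the number of internal vertices of $\Gamma$ proceeds along Wu's reduction scheme: every closed MOY graph with a nonempty vertex set admits a digon or square simplification, and the chain of local equalities propagates the identity from the disjoint-union-of-circles base case up to arbitrary closed MOY graphs, yielding the lemma.
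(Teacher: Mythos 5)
Your proposal is correct and follows essentially the same strategy as the paper's sketch: invoke Wu-style uniqueness of the symmetric MOY evaluation and verify that the vertex-weight product satisfies each defining relation at $N=1$, with the square relation reducing to a $q$-binomial identity. The paper checks the merge product directly and records the precise identity $\qbina{n+k}{m}\qbina{m+l}{k} = \sum_{j}\qbina{l}{k-j}\qbina{n}{m-j}\qbina{n+l+j}{j}$ (proved by induction on $n+m$), whereas you check the split product and obtain the merge side by orientation reversal; these are cosmetic variants of the same argument.
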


\begin{proof}[Sketch of the proof.]
    We only treat the ``merge'' part of the statement.
    Since there is a unique polynomial satisfying the symmetric MOY calculus for $N=1$. It is enough to check that $W(\Gamma):= \prod_{v \textrm{ merge}}W(v)$ satisfies the symmetric MOY calculus of $N=1$.
 For instance, to prove that $W$ satisfies identity~(\ref{eq:symrelsquare3}), one shows the following $q$-binomial identity:
\[
\qbina{n+k}{m}\qbina{m+l}{k} = \sum_{j={\max(0, m-n)}}^n \qbina{l}{k-j}\qbina{n}{m-j}\qbina{n+l+j}{j}
\]
which can be done by induction on $n+m$. 
\end{proof}

Another identity holds both in the exterior and the symmetric MOY calculi. 

\begin{lem}
  \label{lem:otherrelation}The following local identities and their mirror images hold:
  \begin{align} \label{eq:otherrelext}
    \kup{\squaree} = 
    \qbina{r+s}{s}
    \kup{\webHe}
  \end{align}
  \begin{align} \label{eq:otherrelsym}
    \kups{\squaree} = 
    \qbina{r+s}{s}
    \kups{\webHe}
  \end{align}
\end{lem}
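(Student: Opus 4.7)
Both identities \eqref{eq:otherrelext} and \eqref{eq:otherrelsym} are local and their statements are $N$-free. By Remark~\ref{rmk:minusN}(4), the relations \eqref{eq:extrelass}, \eqref{eq:extrelbin1}, \eqref{eq:extrelbin2} and \eqref{eq:extrelsquare3} of the exterior MOY calculus coincide with the corresponding symmetric relations \eqref{eq:symrelass}, \eqref{eq:symrelbin1}, \eqref{eq:symrelbin2} and \eqref{eq:symrelsquare3}. It is therefore enough to prove \eqref{eq:otherrelext}; the same derivation, with $\kup{\bullet}$ replaced by $\kups{\bullet}$, will yield \eqref{eq:otherrelsym}.

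The idea is to reveal a digon inside the LHS by a single associativity move. More precisely, I would apply the associativity relation \eqref{eq:extrelass} to one of the two internal vertices of \squaree, pivoting one of its small edges across the adjacent trivalent vertex. In the resulting web, an $r\!+\!s$-edge is interrupted by a two-edge bigon with labels $r$ and $s$, attached to the skeleton that is precisely \webHe. Collapsing this digon using \eqref{eq:extrelbin1} produces the scalar $\qbina{r+s}{s}$ and leaves \webHe as the only remaining combinatorial data, which yields the claimed identity. The argument for the mirror identity is the symmetric move with \eqref{eq:extrelbin2} in place of \eqref{eq:extrelbin1}.

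As a backup, if the chosen orientation of \squaree does not permit the associativity move directly, I would instead apply the general square relation \eqref{eq:extrelsquare3} to expand $\kup{\squaree}$ as $\sum_j \qbina{l}{k-j}\kup{\squared}$ for the appropriate $(m,n,k,l)$ read off the labels, then simplify each summand by the digon relations \eqref{eq:extrelbin1}--\eqref{eq:extrelbin2} until every term is a scalar multiple of $\kup{\webHe}$. The resulting numerical coefficient is a convolution of $q$-binomials, which collapses to $\qbina{r+s}{s}$ by a $q$-Vandermonde--Chu identity of the same flavour as the one used in the proof of Lemma~\ref{lem:N1symmetriceval}.

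The main obstacle is therefore purely bookkeeping: tracking orientations and labels during the associativity move so that the digon that appears really does sit on an $r\!+\!s$-labelled edge of \webHe. Once this local picture is set up correctly, the proof is a one-line application of \eqref{eq:extrelbin1}, and its translation to the symmetric setting is automatic since neither step involves a closed component whose evaluation depends on $N$.
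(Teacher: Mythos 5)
Your main line of argument --- apply the associativity relation~\eqref{eq:extrelass} (and its mirror image) to expose a digon labelled $r,s$ sitting on an $r+s$-edge, then collapse it with~\eqref{eq:extrelbin1} --- is exactly the paper's proof, and your observation that the derivation is identical for the symmetric calculus because none of the relations used involve $N$ is also the point the paper makes.

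Two small slips, though. First, you list \eqref{eq:extrelbin2}/\eqref{eq:symrelbin2} among the relations that coincide between the two calculi; this is false, since their coefficients are $\qbinil{N-m}{n}$ and $\qbinil{N+m+n-1}{n}$ respectively, both visibly $N$-dependent and not equal. Remark~\ref{rmk:minusN}(4) in the paper lists \eqref{eq:extrelsquare2} in that position, not \eqref{eq:extrelbin2}. Second, and more substantively, your claim that the mirror image of the identity would be handled by using \eqref{eq:extrelbin2} in place of \eqref{eq:extrelbin1} cannot be right: \eqref{eq:extrelbin2} is a digon of mixed orientation whose coefficient depends on $N$, so it would contradict the $N$-free answer $\qbinil{r+s}{s}$. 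The mirror image of the digon appearing in~\eqref{eq:extrelbin1} is again a digon of the same type (both legs oriented the same way relative to the ambient edge), so the mirror identity is proved the same way, using the mirror image of \eqref{eq:extrelass} together with \eqref{eq:extrelbin1} again --- which is precisely what the paper does.
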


\begin{proof}[Sketch of proof.]
  For the exterior calculus: it is a consequence of identity (\ref{eq:extrelass}), its mirror image and identity (\ref{eq:extrelbin1}).
  For the symmetric calculus: it is a consequence of identity (\ref{eq:symrelass}), its mirror image and identity (\ref{eq:symrelbin1}).  
\end{proof}



\subsection{Braid-like MOY graphs}
\label{sec:braid-like-moy}
In this section we introduce a special class of MOY graphs which contains in particular the ones appearing in the expansion of braids when using identities (\ref{eq:extcrossplus}) and (\ref{eq:extcrossminus}) to get rid of crossings. We call these graphs braid-like.

\begin{dfn}
  \label{dfn:braidlike}
  A MOY graph $\Gamma$ is \emph{braid-like} if the scalar products of all its tangent vectors with $\left(\begin{smallmatrix} 0 \\ 1 \end{smallmatrix}\right)$ are strictly positive. In Figure~\ref{fig:exMOY}, the leftmost MOY graph is braid-like, while the two others are not.
\end{dfn}

\begin{rmk}\label{rmk:braidlikeisotopy}\begin{enumerate}
  \item Braid-like MOY graphs are regarded up to ambient isotopy fixing the boundary  and preserving the braid-like property. They fit into a category which is a non-full subcategory of the one described in Remark~\ref{rmk:MOYgraphisopy}.
\item The braid-likeness of a MOY graph $\Gamma$ implies that boundary points  of $\Gamma$ on $]0,1[\times \{0\}$ are negative, while the one on $]0,1[\times \{1\}$ are positive.   
\item Every braid-like MOY graph can be obtained as vertical concatenation of MOY graphs of type: 
\[ \NB{\tikz[scale= 0.7]{
\begin{scope}[yscale = -1]
\draw[-<] (0,0) -- (0,0.5) node [at end, below] {$a+b$};  
\draw[<-] (-0.5, -0.5) -- (0,0) node [at start, above] {$a$};  
\draw[<-] (+0.5, -0.5) -- (0,0) node [at start, above] {$b$};  

\draw[<-] (+1.5, -0.5) -- ++(0,1);
\draw[<-] (+3,   -0.5) -- ++(0,1);
\draw[<-] (-1.5, -0.5) -- ++(0,1);
\draw[<-] (-3,   -0.5) -- ++(0,1);
\node at (2.35, 0) {$\dots$};
\node at (-2.35, 0) {$\dots$};
\end{scope}}} \qquad \textrm{and} \qquad \NB{\tikz[scale= 0.7]{
\begin{scope}[yscale = 1]
\draw[->] (0,0) -- (0,0.5) node [at end, above] {$a+b$};  
\draw[>-] (-0.5, -0.5) -- (0,0) node [at start, below] {$a$};  
\draw[>-] (+0.5, -0.5) -- (0,0) node [at start, below] {$b$};  
\draw[->] (+1.5, -0.5) -- ++(0,1);
\draw[->] (+3,   -0.5) -- ++(0,1);
\draw[->] (-1.5, -0.5) -- ++(0,1);
\draw[->] (-3,   -0.5) -- ++(0,1);
\node at (2.35, 0) {$\dots$};
\node at (-2.35, 0) {$\dots$};
\end{scope}}}
.\]
\item If $\Gamma$ is a $\listk{k}_1$-MOY graph-$\listk{k}_0$, then the sum of the element of $\listk{k}_1$ is equal to the sum of the element of $\listk{k}_0$, we call the number the \emph{level} of $\Gamma$.
  \end{enumerate}
\end{rmk}

The following lemma, although quite elementary and completely combinatorial, is one of the keystones of this paper.

\begin{lem}
  \label{lem:trees}
  Let $k$ be a positive integer and $\listk{k}$ 
be a finite collection of positive integers of level $k$. We consider $M$ the $\ZZ[q,q^{-1}]$ 
module generated by braid-like $\listk{k}$-MOY graphs-$(k)$ and modded out by ambient isotopy and relations~(\ref{eq:extrelass}) and (\ref{eq:extrelbin1}) (or (\ref{eq:symrelass}) and (\ref{eq:symrelbin1})).
The module $M$ is generated by a braid-like tree. Moreover
all braid-like trees are equal in $M$.
\end{lem}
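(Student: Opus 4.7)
The plan is to prove the two statements separately. I would first show that all braid-like trees are equal in $M$. An Euler count on a braid-like graph that is a tree (using trivalence of internal vertices and the boundary-point counts) reveals that such a graph has no split vertices, so it must be a planar binary merging tree whose leaves spell $\listk{k}$ from left to right. Any two such planar binary trees with the same ordered leaf sequence are connected by a finite chain of rotations (the classical associahedron / Tamari-lattice fact), and each rotation is exactly one instance of the associativity relation (\ref{eq:symrelass}) (or (\ref{eq:extrelass})). Hence all braid-like trees coincide in $M$.

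For the generation statement, I would proceed by induction on the number $s$ of split vertices of $\Gamma$. The base case $s = 0$ is immediate from the previous paragraph. For the step $s \geq 1$, the goal is to produce a digon configuration in $\Gamma$, possibly after a finite sequence of associativity moves, so that relation (\ref{eq:symrelbin1}) (or (\ref{eq:extrelbin1})) applies and yields a graph with $s - 1$ splits multiplied by a quantum binomial coefficient.

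To produce a digon I would pick any split $S$ of $\Gamma$ with outputs $e_1,e_2$ and consider the sub-MOY-graph $\Gamma_S$ of $\Gamma$ lying above $S$ and between the descendants of $e_1$ and of $e_2$, up to their first common merge vertex $M$. By braid-likeness of $\Gamma$ and the fact that $\Gamma$ has a single edge at the top, such an $M$ must exist. The interior of $\Gamma_S$ consists of merges and splits in a braid-like fashion. Associativity moves (in both the merge and the split orientations of (\ref{eq:symrelass})) can be used to re-associate the fusion/fission pattern inside $\Gamma_S$ without changing $s$, with the aim of eventually bringing $M$ directly adjacent to $S$ so that $(S,M)$ forms a digon. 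Once this is achieved, one application of (\ref{eq:symrelbin1}) strictly decreases $s$, and iterating reduces $\Gamma$ to a scalar multiple of a braid-like tree, which by the first part is unique.

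The hard part will be giving a precise termination argument for the iterated re-association. A natural candidate for a strictly decreasing complexity measure is the lexicographic pair $\bigl(s,\#\{\text{internal vertices of }\Gamma_S\}\bigr)$; one needs to verify that an associativity move strictly decreasing the second coordinate is always available as long as $\Gamma_S$ has an interior vertex, and that no such move creates new splits or spoils braid-likeness. This requires a short but careful planar case analysis on each vertex on the two paths from $S$ to $M$ (whether it is a merge or a split, and whether its external edge lies on the $e_1$-side or the $e_2$-side of $\Gamma_S$), identifying in each case a pair of consecutive vertices of the same type to which an associativity move applies and whose effect is to pivot one of them out of $\Gamma_S$. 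Once this combinatorial bookkeeping is carried out, the induction closes and both statements of the lemma follow.
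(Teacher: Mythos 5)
Your split/merge bookkeeping is reversed, and this is not merely cosmetic. Under the paper's convention a $\listk{k}$-MOY graph-$(k)$ has $\listk{k}$ at the \emph{top} and the single edge labeled $k$ at the \emph{bottom}, with all orientations pointing upward; the Euler count therefore shows that a braid-like tree has only \emph{split} vertices and no merges, and the correct induction quantity is the number of merges (your $s$ should count merges, not splits). This matters in your inductive step: you justify the existence of the ``first common merge $M$ above $S$'' by appealing to ``the fact that $\Gamma$ has a single edge at the top,'' but the single edge is at the bottom. Under the actual convention the two upward paths emanating from a split need never reconverge (there are $l$ edges at the top), so the vertex $M$ you want is not guaranteed to exist. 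The salvageable version looks at the two downward paths from a merge and follows them to a common split, using the single edge at the bottom.

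More substantively, the step you flag as ``the hard part'' is exactly where the argument is incomplete, and the claim as stated is in fact false. Consider the graph with $S_1$ splitting $k$ into $a,b$, then $S_2$ splitting $a$ into $c,d$, then $M$ merging $d,b$ into $d+b$ (top boundary $(c,d+b)$). No sequence of associativity moves makes $(S_1,M)$ a digon: the only available move re-associates $S_1,S_2$ into $S_1',S_2'$ with $S_1'\colon k\to(c,d+b)$ and $S_2'\colon d+b\to(d,b)$, after which the digon is formed by the \emph{different} pair $(S_2',M)$, and moreover $S_1'$ no longer has any common merge above it, so your region $\Gamma_S$ and your proposed complexity measure $(s,\#\{\text{internal vertices of }\Gamma_S\})$ are not even well-defined for the same $S$ after the move. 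One would have to allow re-choosing the split at each stage and then re-establish monotonicity of some other measure — none of which is done. The paper sidesteps the entire issue with a cleaner device: cut $\Gamma$ horizontally just below its topmost merge, apply the inductive hypothesis to the lower piece $\Gamma_{\mathrm{bot}}$ (which has one fewer merge) to replace it by a scalar times a tree, and then — precisely because all trees are already known to be equal — \emph{choose} that tree to have a split sitting directly under the unique merge in $\Gamma_{\mathrm{top}}$. The digon then appears by construction, with no planar case analysis or termination argument required. That freedom to wholesale replace the lower part by a tree of one's own choosing is the key idea your proposal is missing.
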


\begin{proof}
  Let $T$ be a braid-like $\listk{k}$-tree-$k$, that is a braid-like $\listk{k}$-MOY graph-$(k)$ which is a tree. 

Thanks to the relation~(\ref{eq:extrelass}), it is clear\footnote{This is similar to saying that the associativity of a product allows to remove parentheses in arbitrary long product. Indeed the first relation can be seen as an associativity property and a tree as a (big) product }  that all  braid-like $\listk{k}$-tree-$k$ are equal in $M$. 

It is enough to show that any braid-like $\listk{k}$-MOY graph-$k$ $\Gamma$ is equal in $M$ to $P$ (a Laurent polynomial in $q$) times a braid-like  $\listk{k}$-tree-$k$. We show this simultaneously on all finite sequences of integers of level $k$ by induction on the number of merge vertices. If there is no merge then $\Gamma$ is a tree and there is nothing to show. If $\Gamma$ contains a merge, we cut $\Gamma$ horizontally into two parts, just below its highest merge. We obtain $\Gamma_{\textrm{top}}$ and $\Gamma_{\textrm{bot}}$. The latter is a $\listk{k}'$-MOY graph-$k$ and has one merge vertex less than $\Gamma$. Hence we can use the induction hypothesis to write $\Gamma_{\textrm{bot}}= P(q)T$ and choose the tree $T$ to have a split vertex on its top part which is symmetric to the merge vertex below which we cut $\Gamma$. We now stack $\Gamma_{\textrm{top}}$ onto $T$, and reduce the digon thanks to the relation~(\ref{eq:extrelbin1}), we obtain that $\Gamma$ is equal in $M$ to a Laurent polynomial times a braid-like $\listk{k}$-tree-$k$. 
\end{proof}

\begin{rmk}
  \label{rmk:treeRT}
  \begin{enumerate}
  \item Note that with the representation theoretic interpretation
    of MOY-graph, this results should be interpreted as: the
    multiplicity of $\Lambda_q^k V$ (resp. $\Sym_q^k
    V$) 
    in $\bigotimes_{i=1}^l \Lambda_q^{k_i} V$
    (resp. $\bigotimes_{i=1}^l \Sym_q^{k_i} V$) is one (see Appendix~\ref{sec:quant-link-invar}).
  \item This lemma says, that for any braid-like $(k_1, \dots,
    k_l)$-MOY graph-$(k)$ $\Gamma$, there exists a Laurent polynomial
    $r(\Gamma)$, such that $\Gamma= r(\Gamma) T$ in the skein module, where $T$ is a braid-like tree. From the proof, one deduces that 
\[
r(\Gamma)= \prod_{\substack{v \in V(\Gamma) \\ \textrm{$v$ merge of type $(a,b,a+b)$}}}
\begin{bmatrix}
  a+b \\ a
\end{bmatrix}. 
\]
  \end{enumerate}
\end{rmk}

For a latter use it will be convenient to have a preferred tree.

\begin{dfn}
  \label{dfn:canonical-tree}
  Let $\listk{k}$ be a finite sequence of positive integers which add up to $k$. We denote by $T_{\listk{k}}$ the braid like $\listk{k}$-tree-$k$ which is obtained by this inductive definition:
  \begin{itemize}
  \item $T_{k}$ is a single vertical strand,
  \item $T_{(k_1, \dots ,k_l)}$ is obtained from $T_{(k_1, \dots,k_{l-2},k_{l-1}+ k_l)}$ by splitting its rightmost strand into two strands labeled by $k_{l-1}$ and $k_l$. 
  \end{itemize}
This is probably better understood with the following figure:
\[
\NB{\tikz[yscale=0.4, xscale=0.6]{
\draw[>->] (0,0) --(0,1) -- (-2,5);
\draw[->] (0,1) -- (2,5);
\draw[->] (1.5,4) -- (1,5);
\draw[->] (1,3) -- (0,5);
\draw[->] (0.5,2) -- (-1,5);
}}.
\]
\end{dfn}

\subsection{Vinyl graphs}
\label{sec:vinyl-moy-graphs}

We denote by $\ann$ the annulus $\{ x\in \RR^2 | 1 < \lVert x\rVert < 2\}$ and for all $x= \left(\begin{smallmatrix}   x_1 \\x _2 \end{smallmatrix}\right) $ in $\ann$, we denote by $t_x$ the vector $\left(\begin{smallmatrix} -x_2 \\x_1 \end{smallmatrix}\right)$. A \emph{ray} in $\RR^2$ is a half-line which starts at $O$, the origin of $\RR^2$.

\begin{dfn}\label{dfn:vynil-graph}
  A \emph{vinyl graph} is the image of an abstract closed MOY graph $\Gamma$ in $\ann$ by a smooth\footnote{The smoothness condition is the same as the one of Definition~\ref{dfn:MOY}.} embedding such that for every point $x$ in the image of $\Gamma$, the tangent vector at this point has a positive scalar product with $t_x$. 
The set of vinyl graphs is denoted by $\Vin$. We define the \emph{level} of a vinyl graph to be the rotational of the underlying MOY graph. If $k$ is a non-negative integer, we denote by $\Vin_k$ the set of vinyl graph with rotational equal to $k$. Vinyl graphs are regarded up to ambient isotopy preserving $\ann$.
\end{dfn}
\begin{figure}[ht]
  \centering
\NB{  \includegraphics[height=5.2cm]{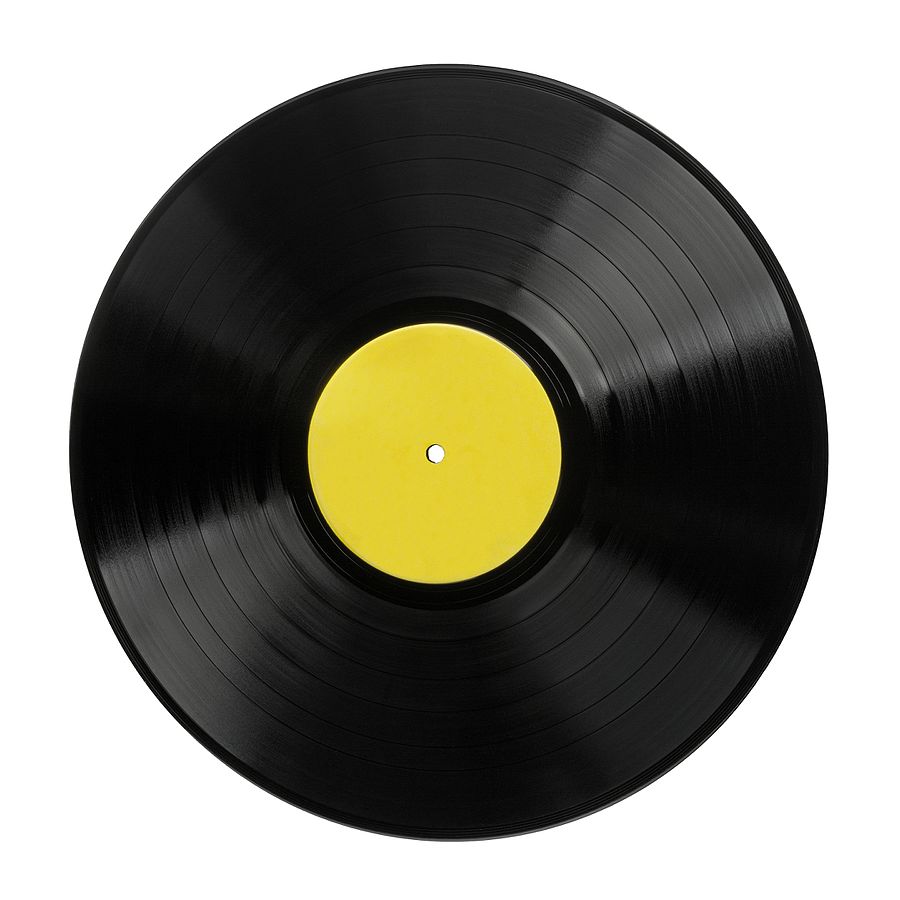}} \qquad \NB{\tiny\tikz[scale=0.6]{\begin{scope}
  \coordinate (O) at (0, 0);
  \draw[dotted] (O) circle (2cm);
  \draw[dotted] (O) circle (4cm);
  \coordinate (F1) at (30:3.8);
  \coordinate (F2) at (90:3.8);
  \coordinate (F3) at (-45:3.8);
  \coordinate (M1) at (135:3);
  \coordinate (M2) at (-90:3);
  \coordinate (M3) at (-45:3);
  \coordinate (I1) at (90:2.2);
  \coordinate (I2) at (180:2.2);
  \coordinate (I3) at (225:2.2);
  \coordinate (I4) at (-90:2.2);
  \draw[->-] (I1) arc (90:180:2.2)  node[midway, right] {$1$};
  \draw[->-] (I2) arc (180:225:2.2)  node[midway, right] {$6$};
  \draw[->-] (I3) arc (-135:-90:2.2)  node[midway, above] {$4$};;
  \draw[->-] (I4) arc (-90:90:2.2) node[midway, left] {$2$};
  \draw[->-] (M3) .. controls +(45:0.5) and +(-60:0.5) .. (F1) node[midway, right] {$3$};
  \draw[->-] (F2) .. controls +(180:0.5) and +(45:0.5) .. (M1)  node[midway, right] {$4$};
  \draw[->-] (I1) .. controls +(180:0.5) and +(45:0.5) .. (M1) node[midway, above] {$1$};
  \draw[->-] (M1) .. controls +(225:0.5) and +(90:0.5) .. (I2)  node[midway, right] {$5$};
  \draw[->-] (I3) .. controls +(-45:0.5) and +(180:0.5) .. (M2)  node[midway, below] {$2$};
  \draw[->-] (I4) .. controls +(0:0.5) and +(225:0.5) .. (M3)  node[near end, above] {$2$};
  \draw[->-] (M2) .. controls +(0:0.5) and +(225:0.5) .. (M3) node[near start, above] {$1$};
  \draw[->-] (M2) .. controls +(0:0.5) and +(225:0.5) .. (F3)  node[near end, above] {$1$};
  \draw[->-] (F1) .. controls +(120:1.5) and +(0:1.5) .. (F2)  node[midway, right] {$5$};
  \draw[->-] (F2) .. controls +(180:1.5) and  +(70:1) .. (160:3.5) .. controls +(250:1.5) and +(135:1.5) .. (225:3.5) node[left] {$1$} .. controls +(-45:1.5) and +(225:1.5) .. (F3);
  \draw[->-] (F3) .. controls +(45:1.5) and +(-60:1.5) .. (F1)  node[midway, right] {$2$};
\end{scope}}}
  \caption{A vinyl record and a vinyl graph with rotational equal to 7.}
  \label{fig:justforfun}
\end{figure}
\begin{rmk}
  \label{rmk:rotational}
  Let $\Gamma$ be a vinyl graph with rotational $k$, and $D$ be a ray which does no contain any vertices of $\Gamma$. Then the condition on the tangent vectors of $\Gamma$, implies that: 
  \begin{itemize}
  \item the intersection points of the ray $D$ with $\Gamma$ are all transverse and positive,
  \item the sum of the labels of edges which intersects $\Gamma$ is equal to $k$.
  \end{itemize}
Informally the level counts the numbers of tracks of a vinyl graph.
\end{rmk}
Of course, a natural way to obtain vinyl graphs is by closing  braid-like MOY graphs. 

\begin{notation}
  \label{not:braidlikeclosure}
  Let $\listk{k}$ be a finite sequence of integers and $\Gamma$ be a braid-like $\listk{k}$-MOY graph-$\listk{k}$. Then we denote by $\widehat{\Gamma}$ the vinyl graph obtained by closing up $\Gamma$. The level of $\Gamma$ equals the level of $\listk{k}$. 
\end{notation}

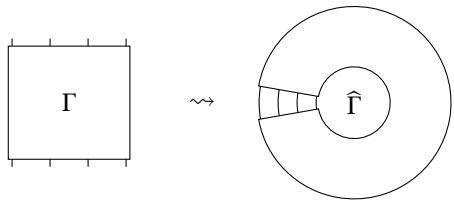
\begin{figure}[ht]
  \centering 
  \begin{tikzpicture}[scale=1]
    \begin{scope}[scale=0.5]
\begin{scope}
  \draw (-0.1,0) -- (3.1,0) -- (3.1,3) -- (-0.1,3) --cycle;
  \draw (0,0) -- +(0,-0.2);
  \draw (1,0) -- +(0,-0.2);
  \draw (2,0) -- +(0,-0.2);
  \draw (3,0) -- +(0,-0.2);
  \draw (0,3) -- +(0, 0.2);
  \draw (1,3) -- +(0, 0.2);
  \draw (2,3) -- +(0, 0.2);
  \draw (3,3) -- +(0, 0.2);
  \node at (1.5,1.5) {$\Gamma$};
\end{scope}
\node at (5,1.5) {$\rightsquigarrow$};
\begin{scope}[xshift= 9cm, yshift=1.5cm]
\draw (-170:0.95) arc (-170:170:0.95) -- ++(170:1.6) arc (170: -170:2.55) -- cycle;
\draw (-170:1) arc (-170:-190:1);
\draw (-170:1.5) arc (-170:-190:1.5);
\draw (-170:2) arc (-170:-190:2);
\draw (-170:2.5) arc (-170:-190:2.5);
\node at (0,0) {$\widehat{\Gamma}$};
\end{scope}
\end{scope}
  \end{tikzpicture}
  \caption{The vinyl graph $\widehat{\Gamma}$ is obtained by closing up the braid-like MOY graph $\Gamma$.}
  \label{fig:closebraidlike}
\end{figure}

The following theorem from Queffelec and Rose shows that the MOY relations~(\ref{eq:extrelcircle}), (\ref{eq:extrelass}), (\ref{eq:extrelbin1}), (\ref{eq:extrelsquare3}) and (\ref{eq:otherrelext}) (resp. (\ref{eq:symrelcircle}), (\ref{eq:symrelass}), (\ref{eq:symrelbin1}), (\ref{eq:symrelsquare3}) and (\ref{eq:otherrelsym})) defines uniquely the exterior (resp. symmetric) MOY calculus for vinyl graphs. 

\begin{thm}[{\cite[Lemma 5.2]{QueffelecRoseAnnular2017}}]
  \label{thm:QR}
  Let $S_k$ be the $\ZZ[q,q^{-1}]$-module generated by vinyl graphs of level $k$ modded out by the relations~(\ref{eq:extrelass}), (\ref{eq:extrelbin1}), (\ref{eq:extrelsquare3}) and (\ref{eq:otherrelext}). The module $S_k$ is generated by vinyl graphs which are collections of circles of level $k$. 
\end{thm}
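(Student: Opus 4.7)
I would prove this by induction on the number $n$ of trivalent vertices of the vinyl graph $\Gamma$. For the base case $n=0$, the positivity condition $\langle \tau, t_x\rangle > 0$ on tangent vectors forces every connected component of $\Gamma$ to be a simple embedded loop of rotational $+1$, i.e., winding once counterclockwise around the origin. Any such disjoint family of loops is ambient-isotopic in $\ann$ to a system of concentric circles with labels summing to $k$, so $\Gamma$ is itself a collection of circles.

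For the inductive step $n\geq 1$, the plan is to exhibit a local configuration inside $\Gamma$ where one of the relations~(\ref{eq:extrelass}), (\ref{eq:extrelbin1}), (\ref{eq:extrelsquare3}) or (\ref{eq:otherrelext}) applies and strictly decreases the vertex count. The key tool is Lemma~\ref{lem:trees} applied to a braid-like presentation of $\Gamma$: pick a ray $D$ from the origin avoiding every vertex of $\Gamma$, cut along $D$ and unroll to obtain a braid-like MOY graph $\Gamma_0\colon \listk{k}\to \listk{k}$ with $\widehat{\Gamma_0}=\Gamma$, where $\listk{k}$ records the labels at $D\cap \Gamma$. Composing $\Gamma_0$ with the canonical tree $T_{\listk{k}}$ of Definition~\ref{dfn:canonical-tree} on top and its mirror $T_{\listk{k}}^*$ on the bottom produces a braid-like $k$-MOY graph-$k$, to which Lemma~\ref{lem:trees} applies (the only braid-like $k$-tree-$k$ being $\id_k$). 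This yields, modulo the allowed relations, an identity $T_{\listk{k}}\circ \Gamma_0\circ T_{\listk{k}}^* = R(q)\,\id_k$ in the braid-like module for some $R(q)\in \ZZ[q,q^{-1}]$. Taking the annular closure and using its cyclic symmetry, together with the reduction $T_{\listk{k}}^*\circ T_{\listk{k}} = Q(q)\,\id_{\listk{k}}$ obtained by iterated applications of~(\ref{eq:extrelbin1}), then expresses $Q(q)\,\Gamma$ as $R(q)\,\widehat{\id_{\listk{k}}}$ in $S_k$.

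The main obstacle is cancellation: $Q(q)$ is a nonzero product of $q$-binomials, but it need not be invertible in $\ZZ[q,q^{-1}]$, and $S_k$ is not a priori torsion-free. The cleanest workaround is to forego the global argument and instead perform a direct combinatorial induction, showing that any vinyl graph with at least one vertex contains, after possibly applying associativity moves~(\ref{eq:extrelass}), either a digon (eliminable by~(\ref{eq:extrelbin1})), a square configuration (reducible via~(\ref{eq:extrelsquare3})), or an H-shape (reducible via~(\ref{eq:otherrelext})). The existence of such a local configuration follows by analyzing the innermost face of $\Gamma$ in $\ann$: its boundary is a cyclic sequence of edges meeting at merges and splits whose orientation pattern, dictated by the vinyl condition, always forces one of these three configurations to appear, so in every case the inductive hypothesis applies to the resulting vinyl graphs with strictly fewer vertices and completes the argument.
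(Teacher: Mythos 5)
The paper itself does not supply a proof of this statement; it cites \cite[Lemma 5.2]{QueffelecRoseAnnular2017} and merely remarks that the argument is a constructive reduction algorithm, so you are reconstructing an argument rather than matching one in the text.

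Your first (global) approach fails earlier than you think, and in a way that the invertibility of $Q(q)$ is not the real issue. Writing $T_{\listk{k}}:k\to\listk{k}$ for the split tree and $T_{\listk{k}}^{*}:\listk{k}\to k$ for the mirror merge, cyclic invariance of the annular closure gives $\widehat{T_{\listk{k}}^{*}\circ\Gamma_0\circ T_{\listk{k}}}=\widehat{\Gamma_0\circ T_{\listk{k}}\circ T_{\listk{k}}^{*}}$; the object that appears next to $\Gamma_0$ is $T_{\listk{k}}\circ T_{\listk{k}}^{*}$ (merge followed by split, a $\listk{k}$-to-$\listk{k}$ morphism), \emph{not} $T_{\listk{k}}^{*}\circ T_{\listk{k}}$. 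Only the latter collapses to $Q(q)\,\id_k$ via iterated~(\ref{eq:extrelbin1}); the former is not a scalar multiple of $\id_{\listk{k}}$, so the equality $Q(q)\,\Gamma=R(q)\,\SS_k$ never materializes. And even if it did, the conclusion would be false: it would assert that every level-$k$ vinyl graph is a Laurent-polynomial multiple of the single circle $\SS_k$, which already fails for $\SS_{(1,1)}$ versus $\SS_2$ (applying the $N$-exterior evaluation to a putative relation $Q(q)\,\SS_{(1,1)}=R(q)\,\SS_2$ for varying $N$ forces $Q=R=0$). This is exactly why the theorem names \emph{all} collections of circles, not $\SS_k$ alone, as the generating set.

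Your second approach is in the right spirit --- it is essentially what the cited Queffelec--Rose argument does --- but the load-bearing claim, that after associativity moves the innermost face of $\Gamma$ always presents a digon, a square of the shape in~(\ref{eq:extrelsquare3}), or an $H$ of the shape in~(\ref{eq:otherrelext}), is asserted rather than proved, and it is precisely the content of the theorem. Note also that~(\ref{eq:extrelsquare3}) does not by itself lower vertex count (both sides have four vertices), so the real work is showing that iterating~(\ref{eq:extrelass}) and~(\ref{eq:extrelsquare3}) eventually produces a configuration where~(\ref{eq:extrelbin1}) or~(\ref{eq:otherrelext}) can fire, and that this rewriting terminates. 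Queffelec--Rose control this by inducting on the number of tracks (the length of $\listk{k}$) and using~(\ref{eq:extrelsquare3}) as a sliding move to decouple an innermost track; ``look at the innermost face'' without that structure does not obviously terminate. Your proposal identifies the correct toolbox but omits the combinatorial induction that makes it work.
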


The proof of this result is constructive. Queffelec and Rose give an algorithm which reduces a vinyl graph using  (\ref{eq:extrelass}), (\ref{eq:extrelbin1}), (\ref{eq:extrelsquare3}) and (\ref{eq:otherrelext}). Their algorithm produces a linear combination of collection of circles. Hence adding relation~(\ref{eq:extrelcircle}) (resp. relation~(\ref{eq:symrelcircle})) we obtain the exterior  (resp. symmetric) evaluation of any vinyl graph. 

In what follows we categorify the symmetric MOY calculus of vinyl graphs. We construct a category $\TL_\NN$ whose objects are vinyl graphs.  Suppose that $\mathcal{F}$ is a functor from the category $\TL_\NN$ to a category $\mathcal{C}$ with a grading, such that the relations~(\ref{eq:extrelass}), (\ref{eq:extrelbin1}), (\ref{eq:extrelsquare3}) and (\ref{eq:otherrelext}) are compatible with $\mathcal{F}$ (i.~e.~linear combinations translates into directs sums of objects with degree shifts). From the algorithm of Queffelec and Rose, we can deduce the following proposition: 

\begin{prop}[{\cite[Proof of Proposition 5.1]{QueffelecRoseAnnular2017}}]
  \label{prop:QR2}
For any vinyl graph $\Gamma$, there exist two $\NN[q,q^{-1}]$-linear combinations of collections of circles $\sum_i a_i C_i$ and  $\sum_j b_j C'_j$ such that:
\[
\mathcal{F}(\Gamma) \oplus \bigoplus_i \mathcal{F}(C_i)\{a_i\} \simeq \bigoplus_j \mathcal{F}(C'_j)\{b_j\}.  
\]
In particular, if $\mathcal{C}$ is a category of modules over an algebra $A$ and collections of circles are mapped to finitely generated projective modules, then $\mathcal{F}(\Gamma)$ is finitely generated and projective. Moreover, if $A$ is a polynomial algebra, then $\mathcal{F}(\Gamma)$ is free and relations~(\ref{eq:extrelass}), (\ref{eq:extrelbin1}), (\ref{eq:extrelsquare3}) and (\ref{eq:otherrelext}) are satisfied by the graded rank of the modules.
\end{prop}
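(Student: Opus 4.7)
The plan is to promote the reduction algorithm behind Theorem~\ref{thm:QR} to the level of the functor $\mathcal{F}$. Recall that the Queffelec--Rose algorithm reduces any vinyl graph $\Gamma$ to a $\ZZ[q,q^{-1}]$-combination of collections of circles by iterating the four local moves~(\ref{eq:extrelass}), (\ref{eq:extrelbin1}), (\ref{eq:extrelsquare3}), and (\ref{eq:otherrelext}). The crucial observation for our purposes is that each of these identities has coefficients in $\NN[q,q^{-1}]$, so by the hypothesis that $\mathcal{F}$ is compatible with them (linear combinations becoming direct sums with grading shifts), every elementary reduction lifts to an isomorphism of the form
\[
\mathcal{F}(\Gamma') \;\simeq\; \bigoplus_s \mathcal{F}(\Gamma'_s)\{c_s\}, \qquad c_s \in \NN[q,q^{-1}],
\]
where $\Gamma'_s$ are the outputs of a single move applied to $\Gamma'$.

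The proof then proceeds by induction on the number of elementary reductions needed to express $\Gamma$ as a combination of collections of circles. The base case is $\Gamma$ itself a collection of circles, where the statement is trivial (take $X = Y = \mathcal{F}(\Gamma)$). For the inductive step, I would apply the first reduction move to $\Gamma$, obtaining graphs $\Gamma_1, \ldots, \Gamma_p$ and an isomorphism $\mathcal{F}(\Gamma) \simeq \bigoplus_s \mathcal{F}(\Gamma_s)\{c_s\}$; by induction each $\mathcal{F}(\Gamma_s)$ fits in an isomorphism of the desired form, and one simply collates these, transferring terms between the two sides of $\simeq$ as needed so that $\mathcal{F}(\Gamma)$ remains isolated on the left and only copies of $\mathcal{F}(C)$ for collections of circles appear elsewhere. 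The bookkeeping of the $q$-shifts is automatic since all coefficients encountered are non-negative Laurent polynomials.

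Given such an isomorphism, the remaining assertions are formal. If each $\mathcal{F}(C_j)$ is a finitely generated projective $A$-module, then so is the right-hand side $\bigoplus_j \mathcal{F}(C'_j)\{b_j\}$, and $\mathcal{F}(\Gamma)$ is a direct summand of this module, hence itself finitely generated projective. When $A$ is a graded polynomial algebra, I would invoke the classical fact that any finitely generated graded projective module over a positively graded connected polynomial algebra is free (a consequence of the graded Nakayama lemma applied to a minimal generating set). Thus $\mathcal{F}(\Gamma)$ has a well-defined graded rank in $\NN[q,q^{-1}]$, which is additive under direct sums; applying this rank to each of the four relations~(\ref{eq:extrelass}), (\ref{eq:extrelbin1}), (\ref{eq:extrelsquare3}) and (\ref{eq:otherrelext}) recovers precisely the corresponding Laurent polynomial identities of the MOY calculus.

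The main obstacle is essentially absent once Theorem~\ref{thm:QR} is granted: the content of the proposition is a bookkeeping argument translating the algorithmic reduction into the categorified setting. The only genuinely delicate point is ensuring that every intermediate step of the Queffelec--Rose algorithm uses the relations in the $\NN[q,q^{-1}]$-positive direction, so that no cancellation between positive and negative terms is required. This is precisely why the statement is formulated in the stable form $\mathcal{F}(\Gamma) \oplus X \simeq Y$ rather than as a direct decomposition of $\mathcal{F}(\Gamma)$ alone: the extra summands on the left absorb the non-pure direct-sum character of some reduction steps.
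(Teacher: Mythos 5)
Your proposal is correct and follows essentially the same route as the paper, which simply cites Queffelec--Rose's algorithm and observes that compatibility of $\mathcal{F}$ with the four local relations (all of which have $\NN[q,q^{-1}]$-coefficients) lets the reduction lift to a stable isomorphism of modules. Your added discussion of why the statement must be phrased stably — because the reduction algorithm can place the graph of interest on the "summand" side of a relation, so that isolating $\mathcal{F}(\Gamma)$ requires moving honest direct summands of circle-images to the other side — is a useful point that the paper leaves implicit. One small slip: in your base case ($\Gamma$ a collection of circles) you write "take $X=Y=\mathcal{F}(\Gamma)$", which as written would read $\mathcal{F}(\Gamma)\oplus\mathcal{F}(\Gamma)\simeq\mathcal{F}(\Gamma)$; you mean of course $\sum_i a_i C_i=0$ and $\sum_j b_j C'_j=\Gamma$, i.e.\ $X=0$ and $Y=\mathcal{F}(\Gamma)$.
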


\section{Foams}
\label{sec:foams}
Foams have been introduced in the realm of link homologies by Khovanov~\cite{MR2100691}. They have been used by Blanchet \cite{1195.57024} to fix functoriality of link homologies. They are now widely used \cite{queffelec2014mathfrak,1212.6076,MR3611714}. 
\subsection{Definitions}
\label{sec:definitions}
In the first two subsections we summarize some of the results of \cite{RW1}. However we think that familiarity with \cite{RW1} is essential to fully understand the constructions done in Sections~\ref{sec:disk-like-foams}, \ref{sec:quasi-annular-foams} and \ref{sec:evaluation-vinyl}. We fix a positive integer $N$. 

\begin{dfn}\label{dfn:foam}
  An \emph{abstract foam} $F$ is a finite collection
  of \emph{facets} $\mathcal{F}(F)=(\Sigma_i)_{i\in I}$, that is a finite set of
  oriented connected surfaces with boundary, together with the
  following data:
  \begin{itemize}
  \item A labeling $l\co (\Sigma_i)_{i\in I} \to \{0, \dots, N\}$,
  \item A ``gluing recipe'' of the facets along their boundaries such
    that when glued together using the recipe a neighborhood of a point of the foam has three possible local models:
\[
      \begin{tikzpicture}
        \begin{scope}
\tdplotsetmaincoords{80}{140}
  \begin{scope}[tdplot_main_coords]
    \filldraw [very thin, fill=red, opacity = 0.2] (0,-1, -1) -- (0,1,-1) -- (0,1,1) -- (0,-1,1) -- (0,-1,-1);
    \node[sloped, red] at (0,0,0) {$a$};
  \end{scope}
\begin{scope}[xshift = 3cm, tdplot_main_coords]
  \begin{scope}
    \filldraw [very thin, fill =red, opacity = 0.2] (0,-1, -1) -- (0,1,-1) -- (0,1,0) -- (0,-1,0) -- (0,-1,-1);
        \node[sloped, red] at (0,0,-0.5) {$a+b$};
    \end{scope}
  \begin{scope}[rotate around y = 150]
    \filldraw [very thin, fill =blue, opacity = 0.2] (0,-1, -1) -- (0,1,-1) -- (0,1,0) -- (0,-1,0) -- (0,-1,-1);
        \node[sloped, blue] at (0,0,-0.5) {$a$};
  \end{scope}
  \begin{scope}[rotate around y =-130]
    \filldraw [very thin, fill =green, opacity = 0.2] (0,-1, -1) -- (0,1,-1) -- (0,1,0) -- (0,-1,0) -- (0,-1,-1);
        \node[sloped, green!50!black] at (0,0,-0.5) {$b$};
        \draw[very thick, ->] (0,1,0) -- (0,-1,0);
  \end{scope}
  \end{scope}
\begin{scope}[scale = 1.6, xshift = 4.5cm, tdplot_main_coords]
  \begin{scope}
    \filldraw [very thin, fill =red, opacity = 0.2] (0,-1, -1) -- (0,1,-1) -- (0,1,0) -- (0,0,0) -- (0,-1,-1);
    \coordinate (a) at (0, -1, -1);
        \node[sloped, red] at (0,0.2,-0.5) {$a+b+c$};
    \end{scope}
  \begin{scope}[rotate around y = 150]
    \filldraw [very thin, fill= blue, opacity = 0.2] (0,-1, -1) -- (0,1,-1) -- (0,1,0) -- (0,0,0) -- (0,-1,-1);
    \coordinate (b) at (0, -1, -1);
        \node[sloped, blue] at (0,0.5,-0.5) {$a+b$};
  \end{scope}
  \begin{scope}[rotate around y =-130]
    \filldraw [very thin, fill= green, opacity = 0.2] (0,-1, -1) -- (0,1,-1) -- (0,1,0) -- (0,0,0) -- (0,-1,-1);
    \coordinate (c) at (0, -1, -1);
        \node[sloped, green!50!black] at (0,0.5,-0.5) {$c$};
        \node[sloped, orange!50!black] at (0,-1.3,0.8) {$a$};
        \node[sloped, purple!50!black] at (0,+0.5,-1.5) {$b$};
        \node[sloped, gray] at (0,-1.3,0.1) {$b+c$};
  \end{scope}
  \filldraw[very thin, fill= orange, opacity = 0.2] (a) -- (b) -- (0,0,0) -- (a);
  \filldraw[very thin, fill= gray, opacity = 0.2] (a) -- (c) -- (0,0,0) -- (a);
  \filldraw[very thin, fill= purple, opacity = 0.2] (b) -- (c) -- (0,0,0) -- (b);
  \draw[very thick, ->] (a) -- (0,0,0);
  \draw[very thick, <-] (b) -- (0,0,0);
  \draw[very thick, ->] (c) -- (0,0,0);
  \draw[very thick, <-] (0,1, 0) -- (0,0,0);
  \fill[green!50!black, opacity = 0.6] (0,0,0) circle (0.5mm);
\end{scope}
\end{scope}

      \end{tikzpicture}
      \label{fig:FBSP} \]
    The letter appearing on a facet indicates the label of this facet.
    That is we have facets, \emph{bindings} (which are compact oriented
    $1$-manifolds) and \emph{singular points}. Each binding carries:
    \begin{itemize}
    \item an orientation which agrees with the orientations of the facets with
      labels $a$ and $b$ and disagrees with the orientation of the facet with label
      $a+b$.
    \item a cyclic ordering of the three facets around it. When a foam
      is embedded in $\RR^3$, we require this cyclic ordering to agree with 
      the left-hand rule\footnote{This agrees with Khovanov's convention \cite{MR2100691}.} with respect to its
      orientation (the dotted circle in the middle indicates that the
      orientation of the binding points to the reader, a crossed circle indicates the other orientation, see Figure~\ref{fig:signsofcircles}):
      \[
        \begin{tikzpicture}[xscale=1]
          \begin{scope}
 \draw (0,0) -- +(0:1);
 \draw (0,0) -- +(120:1);
 \draw (0,0) -- +(240:1);
 \filldraw[fill= white, draw=black, very thin] (0,0) circle (0.15cm);
 \filldraw[fill = black] (0,0) circle (0.02cm);
 \draw[very thin,->] (-10:0.8cm) arc (-10:-110 :0.8); 
 \draw[very thin,->] (230:0.8cm) arc (230: 130:0.8); 
 \draw[very thin,->] (110:0.8cm) arc (110:10 :0.8); 
\end{scope}
        \end{tikzpicture}
      \]
    \end{itemize}
    The cyclic orderings of the different bindings adjacent to a
    singular point should be compatible. This means that a
    neighborhood of the singular point is embeddable in $\RR^3$ in a
    way that respects the left-hand rule for the four bindings adjacent to this singular point.
  \end{itemize}
\end{dfn}

\begin{rmk}
  Les us explain shortly what is meant by ``gluing recipe''. The boundaries of the facets forms a collection of circles. We denote it by  $\mathcal{S}$. The gluing recipe consists of:
  \begin{itemize}
  \item For a subset $\mathcal{S}'$ of $\mathcal{S}$, a subdivision of each circle of $\mathcal{S'}$ into a finite number of closed intervals. This gives us a collection $\mathcal{I}$ of closed intervals.  
  \item Partitions of $\mathcal{I} \cup (\mathcal{S} \setminus \mathcal{S'})$ into subsets of three elements. For every subset $(X_1, X_2, X_3)$ of this partition, three diffeomorphisms $\phi_1 : X_2 \to X_3$, $\phi_2 : X_3 \to X_1$, $\phi_3 : X_1 \to X_2$  such that $\phi_3 \circ \phi_2 \circ \phi_1 = \mathrm{id}_{X_2}$.
  \end{itemize}
A foam is obtained by gluing the facets along the diffeomorphisms, provided that the conditions given in the previous definition are fulfilled.  
\end{rmk}

\begin{dfn}\label{dfn:decoratedfoam}
  A \emph{decoration} of a foam $F$ is a map $f\mapsto P_f$ which associates with any facet $f$ of $F$ an homogeneous symmetric polynomial $P_f$ in $l(f)$ variables. A \emph{decorated foam} is a foam together with a decoration.
\end{dfn}

From now on all foams are decorated.

\begin{dfn}
A \emph{closed foam} is a smoothly embedded abstract foam in $\RR^3$. Smoothness means that the facets are smoothly embedded and the different oriented tangent planes agree on bindings and singular points as depicted here
  \[
\NB{      \begin{tikzpicture}
        \begin{scope}
\tdplotsetmaincoords{80}{140}
\begin{scope}[xshift = 3cm, tdplot_main_coords]
  \begin{scope}
    \filldraw [ thin, fill =red, opacity = 0.2] (0,-1, -1) -- (0,1,-1) -- (0,1,0) -- (0,-1,0) -- (0,-1,-1);
        \node[sloped, red] at (0,0,-0.5) {$a+b$};
    \end{scope}
  \begin{scope}[rotate around y = 150]
    \filldraw [thin, fill =blue, opacity = 0.2] (0,-1, -1) -- (0,1,-1) .. controls +(0,0,0) and +(135:2) .. (0,1,0) -- (0,-1,0) .. controls +(135:2) and +(0,0,0)  .. (0,-1,-1);
        \node[sloped, blue] at (0,0,-0.5) {$a$};
  \end{scope}
  \begin{scope}[rotate around y =-130]
    \filldraw [ thin, fill =green, opacity = 0.2] (0,-1, -1) -- (0,1,-1) .. controls +(0,0,0) and +(-40:0.5) .. (0,1,0) -- (0,-1,0) .. controls +(-40:0.5) and +(0,0,0) .. (0,-1,-1);
        \node[sloped, green!50!black] at (0,0,-0.5) {$b$};
        \draw[thick, ->] (0,1,0) -- (0,-1,0);
  \end{scope}
  \end{scope}
\tdplotsetmaincoords{70}{20}
\begin{scope}[scale = 1.6, xshift = 5cm, tdplot_main_coords, scale =0.9]
\coordinate (O) at (0, 0, 0);
\coordinate (A) at (-1, -1, 0);
\coordinate (B) at ( 1, -1, 0);
\coordinate (C) at ( 1,  1, 0);
\coordinate (D) at (-1,  1, 0);
\coordinate (a) at (0, -1, 0);
\coordinate (b) at ( 1, 0, 0);
\coordinate (c) at (0,  1, 0);
\coordinate (d) at (-1, 0, 0);
\coordinate (Ta) at ( -.2,-1, 1);
\coordinate (Bb) at ( 1, -.2,-1);
\coordinate (Tc) at ( -.2, 1, 1);
\coordinate (Bd) at (-1, -.2,-1);

\begin{scope}
  \filldraw [thin, fill =blue, opacity = 0.2] (Bd) -- (Bb) .. controls +(0,0) and  ($0.3*(B) + 0.7*(b)$).. (b) -- (d) .. controls  ($0.3*(A) + 0.7*(d)$) and +(0,0) ..  (Bd); 
  \node[blue] at ($0.5*(Bb) + 0.5*(d)$) {$a$};
 \draw[thick, ->] (d) -- (O);
  \filldraw [thin, fill =red, opacity = 0.2] (A) -- (a) -- (O) -- (d) -- (A); 
  \node[red] at ($0.5*(A) + 0.5*(O)$) {$b$};
  \filldraw [thin, fill =orange, opacity = 0.2] (B) -- (b) -- (O) -- (a) -- (B); 
  \node[orange] at ($0.5*(B) + 0.5*(O)$) {$b+c$};
  \filldraw [thin, fill =yellow, opacity = 0.2] (C) -- (c) -- (O) -- (b) -- (C); 
  \node[yellow!50!black] at ($0.5*(C) + 0.5*(O)$) {$a+b+c$};
  \filldraw [thin, fill =gray, opacity = 0.2] (D) -- (d) -- (O) -- (c) -- (D); 
  \node[gray!50!black] at ($0.5*(D) + 0.5*(O)$) {$a+b$};
  \filldraw [thin, fill =green, opacity = 0.2] (Ta) -- (Tc) .. controls +(0,0) and ($0.3*(D) + 0.7*(c)$).. (c) -- (a) .. controls ($0.3*(A) + 0.7*(a)$) and +(0,0) ..  (Ta); 
  \node[green!50!black] at ($0.5*(Tc) + 0.5*(a)$) {$c$};
  \draw[thick, <-] (a) -- (O);
   \draw[ thick, <-] (b) -- (O);
   \draw[ thick, ->] (c) -- (O);
 
\end{scope}

\end{scope}
\end{scope}

      \end{tikzpicture}}.
\]
Just like for MOY graphs (see Definition~\ref{dfn:MOY}), we will usually not care too much about the smoothness on bindings and singular points when drawing foams. 
\end{dfn}

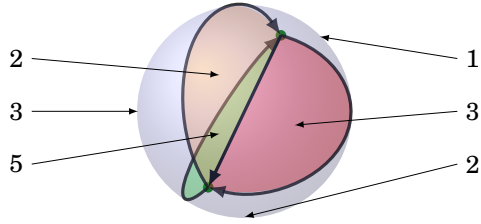
\begin{figure}[h]
  \centering
  \begin{tikzpicture}[scale = 0.7]
    \begin{scope}
\tdplotsetmaincoords{40}{20}
\begin{scope}[tdplot_main_coords]
  \fill[green!50!black] (0,2) circle (1mm);
  \fill[green!50!black] (0,-2) circle (1mm);
  \begin{scope}[rotate around y=0]
    \coordinate (b) at (1,0);
    \fill [fill =red,fill opacity=0.4 ] (0,2) arc (90:-90:2) ;
    \draw [draw = black, very thick , ->, opacity=0.8 ] (0,2) arc (90:-90:2) ;
  \end{scope}
  \begin{scope}[rotate around y=120]
        \coordinate (a) at (1,0);
    \fill [green ,fill opacity=0.3 ] (0,2) arc (90:-90:2) ;
    \draw [draw = black, very thick , <-, opacity=0.8 ] (0,2) arc (90:-90:2) ;
  \end{scope}
  \begin{scope}[rotate around y=240]
    \coordinate (c) at (1,0);
    \fill [orange, fill opacity=0.3 ] (0,2) arc (90:-90:2) ;
    \draw [draw = black, very thick , <-,opacity=0.8 ] (0,2) arc (90:-90:2) ;
  \end{scope}
  \draw[very thick, -> ] (0,2) -- (0,-2);
  \shade [ball color=blue!40!white,opacity=0.30] (0,0,0) circle (2.02cm);
\end{scope}
  \coordinate (d) at (45:2);
  \coordinate (e) at (-90:2);
  \coordinate (f) at (180:2);
  \draw[very thin, -> ] (-4,0)  node [left] {$3$} -- (f);
  \draw[very thin, -> ] (-4,1)    node [left] {$2$} -- (c);
  \draw[very thin, -> ] (-4,-1)  node [left] {$5$} -- (a);
  \draw[very thin, -> ] (4,0)   node [right] {$3$} -- (b);
  \draw[very thin, -> ] (4,-1)   node [right] {$2$} -- (e);
  \draw[very thin, -> ] (4,1)    node [right] {$1$}-- (d);
\end{scope}
  \end{tikzpicture}
  \caption{Example of a foam.
The cyclic ordering on the central binding is $(5,2,3)$.}
  \label{fig:exslnfoam}
\end{figure}


The notion of foam extends naturally to the notion of foam with boundary. The boundary of a foam has a structure of a MOY graph. We require that the facets and bindings are locally orthogonal to the boundary to be able to glue them together. Probably the most local framework is given by the concept of \emph{canopolis} of foams. We refer to \cite{MR2174270, FunctorialitySLN} for more details about this approach. In what follows we will consider:
\begin{itemize}
\item Foams in $\RR^2 \times [0,1]$ where the boundary is contained in $\RR^2 \times \{0,1\}$ (see Definition~\ref{dfn:catfoam});
\item Foams in the cube $[0,1]^3$, where the boundary is contained in $[0,1]^2 \times \{0,1\} \cup \{0,1\}\times [0,1]^2$ (see Section~\ref{sec:disk-like-foams});
\item Foams in the thickened annulus $\ann \times [0,1]$ where the boundary is contained in $\ann\times \{0,1\}$ (see Section~\ref{sec:quasi-annular-foams}).
\end{itemize}

\begin{dfn}
  \label{dfn:catfoam}
  The category $\Foam$ consists of the following data:
  \begin{itemize}
\item Objects are closed MOY graphs,
\item Morphisms from $\Gamma_0$ to $\Gamma_1$ are (ambient isotopy classes relatively to the boundary of) foams in $\RR^2\times [0,1]$ whose boundary is contained in  $\RR^2\times\{0,1\}$. The part of the boundary in $\RR^2\times\{0\}$ (resp. $\RR^2\times\{1\}$) is required to be equal to $-\Gamma_0$ (resp. $\Gamma_1$).     
  \end{itemize}
Composition of morphisms is given by stacking foams and rescaling in the vertical direction.
\end{dfn}

\begin{dfn}
\label{dfn:subsurfaces}
 If $F$ is a foam (possibly with boundary), a \emph{sub-surface} is a collection of  $\mathcal{U}$ of oriented facets $F$ such that
 their union is a smooth oriented surface $\Sigma$ whose boundary is contained in the boundary of $F$.
\end{dfn}


\subsection{Reminder on the exterior evaluation of foams}
\label{sec:rema-exter-foams}

The combinatorial evaluation can be thought of as a higher dimensional state sum formula of the state sum of Murakami--Ohtsuki--Yamada \cite{MR1659228} for evaluating MOY graphs. 

\begin{dfn}\label{dfn:color}
  A \emph{pigment} is an element of $\Col= \{1,\dots,N\}$. 
The set $\Col$ is endowed with the natural order.

  A \emph{coloring} of a foam $F$ is a map $c\co \mathcal{F}(F) \to \mathcal{P}(\Col)$, such that
  \begin{itemize}
  \item For each facet $f$, the number of elements $\# c(f)$ of $c(f)$ is equal to $l(f)$.
  \item For each binding joining a facet $f_1$ with label $a$, a facet $f_2$ with label $b$, and a facet $f_3$ with label $a+b$, we have $c(f_1) \cup c(f_2) = c(f_3)$. This condition is called the \emph{flow condition}.
  \end{itemize}
  A \emph{colored foam} is a foam together with a coloring.
  For a given foam $F$, the set of all its colorings is denoted $\mathrm{col}_N(F)$.
\end{dfn}

A careful inspection of the local behavior of colorings in the neighborhood of bindings and singular points gives the following lemma:

\begin{lem}\label{lem:monobicercle}
  \begin{enumerate}
  \item If $(F,c)$ is a colored foam and $i$ is an element of $\Col$, the union (with the identification coming from the gluing procedure) of all the facets which contain $i$ in their colors is a surface. It is called the \emph{monochrome surface of $(F,c)$ associated with $i$} and is denoted by $F_i(c)$. The restriction we imposed on the orientations of facets ensure that $F_i(c)$ is oriented. 
  \item  If $(F,c)$ is a colored foam and $i$ and $j$ are two distinct elements of $\Col$, the union (with the identification coming from the gluing procedure) of all the facets which contain $i$  or $j$ but not both in their color set is a surface. It is called the \emph{bichrome surface of $(F,c)$ associated with $i,j$}. This the symmetric difference of $F_i(c)$ and $F_j(c)$ and is denoted by $F_{ij}(c)$. The restriction imposed on the orientations of facets ensures that $F_{ij}(c)$ can be oriented via taking the orientation of facets containing $i$ and the reverse orientations on facets containing $j$.
  \item Let $i<j$ and consider a binding joining the facets $f_1$, $f_2$ and $f_3$. Suppose that $i\in c(f_1)$, $j \in c(f_2)$ and $\{i,j\} \subseteq c(f_3)$. We say that the binding is \emph{positive with respect to $(i,j)$} if the cyclic order on the binding is $(f_1, f_2, f_3)$ and \emph{negative with respect to $(i,j)$} otherwise. The set $F_{i}(c) \cap F_{j}(c) \cap F_{ij}(c)$ is a collection of disjoint circles. Each of these circles is a union of bindings; for every circle the bindings are either all positive or all negative with respect to $(i,j)$. 
\end{enumerate}
\end{lem}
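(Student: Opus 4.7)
All three parts are consequences of a careful local analysis at bindings and singular points, together with the flow condition on colorings. Throughout, observe first that if a binding joins facets $f_1,f_2,f_3$ of labels $a,b,a+b$, then the flow condition $c(f_1)\cup c(f_2)=c(f_3)$ is in fact a disjoint union (cardinalities add), so each pigment in $c(f_3)$ lies in exactly one of $c(f_1)$, $c(f_2)$, and each pigment outside $c(f_3)$ lies in neither.

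For part (1), I would fix $i\in\Col$ and examine a binding. Either $i\notin c(f_3)$, in which case the binding and its three incident facets are disjoint from $F_i(c)$, or $i\in c(f_3)$ and $i$ belongs to exactly one of $c(f_1),c(f_2)$; call it $f_\alpha$. Then $F_i(c)$ locally consists of the two facets $f_\alpha$ and $f_3$ meeting along the binding. The orientation convention on bindings (agreeing with the $a$- and $b$-labeled facets, disagreeing with the $a+b$-labeled one) forces the induced boundary orientations of $f_\alpha$ and $f_3$ along the binding to be opposite, which is precisely the condition for the two facets to glue into a smoothly embedded oriented surface. At a singular point, one lists which of the six local facets contain $i$; using the embeddability in $\RR^3$ guaranteed by the compatibility of cyclic orderings, one checks in each case that the corresponding pieces of $F_i(c)$ patch smoothly and coherently. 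Hence $F_i(c)$ is an oriented surface.

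For part (2), the set-level identity $F_{ij}(c)=F_i(c)\triangle F_j(c)$ is clear from the definition, and the only bindings where smoothness of $F_{ij}(c)$ is not inherited from part (1) are those at which $F_i(c)$ and $F_j(c)$ cross: namely, bindings where $\{i,j\}\subseteq c(f_3)$ with $i\in c(f_1)$ and $j\in c(f_2)$ (or the symmetric configuration). At such a binding, $f_3$ is excluded from $F_{ij}(c)$ and the surface is locally $f_1\cup f_2$; the binding's orientation agrees with both, so gluing them smoothly requires reversing one. The reversal prescribed on facets containing $j$ achieves exactly this, and one verifies by direct local inspection that the same rule is consistent across singular points as well.

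For part (3), note that a point of $F_i(c)\cap F_j(c)\cap F_{ij}(c)$ lies simultaneously on a facet containing both $i$ and $j$ and on a facet containing exactly one of them, hence is interior to a binding of the type analyzed above, so this intersection is a union of disjoint bindings, which glue into circles. It remains to verify sign consistency along each circle. Positivity or negativity with respect to $(i,j)$ at a single binding is immediate from the definition of the cyclic order; the only nontrivial step is to check, at each singular point where two of these distinguished bindings meet, that they carry the same sign. The main obstacle, and essentially the only computational content of the lemma, is precisely this case analysis at singular points: one enumerates the possible distributions of $i$ and $j$ among the six facets meeting at the singular point, uses the embeddability in $\RR^3$ together with the left-hand rule to read off the cyclic orderings on the four adjacent bindings, and observes case by case that the sign with respect to $(i,j)$ is preserved from one binding of the intersection to the next. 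This completes the three statements.
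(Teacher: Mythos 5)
Your approach---a local analysis of colorings at bindings and singular points, using the flow condition (which, by a cardinality count, forces $c(f_1)\cup c(f_2)=c(f_3)$ to be a disjoint union) together with the orientation and cyclic-order conventions---is exactly what the paper invokes when it says the lemma ``follows from a careful inspection of the local behavior of colorings in the neighborhood of bindings and singular points'' and refers to the earlier work for details. Your binding-level bookkeeping is correct in all three parts: in the monochrome case the two surviving facets induce opposite boundary orientations on the binding and so glue coherently; in the bichrome ``crossing'' case they induce the same orientation, which the prescribed reversal on $j$-colored facets fixes; and in part (3) you correctly single out the bindings with $i\in c(f_1)$, $j\in c(f_2)$, $\{i,j\}\subseteq c(f_3)$ as the only ones in the triple intersection, observing that no interior facet point can lie in all three sets. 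You leave the singular-point enumeration unexpanded, but you correctly identify it as the only remaining computational content and indicate the right tool (embeddability in $\RR^3$ plus the left-hand rule); that is consistent with how the statement is treated in the source.
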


Please note that the previous lemma contains the definition of \emph{monochrome} and \emph{bichrome surfaces}.
\begin{exa}[{\cite[Example 2.7]{RW1}}]
  \label{exa:torus}
  Suppose $N=4$ (and therefore $\Col = \{{\color{blue}1}, {\color{red}2}, {\color{orange!50!black}3}, {\color{green!50!black}4}\}$) and consider the colored foam $(F,c)$ given by the figure below 
\[
\NB{\tikz[scale =0.8]{
\begin{scope}
      \draw (4,0) arc (0:360: 4cm and 2cm) coordinate[pos = 0.8] (O1) coordinate[pos = 0.3] (O2);
      \draw (-2,0.3)  .. controls +(1.2,-0.6) and +(-1.2, -0.6) .. (2,0.3) coordinate[pos = 0.2] (L) coordinate[pos = 0.8] (R) coordinate[pos = 0.7] (M1) ;
      \draw (L) .. controls +(0.8,0.4) and +(-0.8, 0.4) .. (R) coordinate[pos = 0.2] (M2);
      \node at (125:1.9cm) {$3$}; 
      \node at (340:1.1cm) {$3$}; 
      \draw[thick, ->-] (O1)  .. controls +(0.9,0.2) and +(0.9,0.2 ) .. (M1);
      \draw[densely dotted ] (O1)  .. controls +(-0.9,-0.2) and +(-0.9,-0.1 ) .. (M1);
      \fill[opacity = 0.2, gray] (O1)  .. controls +(0.9,0.2) and +(0.9,0.2 ) .. (M1) ..  controls +(-0.9,-0.1 ) and +(-0.9,-0.2) .. (O1);
      \draw[thick, -<-] (O2)  .. controls +(0.9,0.2) and +(0.9,0.2 ) .. (M2);
      \draw[densely dotted] (O2)  .. controls +(-0.9,-0.2) and +(-0.9,-0.1 ) .. (M2);
      \fill[opacity = 0.2, gray] (O2)  .. controls +(0.9,0.2) and +(0.9,0.2 ) .. (M2) ..  controls +(-0.9,-0.1 ) and +(-0.9,-0.2) .. (O2);
      \node at (50:2cm) {$1$}; 
      \node at (170:2.9cm) {$2$}; 
      \begin{scope}[font= \tiny]
        \node at (-1, 1.05) {$\{{\color{blue}1}, {\color{red}2}, {\color{green!50!black}4}\}$};
        \node at (2.8, -0.3) {$\{ {\color{blue}1}\}$};
        \node at (1,-0.95) {$\{{\color{blue}1}, {\color{red}2}, {\color{green!50!black}4}\}$};
        \node at (-1.4,-0.9) {$\{ {\color{red}2}, {\color{green!50!black}4}\}$};
      \end{scope} 
\end{scope}}
}
\]
where the big digits represent labels. Note that the orientation of
every facet can be deduced from the orientations of the
bindings. Tables~\ref{tab:exa1} and \ref{tab:exa2} describe the monochrome and bichrome
surfaces as well as the values of $\theta^+_{ij}(c)$ for this colored foam.
\begin{table}[ht]
  \centering
  \begin{tabular}{|>{\centering\arraybackslash}m{1.5cm}|| >{\centering\arraybackslash}m{4.5cm} || >{\centering\arraybackslash}m{4cm}|}
  \hline  
  $i \in \mathbb{P}$ &Monochrome Surface $F_i(c)$&  In words \\ \hline \hline
  {\color{blue}{{${1}$}}}& 
 \begin{tikzpicture}[scale=0.3]
      \draw (4,0) arc (0:360: 4cm and 2cm) coordinate[pos = 0.8] (O1) coordinate[pos = 0.3] (O2);
      \draw (-2,0.3)  .. controls +(1.2,-0.6) and +(-1.2, -0.6) .. (2,0.3) coordinate[pos = 0.2] (L) coordinate[pos = 0.8] (R) coordinate[pos = 0.7] (M1) ;
      \draw (L) .. controls +(0.8,0.4) and +(-0.8, 0.4) .. (R) coordinate[pos = 0.2] (M2);
      \draw (O1)  .. controls +(0.9,0.2) and +(0.9,0.2 ) .. (M1);
      \draw[densely dotted ] (O1)  .. controls +(-0.9,-0.2) and +(-0.9,-0.1 ) .. (M1);
      \draw (O2)  .. controls +(0.9,0.2) and +(0.9,0.2 ) .. (M2);
      \draw[densely dotted] (O2)  .. controls +(-0.9,-0.2) and +(-0.9,-0.1 ) .. (M2);
      \fill[blue, opacity = 0.5] (M1) ..  controls +(-0.9,-0.1 ) and +(-0.9,-0.2) .. (O1) arc (-72: 108: 4cm and 2cm) .. controls +(-0.9,-0.2) and +(-0.9,-0.2 ) .. (M2);
      \fill[white]  (L) .. controls +(0.8,0.4) and +(-0.8, 0.4) .. (R)  .. controls +(-0.8,-0.27) and +(0.8, -0.27) ..(L);
 \draw[opacity = 0.5] (4,0) arc (0:360: 4cm and 2cm) coordinate[pos = 0.8] (O1) coordinate[pos = 0.3] (O2);
      \draw (-2,0.3)  .. controls +(1.2,-0.6) and +(-1.2, -0.6) .. (2,0.3) coordinate[pos = 0.2] (L) coordinate[pos = 0.8] (R) coordinate[pos = 0.7] (M1) ; 
      \draw (L) .. controls +(0.8,0.4) and +(-0.8, 0.4) .. (R); 
   \end{tikzpicture}
&Sphere (on the right)
\\ \hline
  \color{red}{$2$}& 
 \begin{tikzpicture}[scale=0.3]
      \draw (4,0) arc (0:360: 4cm and 2cm) coordinate[pos = 0.8] (O1) coordinate[pos = 0.3] (O2);
      \draw (-2,0.3)  .. controls +(1.2,-0.6) and +(-1.2, -0.6) .. (2,0.3) coordinate[pos = 0.2] (L) coordinate[pos = 0.8] (R) coordinate[pos = 0.7] (M1) ;
      \draw (L) .. controls +(0.8,0.4) and +(-0.8, 0.4) .. (R) coordinate[pos = 0.2] (M2);
      \draw (O1)  .. controls +(0.9,0.2) and +(0.9,0.2 ) .. (M1);
      \draw[densely dotted ] (O1)  .. controls +(-0.9,-0.2) and +(-0.9,-0.1 ) .. (M1);
      \draw (O2)  .. controls +(0.9,0.2) and +(0.9,0.2 ) .. (M2);
      \draw[densely dotted] (O2)  .. controls +(-0.9,-0.2) and +(-0.9,-0.1 ) .. (M2);
      \fill[red, opacity = 0.5] (M1) ..  controls +(0.9,0.1 ) and +(+0.9,+0.2) .. (O1) arc (288: 108: 4cm and 2cm) .. controls +(0.9,0.2) and +(0.9,0.2 ) .. (M2);
      \fill[white]  (L) .. controls +(0.8,0.4) and +(-0.8, 0.4) .. (R)  .. controls +(-0.8,-0.27) and +(0.8, -0.27) ..(L);
 \draw[opacity = 0.5] (4,0) arc (0:360: 4cm and 2cm) coordinate[pos = 0.8] (O1) coordinate[pos = 0.3] (O2);
      \draw (-2,0.3)  .. controls +(1.2,-0.6) and +(-1.2, -0.6) .. (2,0.3) coordinate[pos = 0.2] (L) coordinate[pos = 0.8] (R) coordinate[pos = 0.7] (M1) ; 
      \draw (L) .. controls +(0.8,0.4) and +(-0.8, 0.4) .. (R);
   \end{tikzpicture}
&Sphere (on the left)
\\ \hline
  \color{orange!50!black}{$3$}&  
  \begin{tikzpicture}[scale=0.3]
      \draw (4,0) arc (0:360: 4cm and 2cm) coordinate[pos = 0.8] (O1) coordinate[pos = 0.3] (O2);
      \draw (-2,0.3)  .. controls +(1.2,-0.6) and +(-1.2, -0.6) .. (2,0.3) coordinate[pos = 0.2] (L) coordinate[pos = 0.8] (R) coordinate[pos = 0.7] (M1) ;
      \draw (L) .. controls +(0.8,0.4) and +(-0.8, 0.4) .. (R) coordinate[pos = 0.2] (M2);
      \draw (O1)  .. controls +(0.9,0.2) and +(0.9,0.2 ) .. (M1);
      \draw[densely dotted ] (O1)  .. controls +(-0.9,-0.2) and +(-0.9,-0.1 ) .. (M1);
      \draw (O2)  .. controls +(0.9,0.2) and +(0.9,0.2 ) .. (M2);
      \draw[densely dotted] (O2)  .. controls +(-0.9,-0.2) and +(-0.9,-0.1 ) .. (M2);
    \end{tikzpicture}
& Empty set
\\ \hline
  \color{black!50!green}{$4$}&  
\begin{tikzpicture}[scale=0.3]
      \draw (4,0) arc (0:360: 4cm and 2cm) coordinate[pos = 0.8] (O1) coordinate[pos = 0.3] (O2);
      \draw (-2,0.3)  .. controls +(1.2,-0.6) and +(-1.2, -0.6) .. (2,0.3) coordinate[pos = 0.2] (L) coordinate[pos = 0.8] (R) coordinate[pos = 0.7] (M1) ;
      \draw (L) .. controls +(0.8,0.4) and +(-0.8, 0.4) .. (R) coordinate[pos = 0.2] (M2);
      \draw (O1)  .. controls +(0.9,0.2) and +(0.9,0.2 ) .. (M1);
      \draw[densely dotted ] (O1)  .. controls +(-0.9,-0.2) and +(-0.9,-0.1 ) .. (M1);
      \draw (O2)  .. controls +(0.9,0.2) and +(0.9,0.2 ) .. (M2);
      \draw[densely dotted] (O2)  .. controls +(-0.9,-0.2) and +(-0.9,-0.1 ) .. (M2);
      \fill[green!50!black, opacity = 0.5] (M1) ..  controls +(0.9,0.1 ) and +(+0.9,+0.2) .. (O1) arc (288: 108: 4cm and 2cm) .. controls +(0.9,0.2) and +(0.9,0.2 ) .. (M2);
      \fill[white]  (L) .. controls +(0.8,0.4) and +(-0.8, 0.4) .. (R)  .. controls +(-0.8,-0.27) and +(0.8, -0.27) ..(L);
 \draw[opacity = 0.5] (4,0) arc (0:360: 4cm and 2cm) coordinate[pos = 0.8] (O1) coordinate[pos = 0.3] (O2);
      \draw (-2,0.3)  .. controls +(1.2,-0.6) and +(-1.2, -0.6) .. (2,0.3) coordinate[pos = 0.2] (L) coordinate[pos = 0.8] (R) coordinate[pos = 0.7] (M1) ; 
      \draw (L) .. controls +(0.8,0.4) and +(-0.8, 0.4) .. (R);
   \end{tikzpicture}
&Sphere (on the left)
\\ \hline
\end{tabular}
  \caption{The monochrome surfaces of Example~\ref{exa:torus}.}
  \label{tab:exa1}
\end{table}

\begin{table}[ht]
  \centering
\begin{tabular}{| >{\centering\arraybackslash}m{1.3cm} || >{\centering\arraybackslash}m{0.9cm} | >{\centering\arraybackslash}m{1.7cm} | >{\centering\arraybackslash}m{3cm} | >{\centering\arraybackslash}m{3cm} | }
\hline
$(i,j)\in\Col$ & {\color{blue}{{${1}$}}}&  {\color{red}{{${2}$}}}&  {\color{orange!50!black}{{${3}$}}}&  {\color{green!50!black}{{${4}$}}}  \\ \hline \hline
{\color{blue}{{${1}$}}}& & Torus & Sphere\newline (on the right) & Torus \\ \hline
{\color{red}{{${2}$}}}& 2 & &Sphere \newline(on the left)& Empty set  \\ \hline
{\color{orange!50!black}{{${3}$}}}&0  &0 && Sphere \newline(on the left) \\ \hline
{\color{green!50!black}{{${4}$}}}& 2 & 0 & 0 & \\ \hline
\end{tabular} 
 \caption{The bichrome surfaces (top right) and the $\theta_{ij}^+$ (bottom left) of Example~\ref{exa:torus}.}
  \label{tab:exa2}
\end{table}
\end{exa}

\begin{rmk}
  \label{rmk:colorsubsurface}
  Monochrome surfaces of $(F,c)$ are sub-surfaces of $F$ while, in general, bichrome surfaces are not in the sense of Definition~\ref{dfn:subsurfaces}.
\end{rmk}

\begin{dfn}
Let $(F,c)$ be a colored foam and $i<j$ be two pigments. A circle in $F_{i}(c) \cap F_{j}(c) \cap F_{ij}(c)$ is 
\emph{positive} (resp. \emph{negative}) \emph{with respect to $(i,j)$} if it consists of positive (resp. negative)
bindings. 
We denote by $\theta^+_{ij}(c)_F$ (resp. $\theta^-_{ij}(c)_F$) or simply $\theta^+_{ij}(c)$ (resp. $\theta^-_{ij}(c)$) the number of positive (resp. negative) circles with respect to $(i,j)$. We set as well $\theta_{ij}(c)= \theta^+_{ij}(c) +\theta^{-}_{ij}(c)$. See Figure~\ref{fig:signsofcircles} for a pictorial definition.
\end{dfn}




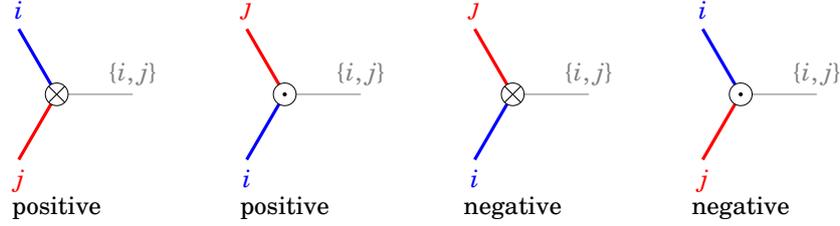
\begin{figure}[h]
  \centering
  \begin{tikzpicture}
    \begin{scope}[xshift = 0cm]
 \draw[gray] (0,0) -- +(0:1)  node[above] {$\{i,j\}$};
 \draw[blue, very thick ] (0,0) -- +(120:1) node[above] {${i}$};;
 \draw[red, very thick] (0,0) -- +(240:1) node[below] {${j}$};;
 \filldraw[fill= white, draw=black, very thin] (0,0) circle (0.15cm);
 \filldraw[fill = black] (0,0) circle (0.02cm);
 \node at (0, -1.5) {negative};
\end{scope}

 \begin{scope}[xshift = -3cm]
  \draw[gray] (0,0) -- +(0:1) node[above] {$\{i,j\}$};
  \draw[red, very thick ] (0,0) -- +(120:1) node[above] {${j}$};;
  \draw[blue, very thick] (0,0) -- +(240:1) node[below] {${i}$};;
  \filldraw[fill= white, draw=black, very thin] (0,0) circle (0.15cm);
  \draw (45:0.15) --  (-135:0.15);
  \draw (-45:0.15) -- (135:0.15);
  \node at (0, -1.5) {negative};
 \end{scope}

\begin{scope}[xshift = -9cm]
 \draw[gray] (0,0) -- +(0:1) node[above] {$\{i,j\}$};
 \draw[blue, very thick ] (0,0) -- +(120:1) node[above] {${i}$};;
 \draw[red, very thick] (0,0) -- +(240:1) node[below] {${j}$};;
 \filldraw[fill= white, draw=black, very thin] (0,0) circle (0.15cm);
  \draw (45:0.15) --  (-135:0.15);
  \draw (-45:0.15) -- (135:0.15);
 \node at (0, -1.5) {positive};
\end{scope}

\begin{scope}[xshift = -6cm]
 \draw[gray] (0,0) -- +(0:1) node[above] {$\{i,j\}$};
 \draw[red, very thick ] (0,0) -- +(120:1) node[above] {${j}$};;
 \draw[blue, very thick] (0,0) -- +(240:1) node[below] {${i}$};;
 \filldraw[fill= white, draw=black, very thin] (0,0) circle (0.15cm);
 \filldraw[fill = black] (0,0) circle (0.02cm);
 \node at (0, -1.5) {positive};
\end{scope}
  \end{tikzpicture}
  \caption{A pictorial definition of the signs of the circle, we assume $i<j$. Recall that a dotted circle in the middle indicates that the orientation of the binding points to the reader and a crossed circle indicates the other orientation.}
  \label{fig:signsofcircles}
\end{figure}



\begin{dfn}\label{dfn:degreefoam}
    The \emph{degree} $\degext_N$ of a foam $F$ is the sum of the following contributions:
    \begin{itemize}
    \item For each facet $f$ with label $a$, set $\deg(f)=a(N-a)\chi(f)$, where $\chi$ stands for the Euler characteristic;
    \item For each interval binding $e$ (i. e. not circle-like binding) surrounded by three facets with labels $a$, $b$ and $a+b$, set $\deg(e)= ab + (a+b)(N-a -b)$; 
    \item For each singular point $p$ surrounded with facets with labels $a$, $b$, $c$, $a+b$, $b+c$, $a+b+c$, set $\deg(p) = ab + bc+ cd + da + ac+ bd $ with $d =N -a -b -c $;
    \item Thus, \[
\degext_N(F) = -\sum_f\deg(f) + \sum_e\deg(e) -\sum_p \deg(p) + \sum_{f} \deg(P_f),
\]
where the variables of the polynomials $P_\bullet$ have degree 2.
    \end{itemize}
\end{dfn}

\begin{rmk}
  \begin{enumerate}
  \item The degree is additive with respect to the composition of
    foam. This is the same degree as in \cite{queffelec2014mathfrak},
    but since we are not in a $2$-categorical setting, the
    contributions of $\Gamma_0$ and $\Gamma_1$ to the degree are equal
    to $0$.
   \item The degree can be thought of as an analogue of the Euler
    characteristic. The degree of the foam of
    Figure~\ref{fig:exslnfoam} is equal to $-16$ when $N=6$.
\end{enumerate}
\end{rmk}

  \begin{dfn} \label{dfn:exteval} If $(F,c)$ is a colored foam, define:
    \begin{align*}
      s(F,c) &= \sum_{i=1}^N {i\chi(F_i(c))/2}  + \sum_{1\leq i < j \leq N} \theta^+_{ij}(F,c), \\
      P(F,c) &= \prod_{f \textrm{ facet of $F$}} P_f(c(f)),\\
      Q(F,c) &= \prod_{1\leq i < j \leq N} (x_i-x_j)^{\frac{\chi(F_{ij}(c))}{2}} \\
      \kup{F,c} &= (-1)^{s(F,c)} \frac{P(F,c)}{Q(F,c)}.
    \end{align*}
In the definition of $P(F,c)$, $P_f(c(f))$  means the polynomial $P$ evaluated on the variables $\{x_i\}_{i\in c(f)}$. Since the polynomial $P_f$ is symmetric, the order of the variables does not matter. A facet $f$ is called \emph{trivially decorated}, if $P_f=1$. 
   Define the \emph{evaluation of the foam $F$ by}:
    \[ \kup{F} := \sum_{c \textrm{ coloring of $F$}} \kup{F,c}. \]
  \end{dfn}


  \begin{rmk}\label{rmk:remove0faces}
    Let $F$ be a foam and denote by $\lambda$ the product of all decorations of facets of label $0$. Consider $F'$ the foam obtained  from $F$ by removing the facets with label $0$. There is a one-one correspondence between the colorings of $F$ and the colorings of $F'$. For every coloring $c$ of $F$ and its corresponding coloring $c'$ of $F'$, we have $\kup{F,c} = \lambda \kup{F',c'}$, and consequently $\kup{F} = \lambda \kup{F'}$.  
\end{rmk}


\begin{prop}[\cite{RW1}]\label{prop:sympol}
  Let $F$ be a foam, then $\kup{F}$ is an homogenous element of $\QQ[x_1, \dots, x_N]^{\mathfrak{S}_N}$ of degree $\degext_N(F)$.
\end{prop}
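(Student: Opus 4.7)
The statement has three parts: $\kup{F}$ lies in $\QQ[x_1,\dots,x_N]$, is invariant under $\mathfrak{S}_N$, and is homogeneous of degree $\degext_N(F)$. I would address them in this order, since polynomiality is the only genuinely subtle point.

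For $\mathfrak{S}_N$-invariance, every $\sigma \in \mathfrak{S}_N$ induces a bijection on colorings via post-composition $c \mapsto \sigma \circ c$. Under this bijection monochrome surfaces get permuted, $F_i(\sigma \circ c) = F_{\sigma^{-1}(i)}(c)$, and the bichrome surface $F_{ij}(\sigma \circ c)$ corresponds to $F_{\sigma^{-1}(i)\sigma^{-1}(j)}(c)$, possibly with reversed orientation when $\sigma$ swaps the order of $(i,j)$. The key computation is that the change in $\theta^+_{ij}$ under such an orientation reversal (positive circles become negative) is compensated precisely by the sign produced when one permutes the $(x_i - x_j)$-factors of $Q(F,c)$. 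Hence $\sigma \cdot \kup{F,c} = \kup{F, \sigma \circ c}$, and summing over colorings yields $\sigma \cdot \kup{F} = \kup{F}$. Reducing to transpositions localises the bookkeeping to a single pair $(i,j)$.

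Polynomiality is the technical core. The denominator $Q(F,c) = \prod_{i<j}(x_i - x_j)^{\chi(F_{ij}(c))/2}$ contains genuinely negative powers whenever a bichrome surface has positive genus. Fix a pair $i<j$; I would show $\kup{F}$ has no pole along $x_i = x_j$. Partition the colorings by their data outside the $i,j$-bichrome information. Within each class, exploit the involution that swaps the pigments $i$ and $j$ along a chosen connected component $\Sigma$ of $F_{ij}(c)$: this preserves the flow condition and the surface $F_{ij}(c)$, alters the monochrome Euler characteristics $\chi(F_i(c))$ and $\chi(F_j(c))$ by $\pm \chi(\Sigma)$, and flips the sign of every binding in $\partial\Sigma$, trading positive and negative circles. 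A Laurent expansion in $(x_i - x_j)$ of the resulting orbit sum shows, by an explicit residue-type calculation as in \cite{RW1}, that the negative powers cancel in pairs. Combined with the $\mathfrak{S}_N$-symmetry this gives $\kup{F} \in \QQ[x_1,\dots,x_N]^{\mathfrak{S}_N}$.

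The degree is then a direct homogeneity count. With $x_i$ of degree $2$, each summand satisfies
\[
\deg \kup{F,c} = \sum_f \deg(P_f) - \sum_{i<j} \chi(F_{ij}(c)).
\]
Expressing bichrome Euler characteristics via monochrome ones using Lemma~\ref{lem:monobicercle}, and decomposing each $\chi(F_i(c))$ into facet, binding and singular-point contributions through repeated application of $\chi(A \cup B) = \chi(A) + \chi(B) - \chi(A \cap B)$, collapses the combinatorial sum to $-\sum_f \deg(f) + \sum_e \deg(e) - \sum_p \deg(p) + \sum_f \deg(P_f) = \degext_N(F)$ independently of $c$. The main obstacle is the polynomiality step: one must orchestrate the cancellation of arbitrarily negative $(x_i - x_j)$-powers simultaneously across the whole sum, which is more delicate than the analogous step for planar graphs since bichrome surfaces can have arbitrary topology.
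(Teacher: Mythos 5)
The paper does not reprove this statement; it is imported wholesale from \cite{RW1}, so the comparison must be against the argument there. Your reconstruction is on the right track and follows the same three-stage plan as \cite{RW1}: $\mathfrak{S}_N$-invariance via the relabelling bijection on colorings, polynomiality by pairing colorings under a Kempe-type pigment swap on a bichrome component and reading off cancellation of the $(x_i-x_j)$-poles, and the degree by an inclusion--exclusion bookkeeping that expresses $\sum_{i<j}\chi(F_{ij}(c))$ in terms of facet, binding, and singular-point contributions. These are exactly the ingredients of the proof you are trying to reconstruct.

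One detail as you wrote it is incorrect and worth flagging. You claim that swapping the pigments $i$ and $j$ on a connected component $\Sigma$ of $F_{ij}(c)$ ``alters the monochrome Euler characteristics $\chi(F_i(c))$ and $\chi(F_j(c))$ by $\pm\chi(\Sigma)$.'' That is not what happens. Writing $\Sigma_i$ and $\Sigma_j$ for the subsurfaces of $\Sigma$ carried by facets whose color set contains $i$ but not $j$, resp.\ $j$ but not $i$, one has $\chi(\Sigma)=\chi(\Sigma_i)+\chi(\Sigma_j)$ (their intersection is a union of circles), and the swap changes $\chi(F_i(c))$ by $\chi(\Sigma_j)-\chi(\Sigma_i)$ and $\chi(F_j(c))$ by $\chi(\Sigma_i)-\chi(\Sigma_j)$. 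The correct change is the \emph{difference} of the two pieces' Euler characteristics, not $\pm\chi(\Sigma)$; this matters because the resulting sign $(-1)^{s(F,c)}$ and the $\theta^+_{ij}$-count must interact in just the right way to kill the pole order $\chi(F_{ij}(c))/2$, which requires tracking $\chi(\Sigma_i)$ and $\chi(\Sigma_j)$ separately rather than lumping them into $\chi(\Sigma)$. With this correction, your outline is a faithful sketch of the argument; without it, the claimed Laurent-expansion cancellation would not go through as stated.
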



\begin{prop}[\cite{RW1}]
  \label{prop:relations-cat-ext}
  The following local identities and their mirror images hold:
\begin{align}
\kup{ \scriptstyle{\NB{\tikz[scale = 0.7]{\tdplotsetmaincoords{60}{110}
\begin{scope}[tdplot_main_coords]
  \coordinate (bT) at (-1, 0, 3);
  \coordinate (dT) at (1, 0, 3);
  \coordinate (AT) at (-2, -2, 3);
  \coordinate (BT) at (-2, 2, 3);
  \coordinate (CT) at (2, 2, 3);
  \coordinate (DT) at (2, -2, 3);
  \coordinate (MB) at (0,0, -1.5);
  \coordinate (bB) at (-1, 0, -3);
  \coordinate (dB) at (1, 0, -3);
  \coordinate (AB) at (-2, -2, -3);
  \coordinate (BB) at (-2, 2, -3);
  \coordinate (CB) at (2, 2, -3);
  \coordinate (DB) at (2, -2, -3);
  \draw[->, thick] (bT) -- (AT);
  \draw[->, thick] (bT) -- (BT);
  \draw[->, thick] (dT) -- (CT);
  \draw[->, thick] (DT) -- (dT);
  \draw[->, thick] (dT) -- (bT);
  \draw[->, thick] (bB) -- (AB);
  \draw[->, thick] (bB) -- (BB);
  \draw[->, thick] (dB) -- (CB);
  \draw[->, thick] (DB) -- (dB);
  \draw[->, thick] (dB) -- (bB);
  \draw[->, thick] (bB) -- (bT);
  \draw[->, thick] (dB) -- (dT);
  \filldraw[draw = black, rounded corners=1pt, thick, fill opacity = 0.3, fill = red]     (BT) -- (BB) -- (bB)  -- (bT)  -- (BT) node[midway, below, sloped, opacity=1] {$\scriptstyle{b}$};
  \filldraw[draw = black, rounded corners=1pt, thick, fill opacity = 0.3, fill = red]     (CT) -- (CB)  -- (dB) node[midway, above, sloped, opacity=1] { $\scriptstyle{c}$} -- (dT)-- cycle;
  \filldraw[draw = black, rounded corners=1pt, thick, fill opacity = 0.3, fill = red]     (AT) -- (AB) -- (bB) -- (bT) -- (AT) node[midway, below, sloped, opacity=1] {$\scriptstyle{a}$};
  \filldraw[draw = black, rounded corners=1pt, thick, fill opacity = 0.3, fill = red]     (DT) -- (DB) -- (dB) node[midway, above, sloped, opacity=1] { $\scriptstyle{a+b+c}$} -- (dT) -- cycle;
  \filldraw[draw = black, rounded corners=1pt, thick, fill opacity = 0.3, fill = blue]    (bT)  -- (dT)  -- (dB) node[pos= 0.5, above, sloped, opacity=1] {$\scriptstyle{a+b}$} -- (bB)-- cycle;
\end{scope}}}}}
= \kup{ \scriptstyle{\NB{\tikz[scale = 0.7]{\tdplotsetmaincoords{60}{115}
\begin{scope}[tdplot_main_coords]
  \coordinate (bT) at (-1, 0, 3);
  \coordinate (dT) at (1, 0, 3);
  \coordinate (AT) at (-2, -2, 3);
  \coordinate (BT) at (-2, 2, 3);
  \coordinate (CT) at (2, 2, 3);
  \coordinate (DT) at (2, -2, 3);
  \coordinate (MT) at (0,0, 1.5);
  \coordinate (aM) at (0, -1, 0);
  \coordinate (cM) at (0, 1, 0);
  \coordinate (AM) at (-2, -2, 0);
  \coordinate (BM) at (-2, 2, 0);
  \coordinate (CM) at (2, 2, 0);
  \coordinate (DM) at (2, -2, 0);
  \coordinate (MB) at (0,0, -1.5);
  \coordinate (bB) at (-1, 0, -3);
  \coordinate (dB) at (1, 0, -3);
  \coordinate (AB) at (-2, -2, -3);
  \coordinate (BB) at (-2, 2, -3);
  \coordinate (CB) at (2, 2, -3);
  \coordinate (DB) at (2, -2, -3);
  \draw[->, thick] (bT) -- (AT);
  \draw[->, thick] (bT) -- (BT);
  \draw[->, thick] (dT) -- (CT);
  \draw[->, thick] (DT) -- (dT);
  \draw[->, thick] (dT) -- (bT);
  \draw[->, thick] (bB) -- (AB);
  \draw[->, thick] (bB) -- (BB);
  \draw[->, thick] (dB) -- (CB);
  \draw[->, thick] (DB) -- (dB);
  \draw[->, thick] (dB) -- (bB);
  \draw[->, thick] (MT) -- (bT);
  \draw[->, thick] (MT) -- (dT);
  \draw[->, thick] (bB) -- (MB);
  \draw[->, thick] (dB) -- (MB);
  \draw[->, thick] (aM) -- (MT);
  \draw[->, thick] (cM) -- (MT);
  \filldraw[draw = black, rounded corners=1pt, thick, fill opacity = 0.3, fill = red]     (BT) -- (BB) -- (bB) -- (MB) -- (cM) -- (MT) -- (bT) -- (BT) node[midway, below, sloped, opacity=1] {$\scriptstyle{b}$};
  \filldraw[draw = black, rounded corners=1pt, thick, fill opacity = 0.3, fill = red]   (AT) -- (AB) -- (bB) -- (MB) -- (aM) -- (MT) -- (bT)-- (AT)  node[midway, below, sloped, opacity=1] { $\scriptstyle{a}$};;
  \filldraw[draw = black, rounded corners=1pt, thick, fill opacity = 0.3, fill = red]    (CT) -- (CB) -- (dB) node[midway, above, sloped, opacity=1] { $\scriptstyle{c}$}-- (MB) -- (cM) -- (MT) -- (dT)-- (CT);
  \filldraw[draw = black, rounded corners=1pt, thick, fill opacity = 0.3, fill = red]  (DT) -- (DB) -- (dB) node[midway, above, sloped, opacity=1] { $\scriptstyle{a+b+c}$} -- (MB) -- (aM) -- (MT) -- (dT)-- cycle;
  \filldraw[draw = black, rounded corners=1pt, thick, fill opacity = 0.3, fill = blue]  (bT) -- (dT) node[pos= 0.7, below=-0.1cm, sloped, opacity=1] {$\scriptstyle{a+b}$} -- (MT) -- cycle;
  \filldraw[draw = black, rounded corners=1pt, thick, fill opacity = 0.3, fill = blue]     (bB)-- (dB) node[pos = 0.3, above= -0.1cm, sloped, opacity=1] {$\scriptstyle{a+b}$}  -- (MB)-- cycle;
  \filldraw[draw = black, rounded corners=1pt, thick, fill opacity = 0.3, fill = green]     (MT)-- (aM) -- (MB)-- (cM)-- cycle;
  \draw[dotted] (AM) -- (aM) -- (cM) node[midway, above, sloped, opacity=1] {$\scriptstyle{b+c}$}  -- (CM);
  \draw[dotted] (BM) -- (cM);
  \draw[dotted] (DM) -- (aM);
\end{scope}}}} }, \label{eq:MPcat}
\end{align}
\begin{align}
%
\kup{\scriptstyle{\NB{\tikz[scale=0.5]{\tdplotsetmaincoords{90}{0}
\begin{scope}[tdplot_main_coords]
  \coordinate (aT) at (1, 0, 3);
  \coordinate (bT) at (-1, 0, 3);
  \coordinate (aB) at (1, 0, -1);
  \coordinate (bB) at (-1, 0, -1);
  \filldraw[draw = black, rounded corners=1pt, thick, fill opacity = 0.3, dotted, fill = red]  (aT) arc (0:-180:1cm and 0.5cm) -- (bB) arc (180:0:1cm and 0.5cm) -- (aT);
  \filldraw[draw = black, rounded corners=1pt, thick, fill opacity = 0.3, fill = red]    (aT) arc (0:180:1cm and 0.5cm) -- (bB) arc (-180:0:1cm and 0.5cm) -- (aT);
  \draw[thick, black, ->-]  (aT) arc (0:-180:1cm and 0.5cm) node[midway, below] {$\scriptstyle{a}$};
  \draw[thick, black, ->-]  (aB) arc (0:-180:1cm and 0.5cm);
\end{scope}
}}}} = \sum_{\alpha \in T(a,N-a)}(-1)^{|\widehat{\alpha}| + \frac{N(N+1)}2}
\kup{\scriptstyle{\NB{\tikz[scale=0.5]{\tdplotsetmaincoords{90}{0}
\begin{scope}[tdplot_main_coords]
  \coordinate (aT) at (1, 0, 3);
  \coordinate (bT) at (-1, 0, 3);
  \coordinate (aB) at (1, 0, -1);
  \coordinate (bB) at (-1, 0, -1);
  \coordinate (a2) at (1, 0, 0.4);
  \coordinate (b2) at (-1, 0, 0.4);
  \coordinate (a1) at (1, 0, -0.2);
  \coordinate (am) at (1, 0, 0.3);
  \coordinate (b1) at (-1, 0, -0.2);
  \coordinate (M2) at (0, 0, 0.4);
  \coordinate (M1) at (0, 0, -0.2);
  \fill[rounded corners=1pt, thick, fill opacity = 0.3, fill = red]   (aT) arc (0:-180:1cm and 0.5cm) arc (-180:0:1cm);
  \draw[thick]  (aT) arc (0:-180:1cm and 0.5cm) node[midway, below=-0.1cm] {$\scriptstyle{a}$};
  \fill[draw = black, rounded corners=1pt, thick, fill opacity = 0.3, fill = red] (aT) arc (0:180:1cm and 0.5cm) arc (-180:0:1cm);
  \filldraw[draw= black, dotted,  rounded corners=1pt, thick, fill opacity = 0.3, fill = red] (aB) arc (0:180: 1cm and 0.5cm) -- (b1) arc (180:0: 1cm and 0.5cm) -- (aB);
  \filldraw[draw= black, rounded corners=1pt, thick, fill opacity = 0.3, fill = yellow] (a1) arc (0:180: 1cm and 0.5cm) -- (b2) arc (180:0: 1cm) -- (a1);
  \fill[blue, opacity = 0.3]  (M1) ellipse (1cm and 0.5cm);
  \filldraw[pattern=north west lines, thick] (M1) ellipse (1cm and 0.5cm);
  \filldraw[draw= black, rounded corners=1pt, thick, fill opacity = 0.3, fill = red] (aB) arc (0:-180: 1cm and 0.5cm) node[midway, above, opacity=1] {$\scriptstyle{a}$} -- (b1) arc (-180:0: 1cm and 0.5cm) -- (aB);
  \filldraw[draw= black, rounded corners=1pt, thick, fill opacity = 0.3, fill = yellow] (a1) arc (0:-180: 1cm and 0.5cm) -- (b2) arc (180:0: 1cm)node[opacity = 1, black,midway, below] {$\scriptstyle{N-a}$} -- (a1); 
  \draw[thick, ->-] (a1) arc (0: -180: 1cm and 0.5cm);
  \draw[thick, ->-] (aT) arc (0: -180: 1cm and 0.5cm);
  \draw[thick, ->-] (aB) arc (0: -180: 1cm and 0.5cm);
\end{scope}
\draw[thick, red, <-] (am) -- +(0.5,0) node[right] {$\pi_{\widehat{\alpha}}$};
\draw[thick, red, <-] (aT) -- +(0.5,0) node[right] {$\pi_{{\alpha}}$};
\node at (M1) {$\scriptstyle{N}$};}}}},\label{eq:neckcuttingcat}
\end{align}
\begin{align}
%
\kup{\scriptstyle{\NB{\tikz[scale=0.8]{\begin{scope}
\tdplotsetmaincoords{60}{60}
\begin{scope}[tdplot_main_coords]
  \coordinate (OT) at (0, 0, 2);
  \coordinate (AT) at (0.5, 1, 2);
  \coordinate (BT) at (0.5, -1, 2);
  \coordinate (CT) at (-1.5, 0, 2);
  \coordinate (OB) at (0, 0, 0);
  \coordinate (AB) at (0.5, 1, 0);
  \coordinate (BB) at (0.5, -1, 0);
  \coordinate (CB) at (-1.5, 0, 0);
  \coordinate (a) at (0.3, 0.6, 1);
  \coordinate (b) at (0.3, -0.6, 1);
  \coordinate (c) at (1, 0, 1);

  \draw[thick, -<-] (OB) -- (OT);
  \draw[thick, ->-] (AB) -- (AT);
  \draw[thick, -<-] (BB) -- (BT);
  \draw[thick, -<-] (CB) -- (CT);

  \draw[thick, -<-] (OB) -- (AB);
  \draw[thick, ->-] (OB) -- (BB);
  \draw[thick, ->-] (OB) -- (CB);
  
  \draw[thick, -<-] (OT) -- (AT);
  \draw[thick, ->-] (OT) -- (BT);
  \draw[thick, ->-] (OT) -- (CT);
  \filldraw[draw = black, rounded corners=1pt, thick, fill opacity = 0.3, fill = red]    (OT) -- (AT) node[midway, opacity =1, below, sloped] {$\scriptstyle{a+b}$} -- (AB) -- (OB) -- (OT);
  \filldraw[draw = black, rounded corners=1pt, thick, fill opacity = 0.3, fill = green]    (OT) -- (CT) node[midway, opacity =1, below] {$\scriptstyle{b}$} -- (CB) -- (OB) -- (OT);
  \filldraw[draw = black, rounded corners=1pt, thick, fill opacity = 0.3, fill = blue]    (OT) -- (BT) -- (BB) -- (OB)  node[midway, opacity =1, above, sloped] {$\scriptstyle{a}$} -- (OT);

\end{scope}
\draw[thick, red, <-] (a) -- +(0.8, 0) node [right, red] {$\pi_{\lambda}$};  
\end{scope}}}}} = \sum_{\alpha, \beta } c_{\alpha \beta} ^\lambda
\kup{\scriptstyle{\NB{\tikz[scale=0.8]{\begin{scope}
\tdplotsetmaincoords{60}{60}
\begin{scope}[tdplot_main_coords]
  \coordinate (OT) at (0, 0, 2);
  \coordinate (AT) at (0.5, 1, 2);
  \coordinate (BT) at (0.5, -1, 2);
  \coordinate (CT) at (-1.5, 0, 2);
  \coordinate (OB) at (0, 0, 0);
  \coordinate (AB) at (0.5, 1, 0);
  \coordinate (BB) at (0.5, -1, 0);
  \coordinate (CB) at (-1.5, 0, 0);
  \coordinate (a) at (0.3, 0.6, 1);
  \coordinate (b) at (0.4, -0.8, 0.5);
  \coordinate (c) at (-1.2, 0, 1.5);

  \draw[thick, -<-] (OB) -- (OT);
  \draw[thick, ->-] (AB) -- (AT);
  \draw[thick, -<-] (BB) -- (BT);
  \draw[thick, -<-] (CB) -- (CT);

  \draw[thick, -<-] (OB) -- (AB);
  \draw[thick, ->-] (OB) -- (BB);
  \draw[thick, ->-] (OB) -- (CB);
  
  \draw[thick, -<-] (OT) -- (AT);
  \draw[thick, ->-] (OT) -- (BT);
  \draw[thick, ->-] (OT) -- (CT);

  \filldraw[draw = black, rounded corners=1pt, thick, fill opacity = 0.3, fill = red]    (OT) -- (AT) node[midway, opacity =1, below, sloped] {$\scriptstyle{a+b}$} -- (AB) -- (OB) -- (OT);
  \filldraw[draw = black, rounded corners=1pt, thick, fill opacity = 0.3, fill = green]    (OT) -- (CT) node[midway, opacity =1, below] {$\scriptstyle{b}$} -- (CB) -- (OB) -- (OT);
\draw[thick, red, <-] (b) -- +(-0.8, 0, -0.4) node [left, red] {$\pi_{\alpha}$};  
\draw[thick, red, <-] (c) -- +(-0.8, 0, -0.4) node [left, red] {$\pi_{\beta}$};  
  \filldraw[draw = black, rounded corners=1pt, thick, fill opacity = 0.3, fill = blue]    (OT) -- (BT) -- (BB) -- (OB)  node[midway, opacity =1, above, sloped] {$\scriptstyle{a}$} -- (OT);
\end{scope}

\end{scope}}}}}, \label{eq:dotmig}
\end{align}
\begin{align}
%
\kup{\scriptstyle{\NB{\tikz[scale=0.5]{\tdplotsetmaincoords{60}{140}
\begin{scope}[tdplot_main_coords]
  \coordinate (aT) at (-1, 0, 2);
  \coordinate (bT) at (+1, 0, 2);
  \coordinate (AT) at (-2.5, 0, 2);
  \coordinate (BT) at (+2.5, 0, 2);
  \coordinate (cT) at (0, +1, 2);
  \coordinate (dT) at (0, -1, 2);
  \coordinate (aB) at (-1, 0, -2);
  \coordinate (bB) at (+1, 0, -2);
  \coordinate (AB) at (-2.5, 0, -2);
  \coordinate (BB) at (+2.5, 0, -2);
  \coordinate (cB) at (0, +1, -2);
  \coordinate (dB) at (0, -1, -2);
  \coordinate (aM) at (-1, 0, 0);
  \coordinate (bM) at (+1, 0, 0);
\draw[thick, -<-] (AT) -- (aT);
\draw[thick, -<-] (bT) -- (BT);
\draw[thick, -<-] (aT) .. controls (dT) .. (bT);
\draw[thick, -<-] (aT) .. controls (cT) .. (bT);
\draw[thick, -<-] (AB) -- (aB);
\draw[thick, -<-] (bB) -- (BB);
\draw[thick, -<-] (aB) .. controls (dB) .. (bB);
\draw[thick, -<-] (aB) .. controls (cB) .. (bB);
\draw[thick, -<-] (aB) -- (aT);
\draw[thick, -<-] (bT) -- (bB);
  \filldraw[draw = black, rounded corners=1pt, thick, fill opacity = 0.3, fill = red]    (AT) -- (aT) node[pos=0.65, opacity =1, below, sloped] {$\scriptstyle{a+b}$} -- (aB) -- (AB) -- (AT);
  \filldraw[draw = black, rounded corners=1pt, thick, fill opacity = 0.3, fill = red]    (bT) -- (BT) node[pos=0.65, opacity =1, below, sloped] {$\scriptstyle{a+b}$} -- (BB) -- (bB) -- (bT);
  \filldraw[draw = black, rounded corners=1pt, thick, fill opacity = 0.3, fill = blue]   (bT) .. controls (dT) .. (aT) node[near end, below, opacity=1] {$\scriptstyle{a}$} -- (aB) .. controls (dB) .. (bB)-- (bT);
  \filldraw[draw = black, rounded corners=1pt, thick, fill opacity = 0.3, fill = green]   (bT) .. controls (cT) .. (aT) -- (aB)  .. controls (cB) .. (bB) node[near end, above= -0.1cm, opacity=1] {$\scriptstyle{b}$} -- (bT);

\end{scope}}}}} = \sum_{\alpha \in T(a,b)}(-1)^{|\widehat{\alpha}|}
\kup{\scriptstyle{\NB{\tikz[scale=0.5]{\tdplotsetmaincoords{60}{140}
\begin{scope}[tdplot_main_coords]
  \coordinate (aT) at (-1, 0, 2);
  \coordinate (bT) at (+1, 0, 2);
  \coordinate (AT) at (-2.5, 0, 2);
  \coordinate (BT) at (+2.5, 0, 2);
  \coordinate (cT) at (0, +1, 2);
  \coordinate (dT) at (0, -1, 2);
  \coordinate (aB) at (-1, 0, -2);
  \coordinate (bB) at (+1, 0, -2);
  \coordinate (AB) at (-2.5, 0, -2);
  \coordinate (BB) at (+2.5, 0, -2);
  \coordinate (cB) at (0, +1, -2);
  \coordinate (dB) at (0, -1, -2);
  \coordinate (aM) at (-1, 0, 0);
  \coordinate (bM) at (+1, 0, 0);
\draw[thick, -<-] (AT) -- (aT);
\draw[thick, -<-] (bT) -- (BT);
\draw[thick, -<-] (aT) .. controls (dT) .. (bT);
\draw[thick, -<-] (aT) .. controls (cT) .. (bT);
\draw[thick, -<-] (AB) -- (aB);
\draw[thick, -<-] (bB) -- (BB);
\draw[thick, -<-] (aB) .. controls (dB) .. (bB);
\draw[thick, -<-] (aB) .. controls (cB) .. (bB);
\draw[thick, -<-] (bT) .. controls (bM) and (aM) .. (aT);
\draw[thick, -<-] (aB) .. controls (aM) and (bM) .. (bB);
  \filldraw[draw = black, rounded corners=1pt, thick, fill opacity = 0.3, fill = red]    (AT) -- (aT)  .. controls (aM) and (bM) .. (bT) -- (BT) -- (BB) node[midway, opacity =1, right, sloped, rotate= 112 ] {$\scriptstyle{a+b}$} -- (bB) .. controls (bM) and (aM) .. (aB) -- (AB) -- (AT);
  \filldraw[draw = black, rounded corners=1pt, thick, fill opacity = 0.3, fill = blue]    (aT) .. controls (aM) and (bM) .. (bT) .. controls (dT) .. (aT) node [opacity=1, near end, below=-0.0cm] {$\scriptstyle{a}$} coordinate[pos=0.3] (alpha); 
  \filldraw[draw = black, rounded corners=1pt, thick, fill opacity = 0.3, fill = green]   (aT) .. controls (aM) and (bM) .. (bT) .. controls (cT) .. (aT) node [opacity=1, near start, below=-0.1cm] {$\scriptstyle{b}$} ;
  \filldraw[draw = black, rounded corners=1pt, thick, fill opacity = 0.3, fill = blue]    (aB) .. controls (aM) and (bM) .. (bB) .. controls (dB) .. (aB) node [opacity=1, near end, above=-0.0cm] {$\scriptstyle{a}$};
  \filldraw[draw = black, rounded corners=1pt, thick, fill opacity = 0.3, fill = green]   (aB) .. controls (aM) and (bM) .. (bB) .. controls (cB) .. (aB) node [opacity=1, near start, above=-0.1cm]  {$\scriptstyle{b}$} coordinate[pos=0.7] (alphah);
\end{scope}
  \draw[thick, red, <-] (alphah) -- +(0.5,0) node[right, red] {$\pi_{\widehat{\alpha}}$};
  \draw[thick, red, <-] (alpha) -- +(-0.5,0) node[left, red] {$\pi_\alpha$};}}}}, \label{eq:digoncat}
\end{align}
\begin{align}
 %
 \kup{\scriptstyle{\NB{\tikz[scale=0.7]{\tdplotsetmaincoords{60}{140}
\begin{scope}[tdplot_main_coords]
  \coordinate (aT) at (-1, 0, 2);
  \coordinate (bT) at (+1, 0, 2);
  \coordinate (AT) at (-2.5, 0, 2);
  \coordinate (BT) at (+2.5, 0, 2);
  \coordinate (cT) at (0, +1, 2);
  \coordinate (dT) at (0, -1, 2);
  \coordinate (aB) at (-1, 0, -2);
  \coordinate (bB) at (+1, 0, -2);
  \coordinate (AB) at (-2.5, 0, -2);
  \coordinate (BB) at (+2.5, 0, -2);
  \coordinate (cB) at (0, +1, -2);
  \coordinate (dB) at (0, -1, -2);
  \coordinate (aM) at (-1, 0, 0);
  \coordinate (bM) at (+1, 0, 0);
\draw[thick, -<-] (AT) -- (aT);
\draw[thick, -<-] (bT) -- (BT);
\draw[thick, ->-] (aT) .. controls (dT) .. (bT);
\draw[thick, -<-] (aT) .. controls (cT) .. (bT);
\draw[thick, -<-] (AB) -- (aB);
\draw[thick, -<-] (bB) -- (BB);
\draw[thick, ->-] (aB) .. controls (dB) .. (bB);
\draw[thick, -<-] (aB) .. controls (cB) .. (bB);
\draw[thick, -<-] (aT) -- (aB);
\draw[thick, -<-] (bB) -- (bT);

  \filldraw[draw = black, rounded corners=1pt, thick, fill opacity = 0.3, fill = red]    (AT) -- (aT) node[midway, opacity =1, below, sloped] {$\scriptstyle{a}$} -- (aB) -- (AB) -- (AT);
  \filldraw[draw = black, rounded corners=1pt, thick, fill opacity = 0.3, fill = red]    (bT) -- (BT) node[midway, opacity =1, below, sloped] {$\scriptstyle{a}$} -- (BB) -- (bB) -- (bT);
  \filldraw[draw = black, rounded corners=1pt, thick, fill opacity = 0.3, fill = blue]   (bT) .. controls (dT) .. (aT) node[near end, below, opacity=1] {$\scriptstyle{b}$} -- (aB) .. controls (dB) .. (bB)-- (bT);
  \filldraw[draw = black, rounded corners=1pt, thick, fill opacity = 0.3, fill = green]   (bT) .. controls (cT) .. (aT) -- (aB)  .. controls (cB) .. (bB) node[near end, above, opacity=1] {$\scriptstyle{a+b}$} -- (bT);
\end{scope}}}}} = \!\!\!\!\!\sum_{\alpha \in T(b,N-a -b)} \!\!\!\!\!(-1)^{|\alpha|}
 \kup{\scriptstyle{\NB{\tikz[scale=0.7]{\tdplotsetmaincoords{60}{140}
\begin{scope}[tdplot_main_coords]
  \coordinate (aT) at (-1, 0, 2);
  \coordinate (bT) at (+1, 0, 2);
  \coordinate (AT) at (-2.5, 0, 2);
  \coordinate (BT) at (+2.5, 0, 2);
  \coordinate (cT) at (0, +1, 2);
  \coordinate (dT) at (0, -1, 2);
  \coordinate (aB) at (-1, 0, -2);
  \coordinate (bB) at (+1, 0, -2);
  \coordinate (AB) at (-2.5, 0, -2);
  \coordinate (BB) at (+2.5, 0, -2);
  \coordinate (cB) at (0, +1, -2);
  \coordinate (dB) at (0, -1, -2);
  \coordinate (aM) at (-1, 0, 0);
  \coordinate (bM) at (+1, 0, 0);
\draw[thick, -<-] (AT) -- (aT);
\draw[thick, -<-] (bT) -- (BT);
\draw[thick, ->-] (aT) .. controls (dT) .. (bT);
\draw[thick, -<-] (aT) .. controls (cT) .. (bT);
\draw[thick, -<-] (AB) -- (aB);
\draw[thick, -<-] (bB) -- (BB);
\draw[thick, ->-] (aB) .. controls (dB) .. (bB);
\draw[thick, -<-] (aB) .. controls (cB) .. (bB);
\draw[thick, -<-] (bB) .. controls (bM) and (aM) .. (aB);
\draw[thick, -<-] (aT) .. controls (aM) and (bM) .. (bT);
  \filldraw[draw = black, rounded corners=1pt, thick, fill opacity = 0.3, fill = red]    (AT) -- (aT)  .. controls (aM) and (bM) .. (bT) -- (BT) -- (BB) node[midway, opacity =1, right, sloped, rotate= 112 ] {$\scriptstyle{a}$} -- (bB) .. controls (bM) and (aM) .. (aB) -- (AB) -- (AT);
  \filldraw[draw = black, rounded corners=1pt, thick, fill opacity = 0.3, fill = blue]    (aT) .. controls (aM) and (bM) .. (bT) .. controls (dT) .. (aT) node [opacity=1, near end, below] {$\scriptstyle{b}$} coordinate[pos=0.3] (alpha); 
  \filldraw[draw = black, rounded corners=1pt, thick, fill opacity = 0.3, fill = green]   (aT) .. controls (aM) and (bM) .. (bT) .. controls (cT) .. (aT) node [opacity=1, near start, below] {$\scriptstyle{a+b}$} ;
  \filldraw[draw = black, rounded corners=1pt, thick, fill opacity = 0.3, fill = blue]    (aB) .. controls (aM) and (bM) .. (bB) .. controls (dB) .. (aB) node [opacity=1, near end, above] {$\scriptstyle{b}$};
  \filldraw[draw = black, rounded corners=1pt, thick, fill opacity = 0.3, fill = green]   (aB) .. controls (aM) and (bM) .. (bB) .. controls (cB) .. (aB) node [opacity=1, near start, above]  {$\scriptstyle{a+b}$} coordinate[pos=0.7] (alphah);
\end{scope}
\filldraw[thick, black, pattern = north west lines ]  ($(alphah) + (-0.2,0.5)$) ellipse (0.1 and 0.3);
\draw[thick, black, <- ]  ($(alphah) + (-0.3,0.5)$)  arc (180:-180: 0.1 and 0.3);
\filldraw[thick, draw= black, fill = yellow, fill opacity =0.3] ($(alphah) + (-0.2, 0.8)$) -- +(1.5,0) node[midway, below, opacity = 1, black] {$\scriptstyle{N-a-b}$}  arc (90: -90:0.3) -- +(-1.5, 0) coordinate[midway] (aaa) {} arc (-90:90:0.1 and 0.3); 
  \filldraw[thick, draw= black, fill = yellow, fill opacity =0.3] ($(alphah) + (-0.2, 0.8)$) -- +(1.5,0) node[midway, below, opacity = 1, black] {$\scriptstyle{N-a-b}$}  arc (90: -90:0.3) -- +(-1.5, 0) coordinate[midway] (aaa) {} arc (270:90:0.1 and 0.3); 

  \draw[thick, red, <-] (aaa) -- +(0,-0.5) node[below, red] {$\pi_{\widehat{\alpha}}$};

  \draw[thick, red, <-] (alpha) -- +(-0.5,0) node[left, red] {$\pi_{\alpha}$};}}}}, \label{eq:digonDURcat}
\end{align}
\begin{align}
%
\kup{\scriptstyle{\NB{\tikz[scale=0.7]{\tdplotsetmaincoords{75}{60}
\begin{scope}[tdplot_main_coords]
  \coordinate (aT) at (-1, -1, 3);
  \coordinate (bT) at (-1, 1, 3);
  \coordinate (cT) at (1, 1, 3);
  \coordinate (dT) at (1, -1, 3);
  \coordinate (AT) at (-1, -2, 3);
  \coordinate (BT) at (-1, 2, 3);
  \coordinate (CT) at (1, 2, 3);
  \coordinate (DT) at (1, -2, 3);
  \coordinate (abMt) at (-1, 0, 1);
  \coordinate (abMb) at (-1, 0, -1);
  \coordinate (cdMt) at (1, 0, 1);
  \coordinate (cdMb) at (1, 0, -1);
  \coordinate (O) at (0,0,0);
  \coordinate (aB) at (-1, -1, -3);
  \coordinate (bB) at (-1, 1, -3);
  \coordinate (cB) at (1, 1, -3);
  \coordinate (dB) at (1, -1, -3);
  \coordinate (AB) at (-1, -2, -3);
  \coordinate (BB) at (-1, 2, -3);
  \coordinate (CB) at (1, 2, -3);
  \coordinate (DB) at (1, -2, -3);
\draw[thick, ->-] (AT) -- (aT);
\draw[thick, ->-] (DT) -- (dT);
\draw[thick, ->-] (cT) -- (CT);
\draw[thick, ->-] (bT) -- (BT);
\draw[thick, ->-] (aT) -- (bT);
\draw[thick, ->-] (dT) -- (cT);
\draw[thick, ->-] (aT) -- (dT);
\draw[thick, ->-] (bT) -- (cT);
\draw[thick, ->-] (AB) -- (aB);
\draw[thick, ->-] (DB) -- (dB);
\draw[thick, ->-] (cB) -- (CB);
\draw[thick, ->-] (bB) -- (BB);
\draw[thick, ->-] (aB) -- (bB);
\draw[thick, ->-] (dB) -- (cB);
\draw[thick, ->-] (aB) -- (dB);
\draw[thick, ->-] (bB) -- (cB);

\draw[thick, ->-] (aB) -- (aT);
\draw[thick, ->-] (cT) -- (cB);
\draw[thick, ->-] (dT) -- (dB);
\draw[thick, ->-] (bB) -- (bT);

  \filldraw[draw = black, rounded corners=1pt, thick, fill opacity = 0.3, fill = red]     (AT) -- (aT) node[pos = 0.5, below=-0.0cm, opacity =1, sloped] {$\scriptstyle{k+s}$} -- (aB) -- (AB) -- (AT);
  \filldraw[draw = black, rounded corners=1pt, thick, fill opacity = 0.3, fill = orange]  (BT) -- (bT) -- (bB) -- (BB) -- (BT)node[pos = 0.5, above=-0.0cm, opacity =1, sloped] {$\scriptstyle{k-r}$} ;
  \filldraw[draw = black, rounded corners=1pt, thick, fill opacity = 0.3, fill = yellow]  (aT) -- (bT) node[pos=0.5, below=-0.1cm, opacity =1] {$\scriptstyle{k}$} -- (bB) -- (aB) -- (aT);
  \filldraw[draw = black, rounded corners=1pt, thick, fill opacity = 0.3, fill = green]  (cT) -- (bT) node[pos=0.6, below=-0.05cm, opacity =1] {$\scriptstyle{r}$} -- (bB) -- (cB) -- (cT);
  \filldraw[draw = black, rounded corners=1pt, thick, fill opacity = 0.3, fill = blue]   (dT) -- (aT) node[pos=0.6, below=-0.05cm, opacity =1] {$\scriptstyle{s}$} -- (aB) -- (dB) -- (dT);
  \filldraw[draw = black, rounded corners=1pt, thick, fill opacity = 0.3, fill = yellow]  (cT) -- (dT) node[pos=0.7, below=-0.1cm, opacity =1] {$\scriptstyle{l}$} -- (dB) -- (cB) -- (cT);
  \filldraw[draw = black, rounded corners=1pt, thick, fill opacity = 0.3, fill = orange]  (DT) -- (dT) -- (dB) -- (DB) -- (DT) node[pos = 0.95, right, opacity =1, rotate =7] {$\scriptstyle{l-s}$} ;
  \filldraw[draw = black, rounded corners=1pt, thick, fill opacity = 0.3, fill = red]     (CT) -- (cT) node[pos = 0.5, below=-0.1cm, opacity =1, sloped] {$\scriptstyle{l+r}$}  -- (cB) -- (CB) -- (CT);
\end{scope}}}}} = \sum_{\alpha \in T(r,s)}(-1)^{|\widehat{\alpha}|}
\kup{\scriptstyle{\NB{\tikz[scale=0.7]{\tdplotsetmaincoords{75}{60}
\begin{scope}[tdplot_main_coords]
  \coordinate (aT) at (-1, -1, 3);
  \coordinate (bT) at (-1, 1, 3);
  \coordinate (cT) at (1, 1, 3);
  \coordinate (dT) at (1, -1, 3);
  \coordinate (AT) at (-1, -2, 3);
  \coordinate (BT) at (-1, 2, 3);
  \coordinate (CT) at (1, 2, 3);
  \coordinate (DT) at (1, -2, 3);
  \coordinate (abMt) at (-1, 0, 1);
  \coordinate (abMb) at (-1, 0, -1);
  \coordinate (cdMt) at (1, 0, 1);
  \coordinate (cdMb) at (1, 0, -1);
  \coordinate (O) at (0,0,0);
  \coordinate (aB) at (-1, -1, -3);
  \coordinate (bB) at (-1, 1, -3);
  \coordinate (cB) at (1, 1, -3);
  \coordinate (dB) at (1, -1, -3);
  \coordinate (AB) at (-1, -2, -3);
  \coordinate (BB) at (-1, 2, -3);
  \coordinate (CB) at (1, 2, -3);
  \coordinate (DB) at (1, -2, -3);
\draw[thick, ->-] (AT) -- (aT);
\draw[thick, ->-] (DT) -- (dT);
\draw[thick, ->-] (cT) -- (CT);
\draw[thick, ->-] (bT) -- (BT);
\draw[thick, ->-] (aT) -- (bT);
\draw[thick, ->-] (dT) -- (cT);
\draw[thick, ->-] (aT) -- (dT);
\draw[thick, ->-] (bT) -- (cT);
\draw[thick, ->-] (AB) -- (aB);
\draw[thick, ->-] (DB) -- (dB);
\draw[thick, ->-] (cB) -- (CB);
\draw[thick, ->-] (bB) -- (BB);
\draw[thick, ->-] (aB) -- (bB);
\draw[thick, ->-] (dB) -- (cB);
\draw[thick, ->-] (aB) -- (dB);
\draw[thick, ->-] (bB) -- (cB);

\draw[thick, ->-] (abMb) -- (cdMb);
\draw[thick, ->-] (cdMt) -- (abMt);

\draw[thick, ->-] (abMb) -- (abMt);
\draw[thick, ->-] (cdMt) -- (cdMb);

\draw[thick, ->-] (aB) -- (abMb);
\draw[thick, ->-] (cT) -- (cdMt);
\draw[thick, ->-] (dT) -- (cdMt);
\draw[thick, ->-] (bB) -- (abMb);

\draw[thick, ->-] (abMt) -- (aT);
\draw[thick, ->-] (cdMb) -- (cB);
\draw[thick, ->-] (cdMb) -- (dB);
\draw[thick, ->-] (abMt) -- (bT);

  \filldraw[draw = black, rounded corners=1pt, thick, fill opacity = 0.3, fill = red]     (AT) -- (aT) -- (abMt) -- (abMb) -- (aB) -- (AB) -- (AT) node[pos = 0.5, right, opacity =1, rotate =7] {$\scriptstyle{k+s}$} ;
  \filldraw[draw = black, rounded corners=1pt, thick, fill opacity = 0.3, fill = orange]  (BT) -- (bT) -- (abMt) -- (abMb) -- (bB) -- (BB) -- (BT) node[midway, above, opacity =1, sloped] {$\scriptstyle{k-r}$} ;
  \filldraw[draw = black, rounded corners=1pt, thick, fill opacity = 0.3, fill = yellow]  (aB) -- (bB) node[midway, above=0.3cm, opacity =1] {$\scriptstyle{k}$} -- (abMb) -- (aB);
  \filldraw[draw = black, rounded corners=1pt, thick, fill opacity = 0.3, fill = yellow]  (aT) -- (bT) node[midway, below, opacity =1] {$\scriptstyle{k}$} -- (abMt) -- (aT); 
  \draw [red, thick, <-] ($(bT)!0.40!(cdMt)$) -- +(0,0,1.5) node[above] {$\pi_{\alpha}$};
  \draw [red, thick, <-] ($(dB)!0.40!(abMb)$) -- +(0,0,-2) node[below] {$\pi_{\widehat{\alpha}}$};
  \filldraw[draw = black, rounded corners=1pt, thick, fill opacity = 0.3, fill = green]  (bT) -- (abMt) -- (cdMt) -- (cT) -- (bT) node[pos=0.6, below=-0.05cm,  opacity =1] {$\scriptstyle{r}$} ;
  \filldraw[draw = black, rounded corners=1pt, thick, fill opacity = 0.3, fill = green]  (bB) -- (abMb) -- (cdMb) -- (cB) -- (bB) node[pos=0.4, above=-0.05cm, opacity =1] {$\scriptstyle{r}$} ;
  \filldraw[draw = black, rounded corners=1pt, thick, fill opacity = 0.3, fill = blue]   (aT) -- (abMt) -- (cdMt) -- (dT) -- (aT) node[pos=0.6, , below=-0.05cm, opacity =1] {$\scriptstyle{s}$} ;
  \filldraw[draw = black, rounded corners=1pt, thick, fill opacity = 0.3, fill = blue]   (aB) -- (abMb) -- (cdMb) -- (dB) -- (aB) node[pos=0.4, above=-0.05cm, opacity =1] {$\scriptstyle{s}$} ;
  \filldraw[draw = black, rounded corners=1pt, thick, fill opacity = 0.3, fill = gray]   (abMb) -- (cdMb) -- (cdMt) -- (abMt)-- (abMb);
  \node[rotate = -15] at (O) {$\scriptstyle{r+s}$};
  \filldraw[draw = black, rounded corners=1pt, thick, fill opacity = 0.3, fill = yellow]  (cB) -- (dB) node[midway, above, opacity =1] {$\scriptstyle{l}$}-- (cdMb) -- (cB); 
  \filldraw[draw = black, rounded corners=1pt, thick, fill opacity = 0.3, fill = yellow]  (cT) -- (dT) node[midway, below=0.3cm, opacity =1] {$\scriptstyle{l}$}-- (cdMt) -- (cT); 
  \filldraw[draw = black, rounded corners=1pt, thick, fill opacity = 0.3, fill = orange]  (DT) -- (dT) -- (cdMt) -- (cdMb) -- (dB) -- (DB) -- (DT) node[pos = 0.5, below, opacity =1, sloped] {$\scriptstyle{l-s}$} ;
  \filldraw[draw = black, rounded corners=1pt, thick, fill opacity = 0.3, fill = red]     (CT) -- (cT) -- (cdMt) -- (cdMb) -- (cB) -- (CB) -- (CT) node[pos = 0.5, left, opacity =1, rotate =7] {$\scriptstyle{l+r}$} ; 
\end{scope}}}}}, \label{eq:cautiscat}
\end{align}
\begin{align}
%
\kup{\scriptstyle{\NB{\tikz[scale = 0.9]{\tdplotsetmaincoords{70}{100}
\begin{scope}[tdplot_main_coords]
  \coordinate (aT) at (-1, -1, 3);
  \coordinate (bT) at (-1, 1, 3);
  \coordinate (cT) at (1, 1, 3);
  \coordinate (dT) at (1, -1, 3);
  \coordinate (AT) at (-2, -2, 3);
  \coordinate (BT) at (-2, 2, 3);
  \coordinate (CT) at (2, 2, 3);
  \coordinate (DT) at (2, -2, 3);
  \coordinate (DM) at (2, -2, 0);
  \coordinate (aB) at (-1, -1, -3);
  \coordinate (bB) at (-1, 1, -3);
  \coordinate (cB) at (1, 1, -3);
  \coordinate (dB) at (1, -1, -3);
  \coordinate (AB) at (-2, -2, -3);
  \coordinate (BB) at (-2, 2, -3);
  \coordinate (CB) at (2, 2, -3);
  \coordinate (DB) at (2, -2, -3);
  \draw[thin, <-] ($(aT)!0.5!(dB)$) -- +(0,-1.5,0) node [left, sloped] {$\scriptstyle{n+k}$}; 
  \draw[thin, <-] ($(cB)!0.5!(bT)$) -- +(0,+1.5,0) node [right, sloped] {$\scriptstyle{m+l-k}$}; 
  \draw[->, thick] (aT) -- (AT);
  \draw[->, thick] (DT) -- (dT);
  \draw[->, thick] (bT) -- (BT);
  \draw[->, thick] (CT) -- (cT);
  \draw[->, thick] (dT) -- (aT);
  \draw[->, thick] (aT) -- (bT);
  \draw[->, thick] (cT) -- (dT);
  \draw[->, thick] (cT) -- (bT);
  \draw[->, thick] (dB) -- (aB);
  \draw[->, thick] (aB) -- (bB);
  \draw[->, thick] (cB) -- (dB);
  \draw[->, thick] (cB) -- (bB);
  \draw[->, thick] (aB) -- (AB);
  \draw[->, thick] (DB) -- (dB);
  \draw[->, thick] (bB) -- (BB);
  \draw[->, thick] (CB) -- (cB);
  \draw[->, thick] (aB) -- (aT);
  \draw[->, thick] (bT) -- (bB);
  \draw[->, thick] (cB) -- (cT);
  \draw[->, thick] (dT) -- (dB);
  \filldraw[draw = black, rounded corners=1pt, thick, fill opacity = 0.3, fill = red]  (aT) -- (aB) -- (AB) -- (AT) -- (aT) node[sloped, midway, below, opacity = 1] {$\scriptstyle{m}$};
  \filldraw[draw = black, rounded corners=1pt, thick, fill opacity = 0.3, fill = red]  (bT) -- (bB) -- (BB)  -- (BT) -- (bT) node[sloped, midway, below, opacity = 1] {$\scriptstyle{n+l}$};
  \filldraw[draw = black, rounded corners=1pt, thick, fill opacity = 0.3, fill = green]   (aT) --  (aB) -- (bB) -- (bT) --  (aT) node[sloped, midway, below, opacity = 1] {$\scriptstyle{n+k-m}$};
  \filldraw[draw = black, rounded corners=1pt, thick, fill opacity = 0.3, fill = red]  (dT) -- (dB) -- (DB) node[sloped, midway, above, opacity = 1] {$\scriptstyle{n}$}-- (DT) -- cycle;
  \filldraw[draw = black, rounded corners=1pt, thick, fill opacity = 0.3, fill = green]   (aT) --  (aB) -- (dB) -- (dT) --  cycle;
  \filldraw[draw = black, rounded corners=1pt, thick, fill opacity = 0.3, fill = green]   (cT) -- (cB) --  (bB) --  (bT) -- cycle;
  \filldraw[draw = black, rounded corners=1pt, thick, fill opacity = 0.3, fill = green]   (cT) -- (cB) --  (dB) node[sloped, midway, above, opacity = 1] {$\scriptstyle{k}$} --  (dT) -- cycle;
   \filldraw[draw = black, rounded corners=1pt, thick, fill opacity = 0.3, fill = red]  (cT) -- (cB) -- (CB) node[sloped, pos=0.45, above=-0.07cm, opacity = 1] {$\scriptstyle{m+l}$} -- (CT) -- cycle; 
\end{scope}}}}} 
= \sum_{\substack{j = \max(0, m-n),\dots, m \\ \alpha \in T(k-j, l-k+j)}} (-1)^{|\alpha| + (l-k+j)(m-j)} \kup{F^j_\alpha}, 
\label{eq:squarecat} 
\end{align}
where 
\[
F_\alpha^j\eqdef\!\! \sum_{\substack{\beta_1, \beta_2\\ \gamma_1, \gamma_2}} \!\!  c^\alpha_{\beta_1\beta_2} c^{\widehat{\alpha}}_{\gamma_1 \gamma_2} 
\scriptstyle{\NB{\tikz[scale = 0.9]{\tdplotsetmaincoords{70}{100}
\begin{scope}[tdplot_main_coords]
  \coordinate (aT) at (-1, -1, 3);
  \coordinate (bT) at (-1, 1, 3);
  \coordinate (cT) at (1, 1, 3);
  \coordinate (dT) at (1, -1, 3);
  \coordinate (AT) at (-2, -2, 3);
  \coordinate (BT) at (-2, 2, 3);
  \coordinate (CT) at (2, 2, 3);
  \coordinate (DT) at (2, -2, 3);
  \coordinate (aMt) at (-1, -1, 1);
  \coordinate (aMb) at (-1, -1, -1);
  \coordinate (bMt) at (-1, 1, 1);
  \coordinate (bMb) at (-1, 1, -1);
  \coordinate (cMt) at (1, 1, 1);
  \coordinate (cMb) at (1, 1, -1);
  \coordinate (dMt) at (1, -1, 1);
  \coordinate (dMb) at (1, -1, -1);
  \coordinate (AMt) at (-2, -2, 1);
  \coordinate (AMb) at (-2, -2, -1);
  \coordinate (BMt) at (-2, 2, 1);
  \coordinate (BMb) at (-2, 2, -1);
  \coordinate (CMt) at (2, 2, 1);
  \coordinate (CMb) at (2, 2, -1);
  \coordinate (DM) at (2, -2, 0);
  \coordinate (aB) at (-1, -1, -3);
  \coordinate (bB) at (-1, 1, -3);
  \coordinate (cB) at (1, 1, -3);
  \coordinate (dB) at (1, -1, -3);
  \coordinate (AB) at (-2, -2, -3);
  \coordinate (BB) at (-2, 2, -3);
  \coordinate (CB) at (2, 2, -3);
  \coordinate (DB) at (2, -2, -3);
  \draw[thin, <-] ($(dMb)!0.7!(aMt)$) --    +(0,-1.5,0) node [left, sloped] {$\scriptstyle{m-j}$}; 
  \draw[thin, <-] ($(aT)!0.3!(dMt)$) --     +(0,-1.5,0) node [left, sloped] {$\scriptstyle{n+k}$}; 
  \draw[red, thick, <-]($(aMb)!0.3!(dMt)$) --  +(0,-1.5,0) node [left, sloped] {$\scriptstyle{\pi_{\gamma_2}}$}; 
  \draw[red,thick, <-] ($(bT)!0.7!(cMt)$)-- +(0,+1.5,0) node [right, sloped] {$\scriptstyle{\pi_{\beta_1}}$}; 
  \draw[thin, <-] ($(dB)!0.5!(aMb)$) --     +(0,-1.5,0) node [left, sloped] {$\scriptstyle{n+k}$}; 
  \draw[thin, <-] ($(cMb)!0.7!(bMt)$) --    +(0,+1.5,0) node [right, sloped] {$\scriptstyle{n+l+j}$}; 
  \draw[thin, <-] ($(bT)!0.3!(cMt)$) --     +(0,+1.5,0) node [right, sloped] {$\scriptstyle{m+l-k}$}; 
  \draw[thin, <-] ($(cB)!0.7!(bMb)$) --     +(0,+1.5,0) node [right, sloped] {$\scriptstyle{m+l-k}$}; 
  \draw[thin, <-] ($(aMb)!0.3!(bMt)$) .. controls +(-3,0,0) and  +(0,0,-1) .. +(-3,0,+3) node [above, sloped] {$\scriptstyle{j}$}; 
  \draw[red, thick, <-] ($(aB)!0.5!(bMb)$) .. controls +(-3,0,0) and  +(0,+1,0) .. +(-3,-3.1,0) node [left, sloped] {$\scriptstyle{\pi_{\gamma_1}}$}; 
  \filldraw[draw = black, rounded corners=1pt, thick, fill opacity = 0.3, fill = red]  (aT) -- (aB) -- (AB) -- (AT) -- (aT) node[sloped, midway, below, opacity = 1] {$\scriptstyle{m}$};
  \filldraw[draw = black, rounded corners=1pt, thick, fill opacity = 0.3, fill = red]  (bT) -- (bB) -- (BB)  -- (BT) -- (bT) node[sloped, midway, below, opacity = 1] {$\scriptstyle{n+l}$};
  \filldraw[draw = black, rounded corners=1pt, thick, fill opacity = 0.3, fill = green]   (aT) --  (aMt) -- (bMt) -- (bT) --  (aT) node[sloped, midway, below, opacity = 1] {$\scriptstyle{n+k-m}$};
  \filldraw[draw = black, rounded corners=1pt, thick, fill opacity = 0.3, fill = green]   (aMb) -- (aB) --  (bB) node[sloped, midway, above, opacity = 1] {$\scriptstyle{n+k-m}$} --  (bMb) -- cycle;
  \filldraw[draw = black, rounded corners=1pt, thick, fill opacity = 0.3, fill = red]  (dT) -- (dB) -- (DB) node[sloped, midway, above, opacity = 1] {$\scriptstyle{n}$}-- (DT) -- cycle;
  \filldraw[draw = black, rounded corners=1pt, thick, fill opacity = 0.3, fill = yellow]  (aMt) -- (aMb) -- (dMb) -- (dMt) -- cycle;
  \filldraw[draw = black, rounded corners=1pt, thick, fill opacity = 0.3, fill = green]   (aT) --  (aMt) -- (dMt) -- (dT) --  cycle;
  \filldraw[draw = black, rounded corners=1pt, thick, fill opacity = 0.3, fill = green]   (aMb) -- (aB) --  (dB) --  (dMb) -- cycle;
  \filldraw[draw = black, rounded corners=1pt, thick, fill opacity = 0.3, fill = yellow]  (cMt) -- (cMb) -- (dMb)  -- (dMt) -- cycle;
  \filldraw[draw = black, rounded corners=1pt, thick, fill opacity = 0.3, fill = yellow]  (aMt) -- (aMb) -- (bMb) -- (bMt) -- cycle;
  \filldraw[draw = black, rounded corners=1pt, thick, fill opacity = 0.3, fill = yellow]  (cMt) -- (cMb) -- (bMb) -- (bMt) -- cycle;
  \filldraw[draw = black, rounded corners=1pt, thick, fill opacity = 0.3, fill = green]   (cT) --  (cMt) -- (bMt) -- (bT) --  (cT);
  \filldraw[draw = black, rounded corners=1pt, thick, fill opacity = 0.3, fill = green]   (cMb) -- (cB) --  (bB) --  (bMb) -- cycle;
  \filldraw[draw = black, rounded corners=1pt, thick, fill opacity = 0.3, fill = green]   (cT) --  (cMt) -- (dMt) -- (dT) --  (cT) node[sloped, midway, below, opacity = 1] {$\scriptstyle{k}$};
  \filldraw[draw = black, rounded corners=1pt, thick, fill opacity = 0.3, fill = green]   (cMb) -- (cB) --  (dB) node[sloped, midway, above, opacity = 1] {$\scriptstyle{k}$} --  (dMb) -- cycle;
  \filldraw[draw = black, rounded corners=1pt, thick, fill opacity = 0.3, fill = blue]    (aMt) -- (bMt) -- (cMt) -- (dMt) node[sloped, midway, above, opacity = 1] {$\scriptstyle{n+k-m+j}$} -- cycle;
  \filldraw[draw = black, rounded corners=1pt, thick, fill opacity = 0.3, fill = blue]    (aMb) -- (bMb) -- (cMb) -- (dMb) node[sloped, midway, above, opacity = 1] {$\scriptstyle{n+k-m+j}$} -- cycle;
  \filldraw[draw = black, rounded corners=1pt, thick, fill opacity = 0.3, fill = red]  (cT) -- (cB) -- (CB) node[sloped, pos=0.45, above=-0.07, opacity = 1] {$\scriptstyle{m+l}$} -- (CT) -- cycle; 
  \draw[thin, <-] ($(dMb)!0.3!(cMt)$) .. controls +(+3,0,0) and  +(0,0,+1) .. (+3,0,-3) node [below, sloped] {$\scriptstyle{n+j-m}$}; 
  \draw[red, thick, <-]($(cMt)!0.3!(dMb)$) .. controls  +(+1,0,0) and +(0, -1, 0) .. +(1,3,0)  node [right, sloped] {$\scriptstyle{\pi_{\beta_2}}$}; 
\end{scope}}}}.
\]
Moreover, in identities~(\ref{eq:MPcat}), (\ref{eq:digoncat}),  (\ref{eq:digonDURcat}), (\ref{eq:cautiscat}) and (\ref{eq:squarecat}), the terms on the right-hand sides are mutually orthogonal idempotents. In the previous formulas $T(a,b)$ denotes the set of all Young diagram contained in the rectangle of size $a\times b$, $\pi_\lambda$ denotes the Schur polynomial associated with $\lambda$ and $c_{\bullet\bullet}^\bullet$ denote the Littlewood--Richardson constant. Further explanations of notations and conventions can be found in \cite[Appendix 1]{RW1}.

In identity~(\ref{eq:squarecat}), $\kup{\bullet}$ needs to be extended linearly to formal $\QQ$-linear combinations of foams.
\end{prop}

Using this evaluation and the universal construction idea (see~\cite{MR1362791}), we define a functor $\F_N$ from the category of foams to the category of $\QQ[x_1, \dots, x_N]^{\mathfrak{S}_N}$-module.


If $\Gamma$ is a MOY graph, consider the free graded $\QQ[x_1, \dots, x_N]^{\mathfrak{S}_N}$-module spanned by $\Hom_\Foam(\emptyset, \Gamma)$. We mod this space out by 
\[
\bigcap_{G\in \Hom_{\Foam}(\Gamma, \emptyset)}\Ker\left(
  \begin{array}{rcl}
    \Hom_\Foam(\emptyset, \Gamma)& \to &  \QQ[x_1, \dots, x_N]^{\mathfrak{S}_N} \\
    F&\mapsto & \kup{G\circ F}
  \end{array}
\right).
\]
We define $\F_N(\Gamma)$ to be this quotient. The definition of $\F_N$ on morphisms follows. 
From Proposition~\ref{prop:relations-cat-ext}, we deduce that the functor $\F_N$ categorifies the exterior MOY calculus.

\begin{cor}[\cite{RW1}]
  \label{cor:catofMOYcalulus}
  Let $\Gamma$ be a closed MOY graph, then $\F_N(\Gamma)$ is a free graded $\QQ[x_1, \dots, x_N]^{\mathfrak{S}_N}$-module of graded rank equal to $\kup{\Gamma}_N$. 
\end{cor}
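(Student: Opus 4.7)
The plan is to combine the universal-construction definition of $\F_N$ with the local categorified relations of Proposition~\ref{prop:relations-cat-ext} and reduce every closed MOY graph to a disjoint union of circles, where the answer can be computed by hand. First I would recall that by Wu~\cite{pre06302580} the decategorified relations~(\ref{eq:extrelcircle})--(\ref{eq:extrelsquare3}) already suffice to evaluate any closed MOY graph; hence it is enough to exhibit, for the functor $\F_N$, direct sum decompositions that lift each of those relations, together with a correct computation on circles. The decompositions of the form
\[
\F_N(\Gamma) \;\simeq\; \bigoplus_j \F_N(\Gamma'_j)\{s_j\}
\]
are exactly what the mutually orthogonal idempotents on the right-hand sides of (\ref{eq:MPcat}), (\ref{eq:digoncat}), (\ref{eq:digonDURcat}), (\ref{eq:cautiscat}) and (\ref{eq:squarecat}) (and their mirrors) provide, once one reads them through the universal construction: two complementary idempotents split the identity of $\F_N(\Gamma)$ into projectors whose images are identified with the $\F_N(\Gamma'_j)$ with the appropriate degree shifts $\{s_j\}$.

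The base case is an unknotted circle $U_a$ labeled by $a\in\{0,\dots,N\}$. Applying the neck-cutting relation (\ref{eq:neckcuttingcat}) to a tubular neighborhood of $U_a$ shows that $\F_N(U_a)$ is generated, as a $\QQ[x_1,\dots,x_N]^{\mathfrak{S}_N}$-module, by the cups decorated with Schur polynomials $\pi_\alpha$ indexed by $\alpha\in T(a,N-a)$. A direct computation of the theta-foam evaluation (pairing a decorated cup with a decorated cap along $U_a$) against the dual basis $\pi_{\widehat{\alpha}}$ using Definition~\ref{dfn:exteval} and Proposition~\ref{prop:sympol} shows that this spanning set is in fact a free basis, and that the graded rank equals $\sum_{\alpha\in T(a,N-a)} q^{\degext_N(\cdot)} = \qbina{N}{a} = \kup{U_a}_N$. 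Disjoint unions of circles then follow by the obvious tensor product argument, matching the multiplicativity of $\kup{\bullet}_N$.

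The inductive step is now purely formal. Given a closed MOY graph $\Gamma$ that is not already a disjoint union of circles, it contains a local configuration to which one of (\ref{eq:extrelass})--(\ref{eq:extrelsquare3}) applies. The corresponding identity in Proposition~\ref{prop:relations-cat-ext} gives a decomposition of $\F_N(\Gamma)$ as a direct sum of modules $\F_N(\Gamma'_j)\{s_j\}$, where each $\Gamma'_j$ is strictly simpler than $\Gamma$ (fewer vertices, or fewer edges in an appropriate complexity order matching Wu's reduction). By the induction hypothesis each $\F_N(\Gamma'_j)$ is free of graded rank $\kup{\Gamma'_j}_N$, so $\F_N(\Gamma)$ is free of graded rank $\sum_j q^{s_j}\kup{\Gamma'_j}_N$, which equals $\kup{\Gamma}_N$ precisely because the MOY relation is satisfied by $\kup{\bullet}_N$.

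The main obstacle is of course not the bookkeeping but the verification that the splittings provided by Proposition~\ref{prop:relations-cat-ext} really are direct sum decompositions in the quotient $\F_N(\Gamma)$, and not merely identities among generating elements: this is where the orthogonality of the idempotents stated in Proposition~\ref{prop:relations-cat-ext} is crucial, since it ensures that the projectors defined by these local foams descend to genuine projectors on $\F_N(\Gamma)$ whose images have no overlap. A secondary technical point, which one must check to close the induction, is that the degree shifts coming from $\degext_N$ of the connecting foams match term-by-term the $q$-shifts appearing in the MOY identities (\ref{eq:extrelass})--(\ref{eq:extrelsquare3}); this is a direct calculation using Definition~\ref{dfn:degreefoam} on each of the elementary foams appearing in Proposition~\ref{prop:relations-cat-ext}.
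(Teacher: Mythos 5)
The paper does not give a proof of this corollary in the present text: it cites \cite{RW1} and offers the one-line justification that the corollary follows from Proposition~\ref{prop:relations-cat-ext} and the universal construction. Your proposal is precisely the unpacking of that one line, and it is correct in its essentials: compute the circle (and disjoint unions of circles) by hand via neck-cutting plus the non-degenerate theta-foam pairing, then run Wu's reduction of closed MOY graphs to circles, replacing each decategorified relation by the corresponding direct-sum decomposition of state spaces furnished by the orthogonal idempotents of Proposition~\ref{prop:relations-cat-ext}. That is the same route the paper points to. Two places deserve a slightly harder look before you call them formal. First, ``disjoint unions of circles then follow by the obvious tensor product argument'' is hiding a genuine input: the universal construction does \emph{not} automatically send disjoint unions to tensor products, since $\Hom_{\Foam}(\emptyset,\Gamma_1\sqcup\Gamma_2)$ contains foams connecting the two components; the multiplicativity holds because the closed foam evaluation $\kup{\bullet}$ of Definition~\ref{dfn:exteval} is literally a product over facets, bindings and singular points and therefore factors over connected components of closed foams, which lets the neck-cutting argument on each circle close up. Second, the inductive step is not a greedy ``always decrease vertex count'' reduction — relation~(\ref{eq:extrelsquare3}), for instance, is label-shuffling rather than complexity-lowering — so you really do need to follow the full reduction order of Wu's algorithm and check at each stage that the categorified move is applied in a direction where the orthogonal-idempotent decomposition is available; you gesture at this with ``an appropriate complexity order matching Wu's reduction,'' which is the right instinct, but in a complete write-up it is where the bookkeeping actually lives.
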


In Sections~\ref{sec:disk-like-foams} and \ref{sec:quasi-annular-foams} we will work with elements of $\F_N(\Gamma)$. Such elements are represented by $\QQ[x_1, \dots, x_N]^{\mathfrak{S}_N}$-linear combination of foams bounding $\Gamma$. Since it is more convenient to work with representatives of classes than with the classes themselves, we introduce the following terminology.
\begin{dfn}
  \label{dfn:equivalence-relations}
  \begin{enumerate}
  \item Let $\sum_i \lambda_i F_i$ and $\sum_j \mu_j G_j$ be two elements of the free graded $\QQ[x_1, \dots, x_N]^{\mathfrak{S}_N}$-module spanned by $\Hom_\Foam(\emptyset, \Gamma)$. We say that they are \emph{$N$-equivalent} if they represent the same element in $\F_N(\Gamma)$. 
\item  Let $\sum_i \lambda_i F_i$ and $\sum_j \mu_j G_j$ two elements of the graded $\QQ$-vector space generated by $\Hom_\Foam(\emptyset, \Gamma)$. We say that they are \emph{$\infty$-equivalent} if they are $N$-equivalent for all $N$ in $\NN$.
\item Define $\IE(\Gamma)$ the graded $\QQ$-vector space generated by $\Hom_\Foam(\emptyset, \Gamma)$ modded out by $\infty$-equivalence.
\end{enumerate}
\end{dfn}

\begin{rmk}
  \label{rmk:relation2equivalence}
  The local identities~(\ref{eq:MPcat}), (\ref{eq:dotmig}), (\ref{eq:digoncat}), (\ref{eq:cautiscat}) and (\ref{eq:squarecat}) can be translated into $\infty$-equivalences, while the local identities~(\ref{eq:neckcuttingcat}) and (\ref{eq:digonDURcat}) can only be translated into $N$-equivalences.
\end{rmk}

\subsection{Disk-like foams (or HOMPLYPT foams)}
\label{sec:disk-like-foams}

For this section we fix a non-negative integer. We will work in $\RR^3$ and we denote by $P$ the plane spanned by 
$\left(
\begin{smallmatrix}
  1 \\ 0 \\0
\end{smallmatrix}\right)$ and $
\left(\begin{smallmatrix}
  0 \\ 0 \\1
\end{smallmatrix} \right).$

We consider the cube $C = [0,1]^3$ and will use the following parametrization of its boundary:

\[
\NB{\begin{tikzpicture}[scale= 1.5]
  \tdplotsetmaincoords{80}{70}
\begin{scope}[tdplot_main_coords]
  \coordinate (OOO) at (0, 0, 0);
  \coordinate (IOO) at (1, 0, 0);
  \coordinate (OIO) at (0, 1, 0);
  \coordinate (IIO) at (1, 1, 0);
  \coordinate (OOI) at (0, 0, 1);
  \coordinate (IOI) at (1, 0, 1);
  \coordinate (OII) at (0, 1, 1);
  \coordinate (III) at (1, 1, 1);
  \coordinate (sf) at (+3.5,  0.5,  0.5);
  \coordinate (sh) at (-2.5,  0.5,  0.5);
  \coordinate (sr) at ( 0.5, +2.25,  0.5);
  \coordinate (sl) at ( 0.5, -1.25,  0.5);
  \coordinate (st) at ( 0.5,  0.5, +2.25);
  \coordinate (sb) at ( 0.5,  0.5, -1.25);
  \draw (OOO) -- (OOI);
  \draw [densely dotted] (OOO) -- (OIO);
  \draw (OOO) -- (IOO);
  \draw (III) -- (OII);
  \draw (III) -- (IOI);
  \draw (III) -- (IIO);
  \draw (OOI) -- (OII);
  \draw (OOI) -- (IOI);
  \draw [densely dotted] (OIO) -- (OII);
  \draw [densely dotted] (OIO) -- (IIO);
  \draw (IOO) -- (IOI);
  \draw (IOO) -- (IIO);
  \draw[<-] ($(OOO)!0.5!(OII)$) -- +(-2,0,0) ;
  \draw[<-] ($(OOO)!0.5!(IOI)$) -- +(0,-1,0) ;
  \draw[<-] ($(OOO)!0.5!(IIO)$) -- +(0,0,-1) ;
  \draw[<-] ($(III)!0.5!(IOO)$) -- +(+2,0,0) ;
  \draw[<-] ($(III)!0.5!(OIO)$) -- +(0,+1,0) ;
  \draw[<-] ($(III)!0.5!(OOI)$) -- +(0,0,+1) ;
  \node at (sf) {$s_f$};
  \node at (sh) {$s_h$};
  \node at (sr) {$s_r$};
  \node at (sl) {$s_l$};
  \node at (st) {$s_t$};
  \node at (sb) {$s_b$};
  \begin{scope}[xshift =2cm, yshift =1.5cm]
    \draw[->] (0,0,0) -- (-1,0,0);
    \node at (-1.5,0,0) {$x_2$};
    \draw[->] (0,0,0) -- (0,0.5,0);
    \node at (0,0.75,0) {$x_1$};
    \draw[->] (0,0,0) -- (0,0,0.5);
    \node at (0,0,.75) {$x_3$};
  \end{scope}

\end{scope} 
\end{tikzpicture}}
\qquad
\begin{array}{l}
  s_b = [0,1]^2\times \{0\} \\
  s_t = [0,1]^2\times \{1\} \\
  s_f = [0,1]\times \{0\} \times [0,1]\\
  s_h = [0,1]\times \{1\} \times [0,1]\\
  s_l = \{0\} \times [0,1]^2\\
 s_r = \{1\} \times [0,1]^2
\end{array}
\] 

The symbols $s_\bullet$ denote the 6 squares of the boundary of $C$ and the letters $f, h, l, r, b, t$ stand for \textbf{f}ront, \textbf{h}idden, \textbf{l}eft, \textbf{r}ight, \textbf{b}ottom and \textbf{t}op. The plan $P$ is parallel  to the square $s_f$ and $s_h$.

\begin{dfn}
  \label{dfn:foamincube}
  Let $F$ be a foam with boundary embedded in $C$. Suppose that the boundary of $F$ is contained in $s_l\cup s_r\cup s_b\cup s_t$, and that the MOY-graphs $F\cap s_f$, $F\cap s_h$, $F\cap s_b$ and $F\cap s_t$ are all braid-like. We say that $F$ is \emph{disk-like} if for every point $x$ of $F$, the normal line of the foam $F$ at $x$ is \emph{not} parallel to $P$.   

We say that  $F$ is a \emph{rooted $\Gamma$-foam of level $k$} if additionally:
\begin{itemize}
\item the restriction of $F$ on $s_b$ is a single strand labeled $k$,
\item the restriction of $F$ on $s_t$ is a braid-like MOY graph $\Gamma$,
\item the restriction of $F$ on  $s_l$ and $s_r$ are standard trees (see Definition~\ref{dfn:canonical-tree}).
\end{itemize}
\end{dfn}
\begin{rmk}
  \label{rmk:levelofdisklikefoam}
  The notion of \emph{level} (see Remark~\ref{rmk:braidlikeisotopy}) extends to disk-like foams. 
\end{rmk}

The name disk-like comes from the following lemma.

\begin{lem}
  \label{lem:disklikesubsurface} Let $F$ be a disk-like foam. Every non-empty connected subsurface of $F$ is a disk whose boundary circle intersects each of the four squares $s_l, s_r, s_b$ and $s_t$ non-trivially.
\end{lem}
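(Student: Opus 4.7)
My plan is to project the foam $F$ onto the plane $P$ along the $y$-axis and show that, restricted to a connected sub-surface, the projection is a homeomorphism onto the square $[0,1]^2$. Concretely, let $\pi \colon \RR^3 \to P$ denote the projection $(x,y,z) \mapsto (x,0,z)$, whose kernel is the $y$-axis. The tangent plane of a facet at a point contains the $y$-axis if and only if the normal line lies in $P$; hence the disk-like condition is precisely the statement that $d\pi$ is injective on the tangent plane at every point of $F$. In particular, for any sub-surface $\Sigma$, the restriction $\pi|_\Sigma$ is a local diffeomorphism of $2$-manifolds with boundary. Moreover $\partial \Sigma \subseteq \partial F \subseteq s_l \cup s_r \cup s_b \cup s_t$, and each of these four faces projects onto one of the four edges of $\pi(C) = [0,1]^2$, so $\pi$ maps $\partial \Sigma$ into $\partial [0,1]^2$; the openness of a local diffeomorphism then forces interior points of $\Sigma$ to land in $\mathrm{int}([0,1]^2)$.

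Next I would restrict to the interior. Since $\Sigma$ is compact, the map $\pi|_{\mathrm{int}(\Sigma)} \colon \mathrm{int}(\Sigma) \to \mathrm{int}([0,1]^2)$ is a proper local diffeomorphism, hence a covering map onto its image. This image is both open (local diffeomorphism) and closed (properness) in the connected space $\mathrm{int}([0,1]^2)$, and it is non-empty because $\Sigma$ is non-empty, so it coincides with all of $\mathrm{int}([0,1]^2)$. The simple connectivity of $\mathrm{int}([0,1]^2)$ together with the connectedness of $\Sigma$ (and hence of $\mathrm{int}(\Sigma)$) forces the cover to have a single sheet, so $\pi|_{\mathrm{int}(\Sigma)}$ is a diffeomorphism. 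Because $\Sigma$ is the closure of its interior and $\pi|_\Sigma$ is a local diffeomorphism everywhere, this extends by continuity to a homeomorphism $\pi|_\Sigma \colon \Sigma \to [0,1]^2$; in particular $\Sigma$ is a disk.

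The final assertion then follows by restriction: $\pi|_{\partial \Sigma}$ is a homeomorphism from the circle $\partial \Sigma$ onto $\partial [0,1]^2$, and the four arcs $\partial \Sigma \cap s_l$, $\partial \Sigma \cap s_r$, $\partial \Sigma \cap s_b$, $\partial \Sigma \cap s_t$ project onto the four non-empty edges of $\partial [0,1]^2$, hence are themselves non-empty. The main obstacle I anticipate is the careful treatment of the boundary and corners: one has to verify that the covering-map argument run on the interior extends to a genuine homeomorphism on the compact manifold-with-boundary $\Sigma$, without spurious identifications on $\partial \Sigma$ near the corners of $[0,1]^2$ where $\pi$ collapses several cube-faces onto the same edge. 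Handling this will rely on the local diffeomorphism property holding up to and including $\partial \Sigma$, and on the injectivity already established on the interior.
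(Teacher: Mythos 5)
Your argument is correct, and it takes a genuinely different route from the paper's. The paper projects to a single coordinate at a time: the disk-like condition makes $x_1$ (the coordinate transverse to $s_l,s_r$) and $x_3$ (transverse to $s_b,s_t$) into critical-point-free functions on $\Sigma$, including no boundary criticality on the transverse parts of $\partial\Sigma$; each therefore yields a cylinder structure of the form $\Sigma\cong(\Sigma\cap s_l)\times[0,1]$, and connectedness forces $\Sigma\cap s_l$ to be a single arc, which gives the disk conclusion and the four boundary incidences simultaneously. Your projection to the whole plane $P$ calls on covering-space theory rather than an elementary flow-box argument, but it has the virtue of producing the explicit identification $\Sigma\cong[0,1]^2$ in one step rather than through a product decomposition.

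The gap you flag in the final step is real, but it closes without any delicate analysis near the corners. Global injectivity of $\pi|_\Sigma$ already follows from injectivity on $\mathrm{int}(\Sigma)$: if $p\neq q$ in $\partial\Sigma$ had $\pi(p)=\pi(q)$, local homeomorphy at $p$ and at $q$ would produce disjoint neighbourhoods $U_p,U_q\subset\Sigma$ whose images both contain a neighbourhood of $\pi(p)$; any common image point taken in $\mathrm{int}([0,1]^2)$ would then have two distinct preimages in $\mathrm{int}(\Sigma)$, contradicting the injectivity you already proved. A continuous bijection from the compact $\Sigma$ onto the Hausdorff $[0,1]^2$ is a homeomorphism, and you are done. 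Even more economically, you can skip the interior/boundary dichotomy altogether: a local homeomorphism from a compact connected space is automatically proper, its image is open and closed and hence all of $[0,1]^2$, so it is a finite-sheeted covering, and simple connectedness of $[0,1]^2$ forces degree one. The last sentence of your proof is also fine once injectivity is settled: the interior of each edge of $\partial[0,1]^2$ is hit only by points of the corresponding face among $s_l,s_r,s_b,s_t$, so each of $\partial\Sigma\cap s_l$, $\partial\Sigma\cap s_r$, $\partial\Sigma\cap s_b$, $\partial\Sigma\cap s_t$ is indeed non-empty.
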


\begin{proof}
  We consider a non-empty subsurface $\Sigma$ of $F$.
  The condition on the normal vector of disk-like foams implies that the projection on the second (resp. the third) coordinate provides a Morse function with no critical points. This implies that $\Sigma$ is diffeomorphic to its  intersection with  $s_l$ (resp. $s_b$ ) times the interval. Since $\Sigma$ is non-empty and connected $s_l \cap \Sigma$ is an interval. Finally,  $\Sigma$ is a disk which intersects non-trivially the four squares $s_l, s_r, s_b$ and $s_t$.
\end{proof}



\begin{rmk}
  \label{rmk:cutdisklike2braidlike} Let $F$ be a disk-like foam.  The condition on the normal vector implies that for all $t$ in $[0,1]$ the intersection of $\{t\}\times[0,1]^2$ (resp. $[0,1]\times \{t\}$ ) with $F$ is transverse. Moreover, if $\{t\}\times[0,1]^2$ (resp. $[0,1]\times \{t\}$ ) does not contain any singular point of $F$, $\{t\}\times[0,1]^2 \cap F$ (resp. $[0,1]\times \{t\} \cap F$ ) is a braid-like MOY graph. A very similar result (Corollary~\ref{cor:tubelike2vynil}) is given a proper proof in the next subsection.
\end{rmk}

\begin{dfn}
  \label{dfn:disklikefoamcat}
  Let us fix a non-negative integer $k$. The 2-category $\DLF_k$ of disk-like foams of level $k$ consists of the following data:
  \begin{itemize}
  \item Objects are finite sequences of positive integers of level $k$. 
  \item A 1-morphism from $\listk{k}_0$ to $\listk{k}_1$ is a braid-like $\listk{k}_1$-MOY graph-$\listk{k}_0$ (it has level $k$). Composition is given by concatenation of braid-like MOY graphs.
  \item A 2-morphism from a braid-like $\listk{k}_1$-MOY graph-$\listk{k}_0$ $\Gamma_{\mathrm{bot}}$ to a  braid-like $\listk{k}_1$-MOY graph-$\listk{k}_0$ $\Gamma_{\mathrm{top}}$ is an ambient isotopy class (relative to the boundary) of a disk-like foam $F$ in the cube $C$ such that:
    \begin{itemize}
    \item The intersection of $F$ with $s_b$ is equal to $-\Gamma_{\mathrm{bot}}$,
    \item The intersection of $F$ with $s_t$ is equal to $\Gamma_{\mathrm{top}}$,
    \item The intersection of $F$ with $s_l$  is equal to $-\listk{k}_0 \times [0,1]$,
    \item The intersection of $F$ with $s_r$  is equal to $\listk{k}_1 \times [0,1]$.
    \end{itemize}
Compositions are given by stacking disk-like foams and re-scaling. This is illustrated in Figure~\ref{fig:com2mor}.
  \end{itemize}
The $2$-category $\widehat{\DLF_k}$ is constructed as follows:
\begin{itemize}
\item Start from the $2$-category $\DLF_k$.
\item Linearize the 2-hom-spaces over $\QQ$.
\item Mod out every $2$-homspace by $\infty$-equivalence (disk-like foams are considered as foams from $\emptyset$ to the boundary vinyl graph).
\end{itemize}
\end{dfn}

\begin{figure}[ht]
  \centering
  \tikz[scale =1.15]{\input{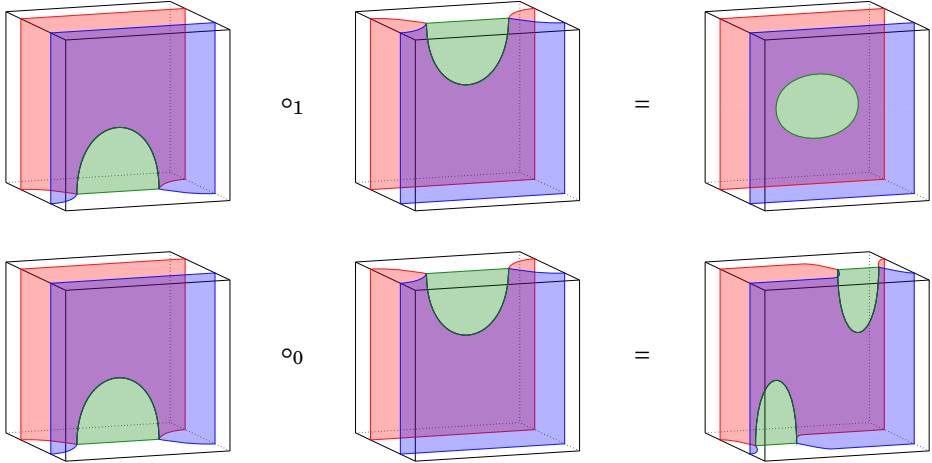}} 
\vspace{0.5cm}  

\tikz[scale =1.15]{\tdplotsetmaincoords{80}{70}
\begin{scope}[xshift =4cm,   tdplot_main_coords]
\draw[very thin] (-1,-1,-1) -- (-1, -1,  1);
\draw[densely dotted, very thin] (-1,-1,-1) -- (-1,  1, -1);
\draw[very thin] (-1,-1,-1) -- ( 1, -1, -1);
\draw[very thin] (-1, 1, 1) -- (-1, -1,  1);
\draw[densely dotted, very thin] ( 1, 1,-1) -- (-1,  1, -1);
\draw[very thin] ( 1,-1, 1) -- ( 1, -1, -1);
\draw[very thin] ( 1,-1, 1) -- (-1, -1,  1);
\draw[densely dotted,very thin] (-1, 1, 1) -- (-1,  1, -1);
\draw[very thin] ( 1, 1,-1) -- ( 1, -1, -1);
\draw[very thin] ( 1,-1, 1) -- ( 1,  1,  1);
\draw[very thin] (-1, 1, 1) -- ( 1,  1,  1);
\draw[very thin] ( 1, 1,-1) -- ( 1,  1,  1);
\filldraw[red, fill opacity =0.3, draw opacity =0.9] (-.5, -1, -1) -- (-.5,  1, -1) -- (-.5,  1,  1) 
.. controls +(0, -0.3 ,0) and +(0,0,0) .. (0, .5, 1)  .. controls +(0,0,-1) and +(0, 0, -1) .. (0, -.5, 1)  .. controls +(0,0,0) and +(0, +0.3, 0) .. ( -.5, -1, 1) --cycle;
\filldraw[blue, fill opacity =0.3, draw opacity =0.9] ( .5, -1, -1) -- ( .5,  1, -1) -- ( .5,  1,  1) 
.. controls +(0, -0.3 ,0) and +(0,0,0) .. (0, .5, 1)  .. controls +(0,0,-1) and +(0, 0, -1) .. (0, -.5, 1)  .. controls +(0,0,0) and +(0, +0.3, 0) .. (  .5, -1, 1) --cycle;
\filldraw[green!50!black, fill opacity = 0.3, draw opacity =0.9] (0, .5, 1)  .. controls +(0,0,-1) and +(0, 0, -1) .. (0, -.5, 1) -- cycle;
\end{scope}
\node at (2, 0) {$\circ_0$};
\node at (6, 0) {$=$};

\begin{scope}[ xshift =0cm, tdplot_main_coords]
\draw[very thin] (-1,-1,-1) -- (-1, -1,  1);
\draw[densely dotted, very thin] (-1,-1,-1) -- (-1,  1, -1);
\draw[very thin] (-1,-1,-1) -- ( 1, -1, -1);
\draw[very thin] (-1, 1, 1) -- (-1, -1,  1);
\draw[densely dotted, very thin] ( 1, 1,-1) -- (-1,  1, -1);
\draw[very thin] ( 1,-1, 1) -- ( 1, -1, -1);
\draw[very thin] ( 1,-1, 1) -- (-1, -1,  1);
\draw[densely dotted,very thin] (-1, 1, 1) -- (-1,  1, -1);
\draw[very thin] ( 1, 1,-1) -- ( 1, -1, -1);
\draw[very thin] ( 1,-1, 1) -- ( 1,  1,  1);
\draw[very thin] (-1, 1, 1) -- ( 1,  1,  1);
\draw[very thin] ( 1, 1,-1) -- ( 1,  1,  1);
\filldraw[red, fill opacity =0.3, draw opacity =0.9] (-.5, -1, 1) -- (-.5,  1,  1) -- (-.5,  1,  -1) 
.. controls +(0, -0.3 ,0) and +(0,0,0) .. (0, .5, -1)  .. controls +(0,0,1) and +(0, 0, 1) .. (0, -.5, -1)  .. controls +(0,0,0) and +(0, +0.3, 0) .. ( -.5, -1, -1) --cycle;
\filldraw[blue, fill opacity =0.3, draw opacity =0.9] ( .5, -1, 1) -- ( .5,  1, 1) -- ( .5,  1,  -1) 
.. controls +(0, -0.3 ,0) and +(0,0,0) .. (0, .5, -1)  .. controls +(0,0,1) and +(0, 0, 1) .. (0, -.5, -1)  .. controls +(0,0,0) and +(0, +0.3, 0) .. (  .5, -1, -1) --cycle;
\filldraw[green!50!black, fill opacity = 0.3, draw opacity =0.9] (0, .5, -1)  .. controls +(0,0,1) and +(0, 0, 1) .. (0, -.5, -1) -- cycle;
\end{scope}

\begin{scope}[ xshift=8cm, tdplot_main_coords]
\draw[very thin] (-1,-1,-1) -- (-1, -1,  1);
\draw[densely dotted,very thin] (-1,-1,-1) -- (-1,  1, -1);
\draw[very thin] (-1,-1,-1) -- ( 1, -1, -1);
\draw[very thin] (-1, 1, 1) -- (-1, -1,  1);
\draw[densely dotted,very thin] ( 1, 1,-1) -- (-1,  1, -1);
\draw[very thin] ( 1,-1, 1) -- ( 1, -1, -1);
\draw[very thin] ( 1,-1, 1) -- (-1, -1,  1);
\draw[densely dotted,very thin] (-1, 1, 1) -- (-1,  1, -1);
\draw[very thin] ( 1, 1,-1) -- ( 1, -1, -1);
\draw[very thin] ( 1,-1, 1) -- ( 1,  1,  1);
\draw[very thin] (-1, 1, 1) -- ( 1,  1,  1);
\draw[very thin] ( 1, 1,-1) -- ( 1,  1,  1);

\fill[red, fill opacity =0.3, draw opacity =0.9] (-.5, 1, -1) -- (-.5,  0, -1) -- (-.5,  0,  1) .. controls +(0, 0.15 ,0) and +(0,0,0) .. (0, .25, 1)  .. controls +(0,0,-1) and +(0, 0, -1) .. (0, .75, 1)  .. controls +(0,0,0) and +(0, -0.1, 0) .. ( -.5, 1, 1) --cycle;
\draw[red, fill opacity =0.3, draw opacity =0.9]  (-.5,  0,  1) .. controls +(0, 0.15 ,0) and +(0,0,0) .. (0, .25, 1)  .. controls +(0,0,-1) and +(0, 0, -1) .. (0, .75, 1)  .. controls +(0,0,0) and +(0, -0.1, 0) .. ( -.5, 1, 1) -- (-.5, 1, -1) -- (-.5,  0, -1);

\fill[red, fill opacity =0.3, draw opacity =0.9] (-.5, 0, 1) -- (-.5,  -1,  1) -- (-.5,  -1,  -1)  .. controls +(0, 0.15 ,0) and +(0,0,0) .. (0, -.75, -1)  .. controls +(0,0,1) and +(0, 0, 1) .. (0, -.25, -1)  .. controls +(0,0,0) and +(0, -0.1, 0) .. ( -.5, 0, -1) --cycle;
\draw[red, fill opacity =0.3, draw opacity =0.9] (-.5, 0, 1) -- (-.5,  -1,  1) -- (-.5,  -1,  -1)  .. controls +(0, 0.15 ,0) and +(0,0,0) .. (0, -.75, -1)  .. controls +(0,0,1) and +(0, 0, 1) .. (0, -.25, -1)  .. controls +(0,0,0) and +(0, -0.1, 0) .. ( -.5, 0, -1);

\fill[blue, fill opacity =0.3, draw opacity =0.9] ( .5, 1, -1) -- ( .5,  0, -1) -- (.5,  0,  1)  .. controls +(0, 0.15 ,0) and +(0,0,0) .. (0, .25, 1)  .. controls +(0,0,-1) and +(0, 0, -1) .. (0, .75, 1)  .. controls +(0,0,0) and +(0, -0.1, 0) .. ( .5, 1, 1) --cycle;
\draw[blue, fill opacity =0.3, draw opacity =0.9]  (.5,  0,  1)  .. controls +(0, 0.15 ,0) and +(0,0,0) .. (0, .25, 1)  .. controls +(0,0,-1) and +(0, 0, -1) .. (0, .75, 1)  .. controls +(0,0,0) and +(0, -0.1, 0) .. ( .5, 1, 1) -- ( .5, 1, -1) -- ( .5,  0, -1);

\fill[blue, fill opacity =0.3, draw opacity =0.9] ( .5, 0, 1) -- ( .5,  -1,  1) -- ( .5,  -1,  -1)  .. controls +(0, 0.15 ,0) and +(0,0,0) .. (0, -.75, -1)  .. controls +(0,0,1) and +(0, 0, 1) .. (0, -.25, -1)  .. controls +(0,0,0) and +(0, -0.1, 0) .. (  .5, 0, -1) --cycle;
\draw[blue, fill opacity =0.3, draw opacity =0.9] ( .5, 0, 1) -- ( .5,  -1,  1) -- ( .5,  -1,  -1)  .. controls +(0, 0.15 ,0) and +(0,0,0) .. (0, -.75, -1)  .. controls +(0,0,1) and +(0, 0, 1) .. (0, -.25, -1)  .. controls +(0,0,0) and +(0, -0.1, 0) .. (  .5, 0, -1);

\filldraw[green!50!black, fill opacity = 0.3, draw opacity =0.9] (0, .25, 1)  .. controls +(0,0,-1) and +(0, 0, -1) .. (0, .75, 1) -- cycle;
\filldraw[green!50!black, fill opacity = 0.3, draw opacity =0.9] (0, -.75, -1)  .. controls +(0,0,1) and +(0, 0, 1) .. (0, -.25, -1) -- cycle;

\end{scope}}
  \caption{ Vertical (top) and horizontal (bottom) compositions of 2-morphisms in $\DLF_k$.}
  \label{fig:com2mor}
\end{figure}

\begin{dfn}
  \label{dfn:degree-disk-like}
  Let $\listk{k}_0$ and $\listk{k}_1$ be two objects of the $\DLF_k$  and  $\Gamma_{\mathrm{bot}}$ and $\Gamma_{\mathrm{top}}$ two 1-morphism from $\listk{k}_0$ to $\listk{k}_1$. The \emph{degree} of a 2-morphism $F: \Gamma_{\textrm{bot}} \to \Gamma_{\textrm{top}}$ is given by formula
\[
\degD(F) = \degext_0(F) - \frac{||\listk{k}_0||^2 + ||\listk{k}_1||^2}{2}, 
\]
where $\degext_0(F)$ is the degree of $F$ as an exterior $0$-foam (see Definition~\ref{dfn:degreefoam}) and if $\listk{k}:= (k_1, \dots, k_l)$ is a finite sequence of non-negative integers, $||\listk{k}||^2:= \sum_{i=1}^l k_i^2$.

Let $\Gamma$ a $1$-morphism from $\listk{k}_0$ to $\listk{k}_1$. The \emph{degree} of a rooted $\Gamma$-foam $F$ is given by:
\[
\degR(F) = \degext_0(F) - \frac{||\listk{k}_0||^2 + ||\listk{k}_1||^2 + 2 k^2}{4}.
\]
\end{dfn}

\begin{rmk}
  \label{rmk:degreecompatible}
  One easily checks that with these definition the degree of $2$-morphisms is additive with respect to vertical and horizontal compositions. In particular, the degrees of identity $2$-morphisms are $0$. Since the relations defining the $\infty$-equivalence are homogeneous, this degree induces a grading on the $2$-homspaces of $\widehat{\DLF_k}$.  Moreover, the composition of a rooted $\Gamma_{\mathrm{bot}}$-foam with an element of $\hom(\Gamma_{\mathrm{bot}}, \Gamma_{\mathrm{top}})$ is a rooted $\Gamma_{\mathrm{top}}$-foam, and the degree is additive with respect to this composition.
\end{rmk}

\begin{dfn}
  \label{dfn:treelikefoams}
  Let $\Gamma$ be a braid-like MOY-graph, and $F$ be a rooted $\Gamma$-foam. We say that $F$ is \emph{tree-like}, if  $\{t\}\times [0,1]^2 \cap F$ is a tree for all $t$ in $[0,1]$. In particular if $\{t\}\times [0,1]^2$ does not contain any singular point of $F$,  then $\{t\}\times [0,1]^2 \cap F$ is a braid-like tree.  
\end{dfn}

From Lemma~\ref{lem:trees} we derive the following lemma which tells us that disk-like foams are combinatorially very simple:

\begin{lem}
  \label{lem:diskdisklike}
  Let $k$ be a non-negative integer, $\Gamma$ be the braid-like $k$-MOY {graph-$k$} consisting of one single strand labeled by $k$ and $F$ be a rooted $\Gamma$-foam (that is a disk-like foam which bound a circle with label $k$). Then $F$ is  $\infty$-equivalent to a disk with label $k$ decorated by $(-1)^{\frac{k(k+1)}{2}}\kup{\widehat{F}}_k$,  where $\widehat{F}$ is the foam obtained from $F$ by capping it with a disk labeled by $k$. 
\end{lem}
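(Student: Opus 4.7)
The plan is to prove the lemma in two steps: first show that $F$ is $\infty$-equivalent to a polynomial multiple of the trivially decorated identity disk $D_k$ (a single $k$-labeled facet between the $k$-strand on the bottom and the $k$-strand on the top), and then identify the polynomial by evaluating at $N=k$.

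For the first step, Lemma~\ref{lem:disklikesubsurface} tells us that every connected sub-surface of $F$ is a topological disk; in particular every facet of $F$ is a disk. By Remark~\ref{rmk:cutdisklike2braidlike} a generic horizontal slice of $F$ is a braid-like MOY graph, and the extreme slices along $s_l$ and $s_r$ are single vertical $k$-strands, since the standard $(k)$-trees on the left and right boundaries of the root are just single strands. Lemma~\ref{lem:trees} then ensures that every intermediate slice is equivalent in the skein to a single strand. I would lift this MOY-level reduction to the foam level by iterated application of the associativity relation~(\ref{eq:MPcat}), the digon-removal relation~(\ref{eq:digoncat}), and dot-migration~(\ref{eq:dotmig}), all of which are $\infty$-equivalences by Remark~\ref{rmk:relation2equivalence}. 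The induction, on the number of singular points of $F$, would progressively eliminate them; when none remain, $F$ is a single disk (Lemma~\ref{lem:disklikesubsurface}) spanning the $k$-labeled boundary circle, which is $D_k$ decorated by some symmetric polynomial $p\in\QQ[x_1,\dots,x_k]^{\mathfrak{S}_k}$.

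For the second step, capping $F\sim_\infty p(x)\,D_k$ with a trivially decorated $k$-disk yields $\widehat{F}\sim_\infty p(x)\cdot\Sigma_k$, where $\Sigma_k$ is the trivially decorated $k$-labeled sphere. Evaluating at $N=k$ is particularly simple: the unique $k$-coloring of $\Sigma_k$ is $c=\{1,\dots,k\}$, so for every pair $i<j$ in $\{1,\dots,k\}$ both indices lie in $c$, giving $F_{ij}(c)=\emptyset$ and hence $Q(\Sigma_k,c)=1$. The monochrome surface $F_i(c)$ is the whole sphere for each $i\in c$, so $s(\Sigma_k,c)=\sum_{i=1}^{k}i=k(k+1)/2$, and therefore $\kup{\Sigma_k}_k=(-1)^{k(k+1)/2}$. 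It follows that $\kup{\widehat{F}}_k=(-1)^{k(k+1)/2}\,p(x)$, identifying $p$ as $(-1)^{k(k+1)/2}\kup{\widehat{F}}_k$, as announced.

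The main obstacle is the foam-level reduction in the first step. While the MOY-level tree reduction of Lemma~\ref{lem:trees} is purely formal, lifting it to the foam level using only $\infty$-equivalences, and avoiding the $N$-dependent relations~(\ref{eq:neckcuttingcat}) and~(\ref{eq:digonDURcat}), requires careful bookkeeping. Here the disk-likeness of $F$ is essential: Lemma~\ref{lem:disklikesubsurface} rules out any closed sub-surface in $F$, so the $N$-dependent neck-cutting relation never needs to be invoked, and the reduction proceeds exclusively via the universal associativity, digon, and dot-migration relations.
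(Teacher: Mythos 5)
Your Step~2 is correct and matches the paper's sign accounting: capping $F$ with a trivially decorated $k$-disk and evaluating the resulting $k$-labeled sphere at $N=k$ gives $(-1)^{k(k+1)/2}$ via the unique coloring, the sign coming from the $\sum_i i\chi(F_i(c))/2$ term in $s(\cdot,c)$, with $Q=1$ since all bichrome surfaces are empty.

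The gap is Step~1. You assert that the $\infty$-equivalence relations (\ref{eq:MPcat}), (\ref{eq:digoncat}), (\ref{eq:dotmig}), via an induction on singular points, reduce a disk-like foam bounding a $k$-circle to a single decorated $k$-disk, but you give no argument that these moves actually decrease the number of singular points, nor that the induction terminates in a disk rather than some other circle-bounding foam. The structure of the paper suggests this is not mere bookkeeping: the proof of Lemma~\ref{lem:treelike-are-enough} carries out precisely the foam-level lift of Lemma~\ref{lem:trees} that you envision, using (\ref{eq:MPcat}) and (\ref{eq:digoncat}), and it does \emph{not} land directly on tree-like foams --- it only reaches ``a tree-like $\Gamma$-foam on top of a disk-like foam which bounds a circle,'' and then \emph{invokes} the present lemma on the residual circle-bounding piece. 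When $\Gamma$ is a single $k$-strand that residue is all of $F$, so the inductive reduction you propose \emph{is} the content of the lemma, not an ingredient you get to assume. Your observation that disk-likeness rules out closed sub-surfaces (so that (\ref{eq:neckcuttingcat}) need never be applied to $F$) is correct, but that alone does not establish that the universal relations suffice to remove all bindings and singular points.

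The paper's own proof circumvents the foam-level reduction entirely by computing directly with the evaluation, one $N$ at a time. For $N=k$, the module $\F_k(\SS_k)$ is free of rank $1$ over $\SP{k}$, so $F$ and the decorated disk $D$ are $k$-equivalent as soon as they pair equally with the cap, which is by definition $\kup{\widehat F}_k$. For $N<k$ both are zero. For $N>k$ one checks $\kup{F\circ G}_N=\kup{D\circ G}_N$ for every $G$: neck-cutting applied \emph{to $G$} (this is allowed, as one only needs $N$-equivalence of $G$ for the fixed $N$ under consideration) reduces to $G$ a decorated $k$-disk, and then every $\sll_N$-coloring $c$ of $F\circ G$ picks out a $k$-element color set $I(c)$ for the facet through $G$, restricting to an $\sll_k$-coloring of $\mathrm{cl}(F)$ after relabeling; summing the contributions over colorings with fixed $I(c)$ reproduces the corresponding term for the sphere $D\circ G$. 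None of this requires understanding what the relations (\ref{eq:MPcat})--(\ref{eq:squarecat}) do to $F$. So the two approaches are genuinely different, and the one you sketch leaves its central step open.
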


Note that this makes sense since $\kup{\widehat{F}}_k$ is a symmetric polynomial in $k$ variables.

\begin{proof}
  Let us denote by $D$ the disk with label $k$  decorated by $(-1)^{\frac{k(k+1)}{2}}\kup{\widehat{F}}_k$.
  It follows directly from the definition of the $\sll_k$-evaluation of foams, that $F$ is $k$-equivalent to $D$. The sign comes from the term  $\sum_{i=1}^ki\chi(F_i(c))/2$ in the definition of $s(F,c)$ (see Definition~\ref{dfn:exteval}). If $N<k$, then both $D$ and $F$ are $N$-equivalent to $0$.
  
  If $N>k$, we will see that the $N$-equivalence between $F$ and $D$ follows from their $k$-equivalence.  We need to prove that for any foam $G$ bounding a circle with label $k$, $\kup{F\circ G}_N= \kup{D\circ G}_N$.   First note that thanks to identity~(\ref{eq:neckcuttingcat}), we can suppose that $G$ is a decorated disk of label $k$. In this case, $D\circ G$ is a decorated sphere of label $k$.  Let $c$ be a coloring of $D\circ G$. The coloring $c$ is given by the color $I(c)=\{i_1(c),  \dots, i_k(c)\}\subseteq\Col$ of this sphere. We have:
  \begin{align} \label{eq:DGc}
    \kup{D\circ G,c}_N = \frac{\kup{D\circ G}_k(x_{i_1(c)}, \dots x_{i_k(c)})}{\prod_{\substack{i\in I(c)\\ j\in \Col\setminus I(c)}}(x_{j}-x_{i})}.
  \end{align}
  Note the foam $D\circ G$ being a sphere, it admits only one $\sll_k$-coloring. It can be obtained from $c$ by replacing $i_a(c)$ by $a$ in $I(c)$ for all $a\in \{1,\dots k\}$.
  
  Similarly, if $c$ is a coloring of $F\circ G$, it gives to the facet containing $G$ a color $I(c)=\{i_1(c), \dots, i_k(c)\}$. Let us denote by $f: I(c) \to \{1, \dots, k\}$ the one-to-one map given by $f(i_a)=a$ and $f(c)$ the $\sll_k$-coloring of $F\circ G$ induced from $c$ by $f$. We have:
  \begin{align}\label{eq:FGc}
    \kup{F\circ G,c}_N = \frac{\kup{F\circ G, f(c)}_k(x_{i_1(c)}, \dots, x_{i_k(c)})}{\prod_{\substack{i\in I\\ j\in \Col\setminus I}}(x_{j}-x_i)}.
  \end{align}
  Combining (\ref{eq:DGc}) and (\ref{eq:FGc}) and keeping the same notations, we get:
  \begin{align*}
    \kup{F\circ G}_N &= \sum_{c \in \mathrm{col}_N(F\circ G)} \kup{F \circ G,c}_N  \\
    &= \sum_{\substack{I \subseteq \Col \\ \#I = k }} \sum_{\substack{c \in \mathrm{col}_N(F\circ G)\\ I(c) =I}} \kup{F \circ G,c}_N \\
    &= \sum_{\substack{I \subseteq \Col \\ \#I = k }} \sum_{\substack{c \in \mathrm{col}_N(D\circ G)\\ I(c) =I}} \kup{D \circ G,c}_N \\
    &= \kup{D\circ G}_N.
  \end{align*}
\end{proof}

\begin{lem}
  \label{lem:treelike-are-enough}
  A rooted $\Gamma$-foam $F$ is $\infty$-equivalent to a $\ZZ$-linear combination of tree-like foams. 
\end{lem}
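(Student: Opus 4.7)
My plan is to categorify, at the foam level, the skein-theoretic rewriting used to prove Lemma~\ref{lem:trees}. I argue by strong induction on the number $m(F)$ of \emph{merge-type singularities} of $F$, that is, bindings of $F$ that contribute a merge vertex to horizontal slices. The base case $m(F)=0$ is immediate: every non-critical slice is then a braid-like MOY graph with only split vertices, and since the bottom of $F$ is a single strand labeled $k$ each such slice is automatically a braid-like tree, so $F$ is already tree-like.

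For the inductive step, after a small ambient isotopy of $F$ I assume that all singular points occur at distinct heights, and I pick the merge-type singularity $b$ with the largest height $t_b$. The sub-foam $F_{<}$ lying below height $t_b-\varepsilon$ has $m(F)-1$ merge-type singularities; by the inductive hypothesis (applied in a mildly generalised form, see below) it is $\infty$-equivalent to a $\ZZ$-linear combination of tree-like foams, so in each summand the slice at height $t_b-\varepsilon$ is a braid-like tree. Lemma~\ref{lem:trees} combined with the categorical associativity relation~(\ref{eq:MPcat})---which is an $\infty$-equivalence---then allows me to further deform each such foam so that this tree ends, just below $b$, in a split vertex whose two outputs are precisely the two edges entering $b$. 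Reassembling with the unchanged upper part of $F$, each summand now contains a digon between heights $t_b-\varepsilon$ and $t_b$, whose merge is $b$; applying the categorified digon relation~(\ref{eq:digoncat}) rewrites this digon as a $\ZZ$-linear combination of foams in which the digon, and hence the merge-type singularity $b$, has been eliminated. Each resulting foam has strictly fewer merge-type singularities, and the inductive hypothesis finishes the proof.

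The main obstacle is step one: the sub-foam $F_{<}$ is disk-like and has a single strand at its bottom, but its left and right boundaries are only truncations of standard trees rather than standard trees themselves, so it is not literally a rooted foam. The natural fix is to prove, by the same induction, a slightly more general statement valid for any disk-like foam with a single strand on $s_b$, the precise form of $s_l$ and $s_r$ playing no essential role in the reduction. Once this mild generalisation is in place the induction closes without circularity, and both the MP move and the digon relation are applied as local modifications inside $F$ exactly as the two-step rewriting in Lemma~\ref{lem:trees}: associativity to line up a suitable split with the chosen merge, then digon removal to cancel them.
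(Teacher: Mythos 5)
Your proposal is in the same spirit as the paper's own proof: it is a categorified implementation of the tree-straightening induction of Lemma~\ref{lem:trees}, repeatedly applying the MP move~(\ref{eq:MPcat}) and the digon relation~(\ref{eq:digoncat}) to kill merge-type structure. The paper organizes this differently (it first treats $\Gamma$ a single digon, then $\Gamma$ a braid-like $k$-MOY graph-$k$, factors $F$ through a rooted foam bounding a circle and invokes Lemma~\ref{lem:diskdisklike}, and reduces arbitrary braid-like $\Gamma$ by cutting along a thickened product of a curve on $s_f$); your idea of instead proving a mildly more general statement for any disk-like foam with a single $k$-strand on $s_b$ is a reasonable alternative to that curve trick.

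However, the inductive step as you have written it has a genuine gap. A merge-type ``singularity'' in your sense is a \emph{binding} of $F$, i.e.\ a $1$-dimensional arc, not a point, so it occupies a whole interval of heights. Consequently ``the merge-type singularity $b$ with the largest height $t_b$'' and ``the sub-foam $F_{<}$ lying below height $t_b-\varepsilon$ has $m(F)-1$ merge-type singularities'' do not hold as stated: cutting just below the topmost point of $b$ merely truncates $b$ (and the other merge bindings), so $F_{<}$ still contains $b$ and $m(F_{<})=m(F)$. If instead $t_b$ denotes the \emph{lowest} point of $b$ and you cut below it, then $b$ is removed, but other merge bindings whose support crosses $t_b-\varepsilon$ are still present, and in particular the boundary slice $F\cap\bigl(\{t_b-\varepsilon\}\times[0,1]^2\bigr)$ can still contain merge vertices. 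Since $\infty$-equivalence fixes the boundary, that slice cannot become a tree, which contradicts the claim that $F_{<}$ is $\infty$-equivalent to tree-like foams. To close the gap you would either need to show that after a generic isotopy each merge binding is ``monotone'' and separated in height from all the others so that a genuine collar cut exists, or abandon the global height cut and work locally as the paper does: use~(\ref{eq:MPcat}) to bring a split and the chosen merge next to each other inside a small ball and then apply~(\ref{eq:digoncat}) to that digon-shaped piece alone, with no need to cut $F$ at all.
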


\begin{proof}
  First, assume that $\Gamma$ is a braid-like $k$-MOY graph-$k$.
  We will show that $F$ is $\infty$-equivalent to a $\ZZ$-linear combination of foams which are superposition of a tree-like $\Gamma$-foam on top of a disk-like foam which bounds a circle and conclude by Lemma~\ref{lem:diskdisklike}
  Assume further that $\Gamma$ has the form:
  \[\digona.\]
  Then, this is the content of identities (\ref{eq:digoncat}).

  If $\Gamma$ is a braid-like $k$-MOY graph-$k$, the result is obtained by induction using repeatedly identities~(\ref{eq:MPcat}) and (\ref{eq:digoncat}). See Figure~\ref{fig:alamain} for an illustration.

  \begin{figure}[ht]
    \[
      \NB{\tikz[scale=0.5]{\input{\imagesfolder/sym_gen-digon}}}
      \sim_\infty  \sum\,
       \NB{\tikz[scale=0.5]{\input{\imagesfolder/sym_gen-digon-2}}}
    \]
\caption{} \label{fig:alamain}
\end{figure}
  
  This induction can be thought of a categorified implementation of the algorithm described in Lemma~\ref{lem:trees}.


  The general case follows. The foam $F$ is embedded in the cube. Consider on $s_f$ the following curve
  \[
\NB{\tikz{\begin{scope}
  \draw (-1,-1) -- (1, -1) -- (1,1) -- (-1,1) -- cycle;
  \draw[rounded corners, red] (-1, -0.8) -- (-0.8, -0.8) -- (-0.8, 0.8) -- (0.8, 0.8) -- (0.8, -0.8) -- (1, -0.8);
\end{scope}}},
  \]
and $S$ the surface embedded in the cube obtained as a product of the previous curve with a unit interval. The intersection of a thickening of this surface with the foam $F$ along is diffeomorphic to a $k$-MOY graph-$k$ for which the first case applies.
\end{proof}

\begin{rmk}
  \label{rmk:dotonlyonleaves} Using the dots migration identity~(\ref{eq:dotmig}), we obtain that a tree-like rooted $\Gamma$-foam is $\infty$-equivalent to $\ZZ$-linear combination of tree-like rooted $\Gamma$-foams, where all non-trivial decoration are on facets which intersect $\Gamma$. These facets are the \emph{leaves} of the rooted $\Gamma$-foam. 
\end{rmk}

\begin{lem}
  \label{lem:tree-likeRequivalent}
  Let $F$ and $F'$ be two tree-like rooted $\Gamma$-foams with non-trivial decorations only on their leaves. Suppose furthermore that these decorations are the same\footnote{The fact that the non-trivial decoration are only on the leaves allows to see the decoration as a function associating a symmetric polynomial with every edge of $\Gamma$. We require that these functions to be the same for $F$ and $F'$.} for $F$ and $F'$. Then $F$ is $\infty$-equivalent to $F'$. 
\end{lem}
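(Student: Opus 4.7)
The strategy is to reduce the lemma to the combinatorial content of Lemma~\ref{lem:trees}: any two braid-like trees with prescribed boundary are equal modulo the associativity relation~(\ref{eq:extrelass}). The foam-level avatar of this associativity is the Matveev--Piergallini identity~(\ref{eq:MPcat}), which, by Remark~\ref{rmk:relation2equivalence}, is an $\infty$-equivalence. The plan is therefore to show that any two tree-like rooted $\Gamma$-foams are related by a finite sequence of MP moves and isotopies, and that these moves preserve the leaf decorations.

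After putting $F$ in generic position with respect to the height function $t$, the slices $F_t \eqdef F\cap(\{t\}\times[0,1]^2)$ form a ``movie'' of trees. Because each $F_t$ is connected and acyclic, $F$ contains no closed binding curves and no configuration of bindings that would create a cycle in some slice (in particular, no internal digon). Hence the only critical events along the movie are: a split/merge binding crossing the slice, which is dictated by the boundary conditions on $s_l$, $s_r$, $s_t$ and produces no ambiguity, and interior singular points of $F$, whose local model is precisely the MP move of identity~(\ref{eq:MPcat}). Since no digon ever appears, the digon-reduction identities~(\ref{eq:digoncat})--(\ref{eq:digonDURcat}) are not invoked, which is crucial because those are only $N$-equivalences.

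I would then prove, by induction on the number of internal vertices of $\Gamma$, that any two tree-like rooted $\Gamma$-foams $F$ and $F'$ are connected by a finite sequence of MP moves and ambient isotopies. The inductive step mirrors the proof of Lemma~\ref{lem:trees}: pick a ``topmost'' split vertex $v$ of $\Gamma$, and apply MP moves to $F$ and $F'$ to push the corresponding split binding up to $v$, so that just below $v$ one recovers a rooted $\Gamma_0$-foam for a strictly simpler $\Gamma_0$, to which the inductive hypothesis applies. Since all non-trivial decorations sit on leaves — i.e.\ on facets meeting the fixed graph $\Gamma$ — and MP moves act entirely in the interior away from the leaves, each move in the sequence preserves the decoration data. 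The hypothesis that the leaf decorations of $F$ and $F'$ coincide therefore yields $F\sim_\infty F'$ as required.

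The main obstacle is the connectedness assertion in Step~2: one must verify carefully that the chosen sequence of MP moves remains inside the class of tree-like foams at every intermediate stage, i.e.\ that no intermediate slice acquires a cycle. This is the foam counterpart of the classical fact that the rotation graph of binary trees with prescribed leaves is connected, and the cleanest way to handle it is to proceed inductively, shrinking $\Gamma$ from the top rather than trying to match $F$ and $F'$ globally.
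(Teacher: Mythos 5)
Your proposal takes a genuinely different route from the paper. The paper's proof is a direct computation: for any closure $G$, it matches the colorings of $F$ and $F'$ (both being determined by the common boundary $\widehat{\Gamma}$, since tree-like foams are acyclic), observes that the corresponding monochrome and bichrome surfaces are diffeomorphic, that the oriented bindings correspond bijectively, and that the decoration contributions agree; hence $\kup{G\circ F}_N = \kup{G\circ F'}_N$ for every $G$ and every $N$, directly from Definition~\ref{dfn:exteval}. No move sequence between $F$ and $F'$ is ever constructed.

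Your plan instead asserts a combinatorial connectedness statement: that any two tree-like rooted $\Gamma$-foams with fixed boundary are related by a finite sequence of ambient isotopies and MP moves \emph{staying within the class of tree-like foams}. This is the step you flag as ``the main obstacle,'' and it is indeed a genuine gap — not a detail to tidy up but the technical heart of your approach. It is the $2$-dimensional analogue of Mac Lane coherence (equivalently, simple-connectedness of the associahedron), phrased as: two movies of trees with the same endpoints and the same top profile $\Gamma$ are related by MP moves, and no intermediate stage of the move sequence destroys tree-likeness. Your induction sketch (``push the split binding up to $v$'') does not constitute a proof: it is unclear why the binding of $F$ above a given vertex of $\Gamma$ can be normalized by MP moves alone without introducing cycles or digons in intermediate slices, and it is unclear that the critical events in the movie occur in a controllable order — two movies realizing the same $\Gamma$ can differ precisely in the relative $x$-ordering of singular points, which is exactly what a Cerf-theoretic argument would need to disentangle. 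Until this connectedness lemma is proved, the proposal establishes nothing. By contrast, the paper's evaluation argument is immune to all of this: it never needs to move $F$ to $F'$, only to compare the numbers that the closed evaluation produces, and the simple topology of tree-like foams (disk facets, no closed bindings, no monodromy of colorings) makes that comparison immediate. If one did want a move-based proof along your lines, the cleanest route would likely be to first prove Lemma~\ref{lem:tree-likeRequivalent} by the paper's direct computation and then \emph{deduce} move-connectedness from it — the opposite logical order from what you propose.
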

\begin{proof}
  This follows directly from the definition of the exterior evaluation of foams. The set of colorings of $F$ and of $F'$ are in one-to-one correspondence (because they are both in one-one correspondence with the set of colorings of their boundary). Let us denote $c$ and $c'$ two corresponding colorings of $F$ and $F'$. The monochrome and the bichrome surfaces of $(F,c)$ and $(F',c')$ are diffeomorphic. The oriented arcs in $(F,c)$ and $(F',c)$  are in one-one correspondence preserving their orientation. Finally the condition on the decorations of $F$ and $F'$ ensures that their contributions to the evaluation are equal.
\end{proof}

\subsection{Vinyl foams (or symmetric foams)}
\label{sec:quasi-annular-foams}

In this part, we work in the thickened annulus $\ann\times [0,1]$. If $
x:= \left(\begin{smallmatrix}
  x_1 \\ x_2 \\ x_3
\end{smallmatrix}\right)$ is an element of $\ann\times [0,1]$, we denote by $t_x$ the vector 
$ \left(\begin{smallmatrix}
  -x_2 \\ x_1 \\ 0
\end{smallmatrix}\right)$, by $v$ the vector
$ \left(\begin{smallmatrix}
  0 \\ 0 \\ 1
\end{smallmatrix}\right)$, and by $P_x$ the affine plane containing $x$ and spanned by $t_x$ and $v$.  If $\theta$ is an element of $[0,2\pipi[$, $P_\theta$ is the half-plane $\left\{
\left.\left(\begin{smallmatrix}
  \rho \cos \theta \\ \rho \sin \theta \\ t 
\end{smallmatrix}\right) \right | (\rho,t) \in \RR_+\times \RR\right\}$.

\[
\tikz[scale=0.8]{\begin{scope}[yscale =0.5]
\draw[dashed] (0, -4) -- (0,8);
\draw (0,4) circle (2cm and 2cm);
\draw (0,4) circle (0.5cm and 0.5cm);
\draw (-2,0) arc (-180:0:2cm and 2cm);
\draw[dotted] (-2,0) arc (180:0:2cm and 2cm);
\draw[dotted] (0,0) circle (0.5cm and 0.5cm);
\draw (-2, 0) -- +(0,4);
\draw ( 2, 0) -- +(0,4);
\draw[dotted] ( -0.5, 0) -- +(0,4);
\draw[dotted] ( +0.5, 0) -- +(0,4);
\filldraw[thick, draw= green!30!black, draw opacity =0.6, fill = green!30!black, fill opacity =0.2] (0, -1) -- (0, +5) -- (3, 8) -- (3,2) node[near start, left, opacity=100, green!30!black] {$P_\theta$} -- cycle;
\filldraw[thick, draw= orange, draw opacity =0.6, fill = orange, fill opacity =0.2] (0,0) -- (0.7,0) arc (0:45 :0.7cm) node[near end, right, orange!50!black, opacity= 100] {$\theta$} --cycle;
\draw[thin] (45:1.5) -- (0,0) -- (1.5,0);
\filldraw[thick, draw= red!70!black, draw opacity =0.6, fill = red!70!black, fill opacity =0.2] (-3, 5)  -- (0.3, 2) node[pos=0.1, below, opacity=100, red!30!black] {$P_x$} -- (0.3, -4) -- (-3,-1) --cycle;
\fill[red] (-1.35, 0.5) circle (0.03) node[right, black] {$x$};
\end{scope}}
\]

\begin{dfn}
  \label{dfn:tubelikefoam}
  Let $k$ be a non-negative integer and $\Gamma_0$ and $\Gamma_1$ two vinyl graphs of level $k$. Let $F$ be a foam with boundary embedded in $\ann \times [0,1]$. Suppose that $F\cap (\ann\times \{0\}) = -\Gamma_0$ and $F\cap (\ann\times \{1\}) = \Gamma_1$. We say $F$ is a \emph{vinyl $\Gamma_1$-foam-$\Gamma_0$ of level $k$} if for every point $x$ of $F$, the normal line of $F$ at $x$ is \emph{not} contained in $P_x$. See Figure~\ref{fig:tuble-like} for an example.
\end{dfn}

\begin{rmk}
  \label{rmk:cutopentubelike}
  Note that if we cut a vinyl foam $F$ along a half plane $P_\theta$, we obtain a disk-like foam.
\end{rmk}

\begin{figure}[ht]
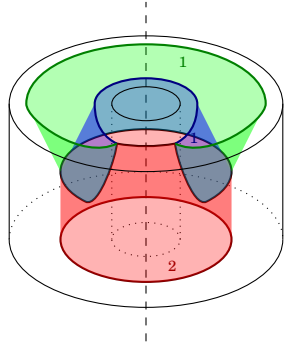

  \centering
\[
\tikz[scale=0.9]{\begin{scope}[yscale =0.5]
\begin{scope}[yshift=4cm]
\coordinate (A) at (-70:1.25 );
\coordinate (A2) at (-110:1.25 );
\end{scope}
\begin{scope}[yshift=2cm]
\coordinate (B) at (-135:1.25);
\coordinate (B2) at (-45:1.25);
\end{scope}
\draw[dashed] (0, -3) -- (0,7);
\draw (-2,0) arc (-180:0:2cm and 2cm);
\draw[dotted] (-2,0) arc (180:0:2cm and 2cm);
\draw[dotted] (0,0) circle (0.5cm and 0.5cm);
\draw (-2, 0) -- +(0,4);
\draw ( 2, 0) -- +(0,4);
\draw[dotted] ( -0.5, 0) -- +(0,4);
\draw[dotted] ( +0.5, 0) -- +(0,4);
\draw[red!50!black, thick] (1.25,0) arc (0:360:1.25cm) node[above, pos =0.8, font= \tiny] {\tiny{$2$}};
\draw[thick] (1.25,2) arc (0: -45:1.25cm)  .. controls +(-0.2,-0.2) and +(0, -0.3) .. (A) arc (-70:-110:1.25) .. controls +(0,-0.3) and +(0.2, -0.2) .. (B)  arc  (-135:-360:1.25cm); 
\fill[red, opacity =0.3] (-1.25,0) arc (180:0:1.25cm) -- +(0,2) arc (0:180:1.25cm) --cycle; 
\fill[red, opacity =0.3] (-1.25,0) arc (-180:0:1.25cm) -- +(0,2) arc (0: -45:1.25cm)  .. controls +(-0.2,-0.2) and +(0, -0.3) .. (A) arc (-70:-110:1.25) .. controls +(0,-0.3) and +(0.2, -0.2) .. (B)  arc  (-135:-180:1.25cm) -- cycle; 
\fill[green, opacity=0.3] (-1.25,2) arc (180:0:1.25cm) -- +(0.5,2) arc (0:180:1.75cm) --cycle; 
\fill[blue, opacity=0.3] (-1.25,2) arc (180:0:1.25cm) -- +(-0.5,2) arc (0:180:0.75cm) --cycle; 
\fill[green, opacity=0.3] (-1.25,2) arc (-180:-135:1.25cm)   .. controls +(0.2,-0.2) and +(0,-0.3)  .. (A2)  .. controls +(-0. 5,-0.5) and +(0,-0.5) .. (-1.75, 4) -- cycle;
\fill[green, opacity=0.3] (1.25,2) arc (0:-45:1.25cm)   .. controls +(-0.2,-0.2) and +(0,-0.3)  .. (A)  .. controls +(0. 5,-0.5) and +(0,-0.5) .. (1.75, 4) -- cycle;
\fill[blue, opacity=0.3] (-1.25,2) arc (-180:-135:1.25cm)   .. controls +(0.2,-0.2) and +(0,-0.3)  .. (A2)  .. controls +(-0.3, +0.3) and +(0,-0.3) .. (-0.75, 4) -- cycle;
\fill[blue, opacity=0.3] (1.25,2) arc      (0:-45:1.25cm)   .. controls +(-0.2,-0.2) and +(0,-0.3)  .. (A)  .. controls +(0. 3,0.3) and +(0,-0.3) .. (0.75, 4) -- cycle;
\draw[thick,blue!50!black] (A)  .. controls +(0. 3,0.3) and +(0,-0.3) .. (0.75, 4) node[midway, below, font = \tiny] {$1$}  arc (0:180:0.75) .. controls +(0,-0.3) and +(-0.3, 0.3) .. (A2); 
\draw[thick,green!50!black] (A)  .. controls +(0.5,-0.5) and +(0,-0.5) .. (1.75, 4) arc (0:180:1.75) node[pos=0.4, below, font= \tiny] {$1$} .. controls +(0,-0.5) and +(-0.5, -0.5) .. (A2); 
\draw (0,4) circle (2cm and 2cm);
\draw (0,4) circle (0.5cm and 0.5cm);
\end{scope}}
\] 
  \caption{An example 
of a vinyl foam}
  \label{fig:tuble-like}
\end{figure}

\begin{dfn}
  \label{dfn:cattubelike}
  The category $\TL_k$ of vinyl foams of level $k$ consists of the following data:
  \begin{itemize}
  \item The objects are elements of $\Vin_k$, \ie vinyl graphs of level $k$,
  \item Morphisms from $\Gamma_0$ to $\Gamma_1$ are (ambient isotopy classes of) vinyl $\Gamma_1$-foams-$\Gamma_0$.
  \end{itemize}
Composition is given by stacking vinyl foams together and rescaling. In the category $\TL_k$ we have one distinguished object which consists of a single essential circle with label $k$ denoted  by $\SS_k$. The \emph{degree} $\degT(F)$ of a vinyl foam $F$ is equal to $\degext_0(F)$. Note that the degree is additive with respect to the composition in $\TL_k$.
\end{dfn}

The name vinyl comes from the following lemma.
\begin{lem}
  \label{lem:tubelikesubsurface}
  Let $F$ be a vinyl $\Gamma_1$-foam-$\Gamma_0$. Then any non-empty connected subsurface $\Sigma$ of $F$ is an annulus. Moreover, for every $t$ in $[0,1]$, $\Sigma \cap \ann \times \{t\}$ is an essential circle in $\ann \times \{t\}$. Such annuli are called \emph{tubes}.
\end{lem}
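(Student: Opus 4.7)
The plan is to adapt the argument of Lemma~\ref{lem:disklikesubsurface} to the annular setting via two natural ``Morse'' submersions. I would introduce the height projection $\pi_t\co \ann \times [0,1] \to [0,1]$, $(x_1,x_2,x_3)\mapsto x_3$, and the angular projection $\pi_\theta\co \ann\times[0,1]\to S^1$, reading off the argument of the horizontal component of a point. At any $x\in F$, the kernel of $d\pi_t$ is the horizontal plane, while the kernel of $d\pi_\theta$ is spanned by the radial direction and $v$. A critical point of $\pi_t|_\Sigma$ at $x$ would force $v$ to lie in the normal line of $F$ at $x$, and a critical point of $\pi_\theta|_\Sigma$ would force $t_x$ to lie there; both conclusions contradict the defining property of a vinyl foam, namely that the normal line at $x$ is not contained in $P_x = \mathrm{span}(t_x, v)$. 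Hence both $\pi_t|_\Sigma$ and $\pi_\theta|_\Sigma$ are submersions on the smooth surface $\Sigma$.

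Next, I would apply Ehresmann's fibration theorem to the proper submersion $\pi_t|_\Sigma\co \Sigma \to [0,1]$ of manifolds with boundary (sending $\partial\Sigma$ into $\{0,1\}$). Since $[0,1]$ is contractible, this yields a trivialisation $\Sigma \simeq F_\tau \times [0,1]$ where $F_\tau := \Sigma \cap (\ann\times\{\tau\})$ for any choice of $\tau \in [0,1]$. Each fibre $F_\tau$ is a compact $1$-submanifold without boundary of $\ann \times \{\tau\}$, hence a disjoint union of smoothly embedded circles.

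On any component circle $c$ of such a fibre, the restriction $\pi_\theta|_c \co c \to S^1$ is a submersion of closed connected $1$-manifolds, hence a covering map of nonzero degree, so $c$ wraps non-trivially around $\ann\times\{\tau\}$ and is therefore essential. The connectedness of $\Sigma$ then forces, via the trivialisation, that $F_\tau$ has a single component, so $\Sigma \simeq S^1 \times [0,1]$ is an annulus and every horizontal slice is an essential circle, \ie\ a tube.

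The main delicate point will be the behaviour at the boundary values $\tau \in \{0,1\}$: one must verify that the Ehresmann trivialisation extends across $\partial\Sigma$. This relies on $\pi_t|_\Sigma$ and $\pi_\theta|_\Sigma$ being submersions up to the boundary. The first is guaranteed by the orthogonality of the facets of $F$ to $\ann\times\{0,1\}$ (so $v \in T_x\Sigma$ along $\partial\Sigma$), and the second by the vinyl condition on $\Gamma_0$ and $\Gamma_1$, whose tangent vectors pair positively with $t_x$ and hence provide a vector in $T_x\Sigma$ on which $d\pi_\theta$ is nonzero. Once these transversalities along $\partial\Sigma$ are recorded, the rest of the argument runs unchanged.
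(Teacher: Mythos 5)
Your argument follows essentially the same route as the paper: the vinyl condition forces the height projection $\pi_t|_\Sigma$ to be a critical-point-free Morse function, and the trivial fibration over $[0,1]$ then shows $\Sigma$ is a product of a closed $1$-manifold (hence a disjoint union of circles) with an interval, so connectedness gives an annulus. Where you go further than the paper's two-line sketch is in \emph{why} the horizontal slices are essential: the paper leaves "the result follows" implicit (most naturally one would note that $\partial\Sigma$ consists of circles of the vinyl graphs $\Gamma_0$, $\Gamma_1$, which are essential by the condition on their tangent vectors, and that the product structure propagates this to every slice), whereas you supply an independent interior argument via the angular projection $\pi_\theta$.

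One step deserves a word more. You verify that $\pi_\theta|_\Sigma$ has no critical points, and then assert that $\pi_\theta|_c$ is a submersion on each circle $c = \Sigma\cap(\ann\times\{\tau\})$. This does not follow automatically: a restriction of a submersion to a $1$-submanifold can acquire critical points. It is true here, but for a reason you should record: if $d\pi_\theta$ vanished on $T_xc = T_x\Sigma\cap(\text{horizontal})$, then since $\ker(d\pi_\theta)\cap(\text{horizontal})$ is the radial line, the radial vector would lie in $T_x\Sigma$, so the normal line of $\Sigma$ at $x$ would lie in the orthogonal complement of the radial direction, which is exactly $P_x = \operatorname{span}(t_x,v)$ — contradicting the vinyl condition. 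Equivalently, the vinyl condition says precisely that $\ker(d\pi_t)\cap\ker(d\pi_\theta) = \operatorname{span}(r_x)$ is transverse to $T_x\Sigma$, so $(\pi_t,\pi_\theta)\colon\Sigma\to[0,1]\times S^1$ is a local diffeomorphism; with that observation, the trivialization, the identification of $c$ with a circle, and the nonzero degree of $\pi_\theta|_c$ all come out at once. With this one line supplied, the proof is complete and, modulo the bookkeeping at $\partial\Sigma$ that you already noted, rigorous.
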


\begin{proof}
  The condition on the tangent plane of vinyl foams implies that the projection on the last coordinate is a Morse function for $\Sigma$ and that it has no critical points. The result follows.
\end{proof}

\begin{cor}
  \label{cor:tubelike2vynil} Let $F$ be a vinyl $\Gamma_1$-foam-$\Gamma_0$.
  \begin{enumerate}
  \item Let $t$ be an element of $[0,1]$ such that the intersection of $\ann\times \{t\}$ and $F$ is generic (i. e.  $\ann\times \{t\}$ does not contain any singular points of $F$ and the intersection of $\ann\times \{t\}$ with the bindings of $F$ is transverse). Then $F\cap \ann\times \{t\}$ is a vinyl graph.
  \item Let $\theta$ be an element of $[0,2\pipi[$ and assume that the intersection of $F$ with $P_\theta$ is generic. Then the intersection of $F$ and $P_\theta$ is braid-like.
  \end{enumerate}
\end{cor}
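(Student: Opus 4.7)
The plan is to combine the tube description provided by Lemma~\ref{lem:tubelikesubsurface} with the genericity assumption and the normal-line condition of Definition~\ref{dfn:tubelikefoam}. The first step will be to translate the normal condition into a pointwise inequality: since $P_x = \mathrm{span}(t_x, v)$ has orthogonal complement $\mathrm{span}(r_x)$, where $r_x$ denotes the unit radial direction at $x$, the condition ``the normal line of $F$ at $x$ is not contained in $P_x$'' is equivalent to $n_x \cdot r_x \neq 0$ for any normal vector $n_x$ at~$x$.

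For part (1), genericity of $t$ ensures that no singular point of $F$ lies on $\ann \times \{t\}$ and that the bindings meet this slice transversely. Then $F \cap (\ann \times \{t\})$ will inherit a smoothly embedded graph structure in $\ann$: transverse intersections of facets with the horizontal plane yield smooth arcs with labels and orientations pulled back from the foam data, while transverse intersections with bindings yield trivalent vertices whose flow condition (merge or split) follows from the local model at a binding. The vinyl tangent property is then verified pointwise: writing a tangent vector to the intersection curve at $x$ as $u = a r_x + b t_x$, the relation $u \perp n_x$ becomes $a (n_x \cdot r_x) + b (n_x \cdot t_x) = 0$. Since $n_x \cdot r_x \neq 0$, any non-zero $u$ has $b \neq 0$, and after choosing the orientation so that $b > 0$ one obtains $u \cdot t_x > 0$ at every point, which is the defining condition of a vinyl graph (see Definition~\ref{dfn:vynil-graph}).

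Part (2) will follow from the same computation applied in the half-plane $P_\theta$. At a generic $\theta$, $F \cap P_\theta$ inherits a smoothly embedded graph structure by the same transversality argument, and a tangent vector at $x$ to the intersection curve has the form $u = a r_x + b v \in T_x F \cap P_\theta$, satisfying $a (n_x \cdot r_x) + b (n_x \cdot v) = 0$. Once more, $n_x \cdot r_x \neq 0$ forces $b \neq 0$, and orienting the curve so that $b > 0$ yields $u \cdot v > 0$, which is exactly the braid-like condition of Definition~\ref{dfn:braidlike}.

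The main obstacle will be the combinatorial bookkeeping at trivalent vertices: at a transverse intersection with a binding one needs to check that the three incident facet-arcs form a MOY merge/split vertex with the correct labels and inherited orientations, and in the $P_\theta$ case that the edges meeting the boundary of $P_\theta$ carry the sign expected for a braid-like MOY graph. These verifications are local consequences of the standard binding model of Definition~\ref{dfn:foam}, and are routine but necessary to record in order to complete the proof.
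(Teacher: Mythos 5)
Your pointwise computation is correct and isolates the key geometric fact: the condition that the normal line of $F$ at $x$ is not contained in $P_x$ is equivalent to $n_x \cdot r_x \neq 0$, and this forces any tangent vector $u$ to the slice to have a nonzero $t_x$-component (resp.\ $v$-component in part (2)). This is the same observation the paper rests on.

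However, the step ``after choosing the orientation so that $b > 0$'' is not legitimate, and this is a real gap. The orientation on the edges of $F \cap (\ann \times \{t\})$ is not yours to choose: it is inherited from the orientations of the facets of $F$. The content of the claim that the slice is a vinyl graph is precisely that this \emph{given} orientation satisfies $u \cdot t_x > 0$ at every point. Your argument only shows $u \cdot t_x \neq 0$, i.e.\ the slice is never radial, which leaves open the possibility that the inherited orientation points ``backwards'' along some edges. The paper closes this by a global argument that your local computation alone cannot reach: it first covers $F$ by connected subsurfaces (tubes) via cabling, observes that on each connected tube the sign of $u \cdot t_x$ is locally constant hence constant, and then pins down that constant sign to be positive by matching against the boundary graphs $\Gamma_0$ and $\Gamma_1$, which are assumed to be vinyl. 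You would need to add this connectedness-plus-boundary argument (or some equivalent) to determine the sign; without it the proof establishes transversality to the rays but not the orientation condition.
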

Before proving the statements, let us emphasize that there are only finitely many $t$'s (resp. $\theta$'s) for which the intersection of $F$ and $\ann\times \{t\}$ (resp. $P_\theta$) is not generic.

\begin{proof} 
First note that every point of $F$ is contained in a connected subsurface which intersects $\ann\times\{0,1\}$ non-trivially. Indeed we can cable foams just like we can cable MOY graphs (see Figure~\ref{fig:rotMOY}). This gives us a collection of sub-surfaces of $F$ which covers it. 
Let us prove the first part.
Let $x=
\left(  \begin{smallmatrix}
    x_1 \\ x_2 \\ t_0
  \end{smallmatrix} \right)
$ be a point in $F$ and $\Sigma$ a connected subsurface of $F$ containing $x$. Since $F$ is vinyl, the scalar product of $t_x$ with the  tangent vector of $F\cap \ann\times \{t\}$ is non-zero. Since $\Sigma$ is connected, this quantity  is either always positive or always negative on $\Sigma$. The graphs $\Gamma_0$ and $\Gamma_1$ being vinyl, it is positive. This proves that  $F\cap \ann\times \{t\}$ is vinyl.

The second part is similar but we consider the scalar product of $P_\theta\cap F$ with 
$v= \left(\begin{smallmatrix}
  0 \\ 0 \\1
\end{smallmatrix} \right)$.
\end{proof}

The notion of tree-like foams developed in Section~\ref{sec:disk-like-foams} extends {\sl mutatis mutandis} to the concept of foams in the thickened annulus. The analogues of rooted $\Gamma$-foams are $\widehat{\Gamma}$-foams-$\SS_{k}$. We have analogues of Lemmas~\ref{lem:treelike-are-enough} and \ref{lem:tree-likeRequivalent}:

\begin{lem}
  \label{lem:vinyl2tree-like}
  Let $\Gamma$ be a vinyl graph of level $k$ and $F$ a  vinyl $\Gamma$-foam-$\SS_k$. Then $F$ is $\infty$-equivalent to a $\ZZ$-linear combination of tree-like foams.
\end{lem}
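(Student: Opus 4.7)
The plan is to reduce to the disk-like setting by cutting $F$ along a generic half-plane. By Corollary~\ref{cor:tubelike2vynil}(2), one may pick an angle $\theta_0 \in [0, 2\pipi[$ so that $G := F \cap P_{\theta_0}$ is a braid-like MOY graph. Cutting $F$ along $P_{\theta_0}$ (Remark~\ref{rmk:cutopentubelike}) presents $F$ as a disk-like foam $\widetilde F$ inside a cube whose bottom face is the single $k$-labeled strand coming from $\SS_k$, whose top face is a braid-like presentation $\Gamma'$ of $\Gamma$, and whose left and right faces both carry the graph $G$; the original $F$ is recovered from $\widetilde F$ by re-identifying these two copies of $G$.

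Next, I would transport the inductive argument used to prove Lemma~\ref{lem:treelike-are-enough} to $\widetilde F$. That argument applies the local identities~(\ref{eq:MPcat}) and~(\ref{eq:digoncat}) in the interior of the cube to rewrite a disk-like foam, up to $\infty$-equivalence, as a superposition of a tree-like foam on top with a simpler disk-bounded piece at the bottom, thereby categorifying the algorithm of Lemma~\ref{lem:trees}. All of these manipulations take place in the interior of the cube and therefore leave the left and right faces of $\widetilde F$ intact; in particular, they do not require those faces to be standard trees. After the induction, regluing the left and right faces (both still $G$) produces a $\ZZ$-linear combination of vinyl foams whose intersection with every generic half-plane $P_\theta$ is a braid-like tree, \ie a $\ZZ$-linear combination of tree-like vinyl foams in the sense that extends Definition~\ref{dfn:treelikefoams}.

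The main obstacle is the last step of the imported proof, namely the appeal to Lemma~\ref{lem:diskdisklike}, which in its stated form collapses a disk-bounded disk-like foam into a single decorated disk. The correct substitute here is the annular analogue of that lemma, stating that any vinyl $\SS_k$-foam-$\SS_k$ is $\infty$-equivalent to a single decorated tube. This analogue is proved by the same coloring argument as Lemma~\ref{lem:diskdisklike}: Lemma~\ref{lem:tubelikesubsurface} guarantees that every such foam consists of a single tube as its only subsurface, and applying neck-cutting (identity~(\ref{eq:neckcuttingcat})) on that tube reduces its decorations to a symmetric polynomial in $k$ variables whose coefficients are read off from the $\sll_k$-evaluation of the cap. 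Once this annular counterpart is established, the induction combined with the regluing yields the desired decomposition of $F$ into tree-like vinyl foams.
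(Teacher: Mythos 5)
Your strategy of cutting along a generic half‑plane $P_{\theta_0}$ matches the paper's opening move, but the order of operations is wrong in a way that creates a genuine gap. The paper does \emph{not} cut first. Instead, it first uses the algorithm of Lemma~\ref{lem:trees} together with identities~(\ref{eq:MPcat}) and (\ref{eq:digoncat}) in a tubular neighborhood $B\times[-\epsilon,\epsilon]$ of $P_{\theta_0}$ to replace $F$, up to $\infty$-equivalence, by a $\ZZ$-linear combination of vinyl foams whose intersection with $P_{\theta_0}$ is the canonical $\listk{k}$-tree-$k$. Only \emph{after} this normalization does it cut, so that the resulting disk-like foams are genuine rooted $\Gamma'$-foams (standard trees on $s_l$ and $s_r$) and Lemma~\ref{lem:treelike-are-enough} can be invoked verbatim. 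When the pieces are glued back along the canonical tree, the slice at $P_{\theta_0}$ is that tree, and the output is tree-like.

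Your version skips this normalization. You cut first, obtaining a disk-like $\widetilde F$ whose left and right faces carry $G=F\cap P_{\theta_0}$, then claim the inductive argument of Lemma~\ref{lem:treelike-are-enough} applies anyway because its manipulations ``take place in the interior of the cube.'' But this claim defeats the goal: if the manipulations never touch $s_l$ and $s_r$, then after regluing the intersection with $P_{\theta_0}$ (and with every nearby generic $P_\theta$, since the slice graph only changes at non-generic angles) is still $G$, which need not be a tree. That contradicts your own conclusion that the result has a braid-like tree slice at \emph{every} generic half-plane, which is exactly what Definition~\ref{dfn:treelikefoams} (extended to vinyl foams) demands. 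The argument of Lemma~\ref{lem:treelike-are-enough} also cannot simply be ``transported'' with arbitrary left/right boundary: the lemma is stated for rooted foams precisely because the general-case reduction (the red curve on $s_f$ times an interval) relies on the standard-tree behavior near $s_l$, $s_r$ and $s_b$. The annular analogue of Lemma~\ref{lem:diskdisklike} you invoke at the end is correctly identified --- it is the paper's Lemma~\ref{lem:Nbiggerk}, proved by a coloring argument as you describe --- but it is not where the difficulty lies; the missing step is the pre-cutting normalization of the slice at $P_{\theta_0}$.
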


\begin{proof}
 First we choose a $\theta$ in $[0,2\pipi]$ such that the intersection of $P_\theta$ with the foam $F$ is generic. Thanks to Corollary~\ref{cor:tubelike2vynil}, we know that this intersection is a braid-like $\listk{k}$-MOY graph-$k$ $B$ for some finite sequence $\listk{k}$ of positive integers. We can suppose that $F$ is locally diffeomorphic to $B \times [\epsilon, \epsilon]$. The algorithm described in Lemma~\ref{lem:trees} and the local identities~(\ref{eq:MPcat}) and (\ref{eq:digoncat}) tells us that $F$ is $\infty$-equivalent to a $\ZZ$-linear combination of vinyl foams such that the intersection with $P_\theta$ is the canonical $\listk{k}$-tree-$k$. If we cut these foams along  $P_\theta$, we can apply Lemma~\ref{lem:treelike-are-enough} on each of these foams. Gluing back the result along the canonical $\listk{k}$-tree-$k$ gives us a $\ZZ$-linear combination of tree-like $\Gamma$-foams-$\SS_k$ which is $\infty$-equivalent to $F$.
\end{proof}

\begin{rmk}
  \label{rmk:leaves}
  Just like for the disk-like context, thanks to the dot migration identity~(\ref{eq:dotmig}), we can move all  non-trivial decorations of a tree-like foam on its leaves.
\end{rmk}


The proof of Lemma~\ref{lem:tree-likeRequivalent} can be easily adapted to the annular case. This gives the following lemma.

\begin{lem}
  \label{lem:tree-likeRequiv}
   Let $\Gamma$ be a vinyl graph and $F$ and $F'$ be two tree-like $\Gamma$-foams-$\SS_k$ with non-trivial decorations only on their leaves. Suppose furthermore that these decorations are the same for $F$ and $F'$. Then $F$ is $\infty$-equivalent to $F'$. 
\end{lem}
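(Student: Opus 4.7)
The plan is to mimic the proof of Lemma~\ref{lem:tree-likeRequivalent} in the annular setting: I want to show that for every $N$, the exterior evaluations $\kup{F}_N$ and $\kup{F'}_N$ agree, and moreover that the equality already holds term-by-term once one identifies the sets of colorings of $F$ and $F'$. Since both $F$ and $F'$ are tree-like with boundary $\Gamma \sqcup (-\SS_k)$, a coloring on either is completely determined by its restriction to the boundary; this produces a canonical bijection $\mathrm{col}_N(F) \to \mathrm{col}_N(F')$, $c \mapsto c'$, via the common boundary.

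Next I would compare the ingredients of $\kup{F,c}$ and $\kup{F',c'}$ one by one using Definition~\ref{dfn:exteval}. By Lemma~\ref{lem:tubelikesubsurface} every connected subsurface of a vinyl foam is a tube, and in a tree-like $\Gamma$-foam-$\SS_k$ the collection of leaves is labelled by the edges of $\Gamma$. For a pigment $i$, the monochrome surface $F_i(c)$ is the union of those tubes whose leaf-color set contains $i$; since this collection is indexed purely by the restriction $c|_{\Gamma}$, it corresponds, tube by tube, to $F'_i(c')$, and each pair of corresponding tubes is an annulus, hence diffeomorphic with vanishing Euler characteristic. The analogous statement holds for bichrome surfaces $F_{ij}(c)$: they are unions of tubes whose tube decomposition is dictated only by $c|_{\Gamma}$, so they are pairwise diffeomorphic (in fact, pairwise isomorphic as oriented surfaces once we track which facets carry $i$ and which carry $j$).

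It then remains to match the circle counts $\theta^+_{ij}$ and the decoration factors. The set $F_i(c)\cap F_j(c) \cap F_{ij}(c)$ is, in both $F$ and $F'$, a disjoint union of binding circles living above the trivalent vertices of $\Gamma$; under the boundary-preserving identification of colorings, these circles are in bijection and their signs with respect to $(i,j)$ agree, since the cyclic orderings around the bindings are determined by the color flow at the vertices of $\Gamma$. Finally, by hypothesis the decorations on the leaves coincide, and Remark~\ref{rmk:leaves} guarantees that no other facet carries a non-trivial decoration, so $P(F,c) = P(F',c')$. Putting these pieces together, $\kup{F,c} = \kup{F',c'}$ for every corresponding pair, hence $\kup{F}_N = \kup{F'}_N$ for all $N$.

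The only subtle step, which I expect to be the main point of the verification, is justifying that the monochrome and bichrome surfaces of a tree-like vinyl foam are rigidly determined, up to diffeomorphism and orientation, by the boundary coloring; in the disk-like case (Lemma~\ref{lem:tree-likeRequivalent}) this was transparent because all connected subsurfaces are disks, while here one has to keep track of how each tube winds around $\ann$. This is however controlled by Lemma~\ref{lem:tubelikesubsurface}: every connected piece is an essential annulus, so its topological type carries no hidden information beyond the partition of leaves dictated by the coloring, which is the same for $F$ and $F'$.
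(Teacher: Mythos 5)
Your proof is correct and follows the same approach as the paper, whose entire argument for this lemma is to say that the disk-like proof of Lemma~\ref{lem:tree-likeRequivalent} ``can be easily adapted to the annular case''; you have supplied that adaptation. The only slight imprecision is your description of $F_i(c)\cap F_j(c)\cap F_{ij}(c)$ as a disjoint union of \emph{circles}: for a tree-like $\Gamma$-foam-$\SS_k$ with $\Gamma$ having vertices this set is in general a union of oriented \emph{arcs} ending at those vertices (possibly together with essential circles not touching $\Gamma$), and the correct statement, mirroring the paper's sentence about oriented arcs in the disk-like case, is that these arcs are in orientation-preserving bijection between $F$ and $F'$, so that after closing with any $G$ the circle counts $\theta^{\pm}_{ij}$ of $G\circ F$ and $G\circ F'$ coincide.
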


\section{Soergel Bimodules}
\label{sec:soergel-bimodules-1}
In this section we prove that for any braid-like MOY graph $\Gamma$  the space of rooted $\Gamma$-foams regarded up to $\infty$-equivalence is isomorphic to the Soergel bimodule associated with $\Gamma$.

\subsection{Some polynomial algebras}
\label{sec:some-polyn-algebr}

\begin{notation}
  \label{not:polynomial-algebras}
  \begin{enumerate}
  \item We denote the graded ring $\QQ[T_1, \dots,T_N]^{\mathfrak{S}_N}$ by $\SP{N}$, where the indeterminates $T_\bullet$ are homogeneous of degree $2$. This is the \emph{$q$-degree}.
   \item Denote  by $\mathsf{C}$ the category of $\ZZ$-graded finitely generated projective $\SP{N}$-modules. If $M$ is an object of $\mathsf{C}$, $Mq^{i}$ denotes the same object where the degree has been shifted by $i$. This means that $(Mq^i)_j = M_{j-i}$. If $P(q)= \sum_i a_i q^i$ is a Laurent polynomial in $q$ with positive integer coefficients, $MP(q)$ denotes the module
\[
\bigoplus_i (Mq^i)^{a_i}.
\]
\item Let $\listk{k}= (k_1, \dots, k_l)$ be a finite sequence of
  positive
  integers of level $k$ (if $k=0$ the empty sequence is allowed). 
  The group $\prod_{i=1}^l \mathfrak{S}_{k_i}$ is denoted by
  ${\mathfrak{S}_{\listk{k}}}$.  We define the algebra $A_{\listk{k}}$: 
\[ A_{\listk{k}}:=\SP{N}[x_1, \dots,   x_k]^{\mathfrak{S}_{\listk{k}}}. \]
 The indeterminates $x_\bullet$ are homogeneous of degree $2$. If $\listk{k}= (k)$ (that is if $\listk{k}$ has length $1$), we write $A_k$ instead of $A_{(k)}$.
   \item If $\Gamma$ is a vinyl graph, denote by $\IET(\Gamma)_\QQ$ the graded $\QQ$-vector space  generated by vinyl $\Gamma$-foams-$\SS_k$ modded out by $\infty$-equivalence (see Definition~\ref{dfn:equivalence-relations}). Define $\IET(\Gamma):= \IET(\Gamma)_\QQ\otimes_\QQ \SP{N}$. Since for all $k$, the exterior $\sll_k$-evaluation of foams is homogeneous, the $\SP{N}$-module $\IET(\Gamma)$ is naturally graded. 
  \end{enumerate}
\end{notation}

Before dealing with Soergel bimodules, we state the following lemma which relates the algebra $A_{\listk{k}}$ with vinyl foams.

\begin{lem}
  \label{lem:IE4circles}
  Let $\listk{k} := (k_1, \dots, k_l)$ be a finite sequence of positive integers and $\SS_{\listk{k}}$ be the vinyl graph which consists of $l$ oriented circles with labeling induced by $\listk{k}$. Then $\IET(\SS_{\listk{k}})$ is isomorphic to $A_{\listk{k}}$ as a graded $\SP{N}$-module. 
\end{lem}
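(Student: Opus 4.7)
The plan is to construct a natural $\SP{N}$-linear map $\psi \co A_{\listk{k}} \to \IET(\SS_{\listk{k}})$ sending a symmetric polynomial to a decorated tree-like foam, and to verify it is an isomorphism of graded $\SP{N}$-modules. The main obstacle is injectivity; surjectivity is essentially served up by the annular reductions already established.

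First, I would fix once and for all a tree-like vinyl $\SS_{\listk{k}}$-foam-$\SS_{k}$, call it $T_{\listk{k}}$, obtained by lifting the combinatorial template of Definition~\ref{dfn:canonical-tree} (iterated splittings of a single essential annulus into $l$ nested ones) to an annular foam. Its $l$ leaf facets are exactly the annuli terminating in the circles of $\SS_{\listk{k}}$, the $i$-th leaf bounding a circle labeled $k_i$. Any $P \in \QQ[x_1,\dots,x_k]^{\mathfrak{S}_{\listk{k}}}$ decomposes, under the canonical identification
\[
\QQ[x_1,\dots,x_k]^{\mathfrak{S}_{\listk{k}}} \;\cong\; \bigotimes_{i=1}^{l} \QQ[x_1^{(i)},\dots,x_{k_i}^{(i)}]^{\mathfrak{S}_{k_i}},
\]
as a $\QQ$-linear combination of pure tensors, and I let $T_{\listk{k}}^{P}$ be the corresponding $\QQ$-linear combination of copies of $T_{\listk{k}}$ decorated leaf by leaf according to the tensor factors. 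Extending $\SP{N}$-linearly, I obtain a graded $\SP{N}$-module map
\[
\psi \co A_{\listk{k}} \longrightarrow \IET(\SS_{\listk{k}}), \qquad P\otimes Q \;\mapsto\; Q\cdot T_{\listk{k}}^{P}.
\]

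Surjectivity of $\psi$ is immediate from the annular reductions of Section~\ref{sec:quasi-annular-foams}: Lemma~\ref{lem:vinyl2tree-like} reduces any vinyl $\SS_{\listk{k}}$-foam-$\SS_{k}$ modulo $\infty$-equivalence to a $\ZZ$-linear combination of tree-like foams; Remark~\ref{rmk:leaves} pushes all decorations onto the leaves by dot migration; and Lemma~\ref{lem:tree-likeRequiv} guarantees that two such decorated tree-like foams with identical leaf decorations represent the same class in $\IET(\SS_{\listk{k}})$. Hence every class in $\IET(\SS_{\listk{k}})$ is of the form $\psi(\alpha)$.

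The hard part is injectivity, which I plan to resolve by testing against the evaluation functors $\F_N$ for varying $N$. Suppose $\alpha \in A_{\listk{k}}$ satisfies $\psi(\alpha)\sim_\infty 0$; then $\F_N(\psi(\alpha)) = 0$ in $\F_N(\SS_{\listk{k}})$ for every $N$. By Corollary~\ref{cor:catofMOYcalulus}, $\F_N(\SS_{\listk{k}})$ is a free graded $\SP{N}$-module of rank $\prod_{i}\qbina{N}{k_i}$, and the map $\alpha \mapsto \F_N(\psi(\alpha))$ exhibits $\F_N(\SS_{\listk{k}})$ as a natural quotient $A_{\listk{k}}/I_N$ with $I_N = \sum_{i=1}^{l} I_N^{(i)}$, where $I_N^{(i)} \subset \SP{N}[x^{(i)}]^{\mathfrak{S}_{k_i}}$ is generated by the equivariant Schubert relations $\sum_{a+b=j}(-1)^{a} h_{a}(x^{(i)}) T_{b}$ for $j > N - k_i$ (the vanishings of the complementary elementary symmetric functions coming from the Grassmannian $\mathrm{Gr}(k_i,N)$). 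Since the generators of $I_N$ have $x$-degree at least $2(N-k_i+1)$, any fixed $\alpha$ of bounded degree lies outside $I_N$ once $N$ is large enough; thus $\bigcap_{N\geq 1} I_N = 0$, forcing $\alpha = 0$. This yields injectivity and completes the proof.
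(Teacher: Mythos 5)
Your overall scheme matches the paper's: fix a tree-like foam over the canonical tree, decorate its leaves, get surjectivity from Lemma~\ref{lem:vinyl2tree-like}, Remark~\ref{rmk:leaves} and Lemma~\ref{lem:tree-likeRequiv}, and prove injectivity by testing against the exterior evaluations $\F_\bullet$ for large enough parameter. Where you diverge is in how injectivity is run, and that step as written has a genuine notational tangle worth flagging.

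In your injectivity paragraph, the symbol $N$ plays two incompatible roles: it is the fixed integer entering $\SP{N}$ and $A_{\listk{k}}$ in the statement, and it is also the running parameter of the evaluation functor $\F_N$ used to unwind $\infty$-equivalence. Since $\F_M(\SS_{\listk{k}})$ is a $\QQ[T_1,\dots,T_M]^{\mathfrak{S}_M}$-module whose coefficient ring changes with $M$, the phrase ``exhibits $\F_N(\SS_{\listk{k}})$ as a natural quotient $A_{\listk{k}}/I_N$'' does not type-check once $N$ is let run. The clean fix is the one implicit in the paper: since $\IET(\SS_{\listk{k}})=\IET(\SS_{\listk{k}})_\QQ\otimes_\QQ\SP{N}$ and $A_{\listk{k}}=\QQ[x_1,\dots,x_k]^{\mathfrak{S}_{\listk{k}}}\otimes_\QQ\SP{N}$ with $\SP{N}$ flat over $\QQ$, it suffices to prove $\psi_\QQ$ injective. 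Moreover, you invoke the identification of $\F_M(\SS_{\listk{k}})$ with an explicit quotient by equivariant Grassmannian relations as a black box; while true, this is nontrivial input. The paper sidesteps it: it picks the Schur basis $(\pi_{\boldsymbol\lambda})$ of $\QQ[x]^{\mathfrak{S}_{\listk{k}}}$, assumes $\sum_j\mu_j F_{\boldsymbol\lambda_j}\sim_\infty 0$ with $\mu_j\in\QQ$, then chooses a concrete $M=\ell_{\max}(k+1)$ depending on the finitely many $\boldsymbol\lambda_j$ involved and observes that the capped-off foams remain linearly independent in $\F_M(\SS_{\listk{k}})$, directly forcing all $\mu_j=0$. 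That avoids both the ideal presentation and the clash of $N$'s, and is the version you should adopt. Your degree-bound idea ($\alpha$ of bounded degree escapes $I_M$ for $M\gg0$) is the correct intuition, but packaging it as $\bigcap_M I_M=0$ inside a single ring $A_{\listk{k}}$ is not literally meaningful.
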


\begin{proof}
  Let $k = \sum_{i}^l k_i$ and  $T$ be the canonical $\listk{k}$-tree-$(k)$. For $i$ in $\{1, \dots, l\}$ and $\lambda_i$ denotes a Young diagram with at most $k_i$ lines. Denote $\pi_{\lambda_i}^{(i)}$ the Schur polynomial associated with $\lambda_i$ in the variables $x_{1+r_i}, \dots, x_{k_i+ r_i}$ where  $r_i=\sum_{j=1}^{i-1} k_j$.
  
  A $\SP{N}$-base of $A_{\listk{k}}$ is given by $(\pi_{\lambda_1}^{(1)}, \dots, \pi_{\lambda_l}^{(l)})=: \pi_{\boldsymbol{\lambda}}$ where the $\lambda_i$'s take all possible shapes. Being given $\boldsymbol{\lambda}= (\lambda_1, \dots, \lambda_l)$ a sequence of of Young diagrams as described above, define $F_{\boldsymbol{\lambda}}$ to be the foam $T \times \SS^1$ where the $i$th leaf of $F$ is decorated by the Schur polynomial $\pi_{\lambda_i}^{(i)}$. 

  Let us prove that the $\SP{N}$-linear map sending  $\pi_{\boldsymbol{\lambda}} \in A_{\listk{k}}$ to $F_{\boldsymbol{\lambda}}\in \IET(\SS_{\listk{k}})$ is  bijective. It is surjective because of dots migration (\ref{eq:dotmig}). Let $\sum_{j=1}^h\mu_{\boldsymbol{\lambda}_j} \pi_{\boldsymbol{\lambda}_j}$ be a linear combination of monomials mapped to $0$. Let $\ell_{\max}$ be the maximal length of all lines appearing in the Young diagrams of all the size of the Young diagram of $(\boldsymbol{\lambda}_j)_{j=1, \dots, h}$. For $M = \ell_{max}(k+1)$, the foams $F_{\boldsymbol{\lambda}_h}$ precomposed by a cup labeled by $k$ are  linearly independent in $\F_{M}(\SS_{\listk{k}})$. Hence the coefficients $\mu_{\boldsymbol{\lambda}_j}$ must be all equal to $0$.
\end{proof}

\subsection{Singular Soergel bimodules}
\label{sec:sing-soerg-bimod}

We introduce singular Soergel bimodules. See for instance \cite{MR1173115, MR2097586, MR2339573, MR2844932} or   \cite{2016arXiv160202769W} for a pictorial description close to ours.

\begin{dfn}
  \label{dfn:soergel-bimodules}
  Let $\Gamma$ be a braid-like $\listk{k}_1$-MOY graph-$\listk{k}_0$. If $\Gamma$ has no trivalent vertices, we have $\listk{k}_0= \listk{k}_1$ and we define $\BS(\Gamma)$ to be equal to $A_{\listk{k}_0}$ as a $A_{\listk{k}_0}$-module-$A_{\listk{k}_0}$. If $\Gamma$ has only one trivalent vertex (which is supposed to be of type $(a,b,a+b)$), then:
  \begin{itemize}
  \item if the length of $\listk{k}_1$ is equal to the length of $\listk{k}_0$ plus 1, we define $\BS(\Gamma)$ to be $A_{\listk{k}_1}q^{-ab/2}$ as a $A_{\listk{k}_1}$-module-$A_{\listk{k}_0}$; 
  \item if the length of $\listk{k}_0$ is equal to the length of $\listk{k}_1$ plus 1, we define $\BS(\Gamma)$ to be $A_{\listk{k}_0}q^{-ab/2}$ as a $A_{\listk{k}_1}$-module-$A_{\listk{k}_0}$;
  \end{itemize}
If $\Gamma$ has more than one trivalent vertex, if necessary we perturb\footnote{In other words, we choose an ambient isotopy of the square such that the images of the trivalent vertices of $\Gamma$ have distinct $y$-coordinate} $\Gamma$ to see it as a composition:
\[
\Gamma = \Gamma_t \circ_{\listk{k^t}} \Gamma_{t-1} \circ_{\listk{k^{t-1}}} \cdots \circ_{\listk{k^2}}  \Gamma_1 \circ_{\listk{k^1}} \Gamma_0, 
\]
where $\Gamma_i$ is a braid-like $\listk{k^{i+1}}$-MOY graph-$\listk{k^{i}}$ with one trivalent vertex, for all $i$ in $\{0,\dots, t\}$. The symbols $\circ_{\listk{k^{i}}}$ means that  $\Gamma_i$ and $\Gamma_{i-1}$ are glued along $\listk{k^i}$. We have $\listk{k^{0}}= \listk{k}_{0}$ and $\listk{k^{t+1}} = \listk{k}_{1}$. We define
\[
\BS(\Gamma):= \BS(\Gamma_t) \otimes_{A_{\listk{k^t}}} \BS(\Gamma_{t-1}) \otimes_{A_{\listk{k^{t-1}}}} \cdots \otimes_{A_{\listk{k^2}}}  \BS(\Gamma_1) \otimes_{A_{\listk{k^1}}} \BS(\Gamma_0).
\]
The space $\BS(\Gamma)$ has a natural structure of $A_{\listk{k}_1}$-module-$A_{\listk{k}_0}$. It is called the \emph{Soergel bimodule associated with $\Gamma$}. Note that the grading of $\BS(\Gamma)$ takes values either in $\ZZ$ or in $\frac12 + \ZZ$.
\end{dfn}

\begin{exa} The singular Soergel bimodule associated with
  \label{exa:Soergel-bimodule-of-graph}
  \[\Gamma =\NB{\tikz[font=\tiny]{\begin{scope}
\coordinate (B1) at (1, 0);
\coordinate (B2) at (2, 0);
\coordinate (B3) at (3, 0);
\coordinate (B4) at (4, 0);
\coordinate (T1) at (1.5, 4);
\coordinate (T2) at (2.5, 4);
\coordinate (T3) at (3.5, 4);
\coordinate (A1) at (1.5,0.5);
\coordinate (A2) at (2,1.5);
\coordinate (A3) at (2.5,2.5);
\coordinate (A4) at (3,3);
\coordinate (A5) at (3,3.5);
\draw[->-]  (B1) .. controls +(0,0.2) and + (0, -0.2) .. (A1) node[midway,above] {$1$};
\draw[->-]  (B2) .. controls +(0,0.2) and + (0, -0.2) .. (A1) node[midway,above] {$2$};
\draw[->-]  (B3) .. controls +(0,0.5) and + (0, -0.2) .. (A2) node[midway,left] {$1$};
\draw[->-]  (B4) .. controls +(0,0.5) and + (0, -0.2) .. (A4) node[midway,left] {$1$};
\draw[->-]  (A1) .. controls +(0,0.2) and + (0, -0.2) .. (A2) node[midway,left] {$3$};
\draw[->-]  (A2) .. controls +(0,0.2) and + (0, -0.2) .. (A3) node[midway,left] {$4$};
\draw[->-]  (A3) .. controls +(0,0.2) and + (0, -0.5) .. (T1) node[midway,left] {$2$};
\draw[->-]  (A3) .. controls +(0,0.2) and + (0, -0.2) .. (A4) node[midway,below] {$2$};
\draw[->-]  (A4) .. controls +(0,0.2) and + (0, -0.2) .. (A5) node[midway,left] {$3$};
\draw[->-]  (A5) .. controls +(0,0.2) and + (0, -0.2) .. (T2) node[midway,left] {$1$};
\draw[->-]  (A5) .. controls +(0,0.2) and + (0, -0.2) .. (T3) node[midway,right] {$2$};
\end{scope}}}  
\]
is the following $A_{(2,1,2)}$-module-$A_{(1,2,1,1)}$:
\begin{align*}&A_{(2,1,2)} \otimes_{A_{(2,3)}} A_{(2,3)} \otimes_{A_{(2,3)}} A_{(2,2,1)} \otimes_{A_{(4,1)}} A_{(4,1)} \otimes_{A_{(4,1)}} A_{(3,1,1)}\otimes_{A_{(3,1,1)}} A_{(1,2,1,1)}q^{-13/2} \\
&\simeq A_{(2,1,2)} \otimes_{A_{(2,3)}} A_{(2,2,1)} \otimes_{A_{(4,1)}} A_{(1,2,1,1)} q^{-13/2}.
\end{align*}
\end{exa}


\begin{rmk}
  \label{rmk:soergel-well-defined}
  For Definition~\ref{dfn:soergel-bimodules} to be valid, the isomorphism type of the bimodule $\BS(\Gamma)$ should not depend on the decomposition of $\Gamma$. This is clear since two such decompositions are related by the ``commutation of faraway vertices'' for which the isomorphism is clear (see as well Remark~\ref{rmk:soergel-well-defined-2}). The purpose of the grading shift introduced in the previous definition is to ensure compatibility of gradings in Proposition~\ref{prop:Soergel-foam}. In order to keep track of this overall shift, define
\[
s(\Gamma) = \sum_{\substack{v \textrm{ vertex of $\Gamma$} \\ \textrm{of type $(a,b, a+b)$}}} \frac{ab}2.
\]
\end{rmk}

\subsection{A 2-functor}
\label{sec:2-functor}

The relationship between Soergel bimodules and foams has already been investigated, see for instance \cite{RW, 2016arXiv160202769W}. However, we develop in this section a foam interpretation of Soergel bimodules themselves and not only of morphisms between them.

\begin{dfn}
  \label{dfn:foam-modulo-infty}
  Let $\Gamma$ be a braid-like $\listk{k}_1$-MOY graph-$\listk{k}_0$. We set $\IED(\Gamma)$ to be the free $\SP{N}$-module generated by the set of rooted $\Gamma$-foams modded out by $\infty$-equivalence. If $F$ is in $2$-$\Hom_{\DLF_k}(\Gamma_0, \Gamma_1)$, we denote by $\IED(F)$ the map $\IED(\Gamma_0) \to \IED(\Gamma_1)$ induced by $F$. It is a map of graded $\SP{N}$-modules. Given two objects $\listk{k}_0$ and $\listk{k}_1$ of $\DLF_k$, this defines a functor $\IED$ from $1$-$\Hom_{\DLF_k}(\listk{k}_0, \listk{k}_1)$ to the category of graded $\SP{N}$-modules.
\end{dfn}

\begin{lem}
  \label{lem:id-dl-foam-2-algebra}
  Let $\listk{k}= (k_1, \dots, k_l)$ be a finite sequence of positive integers. The space $\IED(\listk{k} \times I)$ has a natural structure of $\SP{N}$-algebra. As an algebra it is isomorphic to $A_{\listk{k}}$.
\end{lem}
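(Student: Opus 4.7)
The plan is to exhibit an $\SP{N}$-linear bijection $\Phi: A_\listk{k} \xrightarrow{\sim} \IED(\listk{k}\times I)$ and then transport (or, equivalently, identify naturally) the algebra structure, in close analogy with the proof of Lemma~\ref{lem:IE4circles}. Let $e := [T_\listk{k}\times I]$ be the $\infty$-equivalence class of the undecorated foam obtained by sweeping the standard tree $T_\listk{k}$ (embedded in $s_l$) along the $x$-direction to $s_r$; this is a rooted $(\listk{k}\times I)$-foam. For a basis element $\pi_{\boldsymbol\lambda} := \prod_{i=1}^l \pi_{\lambda_i}^{(i)}$ of $A_\listk{k}$ (a product of Schur polynomials in the blocks of variables associated to each $k_i$), set $\Phi(\pi_{\boldsymbol\lambda}) := [F_{\boldsymbol\lambda}]$, where $F_{\boldsymbol\lambda}$ denotes $T_\listk{k}\times I$ with its $i$-th leaf decorated by $\pi_{\lambda_i}^{(i)}$. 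Lemma~\ref{lem:tree-likeRequivalent} ensures this is well-defined on $\infty$-equivalence classes; extending $\SP{N}$-linearly yields $\Phi$.

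Surjectivity of $\Phi$ follows by a three-step reduction: Lemma~\ref{lem:treelike-are-enough} reduces any rooted foam to a $\ZZ$-combination of tree-like foams, Remark~\ref{rmk:dotonlyonleaves} pushes all decorations to the leaves via the dot-migration identity~(\ref{eq:dotmig}), and Lemma~\ref{lem:tree-likeRequivalent} identifies the $\infty$-equivalence class with the tuple of leaf decorations, which is exactly an element of $A_\listk{k}$.

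Injectivity is the main obstacle, and I would handle it by transferring to vinyl foams. The idea is to define a closing map $\Psi: \IED(\listk{k}\times I) \to \IET(\SS_\listk{k})$ by identifying $s_l$ with $s_r$ (both carrying the standard tree $T_\listk{k}$), which wraps the cube into $\ann\times[0,1]$: the bottom $k$-strand on $s_b$ becomes the essential $k$-circle $\SS_k$, the $l$ parallel strands on $s_t$ become $\SS_\listk{k}$, and $\Psi(F_{\boldsymbol\lambda})$ is precisely the decorated foam $T_\listk{k}\times S^1$ used in the proof of Lemma~\ref{lem:IE4circles}. Since those foams form an $\SP{N}$-basis of $\IET(\SS_\listk{k})$, the $F_{\boldsymbol\lambda}$ must be linearly independent in $\IED(\listk{k}\times I)$, which establishes injectivity of $\Phi$.

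Finally, the natural algebra structure on $\IED(\listk{k}\times I)$ is multiplication of leaf decorations: the product of two tree-like representatives $F_1, F_2$ with respective leaf decorations $(p_i^{(1)})_{i=1}^l$ and $(p_i^{(2)})_{i=1}^l$ is the tree-like foam with leaf decorations $(p_i^{(1)}p_i^{(2)})_{i=1}^l$, well-defined modulo $\infty$-equivalence by Lemma~\ref{lem:tree-likeRequivalent} and dot migration, with unit $e$. By construction $\Phi$ is then an $\SP{N}$-algebra isomorphism matching the algebra structure on $A_\listk{k}$.
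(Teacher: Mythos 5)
Your overall strategy---defining $\Phi$ on a Schur-polynomial basis, getting surjectivity from Lemma~\ref{lem:treelike-are-enough}, Remark~\ref{rmk:dotonlyonleaves} and Lemma~\ref{lem:tree-likeRequivalent}---matches the paper's proof, but you take a genuinely different route for injectivity. The paper notes that $\phi$ is $A_k$-linear, that both $\IED(\listk{k}\times I)$ and $A_{\listk{k}}$ are free $A_k$-modules of the same finite rank $\qbinil{k}{k_1\,k_2\,\dots\,k_l}$ (the first by repeated use of the digon identity~(\ref{eq:digoncat}), the second by standard invariant theory), and concludes injectivity from the standard fact that a surjective endomorphism-type map of free modules of equal finite rank over a commutative ring is bijective. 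You instead push everything into the annulus via a closing map $\Psi:\IED(\listk{k}\times I)\to\IET(\SS_{\listk{k}})$ and invoke the basis produced in Lemma~\ref{lem:IE4circles}. Both work; the paper's version stays self-contained inside the disk-like setting, while yours imports an external linear-independence witness and is arguably more geometric.

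Two points deserve more care. First, your argument requires that $\Psi$ be well-defined on $\infty$-equivalence classes, i.e.\ that closing up along $s_l\sim s_r$ sends $\infty$-equivalent linear combinations of rooted foams to $\infty$-equivalent linear combinations of vinyl foams. This is exactly the ingredient used without comment in the proof of Proposition~\ref{prop:HH0-2-vinyl} (``Closing up rooted $\Gamma$-foams into vinyl $\widehat{\Gamma}$-foams provides a well-defined map $\pipi$''), so it is legitimate, but it is not automatic and you should at least flag it---it amounts to the compatibility of the universal construction with the gluing of boundary components. Second, justifying the multiplication on $\IED(\listk{k}\times I)$ as ``multiplication of leaf decorations, well-defined by Lemma~\ref{lem:tree-likeRequivalent} and dot migration'' is insufficient on its own: that lemma only says equal decorations give $\infty$-equivalent foams, not the converse, which is what you need for the product to descend to classes. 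Either invoke the injectivity of $\Phi$ you just proved, or (as the paper does) define the multiplication directly as horizontal concatenation of rooted foams along the standard tree, which is manifestly compatible with $\infty$-equivalence and reduces to multiplication of leaf decorations after dot migration.
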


\begin{proof} 
  The algebra structure is induced by concatenation of disk-like $(\listk{k} \times I)$-foams along the standard tree. The unit is the standard tree times the interval, with each facets trivially decorated. 
Note, that there is an isomorphism of $\SP{N}$-algebras:
\[
A_{\listk{k}} \simeq \bigotimes_{i=1}^l \SP{N}[x_1, \dots,x_{k_i}]^{\mathfrak{S}_{k_i}}.
\]
It is convenient to use this description of $A_{\listk{k}}$ to define the isomorphism between $A_{\listk{k}}$ and $\IED(\listk{k}\times I)$. For $\mathbf{P}=P_1\otimes \dots \otimes P_l$ a pure tensor in $A_{\listk{k}}$, we define $\phi(\mathbf{P})$ to be the ($\infty$-equivalence class of the) standard tree times the interval with decorations $P_1, \dots, P_l$ on its leaves and trivial decorations on the other facets. 
This is clearly an algebra morphism and it is surjective thanks to Lemma~\ref{lem:treelike-are-enough}. We now focus on injectivity. Both $\IED(\listk{k} \times I)$ and $A_{\listk{k}}$ have natural structures of $A_{k}$-modules. For $A_{\listk{k}}$ this comes from the injection of $A_{k}$ in $A_{\listk{k}}$. For $\IED(\listk{k} \times I)$, this comes by decorating the ``root'' facet, that is the facet which bounds the edge  $k \times I$ (which it self is in $s_b$). The map $\phi$ respects these structures of $A_{k}$-modules because the dots migration identity~(\ref{eq:dotmig}) is part of the $\infty$-equivalence (see Remark~\ref{rmk:relation2equivalence}). Thanks to identity~(\ref{eq:digoncat}) (which is as well compatible with the $\infty$-equivalence) used $(k-1)$ times, we know that $\IED(\listk{k} \times I)$ is free of rank 
$\left[ \begin{smallmatrix}
  k \\
k_1\,\, k_2\,\dots\, k_l 
\end{smallmatrix} \right]$. The algebra $A_{\listk{k}}$ is as well a free $A_{k}$-module of rank $\left[\begin{smallmatrix}
  k \\
k_1\,\, k_2\,\dots\, k_l 
\end{smallmatrix} \right]$. This is enough to conclude that $\phi$ is indeed an isomorphism.
\end{proof}

\begin{cor}
  \label{cor:bimodule-strucure-on-IE-Gamma}
  Let $\Gamma$ be a braid-like $\listk{k}_1$-MOY graph-$\listk{k}_0$. The space $\IED(\Gamma)$ has a natural structure of $A_{\listk{k}_1}$-module-$A_{\listk{k}_0}$. Let us define $r_s$ and $r_m$ the two Laurent polynomials by the formulas:
\[
r_s(\Gamma):= \prod_{\substack{v \in V(\Gamma)\textrm{ split} \\ \textrm{$v$ of type $(a,b,a+b)$}}}
\begin{bmatrix}
  a+b \\ a
\end{bmatrix} 
\quad \textrm{and} \quad
r_m(\Gamma):= \prod_{\substack{v \in V(\Gamma) \textrm{ merge} \\ \textrm{$v$ of type $(a,b,a+b)$}}}
\begin{bmatrix}
  a+b \\ a
\end{bmatrix}. 
\]
The space $\IED(\Gamma)$ is a free $A_{\listk{k}_1}$-module of graded rank $r_m(\Gamma)q^{-s(\Gamma)}$ and a free module-$A_{\listk{k}_0}$ of graded rank $r_s(\Gamma)q^{-s(\Gamma)}$.

\end{cor}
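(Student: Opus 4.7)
The plan is to establish the bimodule structure first and then compute the ranks by induction on the number of trivalent vertices of $\Gamma$.

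For the bimodule structure, I extend Lemma~\ref{lem:id-dl-foam-2-algebra} as follows. Using the decomposition $A_{\listk{k}}\simeq \bigotimes_i \SP{N}[x_{r_i+1},\dots,x_{r_i+k_i}]^{\mathfrak{S}_{k_i}}$ with $r_i=\sum_{j<i}k_j$, the left action of a pure tensor $P=P_1\otimes\cdots\otimes P_l\in A_{\listk{k}_1}$ on a rooted $\Gamma$-foam is given by placing $P_i$ as a decoration on the facet adjacent to the $i$-th edge of $\listk{k}_1$ at the top; the right action of $A_{\listk{k}_0}$ is defined analogously on the facets adjacent to the edges of $\listk{k}_0$ at the bottom. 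Dots migration~(\ref{eq:dotmig}) ensures well-definedness on $\infty$-equivalence classes, and the two actions commute because they touch disjoint facets.

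For the rank computation, the base case of zero trivalent vertices is covered by Lemma~\ref{lem:id-dl-foam-2-algebra}: $\IED(\Gamma)\simeq A_{\listk{k}_0}$ is free of rank $1$ on either side, matching $r_m(\Gamma)=r_s(\Gamma)=1$ and $s(\Gamma)=0$. For the inductive step, after an isotopy I write $\Gamma=(I\sqcup M\sqcup I)\circ\Gamma'$, where $M$ is an elementary merge of type $(a,b,a+b)$ placed at the top of $\Gamma$ and $\Gamma'$ has one fewer trivalent vertex (the three other configurations---split at top, merge at bottom, split at bottom---will be handled symmetrically). Denoting by $\listk{k}'$ the top of $\Gamma'$, the key intermediate step is to establish an isomorphism of $A_{\listk{k}_1}$-$A_{\listk{k}_0}$-bimodules
\[
\IED(\Gamma)\simeq \IED(\Gamma')\,q^{-ab/2},
\]
in which the left $A_{\listk{k}_1}$-action on the right-hand side is the restriction of the natural $A_{\listk{k}'}$-action through the inclusion $A_{\listk{k}_1}\hookrightarrow A_{\listk{k}'}$ (which is an inclusion since $\mathfrak{S}_a\times\mathfrak{S}_b\subset \mathfrak{S}_{a+b}$). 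The map is realized on generators by capping a rooted $\Gamma'$-foam with the zip foam for $M$; the degree shift $-ab/2$ follows from a direct calculation using Definition~\ref{dfn:degree-disk-like}, as one already checks on the single-merge case.

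Granting the isomorphism above, the rank formulas follow by standard base change. As a right $A_{\listk{k}_0}$-module, $\IED(\Gamma)$ is free of rank $r_s(\Gamma')q^{-s(\Gamma')-ab/2}=r_s(\Gamma)q^{-s(\Gamma)}$, using $r_s(\Gamma)=r_s(\Gamma')$ and $s(\Gamma)=s(\Gamma')+ab/2$. As a left $A_{\listk{k}_1}$-module, the classical freeness of $A_{\listk{k}'}$ over $A_{\listk{k}_1}$ of graded rank $\qbina{a+b}{a}$ multiplies the $A_{\listk{k}'}$-rank of $\IED(\Gamma')$, yielding $\qbina{a+b}{a}\,r_m(\Gamma')q^{-s(\Gamma')-ab/2}=r_m(\Gamma)q^{-s(\Gamma)}$. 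The hard part will be to rigorously justify the bimodule isomorphism: the plan is to use Lemma~\ref{lem:treelike-are-enough} and Remark~\ref{rmk:dotonlyonleaves} (reducing to tree-like rooted foams with decorations on leaves), Lemma~\ref{lem:tree-likeRequivalent} (showing such decorated tree-like foams are determined by their leaf data up to $\infty$-equivalence), and the digon identity~(\ref{eq:digoncat}) together with dots migration (tracking how decorations on the $(a+b)$-facet just below the top merge of $\Gamma$ decompose, via Littlewood--Richardson coefficients, into decorations on the $(a)$- and $(b)$-facets of $\Gamma'$).
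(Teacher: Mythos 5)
Your proposal is essentially an explicit unfolding of the paper's compressed argument. The paper realizes the two module structures by concatenating rooted $(\listk{k}_i\times I)$-foams along $s_l$ and $s_r$ (where $\listk{k}_0$ and $\listk{k}_1$ meet $s_t$; they are not literally ``at the top'' or ``at the bottom'' of the rooted foam --- $\Gamma$ sits entirely in $s_t$), and derives the ranks by appealing to Lemma~\ref{lem:trees}, Remark~\ref{rmk:treeRT}, and identities~(\ref{eq:MPcat}) and (\ref{eq:digoncat}). Your induction reproduces precisely the skein reduction hidden behind those references, so the methods coincide.

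The one substantive gap is the assertion that the three remaining configurations ``will be handled symmetrically''. The isomorphism $\IED(\Gamma)\simeq\IED(\Gamma')\,q^{-ab/2}$ with the left action restricted along $A_{\listk{k}_1}\hookrightarrow A_{\listk{k}'}$ is specific to a \emph{merge} at the top: there the $(a+b)$-edge sits above the vertex and $\listk{k}_1$ has one fewer part than $\listk{k}'$, so the underlying $\SP{N}$-module does not change. When the topmost vertex is a \emph{split}, the inclusion goes the other way ($A_{\listk{k}'}\hookrightarrow A_{\listk{k}_1}$), and the correct statement is a base change
\[
\IED(\Gamma)\ \simeq\ A_{\listk{k}_1}\otimes_{A_{\listk{k}'}}\IED(\Gamma')\,q^{-ab/2},
\]
which is a strictly larger $\SP{N}$-module. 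The rank bookkeeping still works --- the left rank is unchanged under base change while the right rank picks up the factor $\qbina{a+b}{a}$, accounting for the new factor in $r_s(\Gamma)$ --- but the two verifications are not identical and you should record both. (Mirror statements hold for elementary vertices peeled off from the $\listk{k}_0$-side.) A small further slip: for a merge at the top, the decorated facet whose Schur polynomial decomposes under dots migration is the $(a+b)$-facet \emph{above} the vertex (attached to the big outgoing edge), not below it.
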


\begin{proof}
  The algebras $A_{\listk{k}_0}$ and $A_{\listk{k}_1}$ are isomorphic to $\IED(\listk{k}_0\times I)$ and $\IED(\listk{k}_1\times I)$ and the action of $A_{\listk{k}_0}$ and $A_{\listk{k}_1}$ are given by concatenating disk-like $(\listk{k}_0\times I)$-foam and $(\listk{k}_1\times I)$ along $s_r$ and $s_l$. The statement about the freeness and the rank follows directly from Lemma~\ref{lem:trees}, Remark~\ref{rmk:treeRT} and the fact that the identities (\ref{eq:MPcat}) and (\ref{eq:digoncat}) holds in the $\infty$-equivalence setting (see Remark~\ref{rmk:relation2equivalence}.)
\end{proof}

\begin{rmk}
  \label{rmk:equality-dimension}
  Note that this corollary implies, that for any  braid-like $\listk{k}_1$-MOY graph-$\listk{k}_0$ $\Gamma$, we have:
\[
r_s(\Gamma)\dim^\QQ_q A_{\listk{k}_1}  = r_m(\Gamma)\dim^\QQ_q A_{\listk{k}_0}.
\]
where $\dim_q^\QQ W$ ($\in \ZZ[[q]]$) denotes the graded dimension of a $W$ as a graded $\QQ$-vector space provided each graded piece is finite-dimensional. 
\end{rmk}

\begin{cor}
  \label{cor:elementary-graph-2-soergel}
  Let us consider $\Gamma_m$ the braid-like $\listk{k}_1$-MOY graph-$\listk{k}_0$ and $\Gamma_s$
the braid-like $\listk{k}_0$-MOY graph-$\listk{k}_1$ given by
\begin{align*}
  &\Gamma_m:=\NB{\tikz[scale= 0.7]{
\begin{scope}[yscale = 1]
\draw[->] (0,0) -- (0,0.5) node [at end, above] {$a+b$};  
\draw[>-] (-0.5, -0.5) -- (0,0) node [at start, below] {$a$};  
\draw[>-] (+0.5, -0.5) -- (0,0) node [at start, below] {$b$};  
\draw[->] (+1.5, -0.5) -- ++(0,1);
\draw[->] (+3,   -0.5) -- ++(0,1);
\draw[->] (-1.5, -0.5) -- ++(0,1);
\draw[->] (-3,   -0.5) -- ++(0,1);
\node at (2.35, 0) {$\dots$};
\node at (-2.35, 0) {$\dots$};
\end{scope}}} \qquad \textrm{and} \\
  &\Gamma_s:=\NB{\tikz[scale= 0.7]{
\begin{scope}[yscale = -1]
\draw[-<] (0,0) -- (0,0.5) node [at end, below] {$a+b$};  
\draw[<-] (-0.5, -0.5) -- (0,0) node [at start, above] {$a$};  
\draw[<-] (+0.5, -0.5) -- (0,0) node [at start, above] {$b$};  

\draw[<-] (+1.5, -0.5) -- ++(0,1);
\draw[<-] (+3,   -0.5) -- ++(0,1);
\draw[<-] (-1.5, -0.5) -- ++(0,1);
\draw[<-] (-3,   -0.5) -- ++(0,1);
\node at (2.35, 0) {$\dots$};
\node at (-2.35, 0) {$\dots$};
\end{scope}}}.
\end{align*}
Then $\IED(\Gamma_s)$ isomorphic to $A_{\listk{k}_0}$ as a $A_{\listk{k}_1}$-module-$A_{\listk{k}_0}$ and
$\IED(\Gamma_m)$ isomorphic to $A_{\listk{k}_0}$ as a graded $A_{\listk{k}_0}$-module-$A_{\listk{k}_1}$. This makes sense, since $A_{\listk{k}_0}$ is a sub-algebra of $A_{\listk{k}_1}$.
\end{cor}

\begin{proof}
  We only prove the statement for $\Gamma_s$ (the proof for $\Gamma_m$ is similar). We can bend rooted $\Gamma_s$-foams on one side (see Figure~\ref{fig:bend}) to see them as rooted $\listk{k}_1\times I$-foams.
  \begin{figure}[ht]
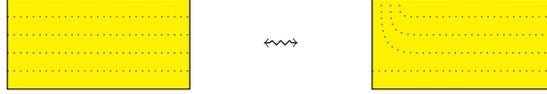

    \centering
    \[
 \tikz{\begin{scope}[scale=1.2]
  \begin{scope}
    \filldraw[draw= black, fill = yellow] (-1,-1) -- (1, -1) -- (1, 0)
    -- (-1, 0) -- cycle;
    \draw[dotted] (-1, -0.2) -- +(2,0);
    \draw[dotted] (-1, -0.4) -- +(2,0);
    \draw[dotted] (-1, -0.6) -- +(2,0);
    \draw[dotted] (-1, -0.8) -- +(2,0);
  \end{scope}
\node at (2, -0.5) {$\leftrightsquigarrow$};
  \begin{scope}[xshift= 4cm]
    \filldraw[draw= black, fill = yellow] (-1,-1) -- (1, -1) -- (1, 0)
    -- (-1, 0) -- cycle;
    \draw[dotted] (-0.7, -0)  .. controls +(0,-0.2) and +(-0.2, 0) .. (-0.5, -0.2) -- +(1.5,0);
    \draw[dotted] (-0.8, -0)  .. controls +(0,-0.3) and +(-0.3, 0) .. (-0.5, -0.4) -- +(1.5,0);
    \draw[dotted] (-0.9, -0)  .. controls +(0,-0.4) and +(-0.4, 0) .. (-0.5, -0.6) -- +(1.5,0); 
    \draw[dotted] (-1, -0.8) -- +(2,0);
  \end{scope}
\end{scope}}
 \]
 \caption{Bending a foam}
 \label{fig:bend}
\end{figure}
This shows that as a $\SP{N}$-module $\IED(\Gamma_s)$ is isomorphic to $A_{\listk{k}_1}$. The $A_{\listk{k}_1}$-module structures is straightforward since we did not bend on this part. The module-$A_{\listk{k}_0}$ structures follows from identity~(\ref{eq:dotmig}) (read from right to left). Note that the grading shift ensures compatibility with the grading of rooted foams (see Definition~\ref{dfn:degree-disk-like}).
\end{proof}

\begin{lem}
  \label{lem:comp-with-bim-str}
  Let $\Gamma_b$ and $\Gamma_t$ be two braid-like $\listk{k}_1$-MOY graphs-$\listk{k}_0$ and $F$ be an element of  $2-\hom_{\DLF}(\Gamma_b, \Gamma_t)$. The linear map $\IED(F)$ is a map of $A_{\listk{k}_1}$-modules-$A_{\listk{k}_0}$.
\end{lem}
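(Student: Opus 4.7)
The plan is to verify directly that $\IED(F)(a \cdot G \cdot b) = a \cdot \IED(F)(G) \cdot b$ for every rooted $\Gamma_b$-foam $G$, every $a \in A_{\listk{k}_1}$ and every $b \in A_{\listk{k}_0}$, and then observe that this statement is preserved by $\infty$-equivalence. Via Lemma~\ref{lem:id-dl-foam-2-algebra}, I represent $a$ by a rooted $(\listk{k}_1\times I)$-foam $B_a$ and $b$ by a rooted $(\listk{k}_0\times I)$-foam $B_b$. By the description in Corollary~\ref{cor:bimodule-strucure-on-IE-Gamma}, $a \cdot G \cdot b$ is the class of the foam obtained by concatenating $B_a$ with $G$ along $s_l$ and $B_b$ with $G$ along $s_r$, while $\IED(F)(G)$ is the class of the foam obtained by stacking $F$ on top of $G$ along $s_t$.

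These three concatenations take place along three pairwise disjoint faces of the cube. Crucially, by the very definition of a $2$-morphism in $\DLF_k$, the restriction of $F$ to $s_l$ is $-\listk{k}_0 \times [0,1]$ and its restriction to $s_r$ is $\listk{k}_1 \times [0,1]$, so stacking $F$ on top does nothing near $s_l$ or $s_r$ beyond extending the trivial products. Consequently, performing the three gluings in either order yields two foams in an enlarged parallelepiped that are related by the obvious ambient isotopy fixing the boundary (the rescaling into the unit cube). This is in essence the interchange law for horizontal and vertical composition in the $2$-category $\DLF_k$: composition with the identity $2$-morphism on the $1$-endomorphisms $\listk{k}_i \times I$ on the left and right of $F$ commutes with vertical composition by $F$.

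It remains to pass from equalities of foams to equalities of their $\infty$-equivalence classes. Since $\infty$-equivalence is generated by local identities (Remark~\ref{rmk:relation2equivalence}) applied inside arbitrary disk-like foams, it is preserved by concatenations along any face, and hence the $A_{\listk{k}_1}$- and $A_{\listk{k}_0}$-actions as well as the map $\IED(F)$ are well-defined on classes. Combining these two observations gives that $\IED(F)$ is $A_{\listk{k}_1}$-$A_{\listk{k}_0}$-linear. The only real point to verify carefully is this compatibility of $\infty$-equivalence with concatenation on each of the relevant faces; once that is in hand, the result is essentially a formal consequence of the $2$-categorical structure and there is no genuine computational obstacle.
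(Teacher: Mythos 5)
Your proof takes the same route as the paper's (sketch of a) proof: both reduce the claim to the interchange law for horizontal and vertical composition of foams in the cube, i.e.\ the observation that concatenating along $s_l$, $s_r$ and stacking along $s_t$ commute up to ambient isotopy, and hence up to $\infty$-equivalence. The paper simply states this in one sentence and defers to an analogous argument elsewhere; your write-up spells out the same idea and correctly flags that the only point requiring care is compatibility of $\infty$-equivalence with concatenation along a face.
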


\begin{proof}[Sketch of the proof.]
  This follows from the fact, that the operations \emph{concatenating foams horizontally} and \emph{concatenating foams vertically} commute up to isotopy and hence up to $\infty$-equivalence. The argument is the same as in \cite[Proposition 2.2.14]{LHRThese}.
\end{proof}

\begin{lem}
  \label{lem:concat-2-tensor}
  Let $\Gamma_0$ be a braid-like $\listk{k}_1$-MOY graph-$\listk{k}_0$ and $\Gamma_1$ be be a $\listk{k}_2$-MOY graph-$\listk{k}_1$. Then we have the following isomorphism of 
$A_{\listk{k}_2}$-module-$A_{\listk{k}_0}$:
\[\IED(\Gamma_1 \circ_{\listk{k}_1} \Gamma_2) \simeq \IED(\Gamma_1) \otimes_{A_{\listk{k}_1}} \IED( \Gamma_2).  \]
\end{lem}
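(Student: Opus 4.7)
The plan is to construct a natural $\SP{N}$-linear map
$\phi\co \IED(\Gamma_1)\otimes_{A_{\listk{k}_1}}\IED(\Gamma_2)\to \IED(\Gamma_1\circ_{\listk{k}_1}\Gamma_2)$
by \emph{horizontal concatenation} of rooted foams, and then to conclude by comparing graded ranks. Given a rooted $\Gamma_1$-foam $F_1$ and a rooted $\Gamma_2$-foam $F_2$, one sees from Definition~\ref{dfn:foamincube} that $F_1$ carries a standard $\listk{k}_1$-tree on its left face $s_l$ and $F_2$ carries a standard $\listk{k}_1$-tree on its right face $s_r$. I place the two cubes side by side and identify these two trees. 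Under this identification, the root $k$-strands at the bottom of $F_1$ and $F_2$ concatenate along the identified tree-root into a single $k$-strand, the rightmost face retains the standard $\listk{k}_2$-tree (from $F_1$), the leftmost face retains the standard $\listk{k}_0$-tree (from $F_2$), and on the top the two MOY graphs glue along their common $\listk{k}_1$-boundary to yield precisely $\Gamma_1\circ_{\listk{k}_1}\Gamma_2$. After rescaling into $[0,1]^3$ the output is a rooted $(\Gamma_1\circ\Gamma_2)$-foam, which I denote by $F_1*_h F_2$; I set $\phi([F_1]\otimes[F_2]):=[F_1*_h F_2]$. Well-definedness on the tensor product reduces to $A_{\listk{k}_1}$-balance, which is ensured by dots migration: a decoration placed on the shared interface tree can be viewed on either $F_1$'s or $F_2$'s side and the discrepancy is exactly captured by identity~(\ref{eq:dotmig}), which is part of $\infty$-equivalence (Remark~\ref{rmk:relation2equivalence}). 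The left $A_{\listk{k}_2}$- and right $A_{\listk{k}_0}$-actions are preserved because these act by decorating the two standard trees that the gluing does not touch, so $\phi$ is a morphism of $A_{\listk{k}_2}$-modules-$A_{\listk{k}_0}$.

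Surjectivity of $\phi$ will follow from Lemma~\ref{lem:treelike-are-enough} combined with Remark~\ref{rmk:dotonlyonleaves}: every element of $\IED(\Gamma_1\circ\Gamma_2)$ is represented by an $\SP{N}$-linear combination of tree-like rooted foams whose non-trivial decorations sit on their leaves. Any such tree-like foam $T$ splits canonically as $T=T_1*_h T_2$, where $T_1$ is a tree-like rooted $\Gamma_1$-foam and $T_2$ is a tree-like rooted $\Gamma_2$-foam, because the leaves of $T$ are partitioned between the top edges of $\Gamma_1$ and those of $\Gamma_2$, while the shared $\listk{k}_1$-tree lies in the interior. Hence every generator of $\IED(\Gamma_1\circ\Gamma_2)$ lies in the image of $\phi$. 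For injectivity, I compare graded ranks as right $A_{\listk{k}_0}$-modules. By Corollary~\ref{cor:bimodule-strucure-on-IE-Gamma}, $\IED(\Gamma_1\circ\Gamma_2)$ is free of graded rank $r_s(\Gamma_1\circ\Gamma_2)q^{-s(\Gamma_1\circ\Gamma_2)}$, and since $r_s$ is a product over split vertices and $s$ a sum over all vertices, this equals $r_s(\Gamma_1)r_s(\Gamma_2)q^{-s(\Gamma_1)-s(\Gamma_2)}$. The same corollary gives $\IED(\Gamma_1)$ free of right $A_{\listk{k}_1}$-rank $r_s(\Gamma_1)q^{-s(\Gamma_1)}$ and $\IED(\Gamma_2)$ free of right $A_{\listk{k}_0}$-rank $r_s(\Gamma_2)q^{-s(\Gamma_2)}$, so the tensor product has matching right $A_{\listk{k}_0}$-rank. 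A degree-preserving surjection between free graded modules of equal graded rank is an isomorphism, which concludes the proof.

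The main obstacle will be the geometric setup of the horizontal gluing. I have to check carefully that the root $k$-strands really do concatenate into a single one, that the output satisfies the conditions of Definition~\ref{dfn:foamincube}, and in particular that the $A_{\listk{k}_1}$-balance holds on the nose via dots migration across the interface tree. The grading shifts introduced in Definition~\ref{dfn:degree-disk-like} must also be accounted for; here an elementary computation, using that the shared $\listk{k}_1$-tree is counted once and not twice in $\degext_0$, shows that $\degR(F_1*_h F_2)=\degR(F_1)+\degR(F_2)$ so that $\phi$ is degree preserving. Once these local geometric points are settled, the rank bookkeeping above finishes the argument essentially for free.
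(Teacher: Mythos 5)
Your proposal is correct and takes essentially the same approach as the paper: horizontal concatenation of rooted foams along the shared standard $\listk{k}_1$-tree gives the comparison map, surjectivity comes from reducing to tree-like foams, and injectivity is a graded-rank count using Corollary~\ref{cor:bimodule-strucure-on-IE-Gamma}. One small imprecision worth flagging: a tree-like rooted $(\Gamma_1\circ\Gamma_2)$-foam does \emph{not} automatically split as $T_1 *_h T_2$, since its intersection with the vertical plane over the gluing locus need not be the standard $\listk{k}_1$-tree; you need to invoke Lemma~\ref{lem:tree-likeRequivalent} (as the paper does) to replace it by an $\infty$-equivalent tree-like foam whose shape you can choose, and only then does the splitting hold. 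Also note that the conclusion "surjection between free graded modules of equal graded rank is an isomorphism" implicitly uses that each graded piece is finite-dimensional over $\QQ$, which is what the paper records by comparing graded $\QQ$-dimensions directly.
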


\begin{proof}
  We have an  ${A_{\listk{k}_1}}$-bilinear morphism of $A_{\listk{k}_2}$-module-$A_{\listk{k}_0}$ from $\IED(\Gamma_1) \times \IED( \Gamma_2)$ (where $1^k= (1,\dots,1)$) to $\IED(\Gamma_1 \circ_{\listk{k}_1} \Gamma_2)$ given by concatenating foams along the standard tree for $\listk{k}_1$. This induce a map $\Psi:\IED(\Gamma_1) \otimes_{A_{\listk{k}_1}} \IED( \Gamma_2) \to \IED(\Gamma_1 \circ_{\listk{k}_1} \Gamma_2)$. We now prove that $\Psi$ is bijective.

The surjectivity is easy. Indeed, thanks to Lemmas~\ref{lem:treelike-are-enough} and \ref{lem:tree-likeRequivalent} every element of $\IED(\Gamma_1 \circ_{\listk{k}_1} \Gamma_2)$ is a linear combination of decorated tree-like foams $F_i$ and we can choose the shape of these tree-like foams. Hence we can suppose that at the locus where $\Gamma_1$ and $\Gamma_2$ are glued together, the intersection of the foams $F_i$ with a vertical plane are equal to the standard trees. Hence every $F_i$ is in the image of $\Psi$ and their sum as well.

To conclude, we argue with graded $\QQ$-dimensions since every graded piece is finite-dimensional. We have:
\begin{align*}
\dim_q^\QQ \left(\IED(\Gamma_1) \otimes_{A_{(k)}} \IED( \Gamma_2) \right) 
&= r_m(\Gamma_1)r_s(\Gamma_2)\dim_q^\QQ A_{\listk{k}_1} q^{-s(\Gamma_1)} q^{-s(\Gamma_2)}\\
&= r_m(\Gamma_1)r_m(\Gamma_2)\dim_q^\QQ A_{\listk{k}_2} q^{-s(\Gamma_1)-s(\Gamma_2)} \\
&= r_m(\Gamma_1 \circ_{\listk{k}_1} \Gamma_2)\dim_q^\QQ A_{\listk{k}_2}  q^{-s(\Gamma_1)-s(\Gamma_2)}\\
&= \dim_q^\QQ  \left(\IED(\Gamma_1 \circ_{\listk{k}_1} \Gamma_2) \right).
\end{align*}
\end{proof}

\begin{rmk}
  \label{rmk:soergel-well-defined-2}
  Note that from this lemma, we can re-obtain the fact that the bimodule $\BS(\Gamma)$ is well-defined. See Remark~\ref{rmk:soergel-well-defined}.
\end{rmk}

\begin{dfn}
  \label{dfn:2-functor-foam-soergel}
  For every $k$, we denote by $\BimS_k$ the 2-category of singular Soergel bimodules of level $k$. More precisely:
  \begin{enumerate}
  \item The objects of $\BS_k$ are finite sequences of positive integers which sum up to $k$.
  \item The category of 1-morphisms from $\listk{k}_0$ to $\listk{k}_1$ is the smallest abelian full sub-category of $A_{\listk{k}_1}$-module-$A_{\listk{k}_0}$ containing the $A_{\listk{k}_1}$-module-$A_{\listk{k}_0}$ $\BS(\Gamma)$ for any braid-like MOY graph $\Gamma$. Note that thanks to Corollary~\ref{cor:bimodule-strucure-on-IE-Gamma}, all objects of this category are projective (and therefore free) as $A_{\listk{k}_1}$-modules and as modules-$A_{\listk{k}_0}$, and are finitely generated for both of these structures.
  \end{enumerate}
\end{dfn}

From Lemmas~\ref{lem:id-dl-foam-2-algebra} and \ref{lem:concat-2-tensor} and Corollaries~\ref{cor:bimodule-strucure-on-IE-Gamma} and \ref{cor:elementary-graph-2-soergel}, we deduce the following proposition:

\begin{prop}
  \label{prop:Soergel-foam}
  We have a 2-functor 
\[
\begin{array}{crcl}
\IED: \thinspace & \DLF_k &\to &\BimS_k   \\
  & \listk{k} &\mapsto& \listk{k}  \\
  & \Gamma &\mapsto& \IED(\Gamma)  \\
  & F &\mapsto &\IED(F). 
\end{array}
\]
which factorizes through the 2-category $\widehat{\DLF_k}$.
\end{prop}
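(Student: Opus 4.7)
The plan is to define the 2-functor $\IED$ piece by piece and then show each piece agrees with the data already gathered in Lemmas~\ref{lem:id-dl-foam-2-algebra}, \ref{lem:comp-with-bim-str}, \ref{lem:concat-2-tensor} and Corollaries~\ref{cor:bimodule-strucure-on-IE-Gamma}, \ref{cor:elementary-graph-2-soergel}. On objects we declare $\IED(\listk{k})=\listk{k}$. On a 1-morphism $\Gamma\co \listk{k}_0\to\listk{k}_1$, we set $\IED(\Gamma)$ to be the $\SP{N}$-module of rooted $\Gamma$-foams modulo $\infty$-equivalence, equipped with the $A_{\listk{k}_1}$-$A_{\listk{k}_0}$-bimodule structure given by horizontal concatenation on the left and right (the fact that this is well-defined, and that it is indeed a bimodule in $\BimS_k$, follows from Corollary~\ref{cor:bimodule-strucure-on-IE-Gamma} combined with the bimodule identifications below). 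On a 2-morphism $F\co \Gamma_b\Rightarrow\Gamma_t$, the map $\IED(F)$ is vertical post-composition by $F$, which is a bimodule map by Lemma~\ref{lem:comp-with-bim-str}.

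Next I would check that $\IED(\Gamma)$ indeed lies in $\BimS_k$, that is, that it is isomorphic to $\BS(\Gamma)$ (up to the conventional grading shift $s(\Gamma)$). Given a decomposition $\Gamma=\Gamma_t\circ\cdots\circ\Gamma_0$ into elementary braid-like pieces, Lemma~\ref{lem:concat-2-tensor} iterated $t$ times yields
\[
\IED(\Gamma)\;\simeq\;\IED(\Gamma_t)\otimes_{A_{\listk{k^t}}}\cdots\otimes_{A_{\listk{k^1}}}\IED(\Gamma_0)
\]
as $A_{\listk{k}_1}$-$A_{\listk{k}_0}$-bimodules, and Corollary~\ref{cor:elementary-graph-2-soergel} identifies each elementary factor with the corresponding bimodule appearing in Definition~\ref{dfn:soergel-bimodules}. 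Matching the grading shifts in Corollary~\ref{cor:elementary-graph-2-soergel} with the $q^{-ab/2}$ shifts in Definition~\ref{dfn:soergel-bimodules} shows that $\IED(\Gamma)\simeq\BS(\Gamma)$ as graded bimodules. In the degenerate case where $\Gamma$ has no trivalent vertex we rely instead on Lemma~\ref{lem:id-dl-foam-2-algebra}, which identifies $\IED(\listk{k}\times I)$ with $A_{\listk{k}}$.

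Functoriality then splits into four compatibilities. Horizontal composition of 1-morphisms (concatenation of MOY graphs) corresponds to $\otimes$ of bimodules via Lemma~\ref{lem:concat-2-tensor}, and the identity 1-morphism $\listk{k}\times I$ is sent to the regular bimodule $A_{\listk{k}}$ via Lemma~\ref{lem:id-dl-foam-2-algebra}; this takes care of the 1-categorical part. Vertical composition of 2-morphisms is preserved because stacking foams and then passing to $\infty$-equivalence classes is, by definition of $\IED$ on foams, the same as composing the induced linear maps; the identity 2-morphism is $\Gamma\times I$ which clearly acts trivially. Horizontal composition of 2-morphisms requires a naturality check: given two composable foams $F\co\Gamma_b\Rightarrow\Gamma_t$ and $F'\co\Gamma'_b\Rightarrow\Gamma'_t$, one has to verify that under the isomorphism of Lemma~\ref{lem:concat-2-tensor} the map $\IED(F\circ_h F')$ corresponds to $\IED(F)\otimes\IED(F')$. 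This follows by the interchange argument sketched in the proof of Lemma~\ref{lem:comp-with-bim-str}: horizontal and vertical concatenation of disk-like foams commute up to ambient isotopy relative to the boundary, hence up to $\infty$-equivalence. Finally, the factorization through $\widehat{\DLF_k}$ is immediate: by Definition~\ref{dfn:foam-modulo-infty}, $\infty$-equivalent 2-morphisms act by the same linear map on $\IED(\Gamma_b)$, and the 2-hom-spaces are $\QQ$-linear by construction.

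The main obstacle, as I see it, is the interchange law for horizontal composition of 2-morphisms: one must argue that the tensor product isomorphism of Lemma~\ref{lem:concat-2-tensor} intertwines the action of horizontally concatenated foams with the tensor product of the induced maps. Geometrically this reduces to the statement that a tree-like foam in the concatenated cube can be isotoped so that its slice along the gluing plane is the standard tree for~$\listk{k}_1$, after which horizontal and vertical stacking manifestly commute. Once this naturality is in hand, every remaining verification is routine bookkeeping with degrees and bimodule structures.
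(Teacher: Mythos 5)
Your proof is correct and takes essentially the same route as the paper: the paper states that the proposition is deduced from Lemmas~\ref{lem:id-dl-foam-2-algebra} and \ref{lem:concat-2-tensor} and Corollaries~\ref{cor:bimodule-strucure-on-IE-Gamma} and \ref{cor:elementary-graph-2-soergel} (together with Lemma~\ref{lem:comp-with-bim-str} for the bimodule-linearity of $\IED(F)$), and gives no further detail; you are simply fleshing out that deduction, including the interchange argument, which is indeed the only point needing any real care and is handled by the isotopy argument already sketched in the proof of Lemma~\ref{lem:comp-with-bim-str}.
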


Actually, based on evidence given by Sto{\v{s}}i{\'c} \cite{2008arXiv0810.3578S}, we conjecture the following:

\begin{cjc}
  \label{cjc:equivalence-of-2-category}
  The 2-functor $\IED$ induces an equivalence of $2$-categories between  $\widehat{\DLF_k}$ and $\BimS_k$.
\end{cjc}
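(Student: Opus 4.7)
The plan is to verify the three standard conditions for a $2$-functor to be a $2$-equivalence: essential surjectivity on objects, essential surjectivity on $1$-morphisms, and full faithfulness of the induced functors on $2$-hom categories. The first two are essentially formal. On objects both sides consist of finite sequences of positive integers of level $k$, and $\IED$ is the identity, so there is nothing to prove. On $1$-morphisms, the category $\BimS_k(\listk{k}_0,\listk{k}_1)$ is by construction generated, as an abelian full subcategory, by the bimodules $\BS(\Gamma)=\IED(\Gamma)$, so essential surjectivity is automatic as soon as one interprets the statement with respect to the additive Karoubian closure of $\widehat{\DLF_k}$ (which must be implicit in the formulation of the conjecture).

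The substantive content is full faithfulness: for every pair $\Gamma_b,\Gamma_t$ of braid-like MOY graphs with matching boundary the map
\[
\IED\colon 2\text{-}\Hom_{\widehat{\DLF_k}}(\Gamma_b,\Gamma_t)\longrightarrow \Hom(\IED(\Gamma_b),\IED(\Gamma_t))
\]
of bimodule morphism spaces should be an isomorphism of graded $\SP{N}$-modules. My strategy for surjectivity is to combine the reduction theorems available on the foam side with a standard generators-and-relations description of morphisms between singular Soergel bimodules in the spirit of Elias--Khovanov, Wedrich, and Sto\v{s}i\'c. The Soergel hom spaces are spanned by elementary merge, split, cap, cup, and dot generators, each of which has an immediate foam counterpart, and Lemma~\ref{lem:treelike-are-enough} together with Remark~\ref{rmk:dotonlyonleaves} supplies the matching foam-side generation: any $F\colon\Gamma_b\to\Gamma_t$, bent into rooted form via Corollary~\ref{cor:elementary-graph-2-soergel}, is $\infty$-equivalent to a $\QQ$-linear combination of tree-like foams with decorations concentrated on the leaves.

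For injectivity the plan is a graded dimension count. By Corollary~\ref{cor:bimodule-strucure-on-IE-Gamma} the modules $\IED(\Gamma_b)$ and $\IED(\Gamma_t)$ are free of explicit graded rank on both sides, so the graded $\QQ$-dimension of the target hom space admits a closed-form expression in symmetric-MOY-style quantum binomials. On the source side the universal construction $\F_N$ together with Corollary~\ref{cor:catofMOYcalulus} yields a parallel expression for the graded dimension of $2\text{-}\Hom_{\widehat{\DLF_k}}(\Gamma_b,\Gamma_t)$ in terms of exterior MOY evaluations. Matching these two expressions is essentially the content of Sto\v{s}i\'c's computation; once established, the already-proved surjectivity of $\IED$ upgrades automatically to an isomorphism. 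A natural organisational device is induction on the total number of trivalent vertices in $\Gamma_b\cup\Gamma_t$, with base case Lemma~\ref{lem:id-dl-foam-2-algebra} and inductive step provided by the local identities~(\ref{eq:MPcat})--(\ref{eq:squarecat}), which are valid under $\infty$-equivalence by Remark~\ref{rmk:relation2equivalence}.

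The main obstacle, and the reason the statement is phrased as a conjecture, is precisely this dimension match. The two combinatorial formulas for the graded ranks coincide in all cases that have been checked, but I see no uniform argument that does not ultimately rely on a deeper structural result aligning the $\infty$-equivalence relations on foams with the defining relations of singular Soergel bimodule morphisms. A promising path is to prove a \emph{relative} version of Lemma~\ref{lem:id-dl-foam-2-algebra} producing a foam-theoretic standard basis for the relevant tensor products of $\IED$-modules whose image under $\IED$ manifestly coincides with a known standard basis on the Soergel side; this would simultaneously settle the injectivity question and clarify the Karoubian completion implicit in the $1$-morphism step.
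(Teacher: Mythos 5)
This statement is labeled a conjecture in the paper, and the paper offers no proof---only the remark that it is ``based on evidence given by Sto\v{s}i\'c.'' There is therefore no paper argument to compare your proposal against, and your honest framing (a strategy outline ending with an identified obstruction rather than a closed argument) is appropriate; in particular you correctly recognize that the statement must implicitly refer to the additive Karoubi completion of $\widehat{\DLF_k}$ on the $1$-morphism level, since $\BimS_k$ is by construction an abelian closure.

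That said, a few steps in your outline are shakier than the text lets on, and they sit upstream of the dimension match you name as the sole obstacle. First, the fullness claim (surjectivity on $2$-hom spaces) is not ``already-proved'' by the ingredients you cite: Lemma~\ref{lem:treelike-are-enough} and Remark~\ref{rmk:dotonlyonleaves} apply to rooted $\Gamma$-foams, i.e.\ to elements of $\IED(\Gamma)$ for a fixed $\Gamma$, not to disk-like $2$-morphisms $\Gamma_b\to\Gamma_t$; and Corollary~\ref{cor:elementary-graph-2-soergel} is an isomorphism statement for specific one-vertex graphs, not a bending operation that converts arbitrary $2$-morphisms into rooted form. To transport the tree-like reduction to $2$-morphisms you would need to fold a $2$-morphism into a rooted $(\overline{\Gamma_b}\circ\Gamma_t)$-foam and then argue via adjunction/duality that this loses nothing, which requires additional work (and, on the Soergel side, a generation theorem in the singular equivariant setting that you should cite explicitly rather than gesture at). Second, Corollary~\ref{cor:catofMOYcalulus} computes graded ranks of the $\F_N$-modules, which live over $\SP{N}$ for a \emph{fixed} $N$; the $2$-hom spaces in $\widehat{\DLF_k}$ are taken modulo $\infty$-equivalence, so $\F_N$ alone does not compute their graded dimension---you would need a statement about $\IE$ or $\IED$ and a stabilisation argument in $N$. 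These gaps are consistent with the paper's treatment of the statement as a conjecture, but they mean the true obstruction is broader than a combinatorial identity check: it is aligning the $\infty$-equivalence relation on foams with a presentation of singular Soergel bimodule morphisms, which is precisely the ``deeper structural result'' you allude to at the end.
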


\subsection{Hochschild homology}
\label{sec:hochschild-homology}

If $A$ is an algebra and $M$ an $A$-module-$A$, the Hochschild homology of $A$ with coefficients in $M$ is denoted by $\HH_\bullet(A,M)$. 

\begin{lem}
  \label{lem:hochschild-trace}
  Let $\Gamma$ be a braid-like $\listk{k}_1$-MOY graph-$\listk{k}_0$ and $\Gamma'$ be a braid-like $\listk{k}_0$-MOY graph-$\listk{k}_1$, then  $\HH_\bullet(A_{\listk{k}_0},\BS(\Gamma'\circ_{\listk{k}_1}\Gamma))$ and 
$\HH_\bullet(A_{\listk{k}_1},\BS(\Gamma\circ_{\listk{k}_0}\Gamma'))$ are canonically isomorphic.
\end{lem}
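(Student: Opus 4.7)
The plan is to recognize this as an instance of the classical trace (or cyclicity) property of Hochschild homology, after using the tensor description of Soergel bimodules. First, by Lemma~\ref{lem:concat-2-tensor}, there are canonical bimodule isomorphisms
\[
\BS(\Gamma'\circ_{\listk{k}_1}\Gamma) \;\cong\; \BS(\Gamma')\otimes_{A_{\listk{k}_1}}\BS(\Gamma)
\quad\text{and}\quad
\BS(\Gamma\circ_{\listk{k}_0}\Gamma') \;\cong\; \BS(\Gamma)\otimes_{A_{\listk{k}_0}}\BS(\Gamma').
\]
Setting $R=A_{\listk{k}_0}$, $S=A_{\listk{k}_1}$, $P=\BS(\Gamma')$ (an $(R,S)$-bimodule) and $Q=\BS(\Gamma)$ (an $(S,R)$-bimodule), the claim reduces to exhibiting a canonical, $\SP{N}$-linear, degree-preserving isomorphism
\[
\HH_\bullet(R,\,P\otimes_S Q)\;\cong\;\HH_\bullet(S,\,Q\otimes_R P).
\]

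The natural tool is the two-sided bar complex
\[
B_{p,q} \;=\; R^{\otimes p}\otimes P \otimes S^{\otimes q}\otimes Q,
\]
with its two commuting Hochschild-type differentials (one contracting $R$-factors against $P$ and the rightmost $Q$, the other contracting $S$-factors against $Q$ and $P$). Filtering this double complex one way and computing the $S$-differential first, the flatness of $P$ and $Q$ as $S$-modules (provided by Corollary~\ref{cor:bimodule-strucure-on-IE-Gamma}, which shows they are in fact free and finitely generated on each side) collapses the $S^{\otimes\bullet}$-direction, and the resulting complex is quasi-isomorphic to the Hochschild complex $C_\bullet(R,\,P\otimes_S Q)$. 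Filtering the other way, and using that $P$ and $Q$ are flat as $R$-modules, collapses the $R^{\otimes\bullet}$-direction and, after the cyclic rearrangement
\[
(r_1\otimes\cdots\otimes r_p\otimes p\otimes s_1\otimes\cdots\otimes s_q\otimes q)\;\longmapsto\;(s_1\otimes\cdots\otimes s_q\otimes q\otimes r_1\otimes\cdots\otimes r_p\otimes p),
\]
yields $C_\bullet(S,\,Q\otimes_R P)$. Both edges of the double complex compute the homology of the total complex, which provides the desired canonical isomorphism.

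Canonicity and the $\SP{N}$-linearity follow at once: the map $\SP{N}\hookrightarrow A_{\listk{k}_i}$ is central, so the $\SP{N}$-action factors through every tensor product and every differential in sight; no choices intervene beyond the intrinsic data of $R$, $S$, $P$ and $Q$. The main technical point is the verification that the cyclic rearrangement is a genuine chain map (with the correct signs), which is the heart of the general trace identity; this is formal and standard, once the required one-sided flatness is in hand (here, freeness from Corollary~\ref{cor:bimodule-strucure-on-IE-Gamma}). Beyond that, the proof is essentially an invocation of a general homological algebra fact rather than anything specific to foams.
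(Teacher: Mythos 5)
Your proof is correct and establishes the same trace/cyclicity property, but by a genuinely different explicit construction than the one the paper uses. The paper's proof fixes a projective resolution $C_\bullet(M)$ of $M=\IED(\Gamma)$ as an $A_{\listk{k}_1}$-module-$A_{\listk{k}_0}$; since $M'=\IED(\Gamma')$ is projective on the appropriate side (Corollary~\ref{cor:bimodule-strucure-on-IE-Gamma}), the complexes $C_\bullet(M)\otimes_{A_{\listk{k}_0}}M'$ and $M'\otimes_{A_{\listk{k}_1}}C_\bullet(M)$ are projective bimodule resolutions of the two compositions, and the lemma reduces to the single canonical isomorphism of chain complexes
\[
A_{\listk{k}_0}\otimes_{A^{\mathrm{en}}_{\listk{k}_0}}\bigl(M'\otimes_{A_{\listk{k}_1}}C_\bullet(M)\bigr)\;\simeq\;A_{\listk{k}_1}\otimes_{A^{\mathrm{en}}_{\listk{k}_1}}\bigl(C_\bullet(M)\otimes_{A_{\listk{k}_0}}M'\bigr),
\]
both sides being the obvious coequalizer. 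You instead build the full two-sided bar bicomplex and run the two edge spectral sequences, invoking one-sided flatness to collapse each direction. What your route buys is that it is entirely canonical: no choice of resolution appears, everything is the bar construction. What it costs is that you have to set up a double complex, verify the sign conventions of the cyclic shift, and argue convergence of two spectral sequences; the paper's proof is a shorter ``resolve on one side and tensor'' argument that needs only the single coequalizer identity. One small caution on phrasing: you attribute the collapse of the $S^{\otimes\bullet}$-direction to ``flatness of $P$ and $Q$ as $S$-modules,'' but what is actually needed there is one-sided flatness (say $Q$ flat as a left $S$-module, or $P$ flat as a right $S$-module), so that $\mathrm{Tor}^S_{>0}(P,Q)=0$; you correctly note this lower down, but the first formulation overstates the hypothesis. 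Corollary~\ref{cor:bimodule-strucure-on-IE-Gamma} supplies exactly this one-sided freeness on both sides, so there is no gap, only a slight imprecision in the statement of the hypothesis being used.
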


\begin{proof}
  This follows from Corollary~\ref{cor:bimodule-strucure-on-IE-Gamma} and Lemma~\ref{lem:concat-2-tensor}. Let us write $M= \IED(\Gamma)$ and $M'=\IED(\Gamma')$. Let $C_\bullet(M)$ be a projective resolution of $M$ as $A_{\listk{k}_1}$-module-$A_{\listk{k}_0}$. Since $M'$ is projective as module-$A_{\listk{k}_1}$, $C_\bullet(M) \otimes_{A_{\listk{k}_0}} M'$ is a projective resolution of $M\otimes_{A_{\listk{k}_0}} M'$ as $A_{\listk{k}_1}$-module-$A_{\listk{k}_1}$. Similarly, $M' \otimes_{A_{\listk{k}_1}} C_\bullet(M)$ is a projective resolution of $M'\otimes_{A_{\listk{k}_1}} M$ as $A_{\listk{k}_0}$-module-$A_{\listk{k}_0}$.
We have a canonical isomorphism of chain complexes
\begin{align*}
  A_{\listk{k}_0} \otimes_{A_{\listk{k}_0}^{\mathrm{en}}} \left(M' \otimes_{A_{\listk{k}_1}} C_\bullet(M)\right)
\simeq
  A_{\listk{k}_1} \otimes_{A_{\listk{k}_1}^{\mathrm{en}}} \left(C_\bullet(M) \otimes_{A_{\listk{k}_0}} M'\right)
\end{align*}
The result follows because $\HH_\bullet(A_{\listk{k}_0},\BS(\Gamma'\circ_{\listk{k}_1}\Gamma))$ and 
$\HH_\bullet(A_{\listk{k}_1},\BS(\Gamma\circ_{\listk{k}_0}\Gamma'))$
are the homology groups of these two chain complexes.
\end{proof}

\begin{prop}
  \label{prop:HH0-2-vinyl}
  Let $\Gamma$ be a braid-like $\listk{k}$-MOY graph-$\listk{k}$ and denote by $\widehat{\Gamma}$ its closure and by $\IET(\widehat{\Gamma})$ the space of vinyl $\widehat{\Gamma}$-foams modulo $\infty$-equivalence. The space $\HH_0(A_{\listk{k}},\IED(\Gamma))$ is canonically isomorphic to $\IET(\widehat{\Gamma})$.
\end{prop}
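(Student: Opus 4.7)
The plan is to build a closure map $\Phi \colon \IED(\Gamma) \to \IET(\widehat{\Gamma})$ sending a rooted $\Gamma$-foam in the cube $C = [0,1]^3$ to the vinyl foam in $\ann \times [0,1]$ obtained by identifying the faces $s_l$ and $s_r$ via the identity map. This makes sense because both $F \cap s_l$ and $F \cap s_r$ are the standard tree $T_{\listk{k}}$, and the resulting vinyl foam has boundaries $\widehat{\Gamma}$ on top and $\SS_k$ at the bottom; the map descends to $\infty$-equivalence classes because the defining relations are local. I will then check that $\Phi(a \cdot F) = \Phi(F \cdot a)$ for every $a \in A_{\listk{k}}$: after closure both decorations live on the seam of the annulus, and the dot migration identity~\eqref{eq:dotmig} together with the commutativity of $A_{\listk{k}}$ lets me slide them into coincidence. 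Hence $\Phi$ factors as $\bar{\Phi} \colon \HH_0(A_{\listk{k}}, \IED(\Gamma)) \to \IET(\widehat{\Gamma})$.

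Next I will establish surjectivity of $\bar{\Phi}$ using Lemma~\ref{lem:vinyl2tree-like}: any vinyl $\widehat{\Gamma}$-foam-$\SS_k$ is $\infty$-equivalent to a $\ZZ$-linear combination of tree-like vinyl foams. Choosing the half-plane $P_{\theta_0}$ to lie along the seam by which $\widehat{\Gamma}$ is closed from $\Gamma$, the proof of that lemma can be arranged so that the $P_{\theta_0}$-slice of each tree-like representative is the standard tree $T_{\listk{k}}$. Cutting each tree-like vinyl foam along $P_{\theta_0}$ then produces a rooted $\Gamma$-foam mapping to the original summand under $\Phi$.

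Injectivity of $\bar{\Phi}$ will be the main obstacle; my plan is a graded $\QQ$-dimension count combined with surjectivity. By Lemmas~\ref{lem:treelike-are-enough} and \ref{lem:tree-likeRequivalent} (with Remark~\ref{rmk:dotonlyonleaves}), $\IED(\Gamma)$ is spanned by tree-like rooted $\Gamma$-foams whose decorations are supported on the facets meeting $\Gamma$, indexed by the edges of $\Gamma$; similarly, $\IET(\widehat{\Gamma})$ is spanned by tree-like vinyl foams with decorations indexed by edges of $\widehat{\Gamma}$. Geometrically, closing $\Gamma$ into $\widehat{\Gamma}$ pairs the top-boundary and bottom-boundary edges of $\Gamma$, and the algebraic relation $aF = Fa$ is precisely the $\HH_0$-version of this pairing, since a decoration on a top-boundary facet can be migrated through the $s_r$-tree to give a right $A_{\listk{k}}$-action while the same migration through the $s_l$-tree yields a left action. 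To make this rigorous I would invoke Proposition~\ref{prop:Soergel-foam} to identify $\IED(\Gamma) \simeq \BS(\Gamma)$, together with Corollary~\ref{cor:bimodule-strucure-on-IE-Gamma} and Lemma~\ref{lem:concat-2-tensor}, and proceed by induction on the number of trivalent vertices of $\Gamma$; the base case $\Gamma = \listk{k} \times I$ is handled by Lemmas~\ref{lem:id-dl-foam-2-algebra} and \ref{lem:IE4circles}, where both sides equal $A_{\listk{k}}$, and Lemma~\ref{lem:hochschild-trace} guarantees independence of the choice of cut. The technical heart is tracking how the Hochschild quotient interacts with the tensor decomposition of $\BS(\Gamma)$ so that $aF = Fa$ collapses the excess left/right structure without losing tree-like generators.
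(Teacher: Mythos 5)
Your overall strategy matches the paper's: define a closure map, show it factors through $\HH_0$ because the left and right $A_{\listk{k}}$-actions agree after closing up, establish surjectivity via tree-like representatives (Lemma~\ref{lem:vinyl2tree-like}), and then finish with a graded dimension count rather than a direct injectivity argument. The first three steps are fine as you have them.

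The gap is in the dimension count. You propose to use the Soergel-bimodule identification (Proposition~\ref{prop:Soergel-foam}, Corollary~\ref{cor:bimodule-strucure-on-IE-Gamma}, Lemma~\ref{lem:concat-2-tensor}) and induct on trivalent vertices, and you frankly label the interaction of the Hochschild quotient with the tensor decomposition of $\BS(\Gamma)$ as the ``technical heart'' without resolving it. As stated, this induction does not close: Lemma~\ref{lem:hochschild-trace} lets you cyclically permute the tensor factors of $\BS(\Gamma)$ before taking $\HH_0$, but it does not reduce the number of vertices, and there is no clean recursion that removes one vertex at a time while controlling the $\HH_0$ of the result. The paper avoids this entirely. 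It uses Lemma~\ref{lem:hochschild-trace} only to establish that $\HH_0(A_{\listk{k}},\IED(\Gamma))$ depends solely on the closure $\widehat{\Gamma}$, then invokes Proposition~\ref{prop:QR2} (the Queffelec--Rose algorithm), together with the compatibility of $\HH_\bullet$ with direct sums, to reduce to the case where $\widehat{\Gamma}$ is a collection of circles. In that case $\IED(\Gamma)=A_{\listk{k}}$, so $\HH_0(A_{\listk{k}},A_{\listk{k}})\simeq A_{\listk{k}}$ (Lemma~\ref{lem:kozsul_resolution}), and $\IET(\widehat{\Gamma})\simeq A_{\listk{k}}$ by Lemma~\ref{lem:IE4circles}, which is exactly the base case you identify. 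Replacing your vertex induction by this single Queffelec--Rose reduction is the missing step that makes the argument go through.
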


\begin{proof}
  Closing up rooted $\Gamma$-foams into vinyl $\widehat{\Gamma}$-foams provides a well-defined map $\pipi: \IED(\Gamma) \to \IET(\widehat{\Gamma})$. The action of $A_{\listk{k}}$ can be seen as concatenating foams (see proof of Corollary~\ref{cor:elementary-graph-2-soergel}). Hence, $\pipi$  factories through $\IED(\Gamma)/ [A_{\listk{k}}, \IED(\Gamma)]$ which is equal to $\HH_0(A_{\listk{k}},\IED(\Gamma))$. We now denote by $\pipi$ the induced map from $\HH_0(A_{\listk{k}},\IED(\Gamma))$ to $\IET(\widehat{\Gamma})$. This map is surjective thanks to Lemma~\ref{lem:treelike-are-enough}. Instead of proving that $\pipi$ is injective, we prove that spaces $\HH_0(A_{\listk{k}},\IED(\Gamma))$ and $\IET(\widehat{\Gamma})$  have the same graded dimension (and each of their homogeneous parts is finite-dimensional). Thanks to Lemma~\ref{lem:hochschild-trace}, $\HH_0(A_{\listk{k}},\IED(\Gamma))$ only depends on $\widehat{\Gamma}$. 
Hochschild homology is compatible with direct sum of bimodules in the sense that: \[\HH_\bullet(A, M\oplus N) \simeq \HH_\bullet(A, M)\oplus \HH_\bullet(A,  N).\] Hence, thanks to Proposition~\ref{prop:QR2}, it is enough to prove the statement for $\widehat{\Gamma}$ a collection of circles. If $\widehat{\Gamma}$ is a collection of circles labeled by $\listk{k}:= (k_1, \dots, k_l)$, then we have $\IED(\Gamma) = A_{\listk{k}}$. Lemma~\ref{lem:kozsul_resolution} implies that the space $\HH_0(A_{\listk{k}},\IED(\Gamma))$ is isomorphic to $A_{\listk{k}}$. On the other hand, $\IET(\widehat{\Gamma})$ is isomorphic to $A_{\listk{k}}$ as well thanks to Lemma~\ref{lem:IE4circles}.
\end{proof}


\section{One quotient and two approaches}
\label{sec:one-quotient-two}

\subsection{A foamy approach}
\label{sec:foamy-approach}

\subsubsection{Evaluation of vinyl foams}
\label{sec:evaluation-vinyl}

\begin{notation}
  \label{not:YD}
  The set of Young diagrams with at most $a$ rows and at most $b$ columns is denoted by $T(a,b)$ and the set of Young diagrams with at most $a$ rows is denoted by $T(a,\infty)$. The rectangular Young diagram with $a$ rows and $b$ columns is denoted by $\rho(a,b)$.
\end{notation}

\begin{notation}
  \label{not:ringforsym}
  Recall that $\SP{N}$ denotes the ring of symmetric polynomials with coefficients in $\QQ$. 
  \begin{enumerate}
  \item Denote 
    the graded algebra
    $\SP{N}[x_1,\dots, x_k]^{\mathfrak{S}_k }$ by $A_{k}$, by
    $J_{N,k}$ the ideal of $R_N[x_1, \dots, x_k]$ generated by
    \[
\left\{\left. \prod_{i=1}^N(x_j-T_i)\right| j=
          1, \dots, k \right\}.
      \]
      Note that elements of this set are indeed symmetric in the $T_\bullet$. 
Denote by $M_{N,k}$ the $\SP{N}$-algebra 
    \[
    A_k\left/(J_{N,k}\cap
       A_k)\right.,
    \]seen as an $\SP{N}$-module. 
    The indeterminates $x_\bullet$ have degree $2$, just like the indeterminates $T_\bullet$ appearing in the definition of $\SP{N}$ (end of Section~\ref{sec:quasi-annular-foams}).
    \item If $\lambda=(\lambda_1, \dots, \lambda_k)$ is a Young diagram with at most $k$ rows, define $\mathbf{x}^{\lambda}:=\prod_{i=1}^k x_i^{\lambda_i}$. Denote by $m_{\lambda}(x_1, \dots, x_k)$ the symmetric polynomial $\sum_{\lambda'}\mathbf{x}^{\lambda'}$, where $\lambda'$ runs over all \emph{distinct} permutations of $\lambda$. Denote by $\tilde{m}_{\lambda}(x_1, \dots, x_k)$ the symmetric polynomial $\sum_{\lambda'}\mathbf{x}^{\lambda'}$, where $\lambda'$ runs over \emph{all} permutations of $\lambda$. The family $(m_\lambda)_{\lambda\in T(k, \infty)}$ is a $\ZZ$-basis of the ring of symmetric polynomials in $k$ variables with coefficients in $\ZZ$ (see~\cite{MR3443860}). The family $(\tilde{m}_\lambda)_{\lambda\in T(k, \infty)}$ is a $\QQ$-basis of the ring of symmetric polynomials in $k$ variables with coefficients in $\QQ$.   
  \end{enumerate}
\end{notation}


\begin{lem}
  \label{lem:MNkisfree} 
  The $\SP{N}$-module $M_{N,k}$ is free and has a basis given by images in $M_{N,k}$ of
  \[(m_{\lambda}(x_1, \dots, x_k))_{\lambda \in T(k, N-1)} \]
  seen as element of $A_k$.
\end{lem}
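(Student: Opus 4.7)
The plan is to identify $M_{N,k}$ with the $\mathfrak{S}_k$-invariants of the quotient ring $B := \SP{N}[x_1, \dots, x_k]/J_{N,k}$ and then read off the desired basis.

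First I would analyze $B$ on its own. Since the generators $p_j(x_j) = \prod_{i=1}^N(x_j-T_i)$ of $J_{N,k}$ are monic polynomials of degree $N$, each in a single distinct variable $x_j$ with coefficients in $\SP{N}$, the quotient $B$ is a free $\SP{N}$-module with basis the standard monomials
\[
\{ x_1^{a_1} \cdots x_k^{a_k} \,:\, 0 \le a_j \le N-1\}.
\]
The symmetric group $\mathfrak{S}_k$ permutes the variables $x_j$, hence permutes this standard basis, and it preserves $J_{N,k}$ since $\{p_1(x_1),\dots,p_k(x_k)\}$ is $\mathfrak{S}_k$-stable. Consequently $B^{\mathfrak{S}_k}$ is free over $\SP{N}$ with basis the orbit sums of the standard monomials; the orbit sum of $x^{a}$ is exactly $m_{\lambda}(x_1,\dots,x_k)$ for the partition $\lambda$ associated with $a$, and the condition $0 \le a_j \le N-1$ picks out precisely $\lambda \in T(k,N-1)$.

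Next I would show that the natural graded $\SP{N}$-linear map
\[
\Phi : M_{N,k} = A_k / (J_{N,k} \cap A_k) \longrightarrow B^{\mathfrak{S}_k}, \qquad f + (J_{N,k}\cap A_k) \longmapsto f + J_{N,k},
\]
is an isomorphism. Injectivity is built into the definition of $J_{N,k}\cap A_k$. Surjectivity uses the Reynolds operator: given $\bar g \in B^{\mathfrak{S}_k}$, lift it to $g \in \SP{N}[x_1,\dots,x_k]$ and set $\tilde g := \tfrac{1}{k!}\sum_{\sigma \in \mathfrak{S}_k}\sigma(g)$. Working in characteristic zero, this averaging is legitimate; $\tilde g$ lies in $A_k$ and, because $\sigma(g) - g \in J_{N,k}$ for every $\sigma$, still represents $\bar g$ in $B$.

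Combining the two steps, $\Phi$ carries the image of $m_\lambda$ in $M_{N,k}$ to the basis element of $B^{\mathfrak{S}_k}$ constructed in the first step, and these images therefore form a $\SP{N}$-basis of $M_{N,k}$. Since the degree conventions agree ($\deg x_i = \deg T_i = 2$), the whole argument is homogeneous and yields freeness as graded $\SP{N}$-modules. I do not anticipate a genuine obstacle here; the only delicate point is the surjectivity of $\Phi$, which relies on being in characteristic zero (the same hypothesis that pervades the rest of the paper).
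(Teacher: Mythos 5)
Your proof is correct and follows essentially the same route as the paper's: both identify $M_{N,k}$ with $\bigl(\SP{N}[x_1,\dots,x_k]/J_{N,k}\bigr)^{\mathfrak{S}_k}$ via the natural map (injectivity from the definition of $J_{N,k}\cap A_k$, surjectivity via symmetrization/Reynolds), and both then read off the $m_\lambda$ basis from the fact that the quotient ring is free over $\SP{N}$ on the standard monomials $x_1^{a_1}\cdots x_k^{a_k}$ with $0\le a_j\le N-1$. The paper phrases the last step as passing to $\mathrm{Sym}^k\bigl(\SP{N}[x]/(\prod_i(x-T_i))\bigr)$, whereas you argue directly via $\mathfrak{S}_k$-orbit sums of a permuted basis, but this is a presentational difference rather than a different argument.
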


\begin{proof}
  The $\SP{N}$-module $M_{N,k}$ is isomorphic to
  \[
    \left(\SP{N}[x_1,\dots, x_k]/J_{N,k}\right)^{\mathfrak{S}_k}.
  \]
  Indeed, the $\SP{N}$-linear maps
  \[
    A_k \hookrightarrow \SP{N}[x_1,\dots, x_k]   \twoheadrightarrow \SP{N}[x_1,\dots, x_k]/J_{N,k}  \twoheadrightarrow  \left(\SP{N}[x_1,\dots, x_k]/J_{N,k}\right)^{\mathfrak{S}_k}
  \]
is surjective because if $P+J_{N,k} \in \left(\SP{N}[x_1,\dots, x_k]/J_{N,k}\right)^{\mathfrak{S}_k}$, one can assume that $P$ is itself $\mathfrak{S}_k$-invariant since the ideal $J_{N,k}$ is $\mathfrak{S}_k$-invariant. The kernel of this morphism is $J_{N,k}\cap A_k$, hence it induces an isomorphism of $\SP{N}$-modules between $M_{N,k}$ and   $\left(\SP{N}[x_1,\dots, x_k]/J_{N,k}\right)^{\mathfrak{S}_k}$.

The $\SP{N}$-module $\left(\SP{N}[x_1,\dots, x_k]/J_{N,k}\right)^{\mathfrak{S}_k}$ is isomorphic to
\[
\mathrm{Sym}^k\left( \SP{N}[x]\left/\left( \prod_{i=1}^N (x-T_i) \right)\right.\right).
\]

The $\SP{N}$-module $\SP{N}[x]\left/\left( \prod_{i=1}^N (x-T_i) \right.\right)$ is a free $\SP{N}$ module of rank $N$ and has a natural $\SP{N}$-basis given by $(x^i)_{0\leq i \leq N-1}$. Therefore, $(m_{\lambda}(x_1, \dots, x_k))_{\lambda \in T(k, N-1)}$ is a $\SP{N}$-basis of
$\left(\SP{N}[x_1,\dots, x_k]/J_{N,k}\right)^{\mathfrak{S}_k}$.
\end{proof}

Denote $\epsilon_{N,k}$ the 
following morphism of $\SP{N}$-modules:
\[
 \begin{array}{crcl} \epsilon_{N,k}\thinspace\colon
  & M_{N,k}  &\to     & \SP{N} \\ 
  & m_{\lambda}&\mapsto &
  \begin{cases}
    1 & \textrm{if $\lambda = \rho(k, N-1),$} \\
    0 & \textrm{if $\lambda \neq \rho(k, N-1),$} \\
  \end{cases}
\end{array}
\]

\begin{prop}
  \label{prop:Mnk-Frobenius}
  The $\SP{N}$-linear map $\epsilon_{N,k}$ endows the $\SP{N}$-algebra $M_{N,k}$ with a structure of symmetric algebra. In particular $M_{N,k}$ is a commutative Frobenius algebra.
\end{prop}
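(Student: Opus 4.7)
The plan is to identify $\epsilon_{N,k}$ with the restriction of a genuine Frobenius trace on a larger, easier, non-symmetric algebra and to conclude via the standard averaging argument. Consider the $\SP{N}$-algebra $R_{N,k} := \SP{N}[x_1, \dots, x_k]/J_{N,k}$, of which $M_{N,k}$ is the $\mathfrak{S}_k$-invariant subalgebra (as was already observed in the proof of Lemma~\ref{lem:MNkisfree}). Since the defining relations of $J_{N,k}$ involve the variables $x_j$ separately, we have $R_{N,k} \cong B^{\otimes k}$ as $\SP{N}$-algebras, where $B := \SP{N}[x]/\prod_{i=1}^{N}(x - T_i)$ is the rank-$N$ free $\SP{N}$-algebra with basis $1, x, \dots, x^{N-1}$. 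The algebra $B$ is visibly Frobenius over $\SP{N}$: the trace $t$ extracting the coefficient of $x^{N-1}$ in the canonical reduction yields a pairing matrix which is anti-diagonal of $1$'s up to lower-triangular corrections and hence non-degenerate. Tensor products of Frobenius algebras being Frobenius, $R_{N,k}$ is Frobenius over $\SP{N}$ with trace $\tau := t^{\otimes k}$, that is $\tau(f)$ is the coefficient of $x_1^{N-1} \cdots x_k^{N-1}$ in the reduction of $f$ modulo $J_{N,k}$.

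Next, I would verify that $\tau|_{M_{N,k}} = \epsilon_{N,k}$. Given $f \in A_k$, reduce modulo $J_{N,k}$ and expand in the $\SP{N}$-basis $(m_\lambda)_{\lambda \in T(k, N-1)}$ furnished by Lemma~\ref{lem:MNkisfree}. The monomial $x_1^{N-1}\cdots x_k^{N-1}$ occurs only in $m_{\rho(k, N-1)}$ (and with coefficient $1$), so its coefficient in the reduction is exactly the coefficient of $m_{\rho(k, N-1)}$ in the basis expansion, which is $\epsilon_{N,k}(f)$ by definition.

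The argument now proceeds by averaging. The trace $\tau$ is $\mathfrak{S}_k$-invariant, since the monomial $x_1^{N-1}\cdots x_k^{N-1}$ is. Let $a \in M_{N,k}$ satisfy $\epsilon_{N,k}(ab) = 0$ for every $b \in M_{N,k}$. For any $c \in R_{N,k}$, the symmetrization $c^{\mathrm{sym}} := \frac{1}{k!}\sum_{\sigma \in \mathfrak{S}_k}\sigma(c)$ lies in $M_{N,k}$, and since $a \in M_{N,k}$ and $\tau$ is $\mathfrak{S}_k$-invariant we obtain
\[
\tau(ac) \;=\; \frac{1}{k!}\sum_{\sigma}\tau(\sigma(ac)) \;=\; \frac{1}{k!}\sum_{\sigma}\tau(a\,\sigma(c)) \;=\; \tau(a c^{\mathrm{sym}}) \;=\; \epsilon_{N,k}(a c^{\mathrm{sym}}) \;=\; 0.
\]
Thus $a$ pairs trivially with every $c \in R_{N,k}$ under the non-degenerate form $\tau$, forcing $a = 0$. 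Therefore $\epsilon_{N,k}$ is a non-degenerate trace on $M_{N,k}$, which is the sought Frobenius structure; the ``symmetric'' qualifier is automatic because $M_{N,k}$ is commutative.

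The main conceptual point, and the only place where something genuinely has to be checked rather than merely quoted, is the compatibility $\tau|_{M_{N,k}} = \epsilon_{N,k}$; the non-degeneracy of $\tau$ on $R_{N,k}$ is routine, and the descent of non-degeneracy from $R_{N,k}$ to $M_{N,k}$ is a purely formal consequence of the averaging argument available in characteristic zero.
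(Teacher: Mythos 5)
Your proof is correct, and it takes a genuinely different route from the paper's. The paper argues by direct computation: it evaluates $\epsilon_{N,k}(\widetilde{m}_\lambda\,\widetilde{m}_\mu)$, finds it equals $(k!)^2$ when $\lambda=\mu^c$ and $0$ when $|\lambda|+|\mu|\leq k(N-1)$ with $\lambda\neq\mu^c$, and then observes that the resulting Gram matrix, suitably ordered, is block anti-triangular with invertible anti-diagonal, hence invertible. You instead lift the pairing to the non-invariant quotient $R_{N,k}=\SP{N}[x_1,\dots,x_k]/J_{N,k}\cong B^{\otimes k}$, where the Frobenius structure on $B=\SP{N}[x]/\prod_i(x-T_i)$ (rank $N$ with basis $1,x,\dots,x^{N-1}$ and the ``coefficient of $x^{N-1}$'' trace) is immediate, and then descend non-degeneracy to $M_{N,k}=R_{N,k}^{\mathfrak{S}_k}$ by averaging over $\mathfrak{S}_k$. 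Both routes ultimately exploit an anti-triangular structure, but yours localizes it entirely to the rank-$N$ algebra $B$; the one genuinely non-formal step, and you correctly flag it, is the compatibility $\tau|_{M_{N,k}}=\epsilon_{N,k}$, which works because $x_1^{N-1}\cdots x_k^{N-1}$ appears only in $m_{\rho(k,N-1)}$ and with coefficient $1$. The averaging step costs you an inversion of $k!$, but this is harmless since everything is over $\QQ$; notice the paper's $(k!)^2$ reflects the same dependence. What your approach buys is a more conceptual explanation — the Frobenius structure on $M_{N,k}=\mathrm{Sym}^k(H^*_G(\mathrm{Gr}(1,N)))$ descends from the obvious one on $H^*_G(\mathrm{Gr}(1,N))^{\otimes k}$, consistent with the geometric picture sketched in Appendix~\ref{sec:pinch-algebr-geom} — while avoiding the combinatorics of products of monomial symmetric polynomials. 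The paper's approach is more self-contained and gives the explicit Gram matrix entries for free.
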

\begin{proof}
  It is enough to check that the composition of $\epsilon_{N,k}$ with the multiplication is a non-degenerate pairing. This follows from the following fact:
  \[
    \epsilon_{N,k}(\widetilde{m}_\lambda \widetilde{m}_\mu) =
    \begin{cases}
      (k!)^2 &\textrm{if $\lambda = \mu^c$,} \\
      0 &\textrm{if $|\lambda| + |\mu| \leq {k(N-1)}$ and $\lambda \neq \mu^c$.}
    \end{cases}
  \]
  Indeed, this implies that the pairing matrix in the bases $(\widetilde{m}_{\lambda})_{\lambda \in T(k, N-1)}$ and $(\widetilde{m}_{\mu^c})_{\mu \in T(k, N-1)}$ suitably ordered has the following form:
\[
  \tikz{
      \matrix[matrix of math nodes,
        left delimiter=(,
        right delimiter=),
        nodes in empty cells] (m)
        {
        (k!)^2&        &                      &        & \\
        &        (k!)^2&                      & 0 &        \\
        &        & {\ddots} &        &        \\
        & ?  &                       &    (k!)^2   &        \\
        &        &                       &        &    (k!)^2    \\
        };
        \draw[black, thin] (m-2-1.north west) |- (m-5-4.south east) --cycle;
        \draw[ black, thin] (m-1-2.north west) -| (m-4-5.south east) --cycle;
      }
    \]
    and is clearly invertible. Hence the pairing is non-degenerate.
\end{proof}



Denote $\Upsilon_{N,k}$ the  composition of the projection from $A_k$ to $M_{N,k}$ with $\epsilon_{N,k}$.

Let $F$ be a vinyl $\SS_k$-foam-$\SS_k$. In $\RR^3$, we can cap and cup $F$ with two disks labeled by $k$, to obtain a (non-vinyl) foam $\mathrm{cl}({F})$. 

\begin{dfn}
  \label{dfn:evaluationequiv}
  The \emph{equivariant symmetric evaluation} of a vinyl $\SS_k$-foams-$\SS_k$ $F$ is given by:
\[
\kups{F} := \Upsilon_{N,k}\left(\kup{\mathrm{cl}({F})}_k\right),
\]
where $\kup{\bullet}_k$ denotes the $\sll_k$-evaluation of closed foams (see Definition~\ref{dfn:exteval}).
\end{dfn}



\begin{rmk} 
  \label{rmk:explicitsymeval}
We can make the symmetric evaluation more explicit: let $F$ be a vinyl $\SS_k$-foam-$\SS_k$, then $\kups{F}$ is equal to the coefficient of $m_{\rho(k, N-1)}$ (in the basis $(m_{\lambda})$)  of $\kup{\mathrm{cl}({F})}_k$.  Alternatively it is equal to ${k!}$ times the $\tilde{m}_{\rho(k, N-1)}$-coefficient in the basis $(\tilde{m}_{\lambda})$ of $\kup{\mathrm{cl}({F})}_k$.
\end{rmk}

The basis $(\tilde{m}_{\lambda})$ is convenient because of the following lemma which is the key ingredient for the proof of monoidality of our construction.

\begin{lem}
  \label{lem:monomialwellbehave}
  Let $k_1$ and $k_2$ be two non-negative integers, $\lambda$ (resp. $\mu$) be a Young diagram with at most $k_1$ (resp $k_2$) rows and $A$ be a set of $k_1 +k_2$ variables. We have
\[
\sum_{\substack{A_1\sqcup A_2 = A \\ \#A_1=k_1 \\ \#A_2=k_2}} \tilde{m}_\lambda(A_1)\tilde{m}_{\mu}(A_2) =  \tilde{m}_{\lambda\cdot\mu}(A).
\]
where $\lambda\cdot\mu$ is Young diagram corresponding to the union of the partition of $\lambda$ and $\mu$. 
In particular for any integer $N$, $\lambda\cdot\mu=\rho(k_1+k_2, N)$ if and only if $\lambda=\rho(k_1,N)$ and $\mu=\rho(k_2, N)$. \hfill$\qed$
\end{lem}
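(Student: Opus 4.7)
The plan is to unfold the definition of $\tilde{m}_\lambda$ as a sum over \emph{all} ordered tuples (with multiplicity) and exhibit a bijection. First, I would rewrite $\tilde{m}_\nu(B) = \sum_\phi \prod_{i=1}^{|B|} \phi(i)^{\nu_i}$, where $\phi$ ranges over all bijections from $\{1,\dots,|B|\}$ to the set of variables $B$. Indeed, the ordered tuples $(\phi(1),\dots,\phi(|B|))$ run through all orderings of $B$, and applying the exponent $\nu_i$ to the $i$-th entry is equivalent to summing over all permutations of the parts of $\nu$ against a fixed ordering of $B$. With this convention, the left-hand side of the identity becomes
\[
\sum_{\substack{A_1\sqcup A_2 = A \\ \#A_1=k_1,\, \#A_2=k_2}}\ \sum_{\phi_1,\phi_2} \prod_{i=1}^{k_1}\phi_1(i)^{\lambda_i} \prod_{j=1}^{k_2}\phi_2(j)^{\mu_j},
\]
where $\phi_1:\{1,\dots,k_1\}\to A_1$ and $\phi_2:\{1,\dots,k_2\}\to A_2$ are bijections.

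Next I would invoke the obvious bijection: a quadruple $(A_1,A_2,\phi_1,\phi_2)$ as above corresponds uniquely to a bijection $\phi:\{1,\dots,k_1+k_2\}\to A$, namely $\phi(i)=\phi_1(i)$ for $i\leq k_1$ and $\phi(k_1+j)=\phi_2(j)$ for $1\leq j\leq k_2$; conversely any bijection $\phi$ arises this way upon setting $A_1=\phi(\{1,\dots,k_1\})$ and $A_2=\phi(\{k_1+1,\dots,k_1+k_2\})$. Under this correspondence the product transports to $\prod_{\ell=1}^{k_1+k_2}\phi(\ell)^{(\lambda\cdot\mu)_\ell}$ using the concatenation definition of $\lambda\cdot\mu$, and the sum is precisely $\tilde{m}_{\lambda\cdot\mu}(A)$.

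For the ``In particular'' clause, note that $\rho(k_1+k_2,N)$ has all $k_1+k_2$ parts equal to $N$. Hence $\lambda\cdot\mu=\rho(k_1+k_2,N)$ forces every part of $\lambda$ and of $\mu$ to be $N$, i.e.\ $\lambda=\rho(k_1,N)$ and $\mu=\rho(k_2,N)$; the reverse implication is trivial from the definition of concatenation.

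There is no serious obstacle: the identity is a pure combinatorial tautology. The only point to beware of is that it is essential to work with $\tilde{m}_\nu$ (all permutations with multiplicity) rather than with $m_\nu$ (distinct permutations); had we used $m_\nu$ the bijection above would collapse overlapping rearrangements and one would have to insert explicit combinatorial factors coming from the stabilizer sizes $|\mathrm{Stab}(\lambda)|$, $|\mathrm{Stab}(\mu)|$ and $|\mathrm{Stab}(\lambda\cdot\mu)|$. This is the precise reason for using the basis $(\tilde{m}_\nu)$ rather than $(m_\nu)$ in Remark~\ref{rmk:explicitsymeval} in the sequel.
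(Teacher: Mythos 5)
Your proof is correct. The paper omits a proof of this lemma entirely (the statement ends with a $\qed$ and no proof environment), so there is nothing to compare against; your argument --- rewriting $\tilde{m}_\nu(B)$ as a sum over bijections $\phi\colon\{1,\dots,|B|\}\to B$ of $\prod_i\phi(i)^{\nu_i}$, and then observing that choosing an ordered splitting $A=A_1\sqcup A_2$ together with bijections onto $A_1$ and $A_2$ is the same as choosing a single bijection onto $A$ --- is the natural one and is complete. One small remark: the transported exponent vector is literally the concatenation $(\lambda_1,\dots,\lambda_{k_1},\mu_1,\dots,\mu_{k_2})$ rather than the sorted partition $\lambda\cdot\mu$, so you are implicitly using that $\tilde{m}_\nu$ depends only on $\nu$ up to reordering of parts; this is immediate from the definition, but worth saying. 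Your closing observation --- that the bijective correspondence is multiplicity-free only because one works with $\tilde{m}_\nu$ (all permutations with repetition) and would otherwise require correction by stabilizer orders --- correctly pinpoints why the paper introduces the $\tilde{m}$ basis alongside $m$.
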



The following lemma proves that this evaluation does not really depend on $k$.

\begin{lem}
  \label{lem:Nbiggerk}
  Let $F$ be a vinyl $\SS_k$-foam-$\SS_k$, then it is $\infty$-equivalent to the foam $\SS_k\times [0,1]$ decorated with $\kup{(-1)^{k(k+1)/2}\mathrm{cl}({F})}_k$. 
The decoration makes sense since $\kup{(-1)^{k(k+1)/2}\mathrm{cl}({F})}_k$ is a symmetric polynomial in $k$ variables.
\end{lem}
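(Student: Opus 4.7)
The strategy is to adapt the proof of Lemma~\ref{lem:diskdisklike} to the vinyl setting, where it becomes particularly transparent thanks to the structural results Lemmas~\ref{lem:vinyl2tree-like} and \ref{lem:tree-likeRequiv}. Denote by $D$ the cylinder $\SS_k \times [0,1]$ decorated on its unique facet by the polynomial $P := (-1)^{k(k+1)/2}\kup{\mathrm{cl}(F)}_k$, which is a symmetric polynomial in $k$ variables.

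First, Lemma~\ref{lem:vinyl2tree-like} together with dot migration (Remark~\ref{rmk:leaves}) produces a finite $\ZZ$-linear combination $F \sim_\infty \sum_i c_i F_i$, where each $F_i$ is a tree-like vinyl $\SS_k$-foam-$\SS_k$ whose decorations are concentrated on its unique leaf facet, \ie the outer rim meeting the two boundary circles of label $k$. Let $Q_i$ denote the rim decoration of $F_i$. The cylinder $\SS_k\times[0,1]$ is itself tree-like with its sole facet playing the role of leaf, so Lemma~\ref{lem:tree-likeRequiv} yields $F_i \sim_\infty (\SS_k\times[0,1])_{Q_i}$ for every $i$. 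Linearity of the evaluation in the decoration then produces
\[ F \sim_\infty (\SS_k\times[0,1])_Q \qquad \text{with } Q := \sum_i c_i Q_i.\]

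It remains to identify $Q$ with $P$. Capping with disks of label $k$ at top and bottom is a functorial closing operation on foams, so $\mathrm{cl}(F) \sim_\infty \mathrm{cl}((\SS_k\times[0,1])_Q)$, and the right-hand side is topologically a sphere of label $k$ carrying the single decoration $Q$. A direct application of Definition~\ref{dfn:exteval} to this sphere yields an $\sll_k$-evaluation equal to $(-1)^{k(k+1)/2} Q(x_1,\dots,x_k)$: the unique $\sll_k$-coloring has each monochrome surface equal to the whole sphere for $i=1,\dots,k$ (contributing $s = k(k+1)/2$) and every bichrome surface empty. Specialising the $\infty$-equivalence of closures at $N=k$ therefore gives
\[\kup{\mathrm{cl}(F)}_k = (-1)^{k(k+1)/2} Q,\]
so that $Q = (-1)^{k(k+1)/2}\kup{\mathrm{cl}(F)}_k = P$, as required.

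The real work is already packaged in Lemmas~\ref{lem:vinyl2tree-like} and \ref{lem:tree-likeRequiv}, and the only remaining calculation is the sphere evaluation at $N=k$. The delicate $N>k$ color-exchange argument that is needed in Lemma~\ref{lem:diskdisklike} does not reappear here, because once $F$ is shown to be $\infty$-equivalent to a decorated cylinder, the single value $\kup{\mathrm{cl}(F)}_k$ suffices to pin down the decoration.
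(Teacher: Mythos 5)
Your proof is correct, and it takes a genuinely different route from the paper's. The paper proves the lemma directly by a case analysis on $N$: the cases $N<k$ and $N=k$ are dispatched quickly (the latter via neck-cutting at top and bottom), and the case $N>k$ is handled by a hands-on color-exchange argument, exploiting that the $\sll_N$-coloring of $G_2\circ F\circ G_1$ restricts identically to both boundary circles of $F$. Your argument instead leans on the structural machinery: Lemma~\ref{lem:vinyl2tree-like} and Remark~\ref{rmk:leaves} establish that $F$ is $\infty$-equivalent to \emph{some} decorated cylinder $T_Q$ (since a tree-like vinyl $\SS_k$-foam-$\SS_k$ has single-strand $P_\theta$-slices, hence is a single-facet cylinder), and then a lone computation at $N=k$ — the sphere evaluation — pins down $Q$. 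This is cleaner and more modular once the structural lemmas are in place. One small caveat about your closing paragraph: the $N>k$ color-exchange argument is not actually eliminated, only relocated. Lemma~\ref{lem:vinyl2tree-like} rests on Lemma~\ref{lem:treelike-are-enough}, which in turn calls Lemma~\ref{lem:diskdisklike}, where that same style of $N>k$ argument appears. So your proof imports it implicitly rather than dispensing with it; the gain is that this lemma itself no longer has to re-run it.
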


\begin{proof} Let us denote by $T$ the foam 
\[
\NB{\tikz[xscale =0.8, yscale=0.6]{\begin{scope}
  \draw[->-] (-2,0) arc (180:360: 2cm and 1cm) node[midway, above] {$k$};
  \draw[dashed] (-2,0) arc (180:0: 2cm and 1cm);
  \draw (-2,3) arc (180:360: 2cm and 1cm);
  \draw[-<-] (-2,3) arc (180:0: 2cm and 1cm)  node[midway, below] {$k$};
  \node[red, very thick] at (0, 1.5) {$(-1)^{k(k+1)/2}\kup{\mathrm{cl}({F})}_k$};
  \draw (2,0) -- +(0,3);
  \draw (-2,0) -- +(0,3);
\end{scope}}}.
\]
We need to show that $T$ and $F$ are $N$-equivalent for all $N$. If $N<k$, this is clear, since all $\SS_k$-foams-$\SS_k$ are $N$-equivalent to $0$. If $N=k$ this is clear as well since the identity~(\ref{eq:neckcuttingcat}) has a very simple form in this case: we can apply it on the top and on the bottom of $F$ and $T$, the result follows immediately.  Let us now pick an integer $N$ greater than $k$.
  Thanks to the definition of $\F_N$ (and its monoidality, see Corollary~\ref{cor:catofMOYcalulus}), the $N$-equivalence is equivalent to saying that for any $(G_1,G_2)$ in $\Hom_\Foam(\SS_k,\emptyset)\times \Hom_\Foam(\emptyset,\SS_k)$, we have:
\[
\kup{G_2\circ F\circ G_1}_N = \kup{G_2\circ T \circ G_1}_N
\]
If $N=k$, the result holds by hypothesis. Since $F$ is vinyl, for any $\sll_N$-coloring $c$ of $G_2\circ F \circ G_1$, the induced coloring on the two circles which form the boundary of $F$ are the same, hence it induces a coloring $c'$ on $G_2\circ T \circ G_1$. Since the difference of the Euler characteristics of monochrome and bichrome surfaces  as well as the parity of the difference of numbers of positive circles can be computed locally, the quantity
\[
\frac{\kup{G_2\circ F \circ G_1, c}_N}{\kup{G_2\circ T \circ G_1,c'}_N}
\]
only depends on the restrictions of $c$ and $c'$ to $F$ and $T$. Since $F$ and $T$ are vinyl, there are exactly $k$ pigments appearing in the restrictions of $c$ and $c'$ to $F$ and $T$. Hence if these pigments are $1, \dots, k$ and if we sum over all colorings $c$ of $G_2\circ F\circ G_1$,  which induce $c'$ on $G_2\circ T \circ G_1$ we obtain (thanks to the case $k=N$):
\[
\sum_{c \textrm{ induces } c'}\frac{\kup{G_2\circ F \circ G_1, c}_N}{\kup{G_2\circ T \circ G_1,c'}_N} =1
\]
Since permuting the pigments boils down to permuting the variables $x_1, \dots, x_N$ (see \cite[Lemma 2.16]{RW1}), we obtain:
\[
\sum_{\substack{c \textrm{ coloring of }\\ G_2\circ F\circ G_1}}{\kup{G_2\circ F \circ G_1, c}_N} = \sum_{\substack{c' \textrm{ coloring of } \\ G_2\circ T\circ G_1}}{\kup{G_2\circ T \circ G_1,c'}_N}.
\]
\end{proof}

\subsubsection{Universal construction}
\label{sec:univ-constr-1}

We will use the evaluation defined in Definition~\ref{dfn:evaluationequiv} and a universal construction à la \cite{MR1362791} in order to define a functor $\syf_{k,N}: \TL_k\to \mathsf{C}$, where $\mathsf{C}$ is the category of $\ZZ$-graded finitely generated projective $\SP{N}$-modules.

If $\Gamma$ is a vinyl graph of level $k$, we consider the free graded $\SP{N}$-module generated by $\Hom_{\TL_k}(\SS_k, \Gamma)q^{-k(N-1)}$. We mod this space out by 
\[
\bigcap_{G\in \Hom_{\TL_k}(\Gamma, \SS_k)}\Ker\left(
  \begin{array}{rcl}
    \Hom_{\TL_k}(\SS_k, \Gamma)& \to &  \SP{N} \\
    F&\mapsto & \kups{G\circ F}
  \end{array}
\right).
\]
We define $\syf_{k,N}(\Gamma)$ to be this quotient. The definition of $\syf_{k,N}$ on morphisms follows naturally. 



\subsubsection{Categorified identities}
\label{sec:categ-relat}

\begin{prop}
  \label{prop:catcircle}
  The $\SP{N}$-module $\syf_{k,N}(\SS_k)$ is isomorphic to $M_{N,k}q^{-k(N-1)}$ (see Notation~\ref{not:ringforsym}).
In particular, we have 
\[
\rk^{\SP{N}}_q(\syf_{k,N}(\SS_k)) =  
\begin{bmatrix}
  k+N-1 \\ k
\end{bmatrix}.
\]
\end{prop}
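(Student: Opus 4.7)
The strategy is to identify $\syf_{k,N}(\SS_k)$ as a quotient of $A_k$ via $\IET(\SS_k)$, and to recognize that quotient as $M_{N,k}$ using the Frobenius structure established in Proposition~\ref{prop:Mnk-Frobenius}. Since the equivariant symmetric evaluation $\kups{\bullet}$ is defined from the exterior $\sll_k$-evaluation, which is invariant under $\infty$-equivalence, the universal quotient defining $\syf_{k,N}(\SS_k)$ factors through $\IET(\SS_k)$. Lemma~\ref{lem:IE4circles}, applied with $\listk{k}=(k)$, gives an $\SP{N}$-module isomorphism $\IET(\SS_k)\simeq A_k$: the cylinder $\SS_k\times[0,1]$ with decoration $P$ on its unique leaf corresponds to $P\in A_k$. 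Moreover, Lemma~\ref{lem:Nbiggerk} says every vinyl $\SS_k$-foam-$\SS_k$ is $\infty$-equivalent to such a decorated cylinder, and concatenating two cylinders with decorations $P$ and $Q$ yields a cylinder with decoration $PQ$; hence vertical composition in $\Hom_{\TL_k}(\SS_k,\SS_k)$ corresponds to multiplication in $A_k$.

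Now an element $P\in A_k$ represents zero in $\syf_{k,N}(\SS_k)$ if and only if $\kups{G\circ F_P}=0$ for every $G\in\Hom_{\TL_k}(\SS_k,\SS_k)$, where $F_P$ denotes the cylinder decorated by $P$. Unwinding the definition $\kups{F}=\Upsilon_{N,k}(\kup{\mathrm{cl}(F)}_k)$ and using the previous paragraph, this is equivalent to $\Upsilon_{N,k}(QP)=0$ for all $Q\in A_k$, i.e.\ $\epsilon_{N,k}(\overline{QP})=0$ in $M_{N,k}$ for every $Q$. By Proposition~\ref{prop:Mnk-Frobenius}, the pairing $(a,b)\mapsto\epsilon_{N,k}(ab)$ on $M_{N,k}$ is non-degenerate, so this forces $\overline{P}=0$ in $M_{N,k}$, i.e.\ $P\in J_{N,k}\cap A_k$. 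Therefore $\syf_{k,N}(\SS_k)\simeq M_{N,k}\,q^{-k(N-1)}$ as graded $\SP{N}$-modules.

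The graded rank is then immediate: by Lemma~\ref{lem:MNkisfree}, $M_{N,k}$ is free over $\SP{N}$ with basis $(m_\lambda)_{\lambda\in T(k,N-1)}$, with $\deg m_\lambda = 2|\lambda|$, so
\[
\rk^{\SP{N}}_q(\syf_{k,N}(\SS_k)) = q^{-k(N-1)}\sum_{\lambda\in T(k,N-1)} q^{2|\lambda|} = \qbina{k+N-1}{k},
\]
by the classical generating-function identity counting Young diagrams in a $k\times(N-1)$ box (with the symmetric convention for the quantum binomial). The delicate step to verify with care is the identification of the composition of vinyl cylinders with multiplication in $A_k$; this is where Lemma~\ref{lem:Nbiggerk} does the heavy lifting by reducing every such composition, up to $\infty$-equivalence, to a single decorated cylinder whose decoration is the product of the original two.
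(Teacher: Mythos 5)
Your proof follows essentially the same route as the paper's: both define the map from $A_k$ via decorated cylinders, use Lemma~\ref{lem:Nbiggerk} for surjectivity, and invoke the non-degeneracy of the Frobenius pairing on $M_{N,k}$ (Proposition~\ref{prop:Mnk-Frobenius}) to pin down the kernel as exactly $J_{N,k}\cap A_k$. The only cosmetic difference is that you phrase the kernel computation directly via the pairing, whereas the paper first observes $J_{N,k}\cap A_k\subseteq\ker\phi$ and then proves injectivity of the induced map $\phi'$ — these are the same argument.
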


\begin{proof}
  
  Define  $\phi:A_k \to \syf_{k,N}(\SS_k)$ the $\SP{N}$-linear map which maps any symmetric polynomial in $k$ variables $P$ to the cylinder $\SS_k\times [0,1]$ decorated by $P$. 
  Thanks to Lemma~\ref{lem:Nbiggerk}, this map is surjective. By the very definition of the equivariant symmetric evaluation $J_{N,k}\cap A_k$ is in the kernel of this map. Hence it induces a $\SP{N}$-linear map $M_{N,k} \to \syf_{k,N}(\SS_k)$ denoted by $\phi'$.

  The map $\phi'$ is injective. Indeed, let $x$ be a non-zero element of $M_{N,k}$. Since $M_{N,k}$ is a Frobenius algebra there exists $y$ in $M_{N,k}$ such that $\epsilon_{N,k}(xy)\neq 0$. Let $X$ and $Y$ be two $\SP{N}$-linear combinations of vinyl $\SS_k$-foams-$\SS_k$ representing $\phi'(x)$ and $\phi'(y)$ in $\syf_{k,N}(\SS_k)$. By definition, $\kups{X \circ Y} = \epsilon(xy) \neq 0$. Hence $\phi'(xy) = \phi'(x) \circ \phi'(y)\neq 0$ and $\phi'(x)\neq 0$.
  It follows that $\phi'$ is an isomorphism.
  Note that it is homogeneous of degree $-(N-1)k$. 
\end{proof}


From the definition of the evaluation of vinyl foams, we immediately deduce that the identities of Section~\ref{sec:rema-exter-foams} which can be expressed by vinyl foams are still valid. This gives the following proposition.

\begin{prop}
  \label{prop:isofromext}
  Let $k$ be a non-negative integer and $N$ be a positive integer, then the functor $\syf_{k,N}$ satisfies the following local relations:
  \begin{align}
    \syf_{k, N}\left(\stgamma\right) &\simeq \syf_{k, N}\left(\stgammaprime\right) \label{eq:catsymass} \\
    \syf_{k, N}\left(\digona\right) &\simeq \syf_{k, N}\left(\verta\right) \begin{bmatrix}
    m+n \\ m
  \end{bmatrix}
 \label{eq:catsymdig}\\
   \syf_{k, N}\left(\squarec\right)&\simeq  \bigoplus_{j=\max{(0, m-n)}}^m
  \syf_{k, N}\left(\squared\right) \begin{bmatrix}l \\ k-j \end{bmatrix} \label{eq:catsymsquare} \\
        \syf_{k, N}\left(\squaree\right) &\simeq 
        \syf_{k, N}\left(\webHe\right) \qbina{r+s}{s}
  \end{align} \hfill $\qed$
\end{prop}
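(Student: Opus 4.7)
The plan is to deduce each listed isomorphism directly from the foam-level $\infty$-equivalences of Proposition~\ref{prop:relations-cat-ext}, pushed through the universal construction defining $\syf_{k,N}$. Since $\kups{\cdot}=\Upsilon_{N,k}\circ\kup{\mathrm{cl}(\cdot)}_k$ and any $\infty$-equivalence of foams is in particular a $k$-equivalence, a local relation that holds as an $\infty$-equivalence in $\IE$ yields an equality of evaluations under $\kups{\cdot}$, and therefore an isomorphism of $\SP{N}$-modules after applying the universal construction.

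I would then treat the four identities in turn. The associativity~(\ref{eq:catsymass}) follows from the Matveev--Piergallini identity~(\ref{eq:MPcat}), which provides mutually inverse vinyl foams between the two tripod configurations. The digon identity~(\ref{eq:catsymdig}) follows from~(\ref{eq:digoncat}): the mutually orthogonal idempotents indexed by $\alpha\in T(m,n)$ decompose the digon module as a direct sum of copies of the vertical-strand module with grading shifts that sum over $T(m,n)$ to the quantum binomial $\qbina{m+n}{m}$. The square identity~(\ref{eq:catsymsquare}) follows analogously from~(\ref{eq:squarecat}), with the double indexing by $(j,\alpha)$ producing the summands weighted by $\qbina{l}{k-j}$. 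The final identity is derived from associativity together with the digon identity in exactly the same way as the decategorified Lemma~\ref{lem:otherrelation}.

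The point that will require the most care is verifying that the orthogonal idempotents of Proposition~\ref{prop:relations-cat-ext} genuinely split $\syf_{k,N}(\Gamma)$ as a direct sum, rather than merely giving an equality of graded ranks. This is standard once the foam identities are in place: if the identity on a hom-space equals a sum $\sum_\alpha e_\alpha$ of mutually orthogonal idempotents, then each $e_\alpha$ defines a projector on $\syf_{k,N}(\Gamma)$ whose image is isomorphic, with an explicit grading shift, to $\syf_{k,N}(\Gamma')$ for the simplified vinyl graph $\Gamma'$ associated with $e_\alpha$. The remaining thing to check is that the local foams used in~(\ref{eq:digoncat}) and~(\ref{eq:squarecat}) preserve the vinyl tangent condition when inserted inside a vinyl diagram; this is immediate because these foams are supported in a compact region and do not create any closed tube, so they define genuine morphisms in $\TL_k$.
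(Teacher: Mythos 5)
Your argument is correct and matches what the paper intends: the proposition is stated with an empty proof (\(\qed\)) because it is presented as an immediate consequence of the preceding sentence, which says exactly what you write — the \(\infty\)-equivalences of Proposition~\ref{prop:relations-cat-ext} that can be expressed by vinyl foams descend through the universal construction. Your chain \(\infty\)-equivalence \(\Rightarrow\) \(k\)-equivalence \(\Rightarrow\) equality under \(\kups{\cdot}\) \(\Rightarrow\) equality in \(\syf_{k,N}\) is the right justification, and your remark that the orthogonal-idempotent structure of~(\ref{eq:digoncat}) and~(\ref{eq:squarecat}) is what upgrades an equality of graded ranks to a genuine direct-sum decomposition is the point that deserves to be said aloud. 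One small divergence: for the fourth relation the paper has the dedicated \(\infty\)-equivalence~(\ref{eq:cautiscat}) available (see Remark~\ref{rmk:relation2equivalence}), whereas you re-derive it from~(\ref{eq:catsymass}), its mirror, and~(\ref{eq:catsymdig}) following the decategorified Lemma~\ref{lem:otherrelation}. Both routes work; using~(\ref{eq:cautiscat}) directly is slightly cleaner since it avoids invoking the mirror-image associativity, which is not among the four displayed relations of the proposition but must be fetched from the ``and their mirror images'' clause of Proposition~\ref{prop:relations-cat-ext}.
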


\subsubsection{Monoidality}
\label{sec:monoidality}


The category $\TL_k$ does not have a notion of disjoint union, hence
for a fixed $k$ we cannot have monoidality of the functor
$\syf_{k,N}$. However, if we consider  the disjoint union $\TL_{\NN}$
of the categories $\TL_k$ for $k$ in $\NN$, then we can speak about
disjoint union, and this obviously endows $\TL_\NN$ with a structure of a
monoidal category. The empty vinyl graph seen as an object of $\TL_0$ is the
monoidal unit. Note that in this category $\Gamma_1\sqcup \Gamma_2$ is
in general not isomorphic to $\Gamma_2\sqcup \Gamma_1$.

We consider the functor $\syf_{\NN,N}$: it is given by the functor
$\syf_{k,N}$ on $\TL_k$ for all $k\in \NN$. In order to fix notations,
set
\[
\du_{k_1,k_2} : \TL_{k_1} \times \TL_{k_2} \to \TL_{k_1+ k_2}
\]
which sends pairs of objects (resp. morphisms) onto their (rescaled)
disjoint union. This is illustrated on object below:

\[\NB{\tikz{\begin{scope}
  \filldraw[draw= black, fill = yellow] (0,0) circle (1cm);
  \node at (0.6,0) {$\Gamma_1$};
  \filldraw[draw= black, fill = white] (0,0) circle (0.2cm);
\end{scope}
\node at (1.5, 0) {$,$};
\begin{scope}[xshift=3cm]
  \filldraw[draw= black, fill = orange] (0,0) circle (1cm);
  \node at (0.6,0) {$\Gamma_2$};
  \filldraw[draw= black, fill = white] (0,0) circle (0.2cm);
\end{scope}
\node at (4.75, 0) {$\mapsto$};
\begin{scope}[xshift=6.5cm]
  \filldraw[draw= black, fill = orange] (0,0) circle (1cm);
  \filldraw[draw = black, dotted, fill = yellow] (0,0) circle (0.6cm);
  \node at (0.4,0) {\scalebox{0.5}[1]{$\Gamma_1$}};
  \node at (0.8,0) {\scalebox{0.5}[1]{$\Gamma_2$}};
  \filldraw[draw= black, fill = white] (0,0) circle (0.2cm);
\end{scope}}}. \]

\begin{prop}
  \label{prop:mono}
  Let $\Gamma_1$ (resp. $\Gamma_2$) be a vinyl graph of level $k_1$ (resp. $k_2$). Suppose that $\syf_{k_1, N}(\Gamma_1)$ and $\syf_{k_2, N}(\Gamma_2)$ are free $\SP{N}$-modules, then $\syf_{k_1+k_2, N}(\du_{k_1, k_2}(\Gamma_1, \Gamma_2))$ is isomorphic to $\syf_{k_1, N}(\Gamma_1) \otimes_{\SP{N}} \syf_{k_2, N}(\Gamma_2)$. In particular, $\syf_{k_1+k_2, N}(\du_{k_1, k_2}(\Gamma_1, \Gamma_2))$ is a free $\SP{N}$-module.
\end{prop}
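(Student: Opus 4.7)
The plan is to exhibit an explicit $\SP{N}$-linear map built from a canonical ``split'' vinyl foam and show it is an isomorphism. Let $\Theta = \Theta_{k_1,k_2}\colon \SS_{k_1+k_2} \to \SS_{k_1} \sqcup \SS_{k_2}$ denote the elementary split foam embedded in the thickened annulus so that the two output circles are concentric, and set
\[
\Phi\bigl([F_1] \otimes [F_2]\bigr) := \bigl[(F_1 \sqcup F_2) \circ \Theta\bigr]
\]
on pure tensors. This rule is $\SP{N}$-bilinear on representatives; well-definedness on the tensor product and descent to the universal-construction quotient will follow from the injectivity argument below.

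Surjectivity is a direct consequence of the tree-like reduction. By Lemma~\ref{lem:vinyl2tree-like} combined with the dot-migration principle (Remark~\ref{rmk:leaves}), every vinyl $(\Gamma_1 \sqcup \Gamma_2)$-foam-$\SS_{k_1+k_2}$ is $\infty$-equivalent to a $\ZZ$-linear combination of tree-like foams with decorations on their leaves. The braid-like tree combinatorics of Lemma~\ref{lem:trees}, applied on a vertical collar just above $\SS_{k_1+k_2}$, combined with the foam identities~(\ref{eq:MPcat}) and (\ref{eq:digoncat}), allow each such foam to be arranged so that it factors through $\SS_{k_1} \sqcup \SS_{k_2}$ precisely along $\Theta$, hence lies in the image of $\Phi$. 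The same argument applied in the opposite direction shows that every test morphism $G\colon \Gamma_1 \sqcup \Gamma_2 \to \SS_{k_1+k_2}$ is $\infty$-equivalent to a linear combination of morphisms of the form $\Theta^{\vee} \circ (g^1 \sqcup g^2)$, where $\Theta^{\vee}$ denotes the vertical reflection of $\Theta$.

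For well-definedness and injectivity we rely on the key evaluation identity
\[
\kups{\Theta^{\vee} \circ (H_1 \sqcup H_2) \circ \Theta} = \kups{H_1} \cdot \kups{H_2}
\]
for vinyl $\SS_{k_\nu}$-foams-$\SS_{k_\nu}$ $H_\nu$. Writing $\Phi(\sum_{i,j}\alpha_{ij}[f^1_i] \otimes [f^2_j]) = 0$ and testing against any reduced test morphism $\Theta^{\vee}\circ(g^1\sqcup g^2)$ yields, thanks to the identity,
\[
\sum_{i,j} \alpha_{ij}\,\kups{g^1 \circ f^1_i} \cdot \kups{g^2 \circ f^2_j} = 0
\]
for all $g^{\nu}\colon \Gamma_{\nu} \to \SS_{k_{\nu}}$. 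The universal-construction pairing on each $\syf_{k_\nu,N}(\Gamma_\nu)$ is non-degenerate by its very definition; the freeness hypothesis then permits two successive applications of this non-degeneracy (first varying $g^2$, then $g^1$) to force $\alpha_{ij} = 0$. The same scheme, applied to a representative of the zero class on either factor, establishes well-definedness of $\Phi$.

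The hard part is the key evaluation identity above, which is where Lemma~\ref{lem:monomialwellbehave} plays a decisive role. By Remark~\ref{rmk:explicitsymeval}, $\kups{\cdot}$ extracts (up to a combinatorial factor) the coefficient of the maximal rectangular monomial $\tilde{m}_{\rho(k,N-1)}$ of the underlying $\sll_{k}$-evaluation. Unwinding the $\sll_{k_1+k_2}$-evaluation of the closure of $\Theta^{\vee} \circ (H_1 \sqcup H_2) \circ \Theta$, one sees that every coloring is determined by a partition of the pigment set $\{1,\dots,k_1+k_2\}$ into two subsets of sizes $k_1$ and $k_2$ attached respectively to the two concentric output circles of $\Theta$. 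This is exactly the variable-set partition $A_1 \sqcup A_2 = A$ of Lemma~\ref{lem:monomialwellbehave}, whose uniqueness clause---that $\rho(k_1+k_2,N-1)$ factors only as $\rho(k_1,N-1) \cdot \rho(k_2,N-1)$---eliminates all cross-terms and yields exactly the product $\kups{H_1} \cdot \kups{H_2}$ on the right-hand side.
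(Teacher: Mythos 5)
Your proposal follows essentially the same route as the paper's proof: define $\Phi$ by pre-composing with the split foam $Y_k^{k_1,k_2}\times \SS^1$ (your $\Theta$), use the tree-like reductions (Lemmas~\ref{lem:vinyl2tree-like} and \ref{lem:tree-likeRequiv}) to get surjectivity and to reduce test morphisms to the form $\Theta^\vee\circ(g^1\sqcup g^2)$, and isolate the $\tilde{m}_{\rho(k,N-1)}$-coefficient via Lemma~\ref{lem:monomialwellbehave}. Two cautions, neither fatal. First, the stated evaluation identity is off by a multinomial constant: with the normalization of Remark~\ref{rmk:explicitsymeval} (where $\kups{F}$ is $k!$ times the $\tilde{m}_{\rho(k,N-1)}$-coefficient of $\kup{\mathrm{cl}(F)}_k$), the $\tfrac{(k_1+k_2)!}{k_1!\,k_2!}$ colorings of the closed theta foam --- one for each partition of the pigment set $\{1,\dots,k_1+k_2\}$ into pieces of sizes $k_1$ and $k_2$ --- give $\kups{\Theta^\vee\circ(H_1\sqcup H_2)\circ\Theta}=\tfrac{(k_1+k_2)!}{k_1!\,k_2!}\,\kups{H_1}\,\kups{H_2}$, not the bare product. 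Since this constant is nonzero it does not affect your vanishing/nonvanishing arguments, but the identity as written is false, and the paper accordingly never asserts it: it only tracks the $\tilde{m}$-coefficients. Second, the injectivity step should be carried out against explicit $\SP{N}$-bases of the two free modules (the paper tests against dual bases $B^{*1}_{i_0}\sqcup B^{*2}_{j_0}$); your phrase about ``two successive applications of non-degeneracy'' implicitly treats the chosen representatives $[f^\nu_i]$ as linearly independent, which is not automatic and is precisely what the freeness hypothesis plus a choice of basis supply.
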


\begin{proof}
  Let us fix two vinyl graphs $\Gamma_1$ and $\Gamma_2$ of level $k_1$
  and $k_2$. We denote $\du_{k_1, k_2}(\Gamma_1, \Gamma_2)$ by $\Gamma$ and
  $k_1+k_2$ by $k$.  We will define a map $\phi_{\Gamma_1,
  \Gamma_2}$ from $\syf_{k_1, N}(\Gamma_1) \otimes_{\SP{N}} \syf_{k_2, N}(\Gamma_2)$ to 
  $\syf_{k_1+k_2, N}(\du_{k_1, k_2}(\Gamma_1, \Gamma_2))$.
It is enough to define $\phi_{\Gamma_1,
  \Gamma_2}$ on pure tensors. Let $v_1$ (resp. $v_2$) be an element of
  $\syf_{k_1,N}(\Gamma_1)$ (resp. $\syf_{k_2,N}(\Gamma_2)$). We can suppose
  that $v_1$ is represented by a $\Gamma_1$-foam-$\SS_{k_1}$
  $F_1$ and $v_2$ by a $\Gamma_2$-foam-$\SS_{k_2}$ $F_2$. We define
  $\phi_{\Gamma_1, \Gamma_2}(v_1\otimes v_2)$ to be the element of
  $\syf_{k,N}\left(\Gamma \right)$ obtained by re-scaling $F_1$ and
  $F_2$, taking their disjoint union (this gives an element of
  $\Hom_{\TL_{k}}\left(\Gamma, \du_{k_1, k_2}\left(\SS_{k_1}, \SS_{k_2}\right)\right)$ and pre-composing it with
  the foam
\[
Y_{k}^{k_1, k_2} := \NB{\tikz{
\draw[-<] (0,0) -- + (0,-0.7) node [below] {$k_1+k_2$}; 
\draw[->] (0,0) .. controls  (0,0.2) and (-0.5,0.5)  .. (-0.5,0.7) node [above] {$k_1$};  
\draw[->] (0,0) .. controls (0,0.2) and (0.5,0.5) .. (+0.5,0.7) node [above] {$k_2$}; 
}} \times \SS^1.
\]

We extend this definition linearly.
We now need to show that:
\begin{enumerate}
  \item\label{it:wd} this is well-defined,
 \item\label{it:iso} this is an isomorphism,
\end{enumerate}

{(\ref{it:wd})} In order to prove that $\phi_{\Gamma_1, \Gamma_2}$ is
well-defined, we only need to show that if for all vinyl
$\Gamma_2$-foam-$\SS_{k_2}$ $F_2$ (resp. $\Gamma_1$-foam-$\SS_{k_1}$
$F_1$) and $\SP{N}$-linear combination of vinyl
$\Gamma_1$-foams-$\SS_{k_1}$ $\sum_ia_iF_1^i$
(resp. $\Gamma_2$-foams-$\SS_{k_2}$ $\sum_j b_jF_2^j$) representing
$0$ in $\syf_{k_1,N}(\Gamma_1)$ (resp. in
$\syf_{k_2,N}(\Gamma_2)$), we have $\phi_{\Gamma_1,
\Gamma_2}\left(\sum_ia_i[F_1^i]\otimes [F_2]\right) =0$
(resp. $\phi_{\Gamma_1,
\Gamma_2}\left(\sum_ia_i\left[F_1^i\right]\otimes [F_2]\right)=0$
). By symmetry we only prove \[\phi_{\Gamma_1,
\Gamma_2}\left(\sum_ia_i\left[F_1^i\right]\otimes [F_2]\right) =0.\]
The square brackets stand for ``the element of the appropriate graded
$\SP{N}$-module represented by this foam''.

Suppose that the element $\sum_ia_i[F_1^i]$ is equal to $0$. This means that for
any vinyl $\SS_{k_1}$-foam-$\Gamma_1$ $G_1$, $\sum_{i}
a_i\kups{G_1\circ F_1^i}=0$. In other words, the coefficient of
$\tilde{m}_{\rho(k_1, N-1)}$ (in the base $(\tilde{m}_\lambda)_{\lambda \in T(k_1, N-1)}$) of
$\sum_{i} a_i\kup{\mathrm{cl}(G_1\circ F_1^i)}_k$ is equal to $0$.

We want to prove that for any vinyl $\SS_{k}$-foam-$\Gamma$ $G$,
\[\sum_{i} a_i\kups{G\circ (F_1^i\sqcup F_2) \circ Y_{k}^{k_1, k_2}}
=0.\] Thanks to Lemmas~\ref{lem:vinyl2tree-like} and
\ref{lem:tree-likeRequiv}, we might suppose that $G$ is tree-like and
that it can be obtained by re-scaling the disjoint union of a
tree-like $\SS_{k_1}$-foam-$\Gamma_1$ $G_1$ and a
$\SS_{k_2}$-foam-$\Gamma_2$ $G_2$ composed with $\rY_{k_1, k_2}^k$
(which is the foam $Y^{k_1,k_2}_k$ turned upside down). Thanks to
Lemma~\ref{lem:Nbiggerk}, for all $i$, $G_1\circ F_1^i$ is
$\infty$-equivalent to $\SS_{k_1}\times [0,1]$ decorated with
$\kup{\mathrm{cl}(G_1\circ F_1^i)}_{k_1}$ and $G_2 \circ F_2$ is
$\infty$-equivalent to $\SS_{k_2}\times [0,1]$ decorated with
$\kup{\mathrm{cl}(G_2\circ F_2)}_{k_2}$. Hence $\sum_{i} a_i G\circ
(F_1^i\sqcup F_2) \circ Y_{k}^{k_1, k_2}$ is $\infty$-equivalent to
\[
\NB{\tikz{
\draw[>-] (0,0) node[below,  font=\tiny] {$k_1+k_2$} -- (0,0.3);
\draw[->] (0,0.3)  .. controls +(0,+0.2) and +(0, -0.2) .. (-0.3, 1) node[midway, left,  font=\tiny] {$k_1$};
\draw (-0.3,1)  .. controls +(0,0.2) and +(0, -0.2) .. (0, 1.7) coordinate[midway] (L);
\draw[->] (0,0.3)  .. controls +(0,+0.2) and +(0, -0.2) .. (0.3, 1) node[midway, right,  font=\tiny] {$k_2$};
\draw (0.3,1)  .. controls +(0,0.2) and +(0, -0.2) .. (0, 1.7) coordinate[midway] (R);
\draw[red, thick, <-] (L) -- +(-0.5,0.1) node[left, font=\tiny] {$\sum_i a_i (-1)^{k_1(k_1+1)/2}\kup{\mathrm{cl}(G_1\circ F_1^i)}_{k_1}$};
\draw[red, thick, <-] (R) -- +( 0.5,0.1) node[right, font=\tiny] {$(-1)^{k_2(k_2+1)/2}\kup{\mathrm{cl}(G_1\circ F_2)}_{k_2}$};
\draw[->] (0,1.7) -- (0,2) node[above,  font=\tiny] {$k_1+k_2$};}}
\times \SS^1
\]

Let us write 
\begin{align*}
  &\sum_i a_i(-1)^{k_1(k_1+1)/2}\kup{\mathrm{cl}(G_1\circ F_1^i)}_{k_1}
  = \sum_{\lambda \in T(k_1, \infty)} c_\lambda \tilde{m}_{\lambda} \quad \textrm{and} \\
  &(-1)^{k_2(k_2+1)/2}\kup{\mathrm{cl}(G_2\circ F_2^i)}_{k_2} =
  \sum_{\mu \in T(k_2, \infty)} d_\mu \tilde{m}_{\mu}
\end{align*}
with $c_\bullet$ and $d_\bullet$ in $\SP{N}$. Thanks to
Lemma~\ref{lem:monomialwellbehave}, we have:
\[
(-1)^{k_1(k_1+1)/2 + k_2(k_2+1)/2} \sum_{i} a_i\kup{\mathrm{cl}(G\circ (F_1^i\sqcup F_2) \circ Y_{k}^{k_1, k_2})}_k = 
\sum_{\substack{\lambda \in T(k_1, \infty) \\ \mu \in T(k_2, \infty)}} c_{\lambda}d_{ \mu} \tilde{m}_{\lambda\cdot \mu}
\]
By hypothesis we know that $c_{\rho(k_1, N-1)}=0$. Thanks to
Lemma~\ref{lem:monomialwellbehave}, if $\lambda$ is different from
$\rho(k_1, N-1)$, then for all $\mu$ in $T(k_2, \infty)$,
$\lambda\cdot \mu$ is different from $\rho(k, N-1)$. This proves that
$\sum_{i} a_i\kups{G\circ (F_1^i\sqcup F_2) \circ Y_{k}^{k_1,
k_2}}=0$.

(\ref{it:iso}) Let $F_1$ (resp. $F_2$) be an trivially decorated tree-like
$\Gamma_1$-foam-$\SS_{k_1}$ (resp. $\Gamma_2$-foam-$\SS_{k_2}$). The
foam $F$ obtained by pre-composing $F_1\sqcup F_2$ with the foam $Y_{k}^{k_1, k_2}$
described in the construction of $\phi_{\Gamma_1, \Gamma_2}$ is in the
image of $\phi_{\Gamma_1, \Gamma_2}$ as well as its decorated version
with non-trivial decoration on leaves.  Thanks to
Lemma~\ref{lem:vinyl2tree-like} and \ref{lem:tree-likeRequiv} the
$\SP{N}$-vector space $\syf_{k,N}\left( \Gamma \right)$ is spanned by
elements represented by foams of this type. This proves the surjectivity.

To prove injectivity, we first pick bases $(B^1_i)_{i\in I}$ and
$(B^2_j)_{j\in J}$ of $\syf_{k_1,N}(\Gamma_1)$ and
$\syf_{k_2,N}(\Gamma_2)$ and their dual bases $(B^{*1}_i)_{i\in I}$
and $(B^{*2}_j)_{j\in J}$. All these elements can be represented by
($\SP{N}$-linear combinations of) tree-like foams. 
Denote these ($\SP{N}$-linear combinations of) tree-like foams by $F_i^1$,
$F_j^2$, $F_i^{*1}$ and $F_j^{*2}$. Suppose that 
\[
\left[
\sum_{\substack{{i\in I} \\
{j\in J} }} a_{ij} (F_i^1\sqcup F_j^2)\circ Y_k^{k_1, k_2} \right] =0.
\]
Let us fix an $i_0$ in $I$ and a  $j_0$ in $J$. The hypothesis implies that
\[
\sum_{\substack{{i\in I} \\{j\in J} }}
a_{ij}\kups{ \rY_{k_1, k_2}^k \circ (B_{i_0}^{*1} \sqcup B_{j_0}^{*2})\circ (F_i^1\sqcup F_j^2)\circ Y_k^{k_1, k_2}
}=0
\]
In the previous expression, $\kups{\bullet}$ has been $\SP{N}$-linearly extended to makes sense on $\SP{N}$-linear combinations of foams.
Thanks to Lemma~\ref{lem:monomialwellbehave}, the $\tilde{m}_{\rho(k, N-1)}$-coefficient in the base $(\tilde{m}_{\lambda})_{\lambda \in T(k, \infty)}$ of
\[
\sum_{\substack{{i\in I} \\{j\in J} }}
a_{ij}\kup{ \mathrm{cl}(\rY_{k_1, k_2}^k \circ (B_{i_0}^{*1} \sqcup B_{j_0}^{*2})\circ (F_i^1\sqcup F_j^2)\circ Y_k^{k_1, k_2})
}_k
\]
is equal to $a_{i_0j_0}$. However, this coefficient should be equal to 0. This prove injectivity. \end{proof}
\begin{thm}
  \label{thm:monoidality}
  The functor $\syf_{\NN,N}$ is monoidal.
\end{thm}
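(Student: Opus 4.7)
The plan is to deduce monoidality of $\syf_{\NN,N}$ from Proposition~\ref{prop:mono}, which does the essential work of producing, for each pair of vinyl graphs $\Gamma_1,\Gamma_2$, an isomorphism
\[
\phi_{\Gamma_1,\Gamma_2}\co \syf_{k_1,N}(\Gamma_1)\otimes_{\SP{N}}\syf_{k_2,N}(\Gamma_2) \longto \syf_{k_1+k_2,N}(\Gamma_1\du\Gamma_2)
\]
\emph{provided} that both factors are free. The first step is therefore to establish freeness in complete generality. By Proposition~\ref{prop:catcircle} and Lemma~\ref{lem:MNkisfree}, $\syf_{k,N}(\SS_k)$ is a free $\SP{N}$-module, and more generally any collection of essential circles labelled by a sequence $\listk{k}$ is sent to a free module by the same lemma applied factor by factor together with Proposition~\ref{prop:mono} (used inductively on the number of circles, which is allowed because each $\syf_{k_i,N}(\SS_{k_i})$ is already known to be free). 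Combining this with Proposition~\ref{prop:QR2} applied to the functor $\syf_{k,N}$ — whose compatibility with the MOY relations required to run Queffelec--Rose's algorithm is exactly the content of Proposition~\ref{prop:isofromext} — one concludes that $\syf_{k,N}(\Gamma)$ is a finitely generated free graded $\SP{N}$-module for every vinyl graph $\Gamma$.

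Once freeness is universal, Proposition~\ref{prop:mono} produces a natural candidate family $\phi_{\Gamma_1,\Gamma_2}$ of isomorphisms; the remaining task is to check naturality in both arguments and the coherence (associativity, unit) axioms. Naturality is immediate from the definition of $\phi_{\Gamma_1,\Gamma_2}$: a pair of vinyl foams $G_i\co\Gamma_i\to\Gamma'_i$ acts on a representative $(F_1\du F_2)\circ Y_k^{k_1,k_2}$ by stacking on top, and stacking commutes with the pre-composition by $Y_k^{k_1,k_2}$. Hence the diagram expressing naturality of $\phi$ with respect to $G_1\du G_2$ commutes on the nose at the level of foams, therefore after passing to $\infty$-equivalence classes.

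For the unit constraint, observe that the empty vinyl graph is the monoidal unit of $\TL_\NN$ and that $\syf_{0,N}(\emptyset) \simeq M_{N,0} \simeq \SP{N}$ via Proposition~\ref{prop:catcircle}; the unitors of $\TL_\NN$ are realised by trivial re-scalings of foams, which $\phi_{\emptyset,\Gamma}$ and $\phi_{\Gamma,\emptyset}$ evidently match. For the associator one needs to verify, for any triple $(\Gamma_1,\Gamma_2,\Gamma_3)$, the equality
\[
\phi_{\Gamma_1\du\Gamma_2,\Gamma_3}\circ\bigl(\phi_{\Gamma_1,\Gamma_2}\otimes\id\bigr) = \phi_{\Gamma_1,\Gamma_2\du\Gamma_3}\circ\bigl(\id\otimes\phi_{\Gamma_2,\Gamma_3}\bigr).
\]
Both sides send a pure tensor of foam representatives $F_1\otimes F_2\otimes F_3$ to the same element, because the two foams
$(Y^{k_1,k_2}_{k_1+k_2}\du\id_{\SS_{k_3}})\circ Y^{k_1+k_2,k_3}_{k_1+k_2+k_3}$ and
$(\id_{\SS_{k_1}}\du Y^{k_2,k_3}_{k_2+k_3})\circ Y^{k_1,k_2+k_3}_{k_1+k_2+k_3}$
differ by the associativity isotopy of trees in $\SS^1$; this is relation~(\ref{eq:catsymass}) applied to the cylinder over the associativity move, hence an $\infty$-equivalence. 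The pentagon and triangle axioms then reduce to coherence of the underlying monoidal structure on $\TL_\NN$, already tautologically satisfied.

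The only genuinely non-trivial ingredient is Proposition~\ref{prop:mono}: once freeness is extended to every vinyl graph, everything else is a diagrammatic verification involving standard foam isotopies. There is no serious obstacle beyond that proposition; the key tightness comes from Lemma~\ref{lem:monomialwellbehave}, which was invoked precisely to identify the top symmetric monomial of a product.
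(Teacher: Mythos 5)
Your proof is correct and follows essentially the same strategy as the paper's: establish freeness of $\syf_{\NN,N}(\Gamma)$ for all vinyl graphs by treating circles first (via Proposition~\ref{prop:catcircle} and Proposition~\ref{prop:mono} inductively) and then invoking the Queffelec--Rose algorithm through Propositions~\ref{prop:QR2} and~\ref{prop:isofromext}, after which Proposition~\ref{prop:mono} supplies the isomorphisms $\phi_{\Gamma_1,\Gamma_2}$. You are somewhat more explicit than the paper about naturality and the coherence axioms, which the paper addresses only informally in Remark~\ref{rmk:ass}; your justification of the associator via the $\infty$-equivalence of the two bracketing foams is the same argument, though the paper attributes it to Lemma~\ref{lem:tree-likeRequiv} (tree-like foams with the same trivial decorations on leaves are $\infty$-equivalent) rather than to relation~(\ref{eq:catsymass}), which is a statement about isomorphisms of modules rather than about foams directly.
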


\begin{proof}
  First of all, since the only vinyl foam of level $0$ is the
  empty set, it is clear that $\syf_{\NN,N}(\mathbf{1}_{\TL_\NN}) =
  \SP{N} = \mathbf{1}_{\mathsf{C}}$ ($\mathbf{1}$ denotes the unital
  object of monoidal categories).  We need to construct a natural
  isomorphism $\left(\phi_{\Gamma_1, \Gamma_2} \right)_{\Gamma_1,
  \Gamma_2 \in \mathrm{ob(\TL_\NN)}}$ from $\syf_{\NN,N}(\bullet)
  \otimes_{\SP{N}} \syf_{\NN,N}(\bullet )$ to $\syf_{\NN,N}\left(\du(\bullet,
    \bullet )\right)$.
  
  This isomorphism is provided by (the proof of) Proposition~\ref{prop:mono}. In order to use it, we only need to show that for any vinyl graph $\Gamma$, $\syf_{\NN,N}(\mathbf{1}_{\TL_\NN})$ is a free $\SP{N}$-module.

  If $\Gamma$ is a collection of circles we can argue by induction on the number of circles. If $\Gamma$ consists of only one circle, we can use Proposition~\ref{prop:catcircle}. If it consists of more than one circle, we can use the induction hypothesis and Proposition~\ref{prop:mono}. We deduce the general case from the case of collection of circles thanks to Propostion~\ref{prop:QR2} and Proposition~\ref{prop:isofromext}.
\end{proof}

\begin{rmk}
  \label{rmk:ass}
  Formally we should have checked the compatibility of associators. Since we did not write down them explicitly, we cannot be very precise here. However, this compatibility trivially holds because the foams
\[
\stgamma \times \SS^1 \quad\textrm{and} \quad \stgammaprime \times \SS^1
\]
are $\infty$-equivalent, (because they are both tree-like with only trivial decorations (see Lemma~\ref{lem:tree-likeRequiv})). 

\end{rmk}

We can now prove that the functor $\syf_{\NN,N}$ categorifies the symmetric MOY calculus. 

\begin{thm}
  \label{thm:catsym}
  The functor $\syf_{\NN,N}: \TL_\NN \to \mathcal{C}$ satisfies for every vinyl graph $\Gamma$.
\[
\rk^{\SP{N}}_q\left(\syf_{\NN,N}(\Gamma)\right) = \kups{\Gamma}.
\]
\end{thm}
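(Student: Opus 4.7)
The plan is to reduce the statement to the case of collections of circles via the Queffelec--Rose reduction (Theorem~\ref{thm:QR}/Proposition~\ref{prop:QR2}), and then further reduce to a single circle via monoidality.

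First, I would treat the base case where $\Gamma = \SS_k$ is a single essential circle labeled by $k$. By Proposition~\ref{prop:catcircle}, we have $\syf_{k,N}(\SS_k) \simeq M_{N,k} q^{-k(N-1)}$ and its graded rank as a $\SP{N}$-module is $\qbina{k+N-1}{k}$, which is exactly $\kups{\SS_k}$ by identity~(\ref{eq:symrelcircle}).

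Next, for a disjoint union of circles $\SS_{\listk{k}}$ with $\listk{k} = (k_1, \dots, k_l)$, I would proceed by induction on $l$ using the monoidality result (Theorem~\ref{thm:monoidality}), which gives
\[
\syf_{\NN,N}(\SS_{\listk{k}}) \simeq \syf_{k_1,N}(\SS_{k_1}) \otimes_{\SP{N}} \cdots \otimes_{\SP{N}} \syf_{k_l,N}(\SS_{k_l}).
\]
Since each factor is free (by Proposition~\ref{prop:catcircle}), the tensor product is free and its graded rank is the product of the graded ranks, i.\,e.\ $\prod_{i=1}^l \qbina{k_i+N-1}{k_i}$, which matches $\kups{\SS_{\listk{k}}}$.

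For the general case, I would invoke Proposition~\ref{prop:QR2}: for any vinyl graph $\Gamma$, there exist $\NN[q,q^{-1}]$-linear combinations $\sum_i a_i C_i$ and $\sum_j b_j C'_j$ of collections of circles such that
\[
\syf_{\NN,N}(\Gamma) \oplus \bigoplus_i \syf_{\NN,N}(C_i)\{a_i\} \simeq \bigoplus_j \syf_{\NN,N}(C'_j)\{b_j\}.
\]
Since the right-hand side is free (as each $\syf_{\NN,N}(C'_j)$ is free by the previous step), Proposition~\ref{prop:QR2} guarantees that $\syf_{\NN,N}(\Gamma)$ is free and its graded rank satisfies the corresponding relation. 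The combinatorial side of the Queffelec--Rose algorithm uses precisely the relations~(\ref{eq:symrelass}), (\ref{eq:symrelbin1}), (\ref{eq:symrelsquare3}) and (\ref{eq:otherrelsym}), which by Proposition~\ref{prop:isofromext} are categorified by $\syf_{\NN,N}$. Therefore
\[
\rk^{\SP{N}}_q\left(\syf_{\NN,N}(\Gamma)\right) + \sum_i a_i \rk^{\SP{N}}_q\left(\syf_{\NN,N}(C_i)\right) = \sum_j b_j \rk^{\SP{N}}_q\left(\syf_{\NN,N}(C'_j)\right),
\]
and the same identity is satisfied by $\kups{\bullet}$ on both sides (by the defining relations of the symmetric MOY calculus). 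Since the statement holds for collections of circles, it follows for $\Gamma$.

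The main technical subtlety is ensuring freeness of $\syf_{\NN,N}(\Gamma)$ so that the graded rank makes sense and is additive; this is built into the formulation of Proposition~\ref{prop:QR2} together with monoidality. Beyond that the argument is essentially a translation of the Queffelec--Rose reduction algorithm through the categorified MOY relations of Proposition~\ref{prop:isofromext} and the multiplicativity of ranks under $\otimes_{\SP{N}}$.
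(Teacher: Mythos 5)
Your proof is correct and follows essentially the same route as the paper: freeness and the circle case via monoidality (Theorem~\ref{thm:monoidality}) and Proposition~\ref{prop:catcircle}, the categorified MOY relations from Proposition~\ref{prop:isofromext}, and the Queffelec--Rose reduction (Theorem~\ref{thm:QR}/Proposition~\ref{prop:QR2}) to conclude. The paper phrases the final step as "the rank function satisfies the defining identities, so it equals $\kups{\bullet}$ by uniqueness," while you unfold the same argument through the constructive reduction of Proposition~\ref{prop:QR2}; this is just a more explicit rendering of the same proof.
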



\begin{proof}
We have already seen in the proof of Theorem~\ref{thm:monoidality}, that the $\SP{N}$-module $\syf_{\NN,N}(\Gamma)$ is free for every vinyl graph $\Gamma$. Thanks to Theorem~\ref{thm:monoidality} and Propositions~\ref{prop:catcircle} and \ref{prop:isofromext} we obtain that the function $\rk^{\SP{N}}_q\left(\syf_{\NN,N}(\bullet)\right)$ satisfies identities~(\ref{eq:symrelcircle}), (\ref{eq:symrelass}), (\ref{eq:symrelbin1}), (\ref{eq:symrelsquare3}) and (\ref{eq:otherrelext}). We conclude by Theorem~\ref{thm:QR}.
\end{proof}

\subsection{An algebraic approach}
\label{sec:alg-approach}

\subsubsection{Hochschild and Koszul homologies}
\label{sec:kozsul-homology}

Koszul homology has been formalized in \cite{berger_lambre_solotar_2017}. If $R$ is a unital commutative ring, $A$ an $R$-algebra and $M$ a $A$-module-$A$, it associates with the pair $(A,M)$ a sequence $\KH_\bullet(A,M)$ of $R$-modules in a functorial way. If $A$ is Koszul (and this will be our case), then $\KH_\bullet(A,M) = \HH(A,M)$. We do not aim to discuss Koszul homology in details, we refer to  \cite{berger_lambre_solotar_2017} and references therein for a nice presentation. We will use Koszul homology instead of Hochschild homology because it enables to have more structure: in fact an extra differential.


\begin{notation}
  \label{not:polynomial-algebra}
  In what follows, 
$\listk{k}= (k_1,\dots ,k_l)$ is a finite sequence of positive integers of level $k$, $A_{\listk{k}}$ is the polynomial algebra $\SP{N}[x_1, \dots x_k]^{\mathfrak{S}_{\listk{k}}}$ and $A_k$ denotes $\SP{N}[x_1, \dots x_k]^{\mathfrak{S}_{k}}$. 
Note that $A_{\listk{k}}$ is a polynomial algebra over $\SP{N}$. For $i$ in $\{1,\dots, l\}$ and $j$ in $\{1, \dots, k_i\}$, we set $e_{j}^{(i)}$ to be the $j$th elementary symmetric polynomial in variables $x_{r_i+1},\dots, x_{r_i+ k_i}$, where $r_i= \sum_{t=1}^{i-1} k_t$. It is standard that we have (see for example \cite[Chapter IV, \S 6]{MR1878556}): 
\[
A_{\listk{k}{}}= \SP{N}[e_1^{(1)},\dots, e_{k_1}^{(1)}, e_1^{(2)}\dots, e_{k_2}^{(2)}, \dots, e_1^{(l)}\dots, e_{k_l}^{(l)} ]  \subseteq A_{1^k},
\]
where $A_{1^k} := A_{(1,\dots,1)} = \SP{N}[x_1, \dots, x_k]$. 
\end{notation}

\begin{dfn}
  \label{dfn:Koszul-complex}
The \emph{Koszul resolution} of $A_{\listk{k}}$ is the complex
  \[
C_\bullet(A_{\listk{k}}) := \bigotimes_{i=1}^l\bigotimes_{j=1}^{k_i} \left( \SP{N}[e_j^{(i)}]\otimes \SP{N}[e_j^{(i)}]q^{2j}   \xrightarrow{e_j^{(i)}\otimes 1 - 1 \otimes e_j^{(i)}} 
{\SP{N}[e_j^{(i)}]\otimes \SP{N}[e_j^{(i)}] }
\right).
\]
The homological degree of $C_\bullet(A_{\listk{k}})$ is called the $H$-degree.
\end{dfn}
It is convenient to think of this complex in this way: let $V_{\listk{k}}$ be the  $\SP{N}$-module generated by $(e_j^{(i)})_{\substack{i=1,\dots, l \\ j= 1, \dots, k_i}}$ (with $(e_j^{(i)})$ having degree $2j$). 
Then $C_\bullet(A_{\listk{k}})=A_{\listk{k}}\otimes \Lambda V_{\listk{k}} \otimes A_{\listk{k}}$ with the differential:
\[
\begin{array}{crcl}
d^{\listk{k}} \colon\thinspace & C_\bullet(A_{\listk{k}}) & \to & C_\bullet(A_{\listk{k}}) \\
  & a\otimes v_1 \wedge \dots \wedge v_l\otimes b &\mapsto & \displaystyle{\sum_{i=1}^l (-1)^{i+1} \left( av_i \otimes  v_1 \wedge \dots \wedge \widehat{v_i}\wedge\dots \wedge v_l\otimes b \right.} \\ &&& \qquad \quad \displaystyle{\left.- a\otimes  v_1 \wedge \dots \wedge \widehat{v_i}\wedge\dots \wedge v_l\otimes v_ib\right).}
\end{array}
\]

It is standard that it is a projective resolution of $A_{\listk{k}}$ as $A_{\listk{k}}$-module-$A_{\listk{k}}$. Hence, for any  $A_{\listk{k}}$-module-$A_{\listk{k}}$ $M$, we have:
\[
\HH_\bullet(A_{\listk{k}},M) \simeq H(C_\bullet(A_{\listk{k}})\otimes_{A_{\listk{k}}^{\mathrm{en}}} M)=: \KH(A_{\listk{k}}, M).
\]

From now on, when speaking about Hochschild homology, we mean Hochschild homology computed in this way. Of course this precision is irrelevant when we only look at the homology groups. But we will shortly introduce an extra differential $d_N$ on $C_\bullet(A_{\listk{k}})$. It will equip the Hochschild homology with a structure of chain complex. As far as we understand, the differential $d_N$ can be thought as an equivariant version of the extra differential introduced by Cautis in \cite{MR3709661}. Some proofs are postponed to Appendix~\ref{sec:kosz-reosl-polyn}.

\subsubsection{An extra differential}
\label{sec:an-extra-diff}

Let $N$ be a positive integer.

\begin{notation} 
  \label{notation:DN-and-dN}
  We denote by $D_N$ the following derivation on $A_{1^k}$:
  \[
  \begin{array}{crcl}
D^N_{1^k}   \colon\thinspace & A_{1^k} & \to & A_{1^k} \\
    & P(x_1,\dots,x_k)  &\mapsto & \sum_{i=1}^k \prod_{j=1}^N (x_i - T_j) \partial_{x_i}P(x_1, \dots, x_k).
  \end{array}
  \] 
We denote by $d^N_{\listk{k}}$ the following map $A_{\listk{k}}$-linear-$A_{\listk{k}}$ map on $C_{\bullet}(A_{\listk{k}})$:
\[
\begin{array}{crcl}
d^N_{\listk{k}}  \colon\,& C_\bullet(A_{\listk{k}}) & \to & C_\bullet(A_{\listk{k}}) \\
  & 1\otimes v_1 \wedge \dots \wedge v_l\otimes 1 &\mapsto & \sum_{i=1}^l 
(-1)^{i+1} D^N_k(v_i) \otimes  v_1 \wedge \dots \wedge \widehat{v_i}\wedge\dots \wedge v_l\otimes 1.  
\end{array}
\]
note that this is $H$-homogeneous of degree $-1$ and $q$-homogeneous of degree $2(N-1)$. When the context is clear we will drop the subscript $\listk{k}$.
\end{notation}
 
\begin{lem}
  \label{lem:d_Ndiff-commute-with-d}
  For any finite sequence of positive integers $\listk{k}$, the map $d^N_{\listk{k}}$ anti-commutes with $d^{\listk{k}}$ and is a differential on $C_{\bullet}(A_{\listk{k}})$.
\end{lem}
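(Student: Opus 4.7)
The plan is to express both $d^{\listk{k}}$ and $d^N_{\listk{k}}$ as $A_{\listk{k}}^{\mathrm{en}}$-linear combinations of odd derivations (``interior products'') on the exterior algebra $\Lambda V_{\listk{k}}$, and deduce both statements from the anti-commutation of such operators.

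First I would check that $D^N_{1^k}$ restricts to a derivation of $A_{\listk{k}}$. This is immediate: the coefficients $\prod_{j=1}^N (x_i - T_j)$ depend symmetrically on the index $i$, so $D^N_{1^k}$ commutes with the natural action of $\mathfrak{S}_k$ on $A_{1^k}$. In particular it preserves the subring of $\mathfrak{S}_{\listk{k}}$-invariants, so $d^N_{\listk{k}}$ is a well-defined $A_{\listk{k}}$-bilinear endomorphism of $C_\bullet(A_{\listk{k}})$ of $q$-degree $2(N-1)$ and $H$-degree $-1$.

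Next, view $C_\bullet(A_{\listk{k}}) = A_{\listk{k}}^{\mathrm{en}} \otimes_{\SP{N}} \Lambda V_{\listk{k}}$ with $A_{\listk{k}}^{\mathrm{en}} := A_{\listk{k}} \otimes_{\SP{N}} A_{\listk{k}}$, and for each basis vector $v$ of $V_{\listk{k}}$ let $\iota_v$ be the $A_{\listk{k}}^{\mathrm{en}}$-linear odd derivation of $\Lambda V_{\listk{k}}$ with $\iota_v(w) = \delta_{v,w}$. Unwinding signs one checks that
\begin{align*}
d^{\listk{k}} &= \sum_{v} \bigl( v \otimes 1 - 1 \otimes v \bigr)\,\iota_v, \\
d^N_{\listk{k}} &= \sum_{v} \bigl( D^N_k(v) \otimes 1 \bigr)\,\iota_v.
\end{align*}
Both operators are of the shape $\iota_\alpha := \sum_v \alpha_v\, \iota_v$ with coefficients $\alpha_v \in A_{\listk{k}}^{\mathrm{en}}$.

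The key step is then the abstract identity: for any two such families $(\alpha_v)$, $(\beta_v)$ in $A_{\listk{k}}^{\mathrm{en}}$, the composites $\iota_\alpha \iota_\beta$ and $\iota_\beta \iota_\alpha$ are negatives of each other. Indeed, using that each $\iota_v$ is $A_{\listk{k}}^{\mathrm{en}}$-linear and that $A_{\listk{k}}^{\mathrm{en}}$ is commutative,
\[
\iota_\alpha \iota_\beta = \sum_{v,w} \alpha_v \beta_w\, \iota_v \iota_w = -\sum_{v,w} \beta_w \alpha_v\, \iota_w \iota_v = -\iota_\beta \iota_\alpha,
\]
where we used $\iota_v \iota_w = -\iota_w \iota_v$. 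Taking $\alpha = \beta$ corresponding to $d^{\listk{k}}$ recovers the standard Koszul identity $(d^{\listk{k}})^2 = 0$; taking $\alpha = \beta$ corresponding to $d^N_{\listk{k}}$ gives $(d^N_{\listk{k}})^2 = 0$; and applying it to the two different families gives $d^{\listk{k}} d^N_{\listk{k}} + d^N_{\listk{k}} d^{\listk{k}} = 0$.

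The only genuinely delicate point is the well-definedness of $d^N_{\listk{k}}$ (requiring that $D^N_{1^k}$ preserve $A_{\listk{k}}$), treated in the first step; the rest is essentially formal once the interior-product picture is in place. Alternatively, one can avoid this abstraction and do a direct sign computation: both $(d^N_{\listk{k}})^2$ and $d^{\listk{k}} d^N_{\listk{k}} + d^N_{\listk{k}} d^{\listk{k}}$ split into sums indexed by ordered pairs $(i,j)$ with $i \neq j$, and the contributions of $(i,j)$ and $(j,i)$ cancel because the coefficients $v_i$, $v_j$, $D^N_k(v_i)$, $D^N_k(v_j)$ all commute in $A_{\listk{k}}$ while the sign $(-1)^{i+j}$ is symmetric.
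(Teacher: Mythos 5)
Your proof is correct, and it proves the same statement by what is ultimately the same sign cancellation as the paper's argument (Appendix~\ref{sec:kosz-reosl-polyn}, Lemma~\ref{lem:deltadiff-commute-with-d}), but packaged more conceptually. The paper expands $d^1 \circ \delta^1$ and $\delta^1 \circ d^1$ directly as sums over ordered pairs $1 \le i_1 < i_2 \le l$ and observes that corresponding terms are negatives of each other (and disposes of $(\delta^i)^2 = 0$ by saying ``the signs force every square to anti-commute''). You instead identify the common mechanism once and for all: both $d^{\listk{k}}$ and $d^N_{\listk{k}}$ have the form $\iota_\alpha = \sum_v \alpha_v\,\iota_v$ with $\alpha_v$ in the commutative ring $A_{\listk{k}}^{\mathrm{en}}$ and $\iota_v$ the mutually anti-commuting interior products, and any two such operators anti-commute. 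This has the advantage of handling $(d^{\listk{k}})^2 = 0$, $(d^N_{\listk{k}})^2 = 0$, and the mixed anti-commutator uniformly from a single two-line lemma, and it isolates the one non-formal point — that $D^N_{1^k}$ commutes with the $\mathfrak{S}_k$-action and hence preserves $A_{\listk{k}}$, which makes the coefficients $D^N(v)\otimes 1$ land in $A_{\listk{k}}^{\mathrm{en}}$. Your closing remark correctly identifies the paper's direct index computation as the unpackaged version of the same argument.
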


Since we have this extra structure, we need a refined version of Lemma~\ref{lem:hochschild-trace}. The following lemma should be compared to \cite[Lemma 6.2]{MR3709661}.
\begin{lem}
  \label{lem:hochschild-trace-dN}
  Let $\Gamma$ be a braid-like $\listk{k}_1$-MOY graph-$\listk{k}_0$ and $\Gamma'$ be a braid-like $\listk{k}_0$-MOY graph-$\listk{k}_1$, then the complexes   \[(\HH_\bullet(A_{\listk{k}_0},\IED(\Gamma'\circ_{\listk{k}_1}\Gamma)), d^N_{\listk{k}_0}) \quad \textrm{and} \quad
(\HH_\bullet(A_{\listk{k}_1},\IED(\Gamma\circ_{\listk{k}_0}\Gamma')), d^N_{\listk{k}_1}) \] are isomorphic. 
\end{lem}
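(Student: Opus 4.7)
The plan is to upgrade the canonical isomorphism of Lemma~\ref{lem:hochschild-trace} at the chain level so that it also intertwines the extra differentials $d^N$. Writing $M := \IED(\Gamma)$ and $M' := \IED(\Gamma')$, set $C_\bullet(M) := C_\bullet(A_{\listk{k}_1}) \otimes_{A_{\listk{k}_1}} M$, which is a projective resolution of $M$ as an $A_{\listk{k}_1}$-module-$A_{\listk{k}_0}$. Since $M'$ is projective as module-$A_{\listk{k}_1}$ (Corollary~\ref{cor:bimodule-strucure-on-IE-Gamma}), exactly as in Lemma~\ref{lem:hochschild-trace} we obtain a canonical chain-level isomorphism
\[
A_{\listk{k}_0}\otimes_{A_{\listk{k}_0}^{\mathrm{en}}}\bigl(M'\otimes_{A_{\listk{k}_1}}C_\bullet(M)\bigr) \;\simeq\; A_{\listk{k}_1}\otimes_{A_{\listk{k}_1}^{\mathrm{en}}}\bigl(C_\bullet(M)\otimes_{A_{\listk{k}_0}}M'\bigr),
\]
and both sides compute (after taking cohomology with respect to the Koszul differential) the respective Hochschild groups.

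First I would observe that the universal derivation $D^N_{1^k}$ on $A_{1^k}$ restricts to derivations on both subalgebras $A_{\listk{k}_0}$ and $A_{\listk{k}_1}$, since these are polynomial subalgebras generated by elementary symmetric polynomials in the appropriate blocks of variables, and $D^N_{1^k}$ respects such subalgebras. Consequently $d^N_{\listk{k}_0}$ and $d^N_{\listk{k}_1}$ fit into a coherent framework: they both descend from a single derivation on $A_{1^k}$, and the generators $e_j^{(i)}$ of $V_{\listk{k}_0}$ (respectively $V_{\listk{k}_1}$) on which $d^N$ is defined all live inside $A_{1^k}$.

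Next I would verify that the chain-level isomorphism displayed above respects the differentials $d^N_{\listk{k}_0}$ and $d^N_{\listk{k}_1}$. This is a direct computation: in both expressions the $d^N$-differential only acts on the Koszul-factor $\Lambda V_{\listk{k}_1}$ of $C_\bullet(M)$ via $D^N_{\listk{k}_1}$ on its generators $e_j^{(i)}$, and the trace isomorphism merely cycles the bimodule factors without touching this Koszul factor. Since both sides arise from the same resolution $C_\bullet(A_{\listk{k}_1})$, the two $d^N$-differentials agree under the isomorphism. Totalizing the bicomplex and passing to cohomology then yields the desired isomorphism of complexes.

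The main obstacle is precisely the chain-level naturality check: one must confirm that passing from the left-hand description, which expresses the Hochschild homology of $A_{\listk{k}_0}$ via a resolution of $M$ over $A_{\listk{k}_1}$ rather than $A_{\listk{k}_0}$, is compatible with the fact that $d^N_{\listk{k}_0}$ is defined on the Koszul resolution of $A_{\listk{k}_0}$ (and not of $A_{\listk{k}_1}$). Concretely, one needs a comparison between the two Koszul-style chain models computing $\HH_\bullet(A_{\listk{k}_0}, M'\otimes_{A_{\listk{k}_1}}M)$ that intertwines both $d^N$'s; this can be achieved by inserting an intermediate double complex built out of the Koszul resolutions of both $A_{\listk{k}_0}$ and $A_{\listk{k}_1}$ simultaneously, and invoking the Koszul-theoretic lemmas of Appendix~\ref{sec:kosz-reosl-polyn} to produce the required compatible quasi-isomorphisms.
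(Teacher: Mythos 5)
There is a genuine gap in the middle of your argument, and it undermines the key step. You write that ``the trace isomorphism merely cycles the bimodule factors without touching this Koszul factor. Since both sides arise from the same resolution $C_\bullet(A_{\listk{k}_1})$, the two $d^N$-differentials agree under the isomorphism.'' But the two sides do \emph{not} arise from the same resolution: the extra differential $d^N_{\listk{k}_0}$ lives on the Koszul resolution $C_\bullet(A_{\listk{k}_0})$ of $A_{\listk{k}_0}$, built from the generators $e_j^{(i)}$ associated to the partition $\listk{k}_0$, while $d^N_{\listk{k}_1}$ lives on $C_\bullet(A_{\listk{k}_1})$ built from the generators associated to $\listk{k}_1$. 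These are different exterior algebras $\Lambda V_{\listk{k}_0}$ and $\Lambda V_{\listk{k}_1}$, and the comparison between them is precisely the nontrivial content. Your claim of a ``direct computation'' showing agreement is not available; what is true is something weaker.

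The paper's actual argument (Appendix~\ref{sec:kosz-reosl-polyn}) reduces to the case where $\Gamma$ has a single vertex, so that one is comparing the Koszul resolutions of $A_{\listk{k}_0}$ and $A_{\listk{k}_1}$ for a pair of sequences differing by merging two adjacent entries. It builds an explicit homotopy equivalence $\varphi\colon C^2_\bullet \to C^1_\bullet$ (Proposition~\ref{prop:carphi_works}) between the two models, and then the essential point — which your proposal is missing — is that $\varphi$ does \emph{not} intertwine the two $d^N$'s on the nose; rather $\varphi\circ d^N_{\listk{k}_0} - d^N_{\listk{k}_1}\circ\varphi$ is null-homotopic, via an explicitly constructed homotopy $\eta$ (Lemma~\ref{lem:dN-and-phi-commute}). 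Only after passing to homology with respect to the Koszul differential (i.e.\ after forming the genuine Hochschild groups) does the induced map strictly commute with the induced $d^N$'s, giving the isomorphism of complexes asserted in the lemma. Your last paragraph does flag the obstacle and gestures at ``Koszul-theoretic lemmas,'' but without identifying that the commutation holds only up to an explicit homotopy, and without producing (or citing) that homotopy, the proof is incomplete.
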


This is proved in Appendix~\ref{sec:kosz-reosl-polyn} where we restrict to the special case where $\Gamma$ contains only one vertex. Note that this is actually enough to conclude in general.
We define an explicit homotopy equivalence $\varphi$ between two complexes computing the Hochschild homology. Finally we prove that $\varphi \circ d^N_{\listk{k}_0} - d^N_{\listk{k}_1}\circ\varphi$ is null-homotopic.
Note that the previous lemma shows that the complex $(\HH_\bullet(A_{\listk{k}},\IED(\Gamma)), d^N)$ only depends on the vinyl graph $\widehat{\Gamma}$. 

In order to make the differential $d_N$ of $q$-degree $0$ we shift the $\HH_i(A_{\listk{k}}, M)$ by $2i(N-1)$ in $q$-degree. Note that this adjustment does not change $\HH_0(A_{\listk{k}}, M)$. We denote this normalization by 
$\HH^N_\bullet(A_{\listk{k}}, M)$

The following proposition should be compared to \cite[Lemma 3.23]{2016arXiv160202769W}.

\begin{prop}\label{prop:all-in-deg-0}
  Let $\Gamma$ be a vinyl graph, then the homology of $(\HH^N_\bullet(A_{\listk{k}},\IED(\Gamma)), d^N)$ (denoted by $\HN_\bullet$) is concentrated in $H$-degree $0$.
\end{prop}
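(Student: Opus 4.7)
The plan is to reduce to the case where $\Gamma$ is a collection of essential circles (where Hochschild homology becomes a concrete Koszul complex), and then reformulate the vanishing as the regularity of an explicit sequence of polynomials.

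For the reduction, fix a braid-like lift of $\Gamma$ as a $\listk{k}$-MOY graph-$\listk{k}$; by Lemma~\ref{lem:hochschild-trace-dN} the complex does not depend on this choice. The complex $(\HH^N_\bullet(A_{\listk{k}},\IED(-)),d^N)$ is additive under direct sums of bimodules, and the property of being concentrated in $H$-degree $0$ is preserved by direct sums and direct summands. The Queffelec--Rose algorithm underlying Proposition~\ref{prop:QR2} uses only the local identities of Proposition~\ref{prop:isofromext}, each of which lifts through the 2-functor $\IED$ to a direct sum decomposition of Soergel bimodules; consequently there exist collections of circles $C_i,C'_j$ with
\[
\IED(\Gamma)\oplus \bigoplus_i \IED(C_i)\{a_i\} \simeq \bigoplus_j \IED(C'_j)\{b_j\}.
\]
Thus it suffices to treat the case where $\Gamma$ is a collection of circles of type $\listk{k}$.

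In that case we take the braid-like lift $\listk{k}\times I$, for which Lemma~\ref{lem:id-dl-foam-2-algebra} gives $\IED(\listk{k}\times I)=A_{\listk{k}}$ as $A_{\listk{k}}$-bimodule. Plugging into the Koszul resolution of Definition~\ref{dfn:Koszul-complex} and tensoring over $A_{\listk{k}}^{\mathrm{en}}$ yields
\[
\HH_\bullet(A_{\listk{k}},A_{\listk{k}}) \simeq A_{\listk{k}}\otimes_{\SP{N}} \Lambda^\bullet V_{\listk{k}},
\]
with vanishing Hochschild differential. Unravelling the formula for $d^N$, it becomes the Koszul differential associated with the family $\bigl(D^N(e_j^{(i)})\bigr)_{i,j}$ in $A_{\listk{k}}$. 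The proposition is therefore equivalent to the regularity of this sequence.

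For $\listk{k}=(k)$: the derivation $D^N=\sum_i \varphi(x_i)\partial_{x_i}$ on $A_{1^k}$ takes values in $J_{N,k}=(\varphi(x_1),\ldots,\varphi(x_k))$, so the chain $A_k\hookrightarrow A_{1^k}\twoheadrightarrow (A_{1^k}/J_{N,k})^{\mathfrak{S}_k}=M_{N,k}$ descends to a surjection $A_k/(D^N(e_j))_j \twoheadrightarrow M_{N,k}$. By Proposition~\ref{prop:Mnk-Frobenius} and Lemma~\ref{lem:MNkisfree}, $M_{N,k}$ is free over $\SP{N}$ of graded rank $\qbina{N+k-1}{k}$, which is precisely the Hilbert series predicted for a complete intersection in $A_k$ cut out by a regular sequence in degrees $(2(j+N-1))_{j=1}^{k}$. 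A dimension count after specialising $T_i=0$, where the common zero locus of the leading terms $\sum_i x_i^N\partial_{x_i}e_j$ reduces to the origin by a generating-function argument involving Newton's identities, forces $A_k/(D^N(e_j))_j$ to be finite over $\SP{N}$. Combined with the surjection and the equality of Krull codimension and length, this establishes regularity. The general $\listk{k}$ reduces to the single-circle case via $A_{\listk{k}}=A_{k_1}\otimes_{\SP{N}}\cdots\otimes_{\SP{N}} A_{k_l}$, the families for different $i$'s involving disjoint sets of variables. The main obstacle is this regularity step: the Hilbert series match is automatic from Proposition~\ref{prop:Mnk-Frobenius}, but the $T_i=0$ finiteness input is the only nontrivial commutative-algebra ingredient.
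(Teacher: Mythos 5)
Your argument is correct and takes a genuinely different route from the paper's at the commutative-algebra core. The reduction to collections of circles via Queffelec--Rose and additivity of $(\HH^N_\bullet,d^N)$ matches the paper (which just cites Theorem~\ref{thm:QR}; your elaboration via Proposition~\ref{prop:QR2} is the right one, though $\IED$ should be applied to a braid-like lift of $\Gamma$ rather than to the vinyl graph itself). The divergence is in the circle case. The paper keeps the whole family $(D^N(e_j^{(i)}))_{i,j}$ in $A_{\listk{k}}$ and proves by induction on $k$ (Lemma~\ref{lem:DNei-coprime}) that its members are pairwise coprime; this feeds the Hilbert-series computation of Lemma~\ref{lem:desctiption-of-I1} and hence the complete-intersection property. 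You instead peel off one circle at a time via $A_{\listk{k}}\simeq A_{k_1}\otimes_{\SP{N}}\cdots\otimes_{\SP{N}}A_{k_l}$ and Künneth over $\SP{N}$ (legitimate since each factor's degree-zero homology $M_{N,k_i}$ is $\SP{N}$-free by Lemma~\ref{lem:MNkisfree}), and establish regularity of $(D^N(e_j))_j$ in $A_k$ by a height count after setting $T_\bullet=0$. The ingredient you flag as the sole nontrivial input is in fact clean: writing $p_m$ for the power sums, the generating-function identity $\partial_{x_m}E(t)=tE(t)/(1+tx_m)$ with $E(t)=\prod_i(1+tx_i)$ gives $D^N(e_j)|_{T=0}=\sum_{s=0}^{j-1}(-1)^{j-1-s}e_s\,p_{N+j-1-s}$, so the specialized ideal equals $(p_N,\dots,p_{N+k-1})$; Newton's recursion then forces $p_m=0$ for all $m\geq N$ on the zero locus, and since $\sum_{m\geq 1}p_mt^m=\sum_i x_it/(1-x_it)$ has non-cancelling simple poles at $t=1/x_i$ unless all $x_i=0$, the zero locus is the origin, whence height $k$ and regularity by Cohen--Macaulayness. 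One small logical clean-up: the surjection onto $M_{N,k}$ does not help \emph{establish} regularity; it becomes an isomorphism only as a consequence. Net effect: your route replaces the paper's inductive coprimality lemma with a short power-sum computation, at the price of carrying the Künneth/flatness bookkeeping explicitly.
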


\begin{proof}
  Thanks to Queffelec--Rose's algorithm (see Theorem~\ref{thm:QR}), it is enough to show the statement when $\Gamma$ is a collection of circles. Suppose that $\Gamma$ is a collection of circles labeled by $\listk{k}$. The result follows from the fact 
that the polynomials $\left(D^N(e_j^{(i)}) \right)_{\substack{1\leq i \leq l \\ 1 \leq j \leq k_i}}$ are pairwise co-prime in $A_{\listk{k}}$. First note that $D^N(e_j^{(i)})$ is a  polynomial in the variables $x_{r_i+1},\dots, x_{r_i+ k_i}$ and $T_1, \dots, T_N$. It is homogeneous and has degree $N-1 +j$ which is bigger than 1. Hence for $i_1\neq {i_2}$, $1\leq j_1\leq k_{i_1}$ and  $1\leq j_2\leq i_2$, if $D^N(e_{j_1}^{(i_1)})$ and $D^N(e_{j_2}^{(i_2)})$ would have a non-trivial common divisor, they would have an homogeneous divisor in the variables $T_\bullet$. However $D^N(e_{j_1}^{(i_1)})$ is not divisible by any non-trivial homogeneous polynomial in the variables $T_\bullet$, because evaluating all theses variables to $0$ in $D^N(e_{j_1}^{(i_1)})$ does not give the 0 polynomial.

It remains to show that for a fixed $i$ and $1\leq j_1< j_2 \leq k_i$,  $D^N(e_{j_1}^{(i)})$ and $D^N(e_{j_2}^{(i)})$ are coprime. By symmetry we may assume that $i=1$ and even that $\listk{k}= (k)$. Then the result follows from Lemma~\ref{lem:DNei-coprime} below.
\end{proof}

\begin{lem}
  \label{lem:DNei-coprime}
  For $1\leq i \leq k$, let $e_i$ be the $i$th  elementary symmetric polynomial in $x_1, \dots, x_k$. The polynomials $\left(D^N(e_i) \right)_{1\leq i \leq k}$ are pairwise co-prime in $A_{k}$ ($=A_{1^k}^{\mathfrak{S}_k}$).
\end{lem}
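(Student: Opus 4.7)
The plan is to reduce the coprimality assertion to a vanishing-locus analysis by combining a specialization argument with Cohen--Macaulay commutative algebra.

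First, I would specialize $T_1 = \cdots = T_N = 0$. This defines a graded ring homomorphism
\[ \varphi\co A_k \longrightarrow \QQ[e_1,\ldots,e_k] = \QQ[x_1,\ldots,x_k]^{\mathfrak{S}_k} \]
sending $D^N(e_i)$ to $F_i := \sum_{j=1}^k x_j^N\, e_{i-1}(\hat{x_j})$. The polynomial $D^N(e_i)$ is homogeneous of positive degree $N+i-1$ (in the grading $\deg x_j = \deg T_\ell = 1$), so any common irreducible factor $h$ of $D^N(e_i)$ and $D^N(e_j)$ in $A_k$ may be assumed homogeneous of positive degree. If $\varphi(h) = 0$ then $h \in (T_1,\ldots,T_N)A_k$, which forces $D^N(e_i) \in (T_\bullet)$, contradicting $F_i \neq 0$. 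Otherwise $\varphi(h)$ is a homogeneous non-unit in $\QQ[e_\bullet]$ dividing both $F_i$ and $F_j$. Hence it is enough to prove pairwise coprimality of $F_1,\ldots,F_k$ in $\QQ[e_1,\ldots,e_k]$.

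Next, rather than handling pairs individually I would establish the stronger statement that $(F_1,\ldots,F_k)$ is a regular sequence in $\QQ[e_1,\ldots,e_k]$. The key observation is the polynomial identity
\[
Q_\xi(x) \,:=\, \sum_{j=1}^k \xi_j^N \prod_{l\neq j}(x-\xi_l) \,\equiv\, x^N p_\xi'(x) \pmod{p_\xi(x)}, \qquad p_\xi(x) := \prod_l (x-\xi_l),
\]
whose coefficient of $x^{k-i}$ is, up to sign, exactly $F_i(\xi)$. Consequently a point $\xi \in \overline{\QQ}^k$ satisfies $F_i(\xi) = 0$ for every $i$ if and only if $Q_\xi = 0$ as a polynomial in $x$, equivalently $p_\xi(x) \mid x^N p_\xi'(x)$. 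Writing $p_\xi(x) = \prod_r (x-\alpha_r)^{m_r}$ with distinct $\alpha_r$, the derivative $p_\xi'$ vanishes at $\alpha_r$ to order exactly $m_r-1$ (here we use characteristic zero), so the divisibility $p_\xi \mid x^N p_\xi'$ forces $x^N$ to vanish at $\alpha_r$ to order at least $1$; that is, $\alpha_r = 0$ for every $r$. Therefore $V(F_1,\ldots,F_k) = \{(0,\ldots,0)\}$ in $\overline{\QQ}^k$.

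Finally, this one-point vanishing locus means the ideal $(F_1,\ldots,F_k)$ has height $k$ in $\QQ[x_1,\ldots,x_k]$; by the finite integral extension $\QQ[e_\bullet]\subseteq \QQ[x_\bullet]$, it also has height $k$ in the Cohen--Macaulay graded polynomial ring $\QQ[e_1,\ldots,e_k]$ of Krull dimension~$k$. A homogeneous system of parameters in such a ring is a regular sequence, and regularity is permutation-invariant for sequences in the irrelevant ideal, so every pair $(F_i,F_j)$ is itself a regular sequence of length two. In the UFD $\QQ[e_1,\ldots,e_k]$ this is equivalent to $\gcd(F_i,F_j)=1$, yielding the lemma.

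The main obstacle I expect is the second step: recognising the identity $Q_\xi(x) \equiv x^N p_\xi'(x)\pmod{p_\xi}$ that bundles the $F_i$'s into a single polynomial-divisibility problem, and then carrying out the multiplicity analysis at the roots of $p_\xi$. Once $V=\{0\}$ is in hand, the passage from \emph{complete intersection} to \emph{pairwise coprime} is standard Cohen--Macaulay machinery, and the specialization reduction of the first step is a pure homogeneity check.
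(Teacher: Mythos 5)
Your proof is correct, and it takes a genuinely different route from the paper's. Both proofs begin by specializing $T_1=\cdots=T_N=0$ and reducing to the coprimality of $F_i=\sum_j x_j^N\, e_{i-1}(\hat{x}_j)$, but from there the arguments diverge. The paper proceeds by induction on $k$: it computes the base case $k=2$ by hand, and for $k\geq 3$ further specializes $x_k=0$ to handle pairs $(i_1,i_2)$ with $i_1,i_2<k$ by induction, then treats pairs $(i,k)$ separately using the factorization $F_k=e_k\cdot p_{k,N-1}$ (power sum) and degree arguments. Your approach instead proves the \emph{stronger} statement that $(F_1,\ldots,F_k)$ is a regular sequence, by encoding the simultaneous vanishing of all $F_i$ at a point $\xi$ into the single divisibility $p_\xi\mid x^N p_\xi'$ via the congruence $Q_\xi\equiv x^N p_\xi'\pmod{p_\xi}$; the multiplicity count at roots in characteristic zero then forces $\xi=0$, so $(F_1,\ldots,F_k)$ is a homogeneous system of parameters. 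The Cohen--Macaulay machinery converts this into pairwise coprimality at once. Your argument is arguably cleaner and gives more (a complete-intersection statement the paper does not record), at the cost of invoking more commutative algebra, whereas the paper's induction is elementary and self-contained but requires separate casework; both are valid.
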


\begin{proof}
    We work by induction on $k$. For $1\leq i \leq k$, we write $P_{i,k}:= D^N(e_i)$ with $e_i$ symmetric in $k$ variables and $P_{i,k}^0:= D^N(e_i)_{|T_1 = \dots = T_N =0}$. For $k=1$, there is nothing to show. For $k=2$, we have:
\[ P^0_{1,2} = x_1^N + x_2^N \quad \textrm{and} \quad P^0_{2,2} = x_1x_2^N + x_2x_1^N = x_1x_2(x_1^{N-1} + x_2^{N-1})\]
which are co-prime since $x_1^{N-1} + x_2^{N-1}$ and $x_1^N + x_2^N$ are co-prime as polynomial in $x_1$ with coefficients in $\QQ[x_2]$. If $Q$ is a non-trivial homogeneous element of $A_{k}$ which divides $P_{1,2}$ and $P_{2,2}$, then $Q_{{|T_1 = \dots = T_N =0}}$ is of degree 0. Since $Q$ is not equal to $0$ it has degree $0$ which proves that $P_{1,2}$ and $P_{2,2}$ are co-prime.
Suppose now that $k\geq 3$.

If $P$ is a polynomial in $A_{k}$, $P(x_k=0)$ denotes the polynomial of $A_{k-1}$ obtained by specializing the variable $x_k$ to $0$ in $P$.
From the very definition from $D^N$, we have:
\[P^0_{i,k} = \sum_{j=1}^{k} x_j^N e_{i-1}(x_1, \dots, \widehat{x_j}, \dots x_k)\]
Hence if $1\leq i \leq k-1$, $P^0_{i,k}(x_k=0) = P^0_{i, k-1}$. Let $1\leq i_1 <i_2 \leq k-1$. Suppose that a polynomial $Q$ in $A_{1^k}$ divides $P_{i_1,k}$ and $P_{i_2,k}$. The polynomial $Q^0:= Q_{|T_1=\dots=T_N=0}$ is homogeneous and $Q(x_k=0)$ divides  $P^0_{i_1,k-1}$ and $P^0_{i_2,k-1}$. By induction, we know that $Q^0(x_k=0)$ has degree $0$. This implies that $Q^0$ and therefore $Q$ has degree $0$. Hence $P_{i_1,k}$ and $P_{i_2,k}$ are co-prime. 

It remains to show that for $1\leq i \leq k-1$, $P_{i,k}$ and $P_{k,k}$ are co-prime. Let $Q$ be a polynomial of $A_{k}$ which divides $P_{i,k}$ and $P_{k,k}$. 
\[
P^0_{k,k} = x_1x_2 \cdots x_k(x_1^{N-1} + \dots + x_k^{N-1}) = e_k(x_1, \dots, x_k)(x_1^{N-1} + \dots + x_k^{N-1})
\]
The polynomial $e_k$ is prime in $A_{k}$ and does not divide $P_{i,k}$, since $P_{i,k}(x_k=0)$ is not equal to $0$.
Hence $Q^0$ divides $x_1^{N-1} + \dots + x_k^{N-1}=:p_{k,N-1}$.  Since $Q^0$ divides $p_{k,N-1}$ and $P_{i,k}^0$, it divides (in $A_{k}$)
\[
P_{i,k}- p_{k,N-1}x_ke_{i-1}(x_1,\dots,x_{k-1})
\]
Hence its $x_k$-degree  is at most equal to $1$. Since $Q^0$ is symmetric in the $x_\bullet$, if it does not have degree equal to $0$, it is a multiple of $x_1 + \dots + x_k = e_1$. But for $e_1$ to divide $p_{k, N-1}$, one must have $k=2$ and $N-1$ odd. But we supposed $k\geq 3$. Hence $Q^0$ has degree $0$. This implies that $Q$ has degree $0$ and finally that $P_{i_1,k}$ and $P_{i_2,k}$ are co-prime. 
\end{proof}

\begin{notation}\label{not:spaceTN}
  If $\widehat{\Gamma}$ is a vinyl graph of level $k$, denote
  $\FA_N(\widehat{\Gamma})$, the space
  \[\HN_0(\HH^N_\bullet(A_{\listk{k}}, \IED(\Gamma)))q^{-k(N-1)}\] for a
  braid-like $\listk{k}$-MOY graph-$\listk{k}$ $\Gamma$ whose closure
  is equal to
  $\widehat{\Gamma}$.  
  This is legitimate thanks to Proposition~\ref{prop:all-in-deg-0}.
\end{notation}


\subsection{When algebra meets foams}
\label{sec:when-algebra-meets}

The aim of this section is to compare $\FA_N(\widehat{\Gamma})$ and $\syf_{\NN, N}(\widehat{\Gamma})$, namely to prove that these spaces are isomorphic.

Let us consider a braid-like $\listk{k}$-MOY graph-$\listk{k}$ ${\Gamma}$ and denote $\widehat{\Gamma}$ the closure of ${\Gamma}$. We know, thanks to Proposition~\ref{prop:HH0-2-vinyl}, that there is a canonical isomorphism $\phi$ from $\IET(\widehat{\Gamma})$ to $\HH_0(A_{\listk{k}},\BS({\Gamma}))$. The space $\syf_{\NN,N}(\widehat{\Gamma})$ is a quotient of $\IET(\widehat{\Gamma})q^{-k(N-1)}$ while the space $\FA_N(\widehat{\Gamma})$ is a quotient of $\HH_0(A_{\listk{k}} , \BS({\Gamma}))q^{-k(N-1)}$ (thanks to Proposition~\ref{prop:all-in-deg-0}). Using the isomorphism $\phi$, we can think of $\FA_N(\widehat{\Gamma})$ and $\syf_{\NN, N}(\widehat{\Gamma})$ as being both quotient of $\HH_0(A_{\listk{k}}, \BS(\widehat{\Gamma}))q^{-k(N-1)}$. The rest of the section is devoted to prove the following proposition:

\begin{prop}
  \label{prop:alg-eq-foam}
  The spaces  $\FA_N(\widehat{\Gamma})$ and $\syf_{\NN, N}(\widehat{\Gamma})$ are isomorphic.
\end{prop}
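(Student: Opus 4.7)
My plan is to realize both $\FA_N(\widehat{\Gamma})$ and $\syf_{\NN, N}(\widehat{\Gamma})$ as quotients of the common ambient space $\HH_0(A_{\listk{k}}, \BS(\Gamma))q^{-k(N-1)}$ and to show that the two quotient ideals coincide. I will first verify this in the case of a single essential circle $\SS_k$ by an explicit computation, then extend to collections of circles via a K\"unneth-type monoidality for $\FA_N$, and finally use the Queffelec--Rose algorithm (Theorem~\ref{thm:QR}) to handle an arbitrary vinyl graph.

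For $\widehat{\Gamma} = \SS_k$, the underlying braid-like graph $\Gamma$ is the trivial strand of label $k$, so $\listk{k} = (k)$, $\BS(\Gamma) = A_k$ as an $A_k$-bimodule, and the Koszul complex collapses to $A_k \otimes \Lambda V_k$ with $V_k = \bigoplus_{j=1}^k \SP{N}\cdot e_j$. The differential $d^{(k)}$ vanishes here, so the extra differential $d^N$ acts on $H$-degree $1$ by sending the class of $e_j$ to $D^N_k(e_j) \in A_k$. Hence $\FA_N(\SS_k) = \bigl(A_k/(D^N_k(e_1), \dots, D^N_k(e_k))\bigr)q^{-k(N-1)}$, while Proposition~\ref{prop:catcircle} gives $\syf_{\NN,N}(\SS_k) = M_{N,k}q^{-k(N-1)} = \bigl(A_k/(J_{N,k}\cap A_k)\bigr)q^{-k(N-1)}$. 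The inclusion $(D^N_k(e_j))_{j=1}^k \subseteq J_{N,k}\cap A_k$ is immediate from $D^N_k(e_j) = \sum_i \prod_l (x_i - T_l)\partial_{x_i}e_j$. For equality, I would combine Lemma~\ref{lem:DNei-coprime} with the Cohen--Macaulay property of $A_k$ (a polynomial ring over $\SP{N}$) to prove that $(D^N_k(e_j))_{j=1}^k$ is a regular sequence of respective degrees $2(N-1+j)$; the Koszul resolution then computes the graded $\SP{N}$-rank of the quotient as $\prod_{j=1}^k \frac{1-q^{2(N-1+j)}}{1-q^{2j}}$, which by a standard $q$-identity equals $q^{k(N-1)}\qbina{N+k-1}{k}$, matching the rank of $M_{N,k}$ obtained from the basis of Lemma~\ref{lem:MNkisfree}. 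Hence the surjection $\FA_N(\SS_k) \twoheadrightarrow \syf_{\NN,N}(\SS_k)$ is an isomorphism.

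Next I establish monoidality for $\FA_N$ analogous to Theorem~\ref{thm:monoidality}. Disjoint union of vinyl graphs corresponds at the algebraic level to concatenation of the label sequences $\listk{k}_1 \sqcup \listk{k}_2$, with $A_{\listk{k}_1 \sqcup \listk{k}_2} \cong A_{\listk{k}_1}\otimes_{\SP{N}} A_{\listk{k}_2}$ and $\BS(\Gamma_1 \sqcup \Gamma_2) \cong \BS(\Gamma_1)\otimes_{\SP{N}} \BS(\Gamma_2)$. The corresponding Koszul complex factors as the tensor product of the individual complexes, and since the derivation $D^N_k$ is additive over the variables, $d^N$ is the total differential $d^N_1\otimes 1 + 1\otimes d^N_2$. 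Because each factor has $d^N$-homology concentrated in $H$-degree $0$ (Proposition~\ref{prop:all-in-deg-0}) and consists of finitely generated free $\SP{N}$-modules, a direct K\"unneth calculation yields $\FA_N(\widehat{\Gamma_1}\sqcup\widehat{\Gamma_2}) \cong \FA_N(\widehat{\Gamma_1})\otimes_{\SP{N}} \FA_N(\widehat{\Gamma_2})$, compatible with the tensor structure of Theorem~\ref{thm:monoidality}. Together with the single-circle case, this settles the isomorphism for all collections of essential circles.

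Finally, for a general vinyl graph $\widehat{\Gamma}$ I would verify that $\FA_N$ satisfies the categorified symmetric MOY identities of Proposition~\ref{prop:isofromext}: each of the associativity, digon, $H$-move and square identities holds at the level of Soergel bimodules $\BS(\Gamma)$ via the standard singular-Soergel isomorphisms derived from Lemma~\ref{lem:concat-2-tensor} and Corollary~\ref{cor:bimodule-strucure-on-IE-Gamma}, and these isomorphisms commute with $d^N$ because $d^N$ is built purely from the $A_{\listk{k}}$-algebra structure. Proposition~\ref{prop:QR2} then produces, for any $\widehat{\Gamma}$, an isomorphism
\[
\FA_N(\widehat{\Gamma}) \oplus \bigoplus_i \FA_N(C_i)\{a_i\} \cong \bigoplus_j \FA_N(C'_j)\{b_j\}
\]
(and the analogous statement for $\syf_{\NN,N}$) in which every circle-collection term agrees by the previous step; cancelling the common free $\SP{N}$-summands yields the desired isomorphism. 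The main obstacle I anticipate is the regularity of $(D^N_k(e_j))_{j=1}^k$: Lemma~\ref{lem:DNei-coprime} only provides pairwise coprimality, and upgrading this to a full regular sequence requires either the dimension argument sketched above (specialising $T_l = 0$ to reduce to the polynomial ring $\QQ[x_1,\dots,x_k]^{\mathfrak{S}_k}$, observing that $\overline{M_{N,k}}$ is Artinian, and then using flatness of $A_k$ over $\SP{N}$) or the explicit Koszul-resolution computations collected in Appendix~B.
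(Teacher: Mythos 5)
Your proposal is correct but follows a genuinely different route from the paper. The paper does \emph{not} go through the single circle $\SS_k$ and monoidality of $\FA_N$. Instead, it applies the Queffelec--Rose algorithm (via Proposition~\ref{prop:QR2} and Proposition~\ref{prop:Soergel-foam}) once, at the outset, to reduce directly to the case where $\widehat\Gamma$ is an arbitrary \emph{collection} of circles labeled by $\listk{k}$, then writes $\FA_N(\widehat\Gamma) = A_{\listk{k}}q^{-k(N-1)}/I_1$ and $\syf_{\NN,N}(\widehat\Gamma) = A_{\listk{k}}q^{-k(N-1)}/I_2$, and proves $I_1 = I_2$ by two dimension counts: $\dim_q^{\QQ} I_1$ is computed in Lemma~\ref{lem:desctiption-of-I1} from the pairwise coprimality of the generators $D^N(e_j^{(i)})$, and $\dim_q^{\QQ} I_2$ is computed in Lemma~\ref{lem:ideal-I2} from the already-established monoidality of $\syf_{\NN,N}$ (Theorem~\ref{thm:monoidality}) and the graded rank of $M_{N,k}$ (Lemma~\ref{lem:MNkisfree}); the easy inclusion $I_1 \subseteq I_2$ does the rest. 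Your approach trades this for a single-circle computation followed by a K\"unneth-type monoidality statement for $\FA_N$ that the paper never develops (and doesn't need, since it can lean on the monoidality of $\syf_{\NN,N}$). Both routes hinge, at bottom, on the same fact: that $(D^N(e_j^{(i)}))$ cut out the expected codimension. The paper invokes pairwise coprimality at this point (Proposition~\ref{prop:all-in-deg-0} and Lemma~\ref{lem:desctiption-of-I1}); you are right to flag that coprimality alone is \emph{a priori} weaker than regularity, and your suggested repair (specialize $T_\bullet = 0$, check that the fibre quotient is Artinian, and use flatness over $\SP{N}$, or equivalently appeal to the Koszul-complex machinery of Appendix~B) is a sound way to close this. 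Your route is slightly longer because of the extra $\FA_N$-monoidality step, but it has the virtue of making the Frobenius-algebra structure of $M_{N,k}$ and the role of the regular sequence more explicit, and it would survive even if one could not invoke the monoidality of $\syf_{\NN,N}$.
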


Thanks to Queffelec--Rose algorithm's and Proposition~\ref{prop:Soergel-foam}, it is enough to prove the statement when $\widehat{\Gamma}$ is a collection of circles labeled by $\listk{k}$. Since the spaces $\FA_N(\widehat{\Gamma})$ and $\syf_{\NN, N}(\widehat{\Gamma})$ are both quotients of $\HH_0(A_{\listk{k}},\BS({\Gamma}))q^{-k(N-1)}$ which is itself isomorphic to $A_{\listk{k}}q^{-k(N-1)}$, let us write $\FA_N(\widehat{\Gamma})=A_{\listk{k}}q^{-k(N-1)}/I_1$ and $\syf_{\NN, N}(\widehat{\Gamma})= A_{\listk{k}} q^{-k(N-1)}/I_2$. With these notations, we only need to show that the space $I_1$ and $I_2$ of $A_{\listk{k}}q^{-k(N-1)}$ are equal. 

\begin{lem}
  \label{lem:desctiption-of-I1}
  The space $I_1$ is generated by the polynomials $\left(D^N(e_j^{(i)}) \right)_{\substack{1\leq i \leq l \\ 1 \leq j \leq k_i}}$. Forgetting about the action of the variables $T_\bullet$, it is a graded vector space. Its graded dimension over $\QQ$ is equal to:
\[
\dim^\QQ_q I_1 = q^{-k(N-1)} \left(1 -  \prod_{b=1}^l\prod_{i=1}^{k_b}\left(1-q^{2(i+N-1)} \right) \right) \dim^\QQ_q A_{\listk{k}}.
\] 
\end{lem}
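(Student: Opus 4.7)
First, I would use the reduction to circles already made in the text: $\widehat{\Gamma}$ is a collection of circles labeled by $\listk{k}$, realized as the closure of $\Gamma = \listk{k} \times I$, so $\IED(\Gamma) = A_{\listk{k}}$ as an $A_{\listk{k}}$-bimodule by Lemma~\ref{lem:id-dl-foam-2-algebra}. Tensoring the Koszul resolution of Definition~\ref{dfn:Koszul-complex} with $A_{\listk{k}}$ over $A_{\listk{k}}^{\mathrm{en}}$ collapses the Koszul differential (because $A_{\listk{k}}$ is commutative), giving $\HH_m(A_{\listk{k}}, A_{\listk{k}}) \simeq A_{\listk{k}} \otimes_{\SP{N}} \Lambda^m V_{\listk{k}}$. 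In particular $\HH_0 = A_{\listk{k}}$ and $\HH_1 = \bigoplus_{i,j} A_{\listk{k}}\cdot v_j^{(i)}$.

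Next, tracing Notation~\ref{notation:DN-and-dN} through these identifications, the extra differential descends to the map $\HH_1 \to \HH_0$ sending $a\, v_j^{(i)}$ to $a\, D^N(e_j^{(i)})$. Here one needs that $D^N$ preserves the subalgebra $A_{\listk{k}} \subset A_{1^k}$, which follows from the $\mathfrak{S}_k$-equivariance of the defining expression $\sum_i \prod_j(x_i-T_j)\partial_{x_i}$. Taking the cokernel and accounting for the global shift $q^{-k(N-1)}$ in the definition of $\FA_N$ identifies $I_1$ with the ideal of $A_{\listk{k}}$ generated by the $D^N(e_j^{(i)})$, proving the first assertion.

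For the graded dimension, I would invoke Proposition~\ref{prop:all-in-deg-0}: the complex $(\HH^N_\bullet(A_{\listk{k}}, A_{\listk{k}}), d^N)$ has homology concentrated in $H$-degree $0$, so its graded Euler characteristic equals $\dim^\QQ_q \HN_0$. With $\HH^N_m = \HH_m\cdot q^{2m(N-1)}$ and $\Lambda^m V_{\listk{k}}$ having graded rank the $m$-th elementary symmetric polynomial in the monomials $(q^{2j})_{(i,j)}$, the Euler characteristic factors as
\[
\sum_{m \geq 0} (-1)^m q^{2m(N-1)} \dim^\QQ_q\bigl(A_{\listk{k}} \otimes_{\SP{N}} \Lambda^m V_{\listk{k}}\bigr) = \dim^\QQ_q A_{\listk{k}} \cdot \prod_{i=1}^l\prod_{j=1}^{k_i}\bigl(1 - q^{2(j + N - 1)}\bigr).
\]
Since $I_1$ fits in a short exact sequence $0 \to I_1 \to A_{\listk{k}} q^{-k(N-1)} \to \FA_N(\widehat{\Gamma}) \to 0$ with $\FA_N(\widehat{\Gamma}) = \HN_0\, q^{-k(N-1)}$, subtracting the shifted Euler characteristic from $q^{-k(N-1)}\dim^\QQ_q A_{\listk{k}}$ yields the claimed formula. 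The only nontrivial point here is bookkeeping of the two grading shifts (the $q^{2m(N-1)}$ per homological level and the overall $q^{-k(N-1)}$); once this is carefully tracked, the proof reduces to a standard Euler-characteristic argument made possible by Proposition~\ref{prop:all-in-deg-0}.
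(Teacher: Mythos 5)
Your proof is correct, but it takes a genuinely different route from the paper's for the graded-dimension count. The paper identifies $I_1$ as the sum of the principal ideals $\left\langle D^N(e_j^{(i)}) \right\rangle$, uses the pairwise coprimality supplied by Lemma~\ref{lem:DNei-coprime} to compute all iterated intersections as the ideals generated by the corresponding products, and then concludes by an (implicit) inclusion--exclusion over the index set. You instead run the homological machinery: you identify $\HH_\bullet(A_{\listk{k}},A_{\listk{k}})\simeq A_{\listk{k}}\otimes_{\SP{N}}\Lambda^\bullet V_{\listk{k}}$ by the Hochschild--Kostant--Rosenberg collapse of the Koszul differential, trace through Notation~\ref{notation:DN-and-dN} to see that the extra differential $\HH_1\to\HH_0$ sends $v_j^{(i)}$ to $D^N(e_j^{(i)})$, invoke Proposition~\ref{prop:all-in-deg-0} to collapse the $d^N$-homology to $H$-degree $0$, and read off $\dim_q^\QQ\HN_0$ as the Euler characteristic, which factors into the claimed product. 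Both arguments rest ultimately on the coprimality Lemma~\ref{lem:DNei-coprime}, yours through its role in proving Proposition~\ref{prop:all-in-deg-0}; but your Euler-characteristic step is arguably cleaner than the paper's, since inclusion--exclusion for the dimension of a sum of more than two graded subspaces is not automatic and the paper does not comment on why it applies here (it does, precisely because the generators form a regular sequence --- which is really the same Koszul exactness your Euler-characteristic argument is exploiting). Your careful bookkeeping of the two grading shifts (the $q^{2m(N-1)}$ per homological level and the global $q^{-k(N-1)}$) is also correct.
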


\begin{proof}
  The first statement is obvious. The second one follows from the fact that the polynomials $\left(D^N(e_j^{(i)}) \right)_{\substack{1\leq i \leq l \\ 1 \leq j \leq k_i}}$ are pairwise co-prime. Indeed this implies that 
\begin{align*}
&\left\langle D^N(e_{j_1}^{(i_1)})  \right\rangle_{A_{\listk{k}}} \cap 
\left\langle D^N(e_{j_2}^{(i_2)})  \right\rangle_{A_{\listk{k}}} \cap 
\dots \cap
\left\langle D^N(e_{j_a}^{(i_a)})   \right\rangle_{A_{\listk{k}}} \\ &\qquad \qquad =
\left\langle D^N(e_{j_1}^{(i_1)})\cdot D^N(e_{j_2}^{(i_2)}) \cdots D^N(e_{j_a}^{(i_a)})  \right\rangle_{A_{\listk{k}}}.
\end{align*}
Since the polynomial $D^N(e_j^{(i)})$ is homogeneous of degree $2(N+j-1)$, we have:
\begin{align*}
\dim_q \left\langle D^N(e_{j_1}^{(i_1)})\cdot D^N(e_{j_2}^{(i_2)}) \,\cdots\, D^N(e_{j_a}^{(i_a)})\right\rangle_{A_{\listk{k}}} &=q^{-k(N-1)} (\dim_q^{\QQ} A_{\listk{k}} )\prod_{l=1}^{a}q^{2(j_l+ N -1)}.
\end{align*}
The space $I_1$ is the sum of all spaces $\left\langle D^N(e_j^{(i)}) \right\rangle_{A_{\listk{k}}}$. This implies:
\[
\dim^\QQ_q I_1 = q^{-k(N-1)}\left(1 -  \prod_{b=1}^l\prod_{i=1}^{k_b}\left(1-q^{2(i+N-1)} \right) \right) \dim^\QQ_q A_{\listk{k}}.
\]
\end{proof}

The proof of Proposition~\ref{prop:alg-eq-foam}, follows from next lemma.

\begin{lem}
  \label{lem:ideal-I2}
  The spaces $I_1$ and $I_2$ are equal. 
\end{lem}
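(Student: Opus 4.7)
The plan is to combine an explicit inclusion $I_1 \subseteq I_2$ with a matching graded-dimension count.

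\textbf{(Inclusion $I_1\subseteq I_2$.)} By Theorem~\ref{thm:monoidality} and Proposition~\ref{prop:catcircle} we have
\[
\syf_{\NN,N}(\widehat{\Gamma}) \;\simeq\; \bigotimes_{i=1}^l \syf_{k_i,N}(\SS_{k_i}) \;\simeq\; \bigotimes_{i=1}^l M_{N,k_i}\,q^{-k_i(N-1)},
\]
with the $A_{\listk{k}} \simeq \bigotimes_i A_{k_i}$-action respecting this tensor decomposition. Since $e_j^{(i)}$ involves only the variables $x_{r_i+1},\dots,x_{r_i+k_i}$, the same is true of $D^N(e_j^{(i)})$, so $D^N(e_j^{(i)})$ acts solely on the $i$-th tensor factor. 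It therefore suffices to check that $D^N(e_j^{(i)}) \in J_{N,k_i}\cap A_{k_i}$. Membership in $J_{N,k_i}$ is immediate from $D^N(P)=\sum_a \prod_l(x_a-T_l)\,\partial_{x_a}P$. To see that $D^N$ preserves the subring of polynomials symmetric in $x_{r_i+1},\dots,x_{r_i+k_i}$, note that for any such transposition $\sigma$ and any symmetric $P$ one has $\sigma(\partial_{x_a}P)=\partial_{x_{\sigma(a)}}P$ while $\sigma$ simultaneously permutes the factors $\prod_l(x_a-T_l)$, so the sum is $\sigma$-invariant.

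\textbf{(Dimension count.)} By Theorem~\ref{thm:catsym} and relation~(\ref{eq:symrelcircle}), the graded $\SP{N}$-rank of $\syf_{\NN,N}(\widehat{\Gamma})$ equals $\prod_{i=1}^l \qbina{N+k_i-1}{k_i}$. Writing each balanced $q$-binomial in unbalanced form as $q^{-k_i(N-1)}\prod_{j=1}^{k_i}\frac{1-q^{2(N+j-1)}}{1-q^{2j}}$ and using $\dim^\QQ_q A_{\listk{k}} = \dim^\QQ_q \SP{N}\cdot\prod_i\prod_{j=1}^{k_i}(1-q^{2j})^{-1}$, a direct calculation yields
\[
\dim^\QQ_q I_2 = q^{-k(N-1)}\Bigl(1 - \prod_{i=1}^l \prod_{j=1}^{k_i}(1-q^{2(N+j-1)})\Bigr)\dim^\QQ_q A_{\listk{k}},
\]
which is precisely the formula for $\dim^\QQ_q I_1$ recorded in Lemma~\ref{lem:desctiption-of-I1}. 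Since both sides are graded $\QQ$-vector spaces with finite-dimensional homogeneous pieces and $I_1 \subseteq I_2$, equality of graded dimensions forces $I_1 = I_2$.

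\textbf{(Main obstacle.)} The genuine content has already been concentrated in Lemma~\ref{lem:desctiption-of-I1}, which in turn rests on the coprimality result of Lemma~\ref{lem:DNei-coprime}; once that is granted, the present statement is bookkeeping. The two points requiring a little care are the verification that $D^N$ preserves the invariants $A_{k_i}$ inside $A_{1^{k_i}}$, and the matching of conventions between the balanced $q$-binomials of the symmetric MOY calculus and the unbalanced graded dimensions of the polynomial algebras (together with the overall shift by $q^{-k(N-1)}$).
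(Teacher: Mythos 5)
Your argument is correct and follows essentially the same route as the paper: establish the inclusion $I_1\subseteq I_2$ by showing the generators $D^N(e_j^{(i)})$ lie in $I_2$, then match the graded $\QQ$-dimensions of $I_1$ and $I_2$ using monoidality (Theorem~\ref{thm:monoidality}), the circle computation (Proposition~\ref{prop:catcircle}), and Lemma~\ref{lem:desctiption-of-I1}. The paper is terser on the inclusion step (citing the definition of the vinyl evaluation directly), whereas you spell out the tensor factorization and the $\mathfrak{S}_{k_i}$-invariance of $D^N$, but the content is identical.
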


\begin{proof}
  It is clear that $I_1$ is in $I_2$ because the polynomials $\left(D^N(e_j^{(i)}) \right)_{\substack{1\leq i \leq l \\ 1 \leq j \leq k_i}}$ are all in $I_2$ (this follows from the definition of the evaluation of vinyl foams). 
We know thanks to Lemma~\ref{lem:MNkisfree} that $\syf_{\NN, N}$ associates with a circle labeled $k_b$ a free $\SP{N}$-module of graded rank
\[
\prod_{i=1}^{k_b} \frac{q^{-i-(N-1)} - q^{i+N-1}}{q^{-i} - q^i} = q^{-k_b(N-1)}
\prod_{i=1}^{k_b} \frac{1 - q^{2(i+N-1)}}{1 - q^{2i}}.
\]
Thanks to the monoidality of this functor, we obtain that the graded rank of $\syf_{\NN, N}(\Gamma)$ is equal to
\[
q^{-k(N-1)}\prod_{b=1}^l\prod_{i=1}^{k_b} \frac{1 - q^{2(i+N-1)}}{1 - q^{2i}}.
\]
But $A_{\listk{k}}q^{-k(N-1)}$ has a graded rank over $\SP{N}$ equal to:
\[
q^{-k(N-1)}\prod_{b=1}^l\prod_{i=1}^{k_b} \frac{1}{1 - q^{2i}}.
\]
Hence we have:
\begin{align*}
\dim_q^\QQ I_2 &= q^{-k(N-1)}\dim_q^\QQ A_{\listk{k}} - q^{-k(N-1)}\prod_{b=1}^l\prod_{i=1}^{k_b} \left(1 - q^{2(i+N-1)}\right) \dim_q^\QQ A_{\listk{k}}\\
&=  q^{-k(N-1)}\left(1 -  \prod_{b=1}^l\prod_{i=1}^{k_b}\left(1-q^{2(i+N-1)} \right) \right) \dim^\QQ_q A_{\listk{k}} \\
&=\dim_q^\QQ I_1.
\end{align*}
\end{proof}

\section{Link homologies}
\label{sec:link-homologies}
In this section, we define the symmetric Khovanov--Rozansky homology on diagrams of braid-closures and prove that they are indeed links invariants. The definition is of a purely foamy nature. However for proving the invariance we need to use the dictionary developed in Section~\ref{sec:soergel-bimodules-1}. We derive the invariance of the symmetric homologies from the invariance of the triply graded homology \cite{MR2421131}. We use the description of this homology as Hochschild homology of complexes of Soergel bimodules due to Khovanov and Rouquier. \cite{MR2339573, 1203.5065}. We show that the extra-differential introduced in Section~\ref{sec:an-extra-diff}, is compatible with their construction. Finally, we prove that when taking the homology with respect to this extra differential, one gets the same link homology as the one obtained by applying the foamy functor of the previous sections.
This link homology categorifies the Reshetikhin--Turaev link invariant associated with $q$-symmetric powers of the standard representation of $U_q(\sll_N)$. We call it the \emph{symmetric Khovanov--Rozansky homology}.


\subsection{The chain complexes}
\label{sec:chain-complexes}



The idea of the construction is somewhat classical. We follow the normalization used in \cite{MR3447099}. 
Let $D$ be a diagram of a braid-closure of level $k$, and
$\Xing(D)$ be its set of crossings.  For $x$ in $\Xing(D)$ we define a
finite set $I_x$ by the following rules:
\begin{align*}
  \begin{array}{ccl}
    \textrm{if } x= \scriptstyle{\NB{\tikz[scale=0.4]{}}}& \textrm{and $m\leq n$}& \textrm{then $I_x=\{-m, \dots, -1, 0\}$},  \\[0.3cm]
    \textrm{if } x= \scriptstyle{\NB{\tikz[scale=0.4]{}}}& \textrm{and $m>n$}& \textrm{then $I_x=\{-n, \dots, -1, 0\}$},  \\[0.3cm]
\textrm{if } x= \scriptstyle{\NB{\tikz[scale=0.4]{}}}& \textrm{and $m\leq n$}& \textrm{then $I_x=\{0, 1, \dots, m\}$} \\[0.3cm]
    \textrm{if } x= \scriptstyle{\NB{\tikz[scale=0.4]{}}}& \textrm{and $m>n$}& \textrm{then $I_x=\{0, 1, \dots, n\}$}. 
  \end{array}
\end{align*}
In the first two cases, we say that $x$ is of type $(m,n,+)$
and in the last two cases of type $(m,n,-)$.  
If $x$ is a crossing and $i$ is an element of $I_x$ we define
\begin{align*}
(\eta_{x,i}, \kappa_{x,i}) &=
  \begin{cases}
    (m+i,-i-m)             & \textrm{if $x$ is of type $(m,n,+)$, and $m\leq n$} \\ 
    (n+i,-i-n)             & \textrm{if $x$ is of type $(m,n,+)$, and $m>n$,} \\ 
    (i-m,m-i)             & \textrm{if $x$ is of type $(m,n,-)$, and $m\leq n$} \\
    (i-n,n-i)             & \textrm{if $x$ is of type $(m,n,-)$ and $m>n$.} 
  \end{cases}                                                           
\end{align*} 

We set $I(D)$ to
be $\prod_{x\in \Xing(D)}I_x$ and call the elements of $I(D)$,
\emph{the states of $D$}. With every state $s=(s_x)_{x\in \Xing(D)}$ of $D$
we associate a vinyl graph $D_s$ of level $k$ by replacing every
crossing $x$ of $\Xing(D)$ according to the following rules:

\begin{align*}
  \NB{\tikz{\begin{scope}
\coordinate (A) at (-1,-1);
\coordinate (B) at (1,-1);
\coordinate (C) at (1,1);
\coordinate (D) at (-1,1);
\coordinate (a) at (-.5,-.5);
\coordinate (b) at (.6,-.6);
\coordinate (c) at (.6,.6);
\coordinate (d) at (-.5,.5);
\draw[->] (A) -- (a) node[at start, below] {\tiny{$n$}};
\draw[->] (c) -- (C) node[at end, above ] {\tiny{$n$}};
\draw[->] (B) -- (b) node[at start , below ] {\tiny{$m$}};
\draw[->] (d) -- (D) node[at end, above] {\tiny{$m$}};
\draw[<-] (c) -- (d) node[midway, above,sloped] {\tiny{$n-s_x-m$}};
\draw[<-] (a) -- (b) node[midway, below, sloped] {\tiny{$-s_x$}};
\draw[->] (a) -- (d) node[midway, left] {\tiny{$n-s_x$}};
\draw[->] (b) -- (c) node[midway, right] {\tiny{$m+s_x$}};
\end{scope}}}  & \qquad \textrm{if $x$ is of type $(m,n,+)$ and $m \leq n $,} \\
  \NB{\tikz{\begin{scope}[xscale=1]
\coordinate (A) at (-1,-1);
\coordinate (B) at (1,-1);
\coordinate (C) at (1,1);
\coordinate (D) at (-1,1);
\coordinate (a) at (-.6,-.6);
\coordinate (b) at (.5,-.5);
\coordinate (c) at (.5,.5);
\coordinate (d) at (-.6,.6);
\draw[->] (A) -- (a) node[at start, below] {\tiny{$n$}};
\draw[->] (c) -- (C) node[at end, above ] {\tiny{$n$}};
\draw[->] (B) -- (b) node[at start , below ] {\tiny{$m$}};
\draw[->] (d) -- (D) node[at end, above] {\tiny{$m$}};
\draw[->] (c) -- (d) node[midway, above, sloped] {\tiny{$m-s_x-n$}};
\draw[->] (a) -- (b) node[midway, below, sloped] {\tiny{$-s_x$}};
\draw[->] (a) -- (d) node[midway, left] {\tiny{$n+s_x$}};
\draw[->] (b) -- (c) node[midway, right] {\tiny{$m-s_x$}};
\end{scope}}}   & \qquad \textrm{if $x$ is of type $(m,n,+)$ and $m >n $,} \\
  \NB{\tikz{\begin{scope}
\coordinate (A) at (-1,-1);
\coordinate (B) at (1,-1);
\coordinate (C) at (1,1);
\coordinate (D) at (-1,1);
\coordinate (a) at (-.5,-.5);
\coordinate (b) at (.6,-.6);
\coordinate (c) at (.6,.6);
\coordinate (d) at (-.5,.5);
\draw[->] (A) -- (a) node[at start, below] {\tiny{$n$}};
\draw[->] (c) -- (C) node[at end, above ] {\tiny{$n$}};
\draw[->] (B) -- (b) node[at start , below ] {\tiny{$m$}};
\draw[->] (d) -- (D) node[at end, above] {\tiny{$m$}};
\draw[<-] (c) -- (d) node[midway, above,sloped] {\tiny{$n+s_x-m$}};
\draw[<-] (a) -- (b) node[midway, below,sloped] {\tiny{$s_x$}};
\draw[->] (a) -- (d) node[midway, left] {\tiny{$n+s_x$}};
\draw[->] (b) -- (c) node[midway, right] {\tiny{$m-s_x$}};
\end{scope}}}    & \qquad \textrm{if $x$ is of type $(m,n,-)$ and $m\leq n$,}  \\
  \NB{\tikz{\begin{scope}[xscale=1]
\coordinate (A) at (-1,-1);
\coordinate (B) at (1,-1);
\coordinate (C) at (1,1);
\coordinate (D) at (-1,1);
\coordinate (a) at (-.6,-.6);
\coordinate (b) at (.5,-.5);
\coordinate (c) at (.5,.5);
\coordinate (d) at (-.6,.6);
\draw[->] (A) -- (a) node[at start, below] {\tiny{$n$}};
\draw[->] (c) -- (C) node[at end, above ] {\tiny{$n$}};
\draw[->] (B) -- (b) node[at start , below ] {\tiny{$m$}};
\draw[->] (d) -- (D) node[at end, above] {\tiny{$m$}};
\draw[->] (c) -- (d) node[midway, above,sloped] {\tiny{$m+s_x-n$}};
\draw[->] (a) -- (b) node[midway, below,sloped] {\tiny{$s_x$}};
\draw[->] (a) -- (d) node[midway, left] {\tiny{$n-s_x$}};
\draw[->] (b) -- (c) node[midway, right] {\tiny{$m+s_x$}};
\end{scope}}}    & \qquad \textrm{if $x$ is of type $(m,n,-)$ and $m> n$.}  
\end{align*}

If $s$ is a state, we define 
\[\eta_s = \sum_{x\in \Xing} \eta_{x,s_x}\quad \textrm{and} \quad \kappa_s= \sum_{x\in \Xing} \kappa_{x,s_x} \] and we set $D_s$ to seat in topological degree $\eta_s$ and to be shifted in $q$-degree by $\kappa_s$.

If $s=(s_x)_{x\in \Xing(D)}$ and $s'=(s'_x)_{x\in \Xing(D})$ are two states
which are equal on all but one of their coordinate $x$, for which
$s'_x = s_x +1$, we write  $(s \to s')$ (or $(s\to_x s')$ to be precise). In
this case, we define $F_{D,s\to s'}$ to be the vinyl 
$D_{s'}$-foam-$D_{s}$, which is the identity everywhere but in a
neighborhood of $x$, where it is given by:

\begin{align*}
  \begin{array}{ccc}
    \NB{\tikz[scale =0.7]{\tdplotsetmaincoords{60}{100}
\begin{scope}[tdplot_main_coords]
  \coordinate (aT) at (-1, -1, 3);
  \coordinate (bT) at (-1, 1, 3);
  \coordinate (cT) at (1, 1, 3);
  \coordinate (dT) at (1, -1, 3);
  \coordinate (AT) at (-2, -2, 3);
  \coordinate (BT) at (-2, 2, 3);
  \coordinate (CT) at (2, 2, 3);
  \coordinate (DT) at (2, -2, 3);
  \coordinate (DM) at (2, -2, 0);
  \coordinate (aB) at (-1, -1, -3);
  \coordinate (bB) at (-1, 1, -3);
  \coordinate (cB) at (1, 1, -3);
  \coordinate (dB) at (1, -1, -3);
  \coordinate (AB) at (-2, -2, -3);
  \coordinate (BB) at (-2, 2, -3);
  \coordinate (CB) at (2, 2, -3);
  \coordinate (DB) at (2, -2, -3);
  \coordinate (aM) at (-1, -1, 1);
  \coordinate (bM) at (-1, 1, -1);
  \coordinate (cM) at (1, 1, -1);
  \coordinate (dM) at (1, -1, 1);
  \draw[thin, <-] ($(aT)!0.5!(dM)$) -- +(0,-1.5,0) node [left, sloped] {$\scriptstyle{n-s'_x}$}; 
  \draw[thin, <-] ($(aB)!0.5!(dM)$) -- +(0,-1.5,0) node [left, sloped] {$\scriptstyle{n-s_x}$}; 
  \draw[thin, <-] ($(cT)!0.5!(bM)$) -- +(0,+1.5,0) node [right, sloped] {$\scriptstyle{m+s'_x}$}; 
  \draw[thin, <-] ($(cB)!0.5!(bM)$) -- +(0,+1.5,0) node [right, sloped] {$\scriptstyle{m+s_x}$}; 
  \draw[->, thick] (aT) -- (AT);
  \draw[->, thick] (DT) -- (dT);
  \draw[->, thick] (bT) -- (BT);
  \draw[->, thick] (CT) -- (cT);
  \draw[->, thick] (dT) -- (aT);
  \draw[->, thick] (aT) -- (bT);
  \draw[->, thick] (cT) -- (dT);
  \draw[->, thick] (cT) -- (bT);
  \draw[->, thick] (dB) -- (aB);
  \draw[->, thick] (aB) -- (bB);
  \draw[->, thick] (cB) -- (dB);
  \draw[->, thick] (cB) -- (bB);
  \draw[->, thick] (aB) -- (AB);
  \draw[->, thick] (DB) -- (dB);
  \draw[->, thick] (bB) -- (BB);
  \draw[->, thick] (CB) -- (cB);
  \draw[->, thick] (aB) -- (aT);
  \draw[->, thick] (bT) -- (bB);
  \draw[->, thick] (cB) -- (cT);
  \draw[->, thick] (dT) -- (dB);
  \filldraw[draw = black, rounded corners=1pt, thick, fill opacity = 0.3, fill = red]  (aT) -- (aB) -- (AB) -- (AT) -- (aT) node[sloped, midway, below, opacity = 1] {$\scriptstyle{m}$};
  \filldraw[draw = black, rounded corners=1pt, thick, fill opacity = 0.3, fill = red]  (bT) -- (bB) -- (BB)  -- (BT) -- (bT) node[sloped, midway, below, opacity = 1] {$\scriptstyle{n}$};
  \filldraw[draw = black, rounded corners=1pt, thick, fill opacity = 0.3, fill = green]   (aT) --  (aB) -- (bB) node[sloped, midway, above, opacity = 1] {$\scriptstyle{n-s_x-m}$} -- (bT) --  (aT) node[sloped, midway, below, opacity = 1] {$\scriptstyle{n-s'_x-m}$};
  \filldraw[draw = black, rounded corners=1pt, thick, fill opacity = 0.3, fill = blue]  (aM) -- (bM) node[ midway, sloped, below, opacity = 1] {$\scriptstyle{1}$} --  (cM)  -- (dM) -- cycle; 
  \filldraw[draw = black, rounded corners=1pt, thick, fill opacity = 0.3, fill = red]  (dT) -- (dB) -- (DB) node[sloped, midway, above, opacity = 1] {$\scriptstyle{n}$}-- (DT) -- cycle;
  \filldraw[draw = black, rounded corners=1pt, thick, fill opacity = 0.3, fill = green]   (aT) --  (aB) -- (dB) -- (dT) --  cycle;
  \filldraw[draw = black, rounded corners=1pt, thick, fill opacity = 0.3, fill = green]   (cT) -- (cB) --  (bB) --  (bT) -- cycle;
  \filldraw[draw = black, rounded corners=1pt, thick, fill opacity = 0.3, fill = green]   (cT) -- (cB) --  (dB) node[sloped, midway, above, opacity = 1] {$\scriptstyle{-s_x}$} --  (dT) -- (cT) node[sloped, midway, below, opacity = 1] {$\scriptstyle{-s'_x}$};
   \filldraw[draw = black, rounded corners=1pt, thick, fill opacity = 0.3, fill = red]  (cT) -- (cB) -- (CB) node[sloped, midway, above, opacity = 1] {$\scriptstyle{m}$} -- (CT) -- cycle;  
\end{scope}}} & & \NB{\tikz[scale =0.7]{\tdplotsetmaincoords{60}{100}
\begin{scope}[tdplot_main_coords]
  \coordinate (aT) at (-1, -1, 3);
  \coordinate (bT) at (-1, 1, 3);
  \coordinate (cT) at (1, 1, 3);
  \coordinate (dT) at (1, -1, 3);
  \coordinate (AT) at (-2, -2, 3);
  \coordinate (BT) at (-2, 2, 3);
  \coordinate (CT) at (2, 2, 3);
  \coordinate (DT) at (2, -2, 3);
  \coordinate (DM) at (2, -2, 0);
  \coordinate (aB) at (-1, -1, -3);
  \coordinate (bB) at (-1, 1, -3);
  \coordinate (cB) at (1, 1, -3);
  \coordinate (dB) at (1, -1, -3);
  \coordinate (AB) at (-2, -2, -3);
  \coordinate (BB) at (-2, 2, -3);
  \coordinate (CB) at (2, 2, -3);
  \coordinate (DB) at (2, -2, -3);
  \coordinate (aM) at (-1, -1, -1);
  \coordinate (bM) at (-1, 1, 1);
  \coordinate (cM) at (1, 1, 1);
  \coordinate (dM) at (1, -1, -1);
  \draw[thin, <-] ($(aT)!0.5!(dM)$) -- +(0,-1.5,0) node [left, sloped] {$\scriptstyle{n+s'_x}$}; 
  \draw[thin, <-] ($(aB)!0.5!(dM)$) -- +(0,-1.5,0) node [left, sloped] {$\scriptstyle{n+s_x}$}; 
  \draw[thin, <-] ($(cT)!0.5!(bM)$) -- +(0,+1.5,0) node [right, sloped] {$\scriptstyle{m-s'_x}$}; 
  \draw[thin, <-] ($(cB)!0.5!(bM)$) -- +(0,+1.5,0) node [right, sloped] {$\scriptstyle{m-s_x}$}; 
  \draw[->, thick] (aT) -- (AT);
  \draw[->, thick] (DT) -- (dT);
  \draw[->, thick] (bT) -- (BT);
  \draw[->, thick] (CT) -- (cT);
  \draw[->, thick] (dT) -- (aT);
  \draw[<-, thick] (aT) -- (bT);
  \draw[<-, thick] (cT) -- (dT);
  \draw[->, thick] (cT) -- (bT);
  \draw[->, thick] (dB) -- (aB);
  \draw[<-, thick] (aB) -- (bB);
  \draw[<-, thick] (cB) -- (dB);
  \draw[->, thick] (cB) -- (bB);
  \draw[->, thick] (aB) -- (AB);
  \draw[->, thick] (DB) -- (dB);
  \draw[->, thick] (bB) -- (BB);
  \draw[->, thick] (CB) -- (cB);
  \draw[->, thick] (aB) -- (aT);
  \draw[->, thick] (bT) -- (bB);
  \draw[->, thick] (cB) -- (cT);
  \draw[->, thick] (dT) -- (dB);
  \filldraw[draw = black, rounded corners=1pt, thick, fill opacity = 0.3, fill = red]  (aT) -- (aB) -- (AB) -- (AT) -- (aT) node[sloped, midway, below, opacity = 1] {$\scriptstyle{m}$};
  \filldraw[draw = black, rounded corners=1pt, thick, fill opacity = 0.3, fill = red]  (bT) -- (bB) -- (BB)  -- (BT) -- (bT) node[sloped, midway, below, opacity = 1] {$\scriptstyle{n}$};
  \filldraw[draw = black, rounded corners=1pt, thick, fill opacity = 0.3, fill = green]   (aT) --  (aB) -- (bB) node[sloped, midway, above, opacity = 1] {$\scriptstyle{m-s_x-n}$} -- (bT) --  (aT) node[sloped, midway, below, opacity = 1] {$\scriptstyle{m-s'_x-n}$};
  \filldraw[draw = black, rounded corners=1pt, thick, fill opacity = 0.3, fill = blue]  (aM) -- (bM) node[ midway, sloped, below, opacity = 1] {$\scriptstyle{1}$} --  (cM)  -- (dM) -- cycle; 
  \filldraw[draw = black, rounded corners=1pt, thick, fill opacity = 0.3, fill = red]  (dT) -- (dB) -- (DB) node[sloped, midway, above, opacity = 1] {$\scriptstyle{n}$}-- (DT) -- cycle;
  \filldraw[draw = black, rounded corners=1pt, thick, fill opacity = 0.3, fill = green]   (aT) --  (aB) -- (dB) -- (dT) --  cycle;
  \filldraw[draw = black, rounded corners=1pt, thick, fill opacity = 0.3, fill = green]   (cT) -- (cB) --  (bB) --  (bT) -- cycle;
  \filldraw[draw = black, rounded corners=1pt, thick, fill opacity = 0.3, fill = green]   (cT) -- (cB) --  (dB) node[sloped, midway, above, opacity = 1] {$\scriptstyle{-s_x}$} --  (dT) -- (cT) node[sloped, midway, below, opacity = 1] {$\scriptstyle{-s'_x}$};
   \filldraw[draw = black, rounded corners=1pt, thick, fill opacity = 0.3, fill = red]  (cT) -- (cB) -- (CB) node[sloped, midway, above, opacity = 1] {$\scriptstyle{m}$} -- (CT) -- cycle;  
\end{scope}}} \\
    \textrm{if $x$ is of type $(m,n,+)$ and $m \leq n $,}   & &   \textrm{if $x$ is of type $(m,n,-)$ and $n\leq m$,}  
\end{array} \\ \\
  \begin{array}{ccc}
    \NB{\tikz[scale =0.7]{\tdplotsetmaincoords{60}{100}
\begin{scope}[tdplot_main_coords]
  \coordinate (aT) at (-1, -1, 3);
  \coordinate (bT) at (-1, 1, 3);
  \coordinate (cT) at (1, 1, 3);
  \coordinate (dT) at (1, -1, 3);
  \coordinate (AT) at (-2, -2, 3);
  \coordinate (BT) at (-2, 2, 3);
  \coordinate (CT) at (2, 2, 3);
  \coordinate (DT) at (2, -2, 3);
  \coordinate (DM) at (2, -2, 0);
  \coordinate (aB) at (-1, -1, -3);
  \coordinate (bB) at (-1, 1, -3);
  \coordinate (cB) at (1, 1, -3);
  \coordinate (dB) at (1, -1, -3);
  \coordinate (AB) at (-2, -2, -3);
  \coordinate (BB) at (-2, 2, -3);
  \coordinate (CB) at (2, 2, -3);
  \coordinate (DB) at (2, -2, -3);
  \coordinate (aM) at (-1, -1, -1);
  \coordinate (bM) at (-1, 1, 1);
  \coordinate (cM) at (1, 1, 1);
  \coordinate (dM) at (1, -1, -1);
  \draw[thin, <-] ($(aT)!0.5!(dM)$) -- +(0,-1.5,0) node [left, sloped] {$\scriptstyle{n-s'_x}$}; 
  \draw[thin, <-] ($(aB)!0.5!(dM)$) -- +(0,-1.5,0) node [left, sloped] {$\scriptstyle{n-s_x}$}; 
  \draw[thin, <-] ($(cT)!0.5!(bM)$) -- +(0,+1.5,0) node [right, sloped] {$\scriptstyle{m+s'_x}$}; 
  \draw[thin, <-] ($(cB)!0.5!(bM)$) -- +(0,+1.5,0) node [right, sloped] {$\scriptstyle{m+s_x}$}; 
  \draw[->, thick] (aT) -- (AT);
  \draw[->, thick] (DT) -- (dT);
  \draw[->, thick] (bT) -- (BT);
  \draw[->, thick] (CT) -- (cT);
  \draw[->, thick] (dT) -- (aT);
  \draw[<-, thick] (aT) -- (bT);
  \draw[<-, thick] (cT) -- (dT);
  \draw[->, thick] (cT) -- (bT);
  \draw[->, thick] (dB) -- (aB);
  \draw[<-, thick] (aB) -- (bB);
  \draw[<-, thick] (cB) -- (dB);
  \draw[->, thick] (cB) -- (bB);
  \draw[->, thick] (aB) -- (AB);
  \draw[->, thick] (DB) -- (dB);
  \draw[->, thick] (bB) -- (BB);
  \draw[->, thick] (CB) -- (cB);
  \draw[->, thick] (aB) -- (aT);
  \draw[->, thick] (bT) -- (bB);
  \draw[->, thick] (cB) -- (cT);
  \draw[->, thick] (dT) -- (dB);
  \filldraw[draw = black, rounded corners=1pt, thick, fill opacity = 0.3, fill = red]  (aT) -- (aB) -- (AB) -- (AT) -- (aT) node[sloped, midway, below, opacity = 1] {$\scriptstyle{m}$};
  \filldraw[draw = black, rounded corners=1pt, thick, fill opacity = 0.3, fill = red]  (bT) -- (bB) -- (BB)  -- (BT) -- (bT) node[sloped, midway, below, opacity = 1] {$\scriptstyle{n}$};
  \filldraw[draw = black, rounded corners=1pt, thick, fill opacity = 0.3, fill = green]   (aT) --  (aB) -- (bB) node[sloped, midway, above, opacity = 1] {$\scriptstyle{m+s_x-n}$} -- (bT) --  (aT) node[sloped, midway, below, opacity = 1] {$\scriptstyle{m+s'_x-n}$};
  \filldraw[draw = black, rounded corners=1pt, thick, fill opacity = 0.3, fill = blue]  (aM) -- (bM) node[ midway, sloped, below, opacity = 1] {$\scriptstyle{1}$} --  (cM)  -- (dM) -- cycle; 
  \filldraw[draw = black, rounded corners=1pt, thick, fill opacity = 0.3, fill = red]  (dT) -- (dB) -- (DB) node[sloped, midway, above, opacity = 1] {$\scriptstyle{n}$}-- (DT) -- cycle;
  \filldraw[draw = black, rounded corners=1pt, thick, fill opacity = 0.3, fill = green]   (aT) --  (aB) -- (dB) -- (dT) --  cycle;
  \filldraw[draw = black, rounded corners=1pt, thick, fill opacity = 0.3, fill = green]   (cT) -- (cB) --  (bB) --  (bT) -- cycle;
  \filldraw[draw = black, rounded corners=1pt, thick, fill opacity = 0.3, fill = green]   (cT) -- (cB) --  (dB) node[sloped, midway, above, opacity = 1] {$\scriptstyle{-s_x}$} --  (dT) -- (cT) node[sloped, midway, below, opacity = 1] {$\scriptstyle{-s'_x}$};
   \filldraw[draw = black, rounded corners=1pt, thick, fill opacity = 0.3, fill = red]  (cT) -- (cB) -- (CB) node[sloped, midway, above, opacity = 1] {$\scriptstyle{m}$} -- (CT) -- cycle;  
\end{scope}}} & &  \NB{\tikz[scale =0.7]{\tdplotsetmaincoords{60}{100}
\begin{scope}[tdplot_main_coords]
  \coordinate (aT) at (-1, -1, 3);
  \coordinate (bT) at (-1, 1, 3);
  \coordinate (cT) at (1, 1, 3);
  \coordinate (dT) at (1, -1, 3);
  \coordinate (AT) at (-2, -2, 3);
  \coordinate (BT) at (-2, 2, 3);
  \coordinate (CT) at (2, 2, 3);
  \coordinate (DT) at (2, -2, 3);
  \coordinate (DM) at (2, -2, 0);
  \coordinate (aB) at (-1, -1, -3);
  \coordinate (bB) at (-1, 1, -3);
  \coordinate (cB) at (1, 1, -3);
  \coordinate (dB) at (1, -1, -3);
  \coordinate (AB) at (-2, -2, -3);
  \coordinate (BB) at (-2, 2, -3);
  \coordinate (CB) at (2, 2, -3);
  \coordinate (DB) at (2, -2, -3);
  \coordinate (aM) at (-1, -1, 1);
  \coordinate (bM) at (-1, 1, -1);
  \coordinate (cM) at (1, 1, -1);
  \coordinate (dM) at (1, -1, 1);
  \draw[thin, <-] ($(aT)!0.5!(dM)$) -- +(0,-1.5,0) node [left, sloped] {$\scriptstyle{n+s'_x}$}; 
  \draw[thin, <-] ($(aB)!0.5!(dM)$) -- +(0,-1.5,0) node [left, sloped] {$\scriptstyle{n+s_x}$}; 
  \draw[thin, <-] ($(cT)!0.5!(bM)$) -- +(0,+1.5,0) node [right, sloped] {$\scriptstyle{m-s'_x}$}; 
  \draw[thin, <-] ($(cB)!0.5!(bM)$) -- +(0,+1.5,0) node [right, sloped] {$\scriptstyle{m-s_x}$}; 
  \draw[->, thick] (aT) -- (AT);
  \draw[->, thick] (DT) -- (dT);
  \draw[->, thick] (bT) -- (BT);
  \draw[->, thick] (CT) -- (cT);
  \draw[->, thick] (dT) -- (aT);
  \draw[->, thick] (aT) -- (bT);
  \draw[->, thick] (cT) -- (dT);
  \draw[->, thick] (cT) -- (bT);
  \draw[->, thick] (dB) -- (aB);
  \draw[->, thick] (aB) -- (bB);
  \draw[->, thick] (cB) -- (dB);
  \draw[->, thick] (cB) -- (bB);
  \draw[->, thick] (aB) -- (AB);
  \draw[->, thick] (DB) -- (dB);
  \draw[->, thick] (bB) -- (BB);
  \draw[->, thick] (CB) -- (cB);
  \draw[->, thick] (aB) -- (aT);
  \draw[->, thick] (bT) -- (bB);
  \draw[->, thick] (cB) -- (cT);
  \draw[->, thick] (dT) -- (dB);
  \filldraw[draw = black, rounded corners=1pt, thick, fill opacity = 0.3, fill = red]  (aT) -- (aB) -- (AB) -- (AT) -- (aT) node[sloped, midway, below, opacity = 1] {$\scriptstyle{m}$};
  \filldraw[draw = black, rounded corners=1pt, thick, fill opacity = 0.3, fill = red]  (bT) -- (bB) -- (BB)  -- (BT) -- (bT) node[sloped, midway, below, opacity = 1] {$\scriptstyle{n}$};
  \filldraw[draw = black, rounded corners=1pt, thick, fill opacity = 0.3, fill = green]   (aT) --  (aB) -- (bB) node[sloped, midway, above, opacity = 1] {$\scriptstyle{n+s_x-m}$} -- (bT) --  (aT) node[sloped, midway, below, opacity = 1] {$\scriptstyle{n+s'_x-m}$};
  \filldraw[draw = black, rounded corners=1pt, thick, fill opacity = 0.3, fill = blue]  (aM) -- (bM) node[ midway, sloped, below, opacity = 1] {$\scriptstyle{1}$} --  (cM)  -- (dM) -- cycle; 
  \filldraw[draw = black, rounded corners=1pt, thick, fill opacity = 0.3, fill = red]  (dT) -- (dB) -- (DB) node[sloped, midway, above, opacity = 1] {$\scriptstyle{n}$}-- (DT) -- cycle;
  \filldraw[draw = black, rounded corners=1pt, thick, fill opacity = 0.3, fill = green]   (aT) --  (aB) -- (dB) -- (dT) --  cycle;
  \filldraw[draw = black, rounded corners=1pt, thick, fill opacity = 0.3, fill = green]   (cT) -- (cB) --  (bB) --  (bT) -- cycle;
  \filldraw[draw = black, rounded corners=1pt, thick, fill opacity = 0.3, fill = green]   (cT) -- (cB) --  (dB) node[sloped, midway, above, opacity = 1] {$\scriptstyle{s_x}$} --  (dT) -- (cT) node[sloped, midway, below, opacity = 1] {$\scriptstyle{s'_x}$};
   \filldraw[draw = black, rounded corners=1pt, thick, fill opacity = 0.3, fill = red]  (cT) -- (cB) -- (CB) node[sloped, midway, above, opacity = 1] {$\scriptstyle{m}$} -- (CT) -- cycle;  
\end{scope}}}    \\  
     \textrm{if $x$ is of type $(m,n,+)$ and $m >n $,}  && \textrm{if $x$ is of type $(m,n,-)$ and $n< m$.} 
  \end{array}
\end{align*}
It is worth noting that this has $q$-degree $1$. 

\begin{rmk}
  \label{rmk:diff1}
  If $x$ is of type $(1, 1, +)$ or $(1,1, -)$, the set $I_x$ has two elements and the foam $F_{D,s\to_x s'}$ is simpler because we can remove the facets labeled by $0$:
\[
\begin{array}{cc}
\NB{\tikz[scale=1]{\tdplotsetmaincoords{125}{115}
\begin{scope}[tdplot_main_coords]
  \coordinate (A1B) at (0, 0, 0);
  \coordinate (A2B) at (1, 0, 0);
  \coordinate (C1B) at (0.5, 0.5, 0);
  \coordinate (C2B) at (0.5, 1.5, 0);
  \coordinate (B1B) at (0, 2, 0);
  \coordinate (B2B) at (1, 2, 0);
  \coordinate (A1T) at (0, 0, 2);
  \coordinate (A2T) at (1, 0, 2);
  \coordinate (B1T) at (0, 2, 2);
  \coordinate (B2T) at (1, 2, 2);
  \coordinate (CM) at (0.5, 1,1);
  \filldraw [draw= black, fill =green, fill opacity =0.4] (A2B) --  (A2T) -- (B2T) -- (B2B) -- (C2B)  .. controls +(0,0,0) and +(0,0.5,0 ) .. (CM)  .. controls +(0, -0.5,0) and +(0,0, 0) .. (C1B) -- cycle;
 \filldraw [draw= black, fill =red, fill opacity =0.4] (A1B) --  (A1T) -- (B1T) -- (B1B) -- (C2B)  .. controls +(0,0,0) and +(0,0.5,0 ) .. (CM)  .. controls +(0, -0.5,0) and +(0,0, 0) .. (C1B) -- cycle;
 \filldraw [draw= black, fill =blue, fill opacity =0.4] (C1B) -- (C2B)  .. controls +(0,0,0) and +(0,0.5,0 ) .. (CM)  .. controls +(0, -0.5,0) and +(0,0, 0) .. (C1B) -- cycle;
\end{scope}}} & \NB{\tikz[scale=1]{\tdplotsetmaincoords{-125}{115}
\begin{scope}[tdplot_main_coords]
  \coordinate (A1B) at (0, 0, 0);
  \coordinate (A2B) at (1, 0, 0);
  \coordinate (C1B) at (0.5, 0.5, 0);
  \coordinate (C2B) at (0.5, 1.5, 0);
  \coordinate (B1B) at (0, 2, 0);
  \coordinate (B2B) at (1, 2, 0);
  \coordinate (A1T) at (0, 0, 2);
  \coordinate (A2T) at (1, 0, 2);
  \coordinate (B1T) at (0, 2, 2);
  \coordinate (B2T) at (1, 2, 2);
  \coordinate (CM) at (0.5, 1,1);
  \filldraw [draw= black, fill =green, fill opacity =0.4] (A2B) --  (A2T) -- (B2T) -- (B2B) -- (C2B)  .. controls +(0,0,0) and +(0,0.5,0 ) .. (CM)  .. controls +(0, -0.5,0) and +(0,0, 0) .. (C1B) -- cycle;
 \filldraw [draw= black, fill =red, fill opacity =0.4] (A1B) --  (A1T) -- (B1T) -- (B1B) -- (C2B)  .. controls +(0,0,0) and +(0,0.5,0 ) .. (CM)  .. controls +(0, -0.5,0) and +(0,0, 0) .. (C1B) -- cycle;
 \filldraw [draw= black, fill =blue, fill opacity =0.4] (C1B) -- (C2B)  .. controls +(0,0,0) and +(0,0.5,0 ) .. (CM)  .. controls +(0, -0.5,0) and +(0,0, 0) .. (C1B) -- cycle;
\end{scope}}} \\ 
F_{D,s\to_x s'}\textrm{ for $x$ of type $(1,1, +)$,} & F_{D,s\to_x s'}\textrm{ for $x$ of type $(1,1,-)$.} 
\end{array}
\]
\end{rmk}

We define an hyper-rectangle $R(D)$ of graded $\SP{N}$-modules. The vertices of this hyper-rectangle are labeled by states and
the edges by pair of states $(s,s')$ for which $s \to s'$. With every
state $s=(s_x)_{x\in \Xing}$, we associated the graded $\SP{N}$- module
$V_s:=\syf_{k,N}(D_s)q^{\kappa_s}$, and we declare that it has
homological degree $\eta_s$.

With every edge $(s\to s')$, we associate the map
$d_{s\to s'}:=\syf_{k,N}(F_{D,s\to s'}) : V_s \to V_{s'}$. One easily checks that all these maps are
$q$-homogeneous of degree $0$ (thanks to the degree shift $q^{\kappa_s}$) and
increase 
the homological degree by $1$. Hence
we call them \emph{pre-differentials}.

All squares in $R(D)$ commute because of the TQFT nature of the
functor $\syf_{k,N}$. Furthermore, if the composition of two pre-differentials $d_{s\to s'}$ and $d_{s'\to s''}$  does not fit into a square in $R(D)$, this means that we have  $(s\to_x s')$  and $(s'\to_x s'')$ for the same $x$ in $\Xing$. In this case $d_{s'\to s''} \circ d_{s\to s'} =0$ because the foams 
\[
\NB{\tikz[scale=0.7]{\tdplotsetmaincoords{60}{100}
\begin{scope}[tdplot_main_coords]
  \coordinate (aT) at (-1, -1, 3);
  \coordinate (bT) at (-1, 1, 3);
  \coordinate (cT) at (1, 1, 3);
  \coordinate (dT) at (1, -1, 3);
  \coordinate (AT) at (-2, -2, 3);
  \coordinate (BT) at (-2, 2, 3);
  \coordinate (CT) at (2, 2, 3);
  \coordinate (DT) at (2, -2, 3);
  \coordinate (DM) at (2, -2, 0);
  \coordinate (aB) at (-1, -1, -3);
  \coordinate (bB) at (-1, 1, -3);
  \coordinate (cB) at (1, 1, -3);
  \coordinate (dB) at (1, -1, -3);
  \coordinate (AB) at (-2, -2, -3);
  \coordinate (BB) at (-2, 2, -3);
  \coordinate (CB) at (2, 2, -3);
  \coordinate (DB) at (2, -2, -3);
  \coordinate (aM1) at (-1, -1, -1);
  \coordinate (bM1) at (-1, 1, -2);
  \coordinate (cM1) at (1, 1, -2);
  \coordinate (dM1) at (1, -1, -1);
  \coordinate (aM2) at (-1, -1, 2);
  \coordinate (bM2) at (-1, 1, 1);
  \coordinate (cM2) at (1, 1, 1);
  \coordinate (dM2) at (1, -1, 2);
  \draw[thin, <-] ($(aT)!0.5!(dM2)$) -- +(0,-1.5,0) node [left, sloped] {$\scriptstyle{n+k+2}$};
  \draw[thin, <-] ($(aM1)!0.5!(dM2)$) -- +(0,-1.5,0) node [left, sloped] {$\scriptstyle{n+k+1}$};  
  \draw[thin, <-] ($(aB)!0.5!(dM1)$) -- +(0,-1.5,0) node [left, sloped] {$\scriptstyle{n+k}$}; 
  \draw[thin, <-] ($(cT)!0.5!(bM2)$) -- +(0,+1.5,0) node [right, sloped] {$\scriptstyle{m-k-2}$}; 
  \draw[thin, <-] ($(cM1)!0.5!(bM2)$) -- +(0,+1.5,0) node [right, sloped] {$\scriptstyle{m-k-1}$}; 
  \draw[thin, <-] ($(cB)!0.5!(bM1)$) -- +(0,+1.5,0) node [right, sloped] {$\scriptstyle{m-k}$}; 
  \draw[->, thick] (aT) -- (AT);
  \draw[->, thick] (DT) -- (dT);
  \draw[->, thick] (bT) -- (BT);
  \draw[->, thick] (CT) -- (cT);
  \draw[->, thick] (dT) -- (aT);
  \draw[->, thick] (aT) -- (bT);
  \draw[->, thick] (cT) -- (dT);
  \draw[->, thick] (cT) -- (bT);
  \draw[->, thick] (dB) -- (aB);
  \draw[->, thick] (aB) -- (bB);
  \draw[->, thick] (cB) -- (dB);
  \draw[->, thick] (cB) -- (bB);
  \draw[->, thick] (aB) -- (AB);
  \draw[->, thick] (DB) -- (dB);
  \draw[->, thick] (bB) -- (BB);
  \draw[->, thick] (CB) -- (cB);
  \draw[->, thick] (aB) -- (aT);
  \draw[->, thick] (bT) -- (bB);
  \draw[->, thick] (cB) -- (cT);
  \draw[->, thick] (dT) -- (dB);
  \filldraw[draw = black, rounded corners=1pt, thick, fill opacity = 0.3, fill = red]  (aT) -- (aB) -- (AB) -- (AT) -- (aT) node[sloped, midway, below, opacity = 1] {$\scriptstyle{m}$};
  \filldraw[draw = black, rounded corners=1pt, thick, fill opacity = 0.3, fill = red]  (bT) -- (bB) -- (BB)  -- (BT) -- (bT) node[sloped, midway, below, opacity = 1] {$\scriptstyle{n}$};
  \filldraw[draw = black, rounded corners=1pt, thick, fill opacity = 0.3, fill = green]   (aT) --  (aB) -- (bB) node[sloped, midway, above, opacity = 1] {$\scriptstyle{n+k-m}$} -- (bT) --  (aT) node[sloped, midway, below, opacity = 1] {$\scriptstyle{n+k+2-m}$};
  \filldraw[draw = black, rounded corners=1pt, thick, fill opacity = 0.3, fill = blue]  (aM2) -- (bM2) node[ midway, sloped, below, opacity = 1] {$\scriptstyle{1}$} --  (cM2)  -- (dM2) -- cycle; 
  \filldraw[draw = black, rounded corners=1pt, thick, fill opacity = 0.3, fill = blue]  (aM1) -- (bM1) node[ midway, sloped, below, opacity = 1] {$\scriptstyle{1}$} --  (cM1)  -- (dM1) -- cycle; 
  \filldraw[draw = black, rounded corners=1pt, thick, fill opacity = 0.3, fill = red]  (dT) -- (dB) -- (DB) node[sloped, midway, above, opacity = 1] {$\scriptstyle{n}$}-- (DT) -- cycle;
  \filldraw[draw = black, rounded corners=1pt, thick, fill opacity = 0.3, fill = green]   (aT) --  (aB) -- (dB) -- (dT) --  cycle;
  \filldraw[draw = black, rounded corners=1pt, thick, fill opacity = 0.3, fill = green]   (cT) -- (cB) --  (bB) --  (bT) -- cycle;
  \filldraw[draw = black, rounded corners=1pt, thick, fill opacity = 0.3, fill = green]   (cT) -- (cB) --  (dB) node[sloped, midway, above, opacity = 1] {$\scriptstyle{k}$} --  (dT) -- (cT) node[sloped, midway, below, opacity = 1] {$\scriptstyle{k+2}$};
   \filldraw[draw = black, rounded corners=1pt, thick, fill opacity = 0.3, fill = red]  (cT) -- (cB) -- (CB) node[sloped, midway, above, opacity = 1] {$\scriptstyle{m}$} -- (CT) -- cycle;  
\end{scope}}}\,\,\textrm{and} \,\,
\NB{\tikz[scale=0.7]{\tdplotsetmaincoords{60}{100}
\begin{scope}[tdplot_main_coords]
  \coordinate (aT) at (-1, -1, 3);
  \coordinate (bT) at (-1, 1, 3);
  \coordinate (cT) at (1, 1, 3);
  \coordinate (dT) at (1, -1, 3);
  \coordinate (AT) at (-2, -2, 3);
  \coordinate (BT) at (-2, 2, 3);
  \coordinate (CT) at (2, 2, 3);
  \coordinate (DT) at (2, -2, 3);
  \coordinate (DM) at (2, -2, 0);
  \coordinate (aB) at (-1, -1, -3);
  \coordinate (bB) at (-1, 1, -3);
  \coordinate (cB) at (1, 1, -3);
  \coordinate (dB) at (1, -1, -3);
  \coordinate (AB) at (-2, -2, -3);
  \coordinate (BB) at (-2, 2, -3);
  \coordinate (CB) at (2, 2, -3);
  \coordinate (DB) at (2, -2, -3);
  \coordinate (aM1) at (-1, -1, -2);
  \coordinate (bM1) at (-1, 1, -1);
  \coordinate (cM1) at (1, 1, -1);
  \coordinate (dM1) at (1, -1, -2);
  \coordinate (aM2) at (-1, -1, 1);
  \coordinate (bM2) at (-1, 1, 2);
  \coordinate (cM2) at (1, 1, 2);
  \coordinate (dM2) at (1, -1, 1);
  \draw[thin, <-] ($(aT)!0.5!(dM2)$) -- +(0,-1.5,0) node [left, sloped] {$\scriptstyle{n-k}$}; 
  \draw[thin, <-] ($(aM1)!0.5!(dM2)$) -- +(0,-1.5,0) node [left, sloped] {$\scriptstyle{n-k+1}$}; 
  \draw[thin, <-] ($(aB)!0.5!(dM1)$) -- +(0,-1.5,0) node [left, sloped] {$\scriptstyle{n-k+2}$}; 
  \draw[thin, <-] ($(cT)!0.5!(bM2)$) -- +(0,+1.5,0) node [right, sloped] {$\scriptstyle{m+k}$}; 
  \draw[thin, <-] ($(cM1)!0.5!(bM2)$) -- +(0,+1.5,0) node [right, sloped] {$\scriptstyle{m+k-1}$}; 
  \draw[thin, <-] ($(cB)!0.5!(bM1)$) -- +(0,+1.5,0) node [right, sloped] {$\scriptstyle{m+k-2}$}; 
  \draw[->, thick] (aT) -- (AT);
  \draw[->, thick] (DT) -- (dT);
  \draw[->, thick] (bT) -- (BT);
  \draw[->, thick] (CT) -- (cT);
  \draw[->, thick] (dT) -- (aT);
  \draw[<-, thick] (aT) -- (bT);
  \draw[<-, thick] (cT) -- (dT);
  \draw[->, thick] (cT) -- (bT);
  \draw[->, thick] (dB) -- (aB);
  \draw[<-, thick] (aB) -- (bB);
  \draw[<-, thick] (cB) -- (dB);
  \draw[->, thick] (cB) -- (bB);
  \draw[->, thick] (aB) -- (AB);
  \draw[->, thick] (DB) -- (dB);
  \draw[->, thick] (bB) -- (BB);
  \draw[->, thick] (CB) -- (cB);
  \draw[->, thick] (aB) -- (aT);
  \draw[->, thick] (bT) -- (bB);
  \draw[->, thick] (cB) -- (cT);
  \draw[->, thick] (dT) -- (dB);
  \filldraw[draw = black, rounded corners=1pt, thick, fill opacity = 0.3, fill = red]  (aT) -- (aB) -- (AB) -- (AT) -- (aT) node[sloped, midway, below, opacity = 1] {$\scriptstyle{m}$};
  \filldraw[draw = black, rounded corners=1pt, thick, fill opacity = 0.3, fill = red]  (bT) -- (bB) -- (BB)  -- (BT) -- (bT) node[sloped, midway, below, opacity = 1] {$\scriptstyle{n}$};
  \filldraw[draw = black, rounded corners=1pt, thick, fill opacity = 0.3, fill = green]   (aT) --  (aB) -- (bB) node[sloped, midway, above, opacity = 1] {$\scriptstyle{m+k-2-n}$} -- (bT) --  (aT) node[sloped, midway, below, opacity = 1] {$\scriptstyle{m+k-n}$};
  \filldraw[draw = black, rounded corners=1pt, thick, fill opacity = 0.3, fill = blue]  (aM1) -- (bM1) node[ midway, sloped, below, opacity = 1] {$\scriptstyle{1}$} --  (cM1)  -- (dM1) -- cycle; 
  \filldraw[draw = black, rounded corners=1pt, thick, fill opacity = 0.3, fill = blue]  (aM2) -- (bM2) node[ midway, sloped, below, opacity = 1] {$\scriptstyle{1}$} --  (cM2)  -- (dM2) -- cycle; 
  \filldraw[draw = black, rounded corners=1pt, thick, fill opacity = 0.3, fill = red]  (dT) -- (dB) -- (DB) node[sloped, midway, above, opacity = 1] {$\scriptstyle{n}$}-- (DT) -- cycle;
  \filldraw[draw = black, rounded corners=1pt, thick, fill opacity = 0.3, fill = green]   (aT) --  (aB) -- (dB) -- (dT) --  cycle;
  \filldraw[draw = black, rounded corners=1pt, thick, fill opacity = 0.3, fill = green]   (cT) -- (cB) --  (bB) --  (bT) -- cycle;
  \filldraw[draw = black, rounded corners=1pt, thick, fill opacity = 0.3, fill = green]   (cT) -- (cB) --  (dB) node[sloped, midway, above, opacity = 1] {$\scriptstyle{k-2}$} --  (dT) -- (cT) node[sloped, midway, below, opacity = 1] {$\scriptstyle{k}$};
   \filldraw[draw = black, rounded corners=1pt, thick, fill opacity = 0.3, fill = red]  (cT) -- (cB) -- (CB) node[sloped, midway, above, opacity = 1] {$\scriptstyle{m}$} -- (CT) -- cycle;  
\end{scope}}}
\]
are $\infty$-equivalent to $0$. 

Hence if we add\footnote{There are many ways to do it, but all possibilities produce isomorphic chain complexes. In order to get functoriality of the construction announced in the introduction, one should be careful in this choice. This boils down to endowing the set $\Xing$ with a total order as detailed in \cite{FunctorialitySLN}. A more systematical construction is given in \cite{1195.57024} making use of the exterior algebra generated by $\Xing$. This last approach works only for uncolored links, but can be easily adapted to the colored case. } some signs to the pre-differential to turn the commutativity of squares into anti-commutativity we can flatten the hyper-rectangle $R(D)$ and obtain a complex of graded $\SP{N}$-modules $S^{\bullet, \bullet}(D)$.


Finally we set $\widehat{S}^{\bullet, \bullet}(D)$ to be equal to $S^{\bullet, \bullet}(D)q^{\kappa'(D)}$, where
\[\kappa'(D) = \sum_{x \in \Xing} \kappa'_x \quad \textrm{and} \quad 
\kappa'_x,  =
  \begin{cases}
    -m(m+N-1)             & \textrm{if $x$ is of type $(m,m,+)$, } \\ 
     m(m+N-1)            & \textrm{if $x$ is of type $(m,m,-)$,} \\ 
    0               & \textrm{else.}
  \end{cases}
\]


\begin{thm}
  \label{thm:symlinkhom}
  The homology of $\widehat{S}^{\bullet, \bullet}(D)$ is a link invariant which categorifies the symmetric Reshetikhin--Turaev invariant.
\end{thm}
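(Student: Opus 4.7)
The plan is to proceed in two largely independent steps: decategorification (showing the Euler characteristic equals $RT^{S}_N(D)$) and invariance (showing the homotopy type of $\widehat{S}^{\bullet,\bullet}(D)$ depends only on the link). The first step is essentially bookkeeping: by construction $V_s$ has graded $\SP{N}$-rank $q^{\kappa_s}\kups{D_s}$, so Theorem~\ref{thm:catsym} gives
\[
\sum_{s\in I(D)} (-1)^{\eta_s} q^{\kappa_s} \rk^{\SP{N}}_q(\syf_{k,N}(D_s)) = \sum_{s\in I(D)} (-1)^{\eta_s} q^{\kappa_s} \kups{D_s}.
\]
Applying the crossing expansions (\ref{eq:symcrossplus}) and (\ref{eq:symcrossminus}) at each $x\in\Xing(D)$ collapses this sum into $\kups{D}$, and the overall shift by $q^{\kappa'(D)}$ then yields the unframed invariant $RT^{S}_N(D)$ as defined in Remark~\ref{rmk:normalisation-sym-MOY}(5).

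The real content is invariance. Since $D$ is the closure of a braid, it suffices to check braid-like Reidemeister II and III (in the interior of the braid) and Markov I and II. I would first translate the complex $\widehat{S}^{\bullet,\bullet}(D)$ into algebraic language: writing $D=\widehat{\beta}$ for a braid $\beta$, the hyper-rectangle $R(D)$ has underlying graph $D_s = \widehat{\Gamma_s}$ for braid-like $\Gamma_s$, and Proposition~\ref{prop:alg-eq-foam} together with Proposition~\ref{prop:HH0-2-vinyl} identifies each $V_s$ with $\FA_N(\widehat{\Gamma_s}) = \HN_0(\HH^N_\bullet(A_{\listk{k}},\BS(\Gamma_s)))$ (with an appropriate $q$-shift). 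Under this identification, the pre-differentials are induced by the standard chain maps between Soergel bimodules (saddles/zip--unzip), so $\widehat{S}^{\bullet,\bullet}(D)$ becomes the total complex obtained by taking $\HN$ of the Rickard/Khovanov--Rouquier complex $C(\beta)$ of singular Soergel bimodules closed up via Hochschild homology and equipped with the extra differential $d_N$ of Section~\ref{sec:an-extra-diff}.

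Once the complex is realized in this form, invariance follows from three classical ingredients plus one new compatibility. First, the Rickard complex $C(\beta)$ of a braid is invariant up to homotopy equivalence of complexes of Soergel bimodules under braid-like Reidemeister II and III (Khovanov--Rouquier \cite{MR2339573}). Second, its Hochschild homology is invariant under Markov conjugation and Markov stabilization up to the standard shifts, yielding the (unreduced) colored triply graded homology~\cite{1203.5065}. Third, by Lemmas~\ref{lem:hochschild-trace} and~\ref{lem:hochschild-trace-dN}, this Hochschild construction is cyclic also \emph{with} the differential $d_N$, which handles Markov I. The new ingredient is that $d_N$ is natural with respect to all the chain homotopies witnessing Reidemeister II, III and Markov II: explicitly, one must check that each such homotopy $h$ satisfies $h\circ d_N \pm d_N\circ h = 0$ (equivalently, lifts to a map of $d_N$-complexes on the Koszul--Hochschild model). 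Granted this, a spectral sequence/filtration argument (take the filtration by $H$-degree, whose $E_1$-page is the triply graded homology and whose successive differentials include $d_N$) transports the homotopy equivalences to $\HN(\HH^N(\cdot),d_N)$, giving Reidemeister/Markov invariance of $\widehat{S}^{\bullet,\bullet}(D)$.

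The main obstacle is precisely this last compatibility: writing down the explicit chain maps and homotopies for RII, RIII and Markov II at the level of the Koszul resolution and checking that they anti-commute with the derivation $D^N$ (and hence with $d_N$). The cleanest approach is to mimic Cautis's construction \cite{MR3709661} in the equivariant setting, exploiting that $d_N$ is a global derivation built from the universal factorization $\prod_{j=1}^N(x_i - T_j)$: RII and RIII reduce to local identities which, after unfolding via the Koszul resolution, become polynomial identities in the $x_i$ and $T_j$ that are verified by direct computation; Markov II uses the Koszul-theoretic form of the Hochschild trace provided in Appendix~\ref{sec:kosz-reosl-polyn}, where the extra differential reduces to multiplication by $\prod_j(x - T_j)$ on the new strand and produces precisely the required shifted $A_{k+1}$-module structure. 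Pulling these pieces together through the identification $\FA_N \simeq \syf_{\NN,N}$ of Proposition~\ref{prop:alg-eq-foam} then proves the theorem.
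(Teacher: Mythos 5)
Your outline is strategically sound and you correctly identify the two necessary steps (decategorification via Theorem~\ref{thm:catsym}, invariance via the Soergel--Hochschild dictionary of Proposition~\ref{prop:alg-eq-foam}) as well as the relevance of Lemmas~\ref{lem:hochschild-trace} and~\ref{lem:hochschild-trace-dN}. But the route you propose for the braid relations diverges from the paper's in a way that both creates extra work and then leaves it undone, and your treatment of stabilization is too thin.

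The key architectural difference: the paper does \emph{not} prove Reidemeister II, III, or the fork slides in the Koszul--Hochschild-with-$d_N$ picture. It proves them directly as homotopy equivalences of the complex $S^{\bullet,\bullet}(D)$ of $\SP{N}$-modules $\syf_{k,N}(D_s)$, via the categorified MOY relations of Proposition~\ref{prop:isofromext} (foam side), the Vaz uncolored foam argument, and reduction to uncolored strands by fork-sliding and blisting (Figures~\ref{fig:fkslmv} and~\ref{fig:blist}). Since $\syf_{k,N}$ is already $\HN_0$ of the Koszul--Hochschild model (Propositions~\ref{prop:all-in-deg-0} and~\ref{prop:alg-eq-foam}), the $d_N$ quotient has been taken once and for all, and there is nothing to verify about compatibility of RII/RIII homotopies with $d_N$. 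The step you flag as "the new ingredient" and "main obstacle" --- checking $h\circ d_N \pm d_N\circ h=0$ for each homotopy $h$ --- is a genuine obstacle for your approach, but the paper's architecture is chosen precisely so that it never arises for those moves. Moreover, invoking Khovanov--Rouquier~\cite{MR2339573} for colored RII/RIII is not sufficient on its own: the colored case needs singular Soergel bimodules, and the reduction to the uncolored case (fork slides + blisting, or a reference such as~\cite{MR3687104}) is part of the proof that you omit.

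The genuine gap is therefore twofold. First, for the braid relations, your plan leaves the central compatibility check undone and, even as a plan, is harder than what is actually needed. Second, for Markov II you gesture at "$d_N$ reduces to multiplication by $\prod_j(x-T_j)$ on the new strand" and a cyclic-Koszul argument, but this does not resemble the actual content of Lemma~\ref{lem:stab}: the real proof involves a short exact sequence of bicomplexes whose outer terms are topologically acyclic, a twisted bimodule $A'^{1\!1}$, and an explicit verification that the induced isomorphism on $\HT(\HH_i)$ commutes with the dotted ($d^N$) arrows. You also omit entirely the Mackaay--Sto\v{s}i\'c--Vaz trick together with the twist-fork identities~\eqref{eq:twistfork}, without which one only obtains stabilization invariance for strands labeled~$1$, not for arbitrary colors. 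The spectral-sequence comparison of $\HNT$ with $\HT\circ\HN$ is correctly identified and does appear in the paper's proof.
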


\begin{notation} 
  In what follows, we deal with three different differentials: the Hochschild one ($d_H$), the topological one ($d_T$) and the extra one ($d^N$). We denote by $\HH(\bullet)$ (resp. $\HT(\bullet)$, $\HN(\bullet)$), the homology taken with respect to $d_H$ (resp. $d_T$, $d^N$) and by $\HNT(\bullet)$ the homology taken with respect to the total complex\footnote{Note that $d^N$ is a differential of \emph{chain} complex (it has $H$-degree $-1$) while $d_T$  is a differential of \emph{cochain} complex (it has $T$-degree $+1$), the total complex we consider is a complex of cochain complex: the total homological degree is equal to the $T$-degree minus the $H$-degree.} built out of the bicomplex with bi-differentials $(d^N,d_T)$.
The Hochschild homology is computed using the Koszul complex (as explain in Section~\ref{sec:hochschild-homology} and Appendix~\ref{sec:kosz-reosl-polyn}). Moreover, for the Hochschild homology, we will often drop the algebra in the notation: writing $\HH(M)$ instead of $\HH(A,M)$. As explain in Section~\ref{sec:an-extra-diff}, we denote by $\HH^N$ the Hochschild homology with an additional $q$-degree shift making the extra differential of $q$-degree $0$.

Note that homological degree for $\HH$ and $\HN$ coincide. We denote by $[\bullet]_H$ and $[\bullet]_T$ grading shifts\footnote{We use the \emph{homological} convention for grading shifts: If $V = \oplus_i V_i$, we have $(V[k])_i = V_{i-k}$.} with respect to the $H$ and the $T$-degree. When considering  the total complex   built out of the bicomplex with bi-differentials $(d^N,d_T)$, the homological degree shift is denoted by $[\bullet]_{T\!H}$. Let us recall that grading shifts with respect to the $q$-degree are denoted by $q^{\bullet}$.
\end{notation}

\begin{proof}[Proof of Theorem~\ref{thm:symlinkhom}]
  The fact the graded Euler characteristic of  $\widehat{S}^{\bullet, \bullet}(\bullet)$ is indeed the symmetric Reshetikhin--Turaev invariant follows from Theorem~\ref{thm:catsym} and from the construction of the hyper-rectangle which is clearly designed to  categorify the identities~(\ref{eq:symcrossplus}) and (\ref{eq:symcrossminus}).
  In order to prove invariance, we proceed in two steps:
  \begin{enumerate}
  \item \label{item:braids} First, we prove that if $D$ is a diagram of a knotted vinyl graph (that is a diagram of a knotted MOY graph which satisfies the same condition on tangent than vinyl graph), then the homotopy type of complex $S^{\bullet, \bullet}(D)$ (up to a $q$-grading shift) only depends on the isotopy type of knotted graph in the annulus.
  \item \label{item:stab} Then we prove that we have a stabilization property. Namely, we will show, that for any diagram of knotted braid-like $\listk{k}$-MOY graph-$\listk{k}$ $\Gamma$, the complexes associated with the following three diagrams 
\[
D_+ := \NB{\tikz[scale=0.38]{\begin{scope}[xscale=-1, yscale= 1]
\filldraw[fill=blue!20!white, draw = black] (-145:1.45) arc (-145:145:1.45) -- +(145:1.6) arc (145: -145:3.05) -- cycle;
\draw[>->] (-145:1.5) .. controls +(125:0.5) and +(-125:0.5) .. (145:1) node[near end, left] {\tiny{$1$}};
\fill[white] (-1.17, 0) circle (0.1cm);
\draw[<-<] ( 145:1.5) .. controls +(-125:0.5) and +(+125:0.5) .. (-145:1) node[near start, right] {\tiny{$1$}};
\draw[->] (-145:2) arc (-145:-215:2);
\node at (180:2.5) {\tiny{$_{\ \dots}$}};
\node at (0:2.25) {$\Gamma$};
\draw[->] (-145:3) arc (-145:-215:3);
\draw (-145:1) arc (-145:145:1) node[midway, right] {\tiny{$1$}};
\end{scope}
}},
\quad
D_- := \NB{\tikz[scale=0.38]{\begin{scope}[xscale=-1, yscale= 1]
\filldraw[fill=blue!20!white, draw = black] (-145:1.45) arc (-145:145:1.45) -- +(145:1.6) arc (145: -145:3.05) -- cycle;
\draw[<-<] ( 145:1.5) .. controls +(-125:0.5) and +(+125:0.5) .. (-145:1) node[near start, right] {\tiny{$1$}};
\fill[white] (-1.17, 0) circle (0.1cm);
\draw[>->] (-145:1.5) .. controls +(125:0.5) and +(-125:0.5) .. (145:1) node[near end, left] {\tiny{$1$}};
\draw[->] (-145:2) arc (-145:-215:2);
\node at (180:2.5) {\tiny{$_{\ \dots}$}};
\node at (0:2.25) {$\Gamma$};
\draw[->] (-145:3) arc (-145:-215:3);
\draw (-145:1) arc (-145:145:1) node[midway, right] {\tiny{$1$}};
\end{scope}
}}
\quad  \textrm{and} \quad
D_0 := \NB{\tikz[scale=0.38]{\begin{scope}[xscale=-1]
\filldraw[fill=blue!20!white, draw = black] (-145:1.45) arc (-145:145:1.45) -- +(145:1.6) arc (145: -145:3.05) -- cycle;
\draw[->] (-145:1.5) arc (-145:-215:1.5) node[left, midway] {\tiny{$1$}}; 
\draw[->] (-145:2) arc (-145:-215:2);
\node at (180:2.5) {\tiny{$_{\ \dots}$}};
\node at (0:2.25) {$\Gamma$};
\draw[->] (-145:3) arc (-145:-215:3);
\end{scope}
}}
\]
have the same homology. We write $\Gamma_+$  (resp. $\Gamma_-$) for the two knotted braid-like $\listk{k'}$-MOY graph-$\listk{k'}$ obtained from $\Gamma$ by adding one strand labeled by $1$ on the right and a positive (resp. negative) crossing on the top of it.
  \end{enumerate}
Note that in our stabilization, we only deal with a strand labeled by $1$. Thanks to a trick due to Mackaay--Sto\v si\'c--Vaz~\cite{MR2746676}  (see as well \cite{pre06302580} and \cite[Figure 1]{MR3709661}), this implies (together with the homotopy equivalences~(\ref{eq:twistfork})) that we actually get the stabilization property for any labels. Using this trick requires that we actually deal with knotted vinyl graphs in step~(\ref{item:braids}) (and not only with links). Finally thanks to Markov theorem, we can conclude that $S^{\bullet, \bullet}(D)$ is a link invariant.

The proof of step~(\ref{item:braids}) is quite standard. One first consider the case where all strands involved in the braid relations have label $1$. This case is treated in terms of foams in \cite[Figures 5.14 and 5.16]{VAZPHD}. The case with strands of arbitrary labels follows from the first case and the invariance under the so-called \emph{fork slide moves} (see Figure~\ref{fig:fkslmv}).
\begin{figure}[ht!]
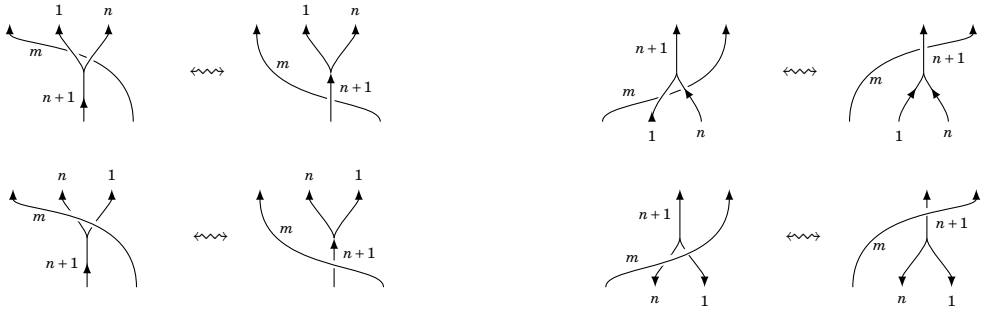

  \centering
  \tikz[scale= 0.65]{\begin{scope}
  \coordinate (A1) at (   0, -1);
  \coordinate (A2) at (   1, -1);
  \coordinate (B1) at (   0,  0);
  \coordinate (C1) at (-1.5,  1);
  \coordinate (C2) at (-0.5,  1);
  \coordinate (C3) at ( 0.5,  1);
  \draw[->] (A2) ..controls +(0,1.7) and + (0, -0.5) .. (C1) node[above, below, pos= 0.7, font=\tiny] {$m$};
  \draw[white, line width = 1mm] (A1) -- (B1);
  \draw[white, line width = 1mm] (B1) .. controls +(0,0.2) and +(0,-0.3) .. (C2);
  \draw[white, line width = 1mm] (B1) .. controls +(0,0.2) and +(0,-0.3) .. (C3);
  \draw[->-] (A1) -- (B1) node[midway, left, font=\tiny] {$n+1$};
  \draw[->] (B1) .. controls +(0,0.2) and +(0,-0.3) .. (C2) node[ above,  font=\tiny] {$1$};
  \draw[->] (B1) .. controls +(0,0.2) and +(0,-0.3) .. (C3) node[above, font=\tiny] {$n$};


\end{scope}
\node (R) at (2.5,0) {$\leftrightsquigarrow$};

\begin{scope}[xshift=5cm]
  \coordinate (A1) at (   0, -1);
  \coordinate (A2) at (   1, -1);
  \coordinate (B1) at (   0,  0);
  \coordinate (C1) at (-1.5,  1);
  \coordinate (C2) at (-0.5,  1);
  \coordinate (C3) at ( 0.5,  1);
  \draw[->] (A2) ..controls +(0,0.5) and + (0, -1.7) .. (C1) node[above, above, pos= 0.7, font=\tiny] {$m$};
  \draw[white, line width = 1mm] (A1) -- (B1);
  \draw[white, line width = 1mm] (B1) .. controls +(0,0.2) and +(0,-0.3) .. (C2);
  \draw[white, line width = 1mm] (B1) .. controls +(0,0.2) and +(0,-0.3) .. (C3);
  \draw[->] (A1) -- (B1) node[pos = 0.7, right, font=\tiny] {$n+1$};
  \draw[->] (B1) .. controls +(0,0.2) and +(0,-0.3) .. (C2) node[above,  font=\tiny] {$1$};
  \draw[->] (B1) .. controls +(0,0.2) and +(0,-0.3) .. (C3) node[above, font=\tiny] {$n$};

\end{scope}

\begin{scope}[xshift =12cm, yscale= -1]
  \begin{scope}
  \coordinate (A1) at (   0, -1);
  \coordinate (A2) at (   1, -1);
  \coordinate (B1) at (   0,  0);
  \coordinate (C1) at (-1.5,  1);
  \coordinate (C2) at (-0.5,  1);
  \coordinate (C3) at ( 0.5,  1);
  \draw[<-] (A2) ..controls +(0,1.7) and + (0, -0.5) .. (C1) node[above, above, pos= 0.7, font=\tiny] {$m$};
  \draw[white, line width = 1mm] (A1) -- (B1);
  \draw[white, line width = 1mm] (B1) .. controls +(0,0.2) and +(0,-0.3) .. (C2);
  \draw[white, line width = 1mm] (B1) .. controls +(0,0.2) and +(0,-0.3) .. (C3);
  \draw[<-] (A1) -- (B1) node[midway, left, font=\tiny] {$n+1$};
  \draw[-<] (B1) .. controls +(0,0.2) and +(0,-0.4) .. (C2) node[pos= 1, below, font=\tiny] {$1$};
  \draw[-<-] (B1) .. controls +(0,0.2) and +(0,-0.3) .. (C3) node[below, font=\tiny] {$n$};
\end{scope}
\node (R) at (2.5,0) {$\leftrightsquigarrow$};

\begin{scope}[xshift=5cm]
  \coordinate (A1) at (   0, -1);
  \coordinate (A2) at (   1, -1);
  \coordinate (B1) at (   0,  0);
  \coordinate (C1) at (-1.5,  1);
  \coordinate (C2) at (-0.5,  1);
  \coordinate (C3) at ( 0.5,  1);
  \draw[<-] (A2) ..controls +(0,0.5) and + (0, -1.7) .. (C1) node[above, below, pos= 0.7, font=\tiny] {$m$};
  \draw[white, line width = 1mm] (A1) -- (B1);
  \draw[white, line width = 1mm] (B1) .. controls +(0,0.2) and +(0,-0.3) .. (C2);
  \draw[white, line width = 1mm] (B1) .. controls +(0,0.2) and +(0,-0.3) .. (C3);
  \draw[<-] (A1) -- (B1) node[pos = 0.7, right, font=\tiny] {$n+1$};
  \draw[-<-] (B1) .. controls +(0,0.2) and +(0,-0.3) .. (C2) node[below,  font=\tiny] {$1$};
  \draw[-<-] (B1) .. controls +(0,0.2) and +(0,-0.3) .. (C3) node[below, font=\tiny] {$n$};
\end{scope}
\end{scope}} \\[4pt]
  \tikz[scale = 0.65]{\begin{scope}
  \coordinate (A1) at (   0, -1);
  \coordinate (A2) at (   1, -1);
  \coordinate (B1) at (   0,  0);
  \coordinate (C1) at (-1.5,  1);
  \coordinate (C2) at (-0.5,  1);
  \coordinate (C3) at ( 0.5,  1);
  \draw[->-] (A1) -- (B1) node[midway, left, font=\tiny] {$n+1$};
  \draw[->] (B1) .. controls +(0,0.2) and +(0,-0.3) .. (C2) node[above,  font=\tiny] {$n$};
  \draw[->] (B1) .. controls +(0,0.2) and +(0,-0.3) .. (C3) node[above, font=\tiny] {$1$};
  \draw[white, line width= 1mm,] (A2) ..controls +(0,1.7) and + (0, -0.5) .. (C1);
  \draw[->] (A2) ..controls +(0,1.7) and + (0, -0.5) .. (C1) node[above, below, pos= 0.7, font=\tiny] {$m$};
\end{scope}
\node (R) at (2.5,0) {$\leftrightsquigarrow$};

\begin{scope}[xshift=5cm]
  \coordinate (A1) at (   0, -1);
  \coordinate (A2) at (   1, -1);
  \coordinate (B1) at (   0,  0);
  \coordinate (C1) at (-1.5,  1);
  \coordinate (C2) at (-0.5,  1);
  \coordinate (C3) at ( 0.5,  1);
  \draw[->] (A1) -- (B1) node[pos = 0.7, right, font=\tiny] {$n+1$};
  \draw[->] (B1) .. controls +(0,0.2) and +(0,-0.3) .. (C2) node[above,  font=\tiny] {$n$};
  \draw[->] (B1) .. controls +(0,0.2) and +(0,-0.3) .. (C3) node[above, font=\tiny] {$1$};
  \draw[white, line width= 1mm,] (A2) ..controls +(0,0.5) and + (0, -1.7) .. (C1);
  \draw[->] (A2) ..controls +(0,0.5) and + (0, -1.7) .. (C1) node[above, above, pos= 0.7, font=\tiny] {$m$};
\end{scope}

\begin{scope}[xshift =12cm, yscale= -1]
  \begin{scope}
  \coordinate (A1) at (   0, -1);
  \coordinate (A2) at (   1, -1);
  \coordinate (B1) at (   0,  0);
  \coordinate (C1) at (-1.5,  1);
  \coordinate (C2) at (-0.5,  1);
  \coordinate (C3) at ( 0.5,  1);
  \draw[<-] (A1) -- (B1) node[midway, left, font=\tiny] {$n+1$};
  \draw[->] (B1) .. controls +(0,0.2) and +(0,-0.3) .. (C2) node[below,  font=\tiny] {$n$};
  \draw[->] (B1) .. controls +(0,0.2) and +(0,-0.3) .. (C3) node[below, font=\tiny] {$1$};
  \draw[white, line width= 1mm,] (A2) ..controls +(0,1.7) and + (0, -0.5) .. (C1);
  \draw[<-] (A2) ..controls +(0,1.7) and + (0, -0.5) .. (C1) node[above, above, pos= 0.7, font=\tiny] {$m$};
\end{scope}
\node (R) at (2.5,0) {$\leftrightsquigarrow$};

\begin{scope}[xshift=5cm]
  \coordinate (A1) at (   0, -1);
  \coordinate (A2) at (   1, -1);
  \coordinate (B1) at (   0,  0);
  \coordinate (C1) at (-1.5,  1);
  \coordinate (C2) at (-0.5,  1);
  \coordinate (C3) at ( 0.5,  1);
  \draw[<-] (A1) -- (B1) node[pos = 0.7, right, font=\tiny] {$n+1$};
  \draw[->] (B1) .. controls +(0,0.2) and +(0,-0.3) .. (C2) node[below,  font=\tiny] {$n$};
  \draw[->] (B1) .. controls +(0,0.2) and +(0,-0.3) .. (C3) node[below, font=\tiny] {$1$};
  \draw[white, line width= 1mm,] (A2) ..controls +(0,0.5) and + (0, -1.7) .. (C1);
  \draw[<-] (A2) ..controls +(0,0.5) and + (0, -1.7) .. (C1) node[above, below, pos= 0.7, font=\tiny] {$m$};
\end{scope}
\end{scope}}
  \caption{Fork slide moves}
  \label{fig:fkslmv}
\end{figure}
Indeed, one can blist (see Figure~\ref{fig:blist}) each strands and use the fork slide moves and the $1$-labeled  braid relation to deduce the arbitrary labeled braid relations.  See \cite{MR3687104, MR2491657,pre06302580} for similar arguments.
\begin{figure}[ht!]
  \centering
    \tikz[scale = 1]{\input{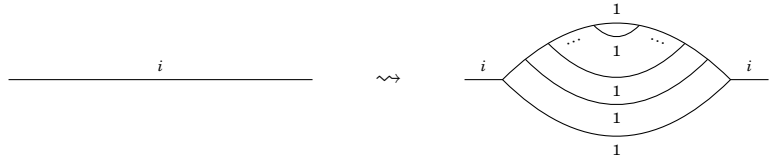}}
  \caption{Blisting an edge of label $i$.}
  \label{fig:blist}
\end{figure}
Proof of invariance for the fork slide move using foam is quite standard see for instance \cite[Proof of Proposition 4.10]{queffelec2014mathfrak} or \cite[Section 3.3]{FunctorialitySLN}. But let us briefly sketch how it works on the fork slide move on the top left move of Figure~\ref{fig:fkslmv}. We consider the diagram on the left-hand side of the move and its associated complex:
\[
  \tikz[xscale =3, yscale= 2.5]{
  \node (A) at (0, 1) {$ \NB{\tikz[yscale = 0.5, xscale = 0.9]{\begin{scope}[scale =1.5, font = \tiny]
  \coordinate (A1) at ( -1, 4);
  \coordinate (A2) at (  0, 4);
  \coordinate (A3) at (  1, 4);
  \coordinate (B)  at ( -1, 3.5);
  \coordinate (C)  at (  0, 3);
  \coordinate (D)  at (  0, 2.5 );
  \coordinate (E)  at ( -1, 2);
  \coordinate (F)  at (0.5, 1.5);
  \coordinate (G)  at (0.5, 1);
  \coordinate (H)  at (  0, 0.5);
  \coordinate (I1) at (  0, 0);
  \coordinate (I2) at (  1, 0);

  \draw[->-] (I1) -- (H) node[pos = 0, below] {$n+1$};
  \draw[->-] (I2)  .. controls +(0, 0.5) and + (0.1, -0.1) .. (G) node[pos = 0, below] {$m$};
  \draw[->-] (H)  -- (G) node[pos = 0.5, below] {$1$};
  \draw[->-] (G)  -- (F) node[pos = 0.5, right] {$m+1$};
  \draw[->-] (H)  -- (E) node[pos = 0.5, left] {$n$};
  \draw[->-] (E)  -- (D) node[pos = 0.5, above] {$j$};
  \draw[->-] (F)  .. controls +(0.1,0.1) and +(0, -1) .. (A3) node[pos = 1, above] {$1$};
  \draw[->-] (F)  -- (D) node [pos =0.5 , right] {$m$};
  \draw[->-] (D)  -- (C) node [pos =0.5 , right] {$m+j$};
  \draw[->-] (E)  -- (B) node [pos =0.5 , left] {$n-j$};
  \draw[->-] (C)  -- (B) node [pos =0.5 , sloped, above] {$m+j-n$};
  \draw[->-] (C)  -- (A2) node [pos =1 , above] {$n$};
  \draw[->-] (B)  -- (A1) node [pos =1 , above] {$i$};

\end{scope}} }$};
  \node (B) at (0,-1) {$ \NB{\tikz[yscale = 0.5, xscale = 0.9]{\begin{scope}[scale =1.5, font = \tiny]
  \coordinate (A1) at ( -1, 4);
  \coordinate (A2) at (  0, 4);
  \coordinate (A3) at (  1, 4);
  \coordinate (B)  at ( -1, 3.5);
  \coordinate (C)  at (  0, 3);
  \coordinate (D)  at (  0, 2.5 );
  \coordinate (E)  at ( -1, 2);
  \coordinate (F)  at (0, 1.5);
  \coordinate (G)  at (1, 1);
  \coordinate (H)  at (  0, 0.5);
  \coordinate (I1) at (  0, 0);
  \coordinate (I2) at (  1, 0);
  \draw[->-] (I1) -- (H) node[pos = 0, below] {$n+1$};
  \draw[->-] (I2) -- (G) node[pos = 0, below] {$m$};
  \draw[->-] (H)  -- (F) node[pos = 0.5, right] {$1$};
  \draw[->-] (G)  -- (F) node[pos = 0.5, below, sloped] {$m-1$};
  \draw[->-] (H)  -- (E) node[pos = 0.5, left] {$n$};
  \draw[->-] (E)  -- (D) node[pos = 0.5, above] {$j$};
  \draw[->-] (G) -- (A3) node[pos = 1, above] {$1$};
  \draw[->-] (F)  -- (D) node [pos =0.5 , right] {$m$};
  \draw[->-] (D)  -- (C) node [pos =0.5 , right] {$m+j$};
  \draw[->-] (E)  -- (B) node [pos =0.5 , left] {$n-j$};
  \draw[->-] (C)  -- (B) node [pos =0.5 , sloped, above] {$m+j-n$};
  \draw[->-] (C)  -- (A2) node [pos =1 , above] {$n$};
  \draw[->-] (B)  -- (A1) node [pos =1 , above] {$m$};
\end{scope}} }$};
  \node (a) at (-1, 1) {$\dots$};
  \node (aa) at (1, 1) {$\dots$};
  \node (b) at (-1,-1) {$\dots$};
  \node (bb) at (1,-1) {$\dots$};
  \draw[-to] (a) -- (A);
  \draw[-to] (A) -- (aa);
  \draw[-to] (b) -- (B);
  \draw[-to] (B) -- (bb);
  \draw[-to] (A) -- (B);
  }
\]
where for simplicity we dropped the symbols $\syf_N$.
The space on the first line can be decomposed using the isomorphisms (\ref{eq:catsymass}) and (\ref{eq:catsymsquare}). The space on the second line can be decomposed using isomorphisms
(\ref{eq:catsymass}) and (\ref{eq:digoncat}). This gives:
\[
  \tikz[xscale =3, yscale = 2.5]{
  \node (A) at (0, 1) {$\NB{\tikz[yscale = 0.5, xscale = 0.9]{\begin{scope}[scale =1.5, font = \tiny]
  \coordinate (A1) at ( -1, 4);
  \coordinate (A2) at (  0, 4);
  \coordinate (A3) at (  1, 4);
  \coordinate (B)  at ( -1, 3.5);
  \coordinate (C)  at (  0, 2.75);
  \coordinate (D)  at (  0.5, 2 );
  \coordinate (E)  at ( 0.5, 1.25);
  \coordinate (F)  at (0, 0.75);
  \coordinate (I1) at (  0, 0);
  \coordinate (I2) at (  1, 0);

  \draw[->-] (I1) -- (F) node[pos = 0, below] {$n+1$};
  \draw[->-] (I2)  .. controls +(0, 0.5) and +(0.1, -0.1) .. (E) node[pos = 0, below] {$m$};
  \draw[->-] (F)  -- (E) node[pos = 0.5, sloped, below] {$j+1$};
  \draw[->-] (F)  -- (B) node[pos = 0.5, left] {$n-j$};
  \draw[->-] (E)  -- (D) node[pos = 0.5, right] {$m+j+1$};
  \draw[->-] (D)  .. controls +(0.1,0.1) and +(0, -1) .. (A3) node[pos = 1, above] {$1$};
  \draw[->-] (D)  -- (C) node [pos =0.5 , left] {$m+j$};
  \draw[->-] (C)  -- (B) node [pos =0.5 , sloped, above] {$m+j-n$};
  \draw[->-] (C)  -- (A2) node [pos =1 , above] {$n$};
  \draw[->-] (B)  -- (A1) node [pos =1 , above] {$m$};
\end{scope}} }
  \oplus
  [j]\NB{\tikz[yscale = 0.5, xscale = 0.9]{\input{\imagesfolder/sym_cx-fk-sl-3}} }
  $};
  \node (B) at (0,-1) {$[j+1]\NB{\tikz[yscale = 0.5, xscale = 0.9]{\input{\imagesfolder/sym_cx-fk-sl-3}} }$};
  \node (a) at (-1.7, 1) {$\dots$};
  \node (aa) at (1.7, 1) {$\dots$};
  \node (b) at (-1.7,-1) {$\dots$};
  \node (bb) at (1.7,-1) {$\dots$};
  \draw[-to] (a) -- (A);
  \draw[-to] (A) -- (aa);
  \draw[-to] (b) -- (B);
  \draw[-to] (B) -- (bb);
  \draw[-to] (A) -- (B);
  }
\]
One can check that the vertical maps injective on the second term of the direct sum. We can use them to simplify the complex. The complex is homotopy equivalent to 
\[
  \tikz[xscale =3, yscale=2.5]{
  \node (A) at (0, 1) {$\NB{\tikz[yscale = 0.5, xscale = 0.9]{\begin{scope}[scale =1.5, font = \tiny]
  \coordinate (A1) at ( -1, 4);
  \coordinate (A2) at (  0, 4);
  \coordinate (A3) at (  1, 4);
  \coordinate (B)  at ( -1, 3.5);
  \coordinate (C)  at (  0, 2.75);
  \coordinate (D)  at (  0.5, 2 );
  \coordinate (E)  at ( 0.5, 1.25);
  \coordinate (F)  at (0, 0.75);
  \coordinate (I1) at (  0, 0);
  \coordinate (I2) at (  1, 0);

  \draw[->-] (I1) -- (F) node[pos = 0, below] {$n+1$};
  \draw[->-] (I2)  .. controls +(0, 0.5) and +(0.1, -0.1) .. (E) node[pos = 0, below] {$m$};
  \draw[->-] (F)  -- (E) node[pos = 0.5, sloped, below] {$j+1$};
  \draw[->-] (F)  -- (B) node[pos = 0.5, left] {$n-j$};
  \draw[->-] (E)  -- (D) node[pos = 0.5, right] {$m+j+1$};
  \draw[->-] (D)  .. controls +(0.1,0.1) and +(0, -1) .. (A3) node[pos = 1, above] {$1$};
  \draw[->-] (D)  -- (C) node [pos =0.5 , left] {$m+j$};
  \draw[->-] (C)  -- (B) node [pos =0.5 , sloped, above] {$m+j-n$};
  \draw[->-] (C)  -- (A2) node [pos =1 , above] {$n$};
  \draw[->-] (B)  -- (A1) node [pos =1 , above] {$m$};
\end{scope}} }
  $};
  \node (B) at (0,-1) {$q^{-j}\NB{\tikz[yscale = 0.5, xscale = 0.9]{\input{\imagesfolder/sym_cx-fk-sl-3}} }$};
  \node (a) at (-1.5, 1) {$\dots$};
  \node (aa) at (1.5, 1) {$\dots$};
  \node (b) at (-1.5,-1) {$\dots$};
  \node (bb) at (1.5,-1) {$\dots$};
  \draw[-to] (a) -- (A);
  \draw[-to] (A) -- (aa);
  \draw[-to] (b) -- (B);
  \draw[-to] (B) -- (bb);
  \draw[-to] (A) -- (B);
  }
\]
The second line turns out to be exact (thanks to isomorphism~(\ref{eq:catsymsquare})) except on the rightmost (corresponding to $j=0$) term if $n<m$. If the line is exact we can simplify and obtain that the complex is homotopy equivalent to that corresponding to the right-hand side diagram of the for slide move and we are interested in.

If $n<m$, we can cancel out the second line except the rightmost term which becomes:
\[
\NB{\tikz[yscale = 0.5, xscale = 0.9]{\begin{scope}[scale =1.5, font = \tiny]
  \coordinate (A1) at ( -1, 4);
  \coordinate (A2) at (  0, 4);
  \coordinate (A3) at (  1, 4);
  \coordinate (B)  at (  0.5, 3);
  \coordinate (C)  at (  -0.5, 2);
  \coordinate (D)  at (  0.5, 1 );
  \coordinate (I1) at (  0, 0);
  \coordinate (I2) at (  1, 0);
  \draw[->-] (I2) -- (D) node[pos = 0, below] {$m$};
  \draw[->-] (I1) ..controls +(0,0.5) and +(-0,-0.5) ..  (C) node[pos = 0, below] {$n+1$};
  \draw[->-] (D)  -- (C) node[pos = 0.5, above, sloped] {$n+1-m$};
  \draw[->-] (D)  -- (B) node[pos = 0.5, right] {$n+1$};
  \draw[->-] (B)  ..controls +(0.1,0.1) and +(-0,-0.5) .. (A3) node[pos = 1, above] {$1$};
  \draw[->-] (B)  ..controls +(-0.1,0.1) and +(-0,-0.5) .. (A2) node [pos =1 , above] {$n$};
  \draw[->-] (C)  ..controls +(-0.1,0.1) and +(-0,-0.5) .. (A1) node [pos =1 , above] {$m$};
\end{scope}}}
\]
Hence the whole complex is homotopy equivalent to that corresponding to the right-hand side diagram of the for slide move and we are interested in

The invariance under braid relations can as well be deduced from the algebraic setting using Soergel bimodules see \cite{MR3687104}.


\begin{align}\label{eq:twistfork}
q^{-ab }S\left(
\NB{\tikz[scale=0.5]{\begin{scope}[font= \tiny]
  \coordinate (T) at (0, 1);
  \coordinate (M) at (0, 0);
  \coordinate (B1) at (-1, -1);
  \coordinate (B2) at (1, -1);
  \draw[->] (M) -- (T) node[above] {$a+b$};
  \draw[->-] (B2)  node[below] {$b$} .. controls +(-0.1, 0.1) and +(-0.7 ,-0.3) .. (M);
  \fill[white] (0,-0.45) circle (0.1);
  \draw[->-] (B1)  node[below] {$a$} .. controls +( 0.1, 0.1) and +( 0.7 ,-0.3) .. (M);
\end{scope}}}
\right) 
\simeq 
S\left(
\NB{\tikz[scale=0.5]{\begin{scope}[font= \tiny]
  \coordinate (T) at (0, 1);
  \coordinate (M) at (0, 0);
  \coordinate (B1) at (-1, -1);
  \coordinate (B2) at (1, -1);
  \draw[->] (M) -- (T) node[above] {$a+b$};
  \draw[->-] (B1) node[below] {$a$} -- (M);
  \draw[->-] (B2) node[below] {$b$} -- (M);
\end{scope}}}
\right) 
\simeq 
q^{ab} S\left(
\NB{\tikz[scale=0.5]{\begin{scope}[font= \tiny]
  \coordinate (T) at (0, 1);
  \coordinate (M) at (0, 0);
  \coordinate (B1) at (-1, -1);
  \coordinate (B2) at (1, -1);
  \draw[->] (M) -- (T) node[above] {$a+b$};
  \draw[->-] (B1)  node[below] {$a$} .. controls +( 0.1, 0.1) and +( 0.7 ,-0.3) .. (M);
  \fill[white] (0,-0.45) circle (0.1);
  \draw[->-] (B2)  node[below] {$b$} .. controls +(-0.1, 0.1) and +(-0.7 ,-0.3) .. (M);

\end{scope}}}
\right) 
\end{align}

The proof of step~(\ref{item:stab}) is more involved. We need to use the  dictionary between Soergel bimodules and vinyl foams developed in Section~\ref{sec:one-quotient-two} and a stabilization result which holds for Soergel bimodules. 

First note that if $D$ has braid index equal to $k$, $S(D) = \HT(\HN(\HH^N_\bullet( \BS( R(D))))q^{-k(N-1)}$ thanks to Proposition~\ref{prop:alg-eq-foam}. We claim that we have $\HNT(\HH^N_\bullet(\BS( R(D))))= \HT(\HN(\HH^N_\bullet (\BS( R(D)))))$. We consider $\HH^N_\bullet (\BS( R(D)))$. It is a bi-complex which we temporarily denote by $C$. Proposition~\ref{prop:all-in-deg-0} tells us that $\HN(C)$ is concentrated in $H$-degree equal to $0$. This implies that the spectral sequence $E(C)$ induced by the bi-complex structure with $\HT(\HN(C))$ on the second page has only trivial differentials on this page. This spectral sequence converges to $\HNT(C)$ because the bi-complex is bounded. Hence, we have  $\HNT(\HH^N_\bullet (\BS( R(D))))=E^\infty(C) =E^2(C) = \HT(\HN(\HH^N_\bullet(\BS(R(D) ))))$.

The stabilization result for Soergel bimodules is given by Lemma~\ref{lem:stab} and its proof occupies Section~\ref{sec:stabilization}. It tells us that: 
\[\HT(\HH_\bullet(\BS( \Gamma_+)))[-1]_T[-1]_Hq^{-1}\simeq \HT(\HH_\bullet( \BS( \Gamma_0))) \simeq  \HT(\HH_\bullet( \BS( \Gamma_-)))q^{1}\]
Using $\HH^N$ instead of $\HH$ and collapsing the $T$-grading and the $H$-grading to their difference, we get:
\[\HT(\HH^N_\bullet(\BS( \Gamma_+)))q^{-2(N-1)-1}\simeq \HT(\HH^N_\bullet( \BS( \Gamma_0))) \simeq  \HT(\HH^N_\bullet( \BS( \Gamma_-)))q^{1}\]
moreover these isomorphisms preserve the extra-differential $d^N$. We shift by $q^{-k(N-1)}$. This proves that 
\begin{align*}
&\HN(\HT(\HH^N_\bullet(\BS( \Gamma_+)))) q^{-N}q^{-(k+1)(N-1)}\simeq \HN(\HT(\HH^N_\bullet(\BS( \Gamma_0)))) q^{-k(N-1)} \\ &\qquad \qquad\simeq  \HN(\HT(\HH^N_\bullet(\BS( \Gamma_-))))[1]_{T\!H}q^{N} q^{-(k+1)(N-1)}.
\end{align*}
Since homological degrees are all finite, there is a spectral sequence with $\HN(\HT(\HH_\bullet A_{\listk{k}}, \BS( \bullet)))$ on the second page converging to $\HNT(\HH_\bullet A_{\listk{k}}, \BS( \bullet))$. The previous isomorphisms descend to the spectral sequences and to their limits. Hence we have:
\begin{align*}&\HNT(\HH^N_\bullet A_{\listk{k}}, \BS( \Gamma_+))q^{N}q^{-(k+1)(N-1)} \simeq \HNT(\HH_\bullet^N (A_{\listk{k}}, \BS( \Gamma_0))q^{-k(N-1)}\\& \qquad \qquad \simeq  \HNT(\HH^N_\bullet( A_{\listk{k}}, \BS( \Gamma_-))q^{-N}q^{-(k+1)(N-1)}.\end{align*}
This implies that:
\[S(D_+)q^{N} \simeq S(D_0) \simeq  S(D_-)q^{-N},\]
And finally that 
\[\widehat{S}(D_+)  \simeq S(D_0) \simeq  \widehat{S}(D_-).\]
\end{proof}

\begin{rmk}
  \label{rmk:triplygraded}
  Note that the previous proof can be adapted to disk-like foams up to $\infty$-equivalence. Then it turns out to be a rewriting of the invariance of the triply graded homology very close to \cite{1203.5065} (see also \cite{MR2421131, MR3447099}).
\end{rmk}

\subsection{Stabilization}
\label{sec:stabilization}

The aim of this section is to prove the stabilization move for Soergel bimodules. This basically follows from Rouquier \cite{1203.5065}. However, since for further use we need to be careful with some additional structures going on, we repeat the proof. Note, however, that the framework in which the results of \cite{1203.5065} are stated is more general\footnote{Namely, he deals with arbitrary Coxeter groups, while we only consider the type $A$.} than ours.

Let $k$ be a positive integer and $\listk{k}^{1\!1}:= (k_1, \dots, k_{l-1},1,1)$ 
be a finite sequence of positive integers of level $k$. We define $\listk{k}^2= (k_1, \dots, k_{l-1},2)$ to be the same sequence where the last two $1$s has been merged $\listk{k}^1:= (k_1, \dots, k_{l-1},1)$ to be the same sequence where the last $1$ has been dropped, and finally $\listk{k}^0:= (k_1, \dots, k_{l-1})$ to be the same sequence where the last two $1$s has been dropped.  We consider the algebras $A^{1\!1}:= A_{\listk{k}^{1\!1}}$, $A^{2}:= A_{\listk{k}^{2}}$ and $A^{1}:= A_{\listk{k}^{1}}$.

We consider $M$ a complex of $A^1$-modules-$A^1$ which is projective as an $A^1$-module and as a module-$A^1$, and we define $MI := M\otimes \SP{N}[x_{k}]$. It has a natural structure complex of $A^{1\!1}$-modules-$A^{1\!1}$ and these modules are projective as $A^1$-modules and as a modules-$A^1$. We denote by $\theta$ the $A^{1\!1}$-module-$A^{1\!1}$ $A^{1\!1}\otimes_{A^2}A^{1\!1}q^{-1}$. Note that with the notations of Section~\ref{sec:soergel-bimodules-1}, $\theta$ is the Soergel bimodule associated with $\NB{\tikz[scale=0.3]{\begin{scope}[font=\tiny]
  \coordinate (T1) at (-1, 1);
  \coordinate (T2) at ( 1, 1);
  \coordinate (B1) at (-1,-1);
  \coordinate (B2) at ( 1,-1);
  \coordinate (M2) at ( 0, .5);
  \coordinate (M1) at ( 0,-.5);
  \draw[>-] (B1) node[below] {$1$} -- (M1);
  \draw[>-] (B2) node[below] {$1$} -- (M1);
  \draw[<-] (T1) node[above] {$1$} -- (M2);
  \draw[<-] (T2) node[above] {$1$} -- (M2);
  \draw[->-] (M1) -- (M2) node[midway, right] {$2$};
\end{scope}}}$ and the $A^{1\!1}$-module-$A^{1\!1}$ $A^{1\!1}$ is the Soergel bimodule associated with $\NB{\tikz[scale=0.3]{\begin{scope}[font=\tiny]
  \coordinate (T1) at (-1, 1);
  \coordinate (T2) at ( 1, 1);
  \coordinate (B1) at (-1,-1);
  \coordinate (B2) at ( 1,-1);
  \draw[->-] (B1)  .. controls +( 0.3,0.3) and +( 0.3, -0.3) .. (T1) node[midway, left] {$1$};
  \draw[->-] (B2)  .. controls +(-0.3,0.3) and +(-0.3, -0.3) .. (T2) node[midway, right] {$1$};
\end{scope}}}$.
Finally we consider two morphisms of $A^{1\!1}$-modules-$A^{1\!1}$.
\[
\begin{array}{crcl}
s  \colon\thinspace &\theta & \to & A^{1\!1} \\
  & P\otimes Q &\mapsto &PQ 
\end{array}\quad \textrm{and}\quad
\begin{array}{crcl}
m  \colon\thinspace & A^{1\!1} & \to & \theta \\
  &  P &\mapsto & Px_{k-1}\otimes 1 - P\otimes x_{{k}}
\end{array}
\]
The notation may seem confusing: \emph{m} is for \emph{m}erge and \emph{s} is for \emph{s}plit. We define
\begin{align*}
  F&:= 0 \longrightarrow \theta \stackrel{s} {\longrightarrow} A^{1\!1}q^{-1} \longrightarrow 0, \\
  F^{-1}&:= 0 \longrightarrow  A^{1\!1}q \stackrel{m}{\longrightarrow} \theta \longrightarrow 0.
\end{align*}
In $F^{-1}$ and $F$, $\theta$ is in $T$-degree $0$, $A^{1\!1}q^{-1}$ in $T$-degree $1$ and $A^{1\!1}q$ in $T$-degree $-1$.

Note that in the language of Section~\ref{sec:soergel-bimodules-1}, $m$ and $s$ are the maps induced by the foams given in Remark~\ref{rmk:diff1}.

The following lemma should be compared to \cite[Lemma 6.3]{MR3709661}.

\begin{lem}
  \label{lem:stab}
  The homology of the complexes $(\HH_*(A^{1}, M), d_T)$, $(\HH_*(A^{1\!1}, MI\otimes_{A^{1\!1}} F)[-1]_T[-1]_Hq^{-1}, d_T)$ and $(\HH_*(A^{1\!1}, MI\otimes_{A^{1\!1}} F^{-1})q^{1}, d_T)$ are isomorphic as triply graded $\SP{N}$-modules. Moreover we can choose the isomorphisms to commute with the extra-differentials $d^N$.  
\end{lem}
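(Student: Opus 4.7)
The plan is to follow Rouquier's stabilization argument from \cite{1203.5065} adapted to the equivariant setting, with additional care for compatibility with the extra differential $d^N$. The underlying simplification is that $x_k$ acts centrally on $MI = M \otimes_{\SP{N}} \SP{N}[x_k]$, because $M$ is an $A^1$-bimodule while $\SP{N}[x_k]$ is a central factor.

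The first step is to apply the trace isomorphism (Lemma~\ref{lem:hochschild-trace-dN}) to the $\theta$-summand of $F$ and $F^{-1}$: using $\theta = A^{1\!1}\otimes_{A^2}A^{1\!1}q^{-1}$, one gets $\HH_*(A^{1\!1}, MI \otimes_{A^{1\!1}}\theta) \cong \HH_*(A^2, MI) q^{-1}$, with $d^N$-compatibility provided by the same lemma. Then compute $\HH_*(A^{1\!1},MI)$ and $\HH_*(A^2, MI)$ explicitly via Koszul resolutions. Since $\partial\xi_k = 0$ on $MI$ (where $\xi_k$ is the Koszul variable associated with $x_k$), one obtains
\[ \HH_*(A^{1\!1}, MI) \cong \HH_*(A^1, M)\otimes_{\SP{N}}\SP{N}[x_k] \otimes \Lambda\langle\xi_k\rangle, \]
with $\xi_k$ placed in $H$-degree $1$ and $q$-degree $2$. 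For $A^2 = A^0[e_1, e_2]$ with $e_1 = x_{k-1}+x_k$ and $e_2 = x_{k-1}x_k$, the Koszul variables $\eta_1, \eta_2$ satisfy $\partial\eta_1 = \delta$ and $\partial\eta_2 = x_k\delta$, where $\delta = [x_{k-1},-]$. The change of variables $\eta_2' := \eta_2 - x_k\eta_1$ produces a Koszul generator with trivial differential, so
\[ \HH_*(A^2, MI) \cong \HH_*(A^1, M)\otimes_{\SP{N}}\SP{N}[x_k] \otimes \Lambda\langle\eta_2'\rangle, \]
with $\eta_2'$ in $H$-degree $1$ and $q$-degree $4$.

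The third step is to identify the map $s_*$ (resp.\ $m_*$) induced on Hochschild homology by the ring inclusion $A^2\hookrightarrow A^{1\!1}$: a chain-level lift sends $\eta_1\mapsto\xi_{k-1}+\xi_k$ and $\eta_2\mapsto x_{k-1}\xi_k+\xi_{k-1}x_k$, so (after the change of variables) $\eta_2'\mapsto(x_{k-1}-x_k)\xi_k$. Taking $d_T$-homology of the resulting two-term bicomplex $\HH_*(A^{1\!1}, MI\otimes_{A^{1\!1}} F)$, the $\SP{N}[x_k]$-free pieces coming from the identity on the first summands cancel, while the multiplication by $(x_{k-1}-x_k)$ on the $\xi_k$-piece imposes $x_{k-1}=x_k$ on the cokernel, collapsing the extra $\SP{N}[x_k]$-factor to $\SP{N}$. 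What survives is isomorphic to $\HH_*(A^1, M)$ with the announced shifts $[-1]_T[-1]_Hq^{-1}$; the dual computation for $F^{-1}$ with the map $m$ produces the $q^{1}$ shift, the roles of kernel and cokernel being swapped.

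The main obstacle is the $d^N$-compatibility. The trace isomorphism and the Koszul change of variables are compatible with $d^N$ by formulas similar to those developed in Appendix~\ref{sec:kosz-reosl-polyn}: explicitly, $d^N(\eta_2') = D^N(e_2) - x_k D^N(e_1)$ is a correction term that fits into the chain-homotopy scheme. The delicate part is verifying that the homotopies used in the cokernel computation --- which collapse the infinite-dimensional $\SP{N}[x_k]$-factors --- commute with $d^N$ up to $\partial$-homotopy. This reduces to checking specific identities in the Koszul complex relating $D^N(x_k)$ and the multiplication-by-$(x_{k-1}-x_k)$ operator, which can be verified by direct computation once the chain-level maps are made explicit.
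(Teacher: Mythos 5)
Your route is genuinely different from the paper's: you pass through the trace isomorphism $\HH_*(A^{1\!1}, MI\otimes\theta)\cong\HH_*(A^2, MI)q^{-1}$ and then compute the Hochschild homology of both ends of the complex explicitly via Koszul, using the clean change of variables $\eta_2'=\eta_2-x_k\eta_1$ (valid because $x_k$ acts centrally on $MI$). The paper instead keeps everything on the chain level, tensors $F$ against the Koszul factor $X$ with $C(A^{1\!1})=C(A^1)\otimes_{\SP{N}}X$, and produces an explicit short exact sequence of bicomplexes $0\to Y_1\to F\otimes X\to Y_2\to 0$, showing $Y_2$ gives something $d_T$-acyclic after Hochschild and $Y_1$ the shifted copy of $\HH(A^1,M)$. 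Your Koszul computations in steps one and two are correct; however there are two gaps in what you submit.

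First, you assert without proof that after the trace isomorphism $s_*$ is the map induced by the chain lift $\varphi$ of Appendix~\ref{sec:kosz-reosl-polyn}, i.e.\ $\eta_1\mapsto\xi_{k-1}+\xi_k$, $\eta_2\mapsto x_{k-1}\xi_k+\xi_{k-1}x_k$. This is plausible but needs an argument; note also that the corresponding map in the paper's $Y_1$ is $2(x_k-x_{k-1})$, not $x_{k-1}-x_k$, and tracking that factor of $2$ matters precisely because inverting $2$ is one of the points the paper flags in the introduction. Moreover, even granting the formula, the image $\varphi(\eta_1)=\xi_{k-1}+\xi_k$ does \emph{not} respect the naive decomposition $\HH(A^{1\!1},MI)\cong\HH(A^1,M)\otimes\SP{N}[x_k]\otimes\Lambda\langle\xi_k\rangle$ built on $\xi_{k-1}$; the statement that the ``first summands cancel by the identity'' is only literal after you also change basis on the target to $\zeta:=\xi_{k-1}+\xi_k$ (for which $\partial\zeta=\delta$, so the factorization still goes through). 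Small, but it should be said.

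Second, and more seriously, the $d^N$-compatibility part of the lemma is exactly what you do not prove. You correctly identify it as the delicate step --- the homotopy that collapses the $\SP{N}[x_k]$-factor to $\SP{N}$ must be shown to commute with $d^N$ up to a $\partial$-homotopy --- but you stop at ``can be verified by direct computation.'' The paper's SES decomposition earns this for free: the summand $Y_2$ is the piece through which the $X$-contribution to $d^N$ (the dotted arrows in the paper's diagrams) passes, and since $\HH(Y_2)$ is $d_T$-acyclic, that contribution is identically zero on $\HT$. Your approach collapses the $\SP{N}[x_k]$-direction by an explicit division, and the induced homotopy does not obviously kill the $D^N(x_k)=\prod_j(x_k-T_j)$-multiplication that $d^N$ contributes in the $\xi_k$-direction. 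Until that is carried out, the ``Moreover'' clause of the lemma remains unproved.
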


\begin{proof}
  We will deal with complexes carrying three different differentials: the Hochschild differential $d_H$, the topological differential $d_T$ and the additional differential $d^N$.

\[
C(A^{1\!1}) = C(A^1)\otimes_{\SP{N}} X
\]

where 
\[
\ensuremath{\vcenter{\hbox{
\begin{tikzpicture}[xscale =2]
  \node (X) at (-1,1) {$X:=$};
  \node (A) at (0,1) {$\SP{N}[x_k] \otimes \SP{N}[x_k]q^2$};
  \node (B) at (3,1) {$\SP{N}[x_k] \otimes \SP{N}[x_k]$};
  \draw[-to] (A)--(B) node [midway,above] {\tiny{$x_k\otimes 1 - 1 \otimes x_k$}};
  \draw[densely dotted, -to] (A)  .. controls +(0,-1) and +(0, -1) .. (B) node [midway,below] {$x_k^N\partial_{x_k}$};
\end{tikzpicture}
}}}
\]

Hence 
\[
C(A^{1\!1}) \otimes_{(A^{1\!1})^{\mathrm{en}}} (MI \otimes_{A^{1\!1}} F) \simeq C(A^{1}) \otimes_{(A^{1})^{\mathrm{en}}}( M \otimes_ {A^{1}}  (X\otimes_{\SP{N}[x_k]^{\mathrm{en}}} F)).
\]
where $B^{\mathrm{en}}$ denotes the algebra $B\otimes_RB^{\mathrm{opp}}$ for any $R$-algebra.
We focus on $X\otimes_{\SP{N}[x_k]^{\mathrm{en}}} F$. We have

\begin{align*}
  X\otimes_{\SP{N}[x_k]^{\mathrm{en}}} F \simeq
\NB{
\ensuremath{
\vcenter{\hbox{
\begin{tikzpicture}[yscale =1.3]
  \node (A)  at (0,1) {$\theta q^2$};
  \node (B) at (2,1)  {$A^{1\!1}q$};
  \node (C)  at (0,0) {$\theta $};
  \node (D) at (2,0) {$A^{1\!1}q^{-1}$};
  \draw[-to] (A)--(B) node [midway,above] {\tiny{$s$}};
  \draw[-to] (A)--(C) node [midway,left] {\tiny{$x_k\otimes 1 - 1\otimes x_k$}};
  \draw[-to] (B)--(D) node [midway,right] {\tiny{$0$}};
  \draw[-to] (C)--(D) node [midway,below] {\tiny{$s$}};
  \draw[densely dotted, -to] (A)  .. controls +(0.5,-0.50) and +(0.5,0.5) .. (C);
  \draw[densely dotted, -to] (B)  .. controls +(0.5,-0.5) and +(0.5,0.5) .. (D);
\end{tikzpicture}
}}}
}.
\end{align*}

In $\theta$, the elements $2(x_k\otimes 1 - 1\otimes x_k)$ and $(x_{k}- x_{k-1})\otimes 1 - 1\otimes (x_k - x_{k-1})$ are equal. Hence, we have the following exact sequence of bicomplexes (for $d_T$ and $d_H$) of graded $A^{1\!1}$-modules-$A^{1\!1}$:
\begin{align*}
  \begin{tikzpicture}[yscale=1.5, xscale=1.3]
  \node (O1)  at (5.5,1.5) {$0$}; 
  \node (Y1)  at (5.5,0.5) {$Y_1$}; 
  \node (FX)  at (5.5,-1.5) {$F\otimes_{\QQ[x_k]^{\mathrm{en}}}X$};
  \node (Y2)  at (5.5,-3.5) {$Y_2$};
  \node (O2)  at (5.5,-4.5) {$0$};  
 \begin{scope}
  \node (A1)  at (0,1) {$A^{1\!1}q^3 $}; 
  \node (B1) at (2,1)   {$A^{1\!1}q$}; 
  \node (C1)  at (0,0) {$0$};         
  \node (D1) at (2,0)   {$0$};        
  \draw[-to] (A1)--(B1) node [midway,above] {\tiny{$2(x_{k} - x_{k-1})$}};
  \draw[-to] (A1)--(C1) node [midway,left] {};
  \draw[-to] (B1)--(D1) node [midway,right] {};
  \draw[-to] (C1)--(D1) node [midway,below] {};
  \draw [dotted, rounded corners] (-0.5,-0.3) -- ++(3,0) -- ++(0,1.6) -- ++(-3,0) -- cycle;
  \draw [dotted] (2.5, 0.5) -- (Y1);
 \end{scope}
 \begin{scope}[yshift =0cm]
  \node (A2)  at (0,-1) {$\theta q^2$};   
  \node (B2) at (2,-1)  {$A^{1\!1}q$};  
  \node (C2)  at (0,-2) {$\theta $};   
  \node (D2) at (2,-2)  {$A^{1\!1}q^{-1}$};  
  \draw[-to] (A2)--(B2) node [midway,above] {\tiny{$s$}};
  \draw[-to] (A2)--(C2) node [midway,left] {\tiny{$x_k\otimes 1 - 1\otimes x_k$}};
  \draw[-to] (B2)--(D2) node [midway,right] {\tiny{$0$}};
  \draw[-to] (C2)--(D2) node [midway,below] {\tiny{$s$}};
  \draw[densely dotted, -to] (A2)  .. controls +(0.5,-0.50) and +(0.5,0.5) .. (C2);
  \draw[densely dotted, -to] (B2)  .. controls +(0.5,-0.5) and +(0.5,0.5) .. (D2);
  \draw [dotted, rounded corners] (-0.5,-2.3) -- ++(3,0) -- ++(0,1.6) -- ++(-3,0) -- cycle;
  \draw [dotted] (2.5, -1.5) -- (FX);
 \end{scope}
 \begin{scope}[yshift = 0cm]
  \node (A3)  at (0,-3) {$A'^{1\!1}q$};
  \node (B3) at (2,-3)  {$0$};
  \node (C3)  at (0,-4) {$\theta $};
  \node (D3) at (2,-4) {$A^{1\!1}q^{-1}$};
  \draw[-to] (A3)--(B3) node [midway,above] {};
  \draw[-to] (A3)--(C3) node [midway,left] {\tiny{$a\mapsto
  \begin{array}{l}
    a(x_{k}- x_{k-1})\otimes 1 \\- a\otimes (x_k - x_{k-1})
  \end{array}$}};
  \draw[-to] (B3)--(D3) node [midway,right] {\tiny{$0$}};
  \draw[-to] (C3)--(D3) node [midway,below] {\tiny{$s$}};
  \draw [dotted, rounded corners] (-0.5,-4.3) -- ++(3,0) -- ++(0,1.6) -- ++(-3,0) -- cycle;
  \draw [dotted] (2.5, -3.5) -- (Y2);
 \end{scope}
\draw[-to] (A1) .. controls +(-1,-1) and +(-1, 1) .. (A2) node[midway, left] 
{\tiny{$ \begin{array}{c} (x_{k}-x_{k-1})\otimes 1 \\ + 1\otimes (x_{k}-x_{k-1})  \end{array}$}};  
\draw[-to] (A2) .. controls +(-1,-1) and +(-1, 1) .. (A3) node[midway, left] {\tiny{$a\otimes b \mapsto a\sigma_{k-1}(b)$}};
\draw[-to] (C2) .. controls +(-1,-1) and +(-1, 1) .. (C3) node[midway, left] {\tiny{$2\id$}};
\draw[dashed, -to] (A3) .. controls +(1,1) and +(1, -1) .. (A2) node[near end, right] {\tiny{$\phi$}};
\draw[-to] (B1) .. controls +(1,-1) and +(1, 1) .. (B2) node[midway, right] {\tiny{$\id$}};
\draw[-to] (D2) .. controls +(1,-1) and +(1, 1) .. (D3) node[midway, right] {\tiny{$2\id$}};
\draw[->] (O1) -- (Y1);
\draw[->] (Y1) -- (FX);
\draw[->] (FX) -- (Y2);
\draw[->] (Y2) -- (O2);
\end{tikzpicture}
\end{align*}
where $A'^{1\!1}$ is equal to $A^{1\!1}$ as a $A^{1\!1}$-module and has a right $A^{1\!1}$-action twisted by the transposition $\sigma_{k-1}$ which exchanges $x_k$ and $x_{k-1}$. Note that this is \emph{not} a sequence of tri-complexes: it does not respect the differential $d^N$. The dotted arrows represents the part of $d^N$ appearing in $X$. In each topological degree, this sequence splits as a sequence of complexes (for the Hochschild differential) of 
\[
\phi(p(x_1, \dots, x_{k_2})x_{k-1}^i x_k^j) = p(x_1, \dots, x_{k_2}) x_{k-1}^i \otimes x_{k-1}^j.
\]
We now take the homology with respect with the Hochschild differentials\footnote{Here, we abuse a little bit the appellation Hochschild differential: in $Y_1$ and $Y_2$ the vertical arrows are part of the Hochschild differential, while the horizontal ones are part of the topological differential.}. It implies, that for each $i$ in $\NN$, that we have the sequence of graded complexes:
\begin{align*} 
0\longrightarrow \HH_i(C(A^{1}) \otimes_{(A^{1})^{\mathrm{en}}}( M \otimes_ {A^{1}} Y_1)) \longrightarrow &\HH_i(A^{1\!1}, MI\otimes F) \longrightarrow \\  \longrightarrow &\HH_i(C(A^{1}) \otimes_{(A^{1})^{\mathrm{en}}}( M \otimes_ {A^{1}} Y_2)) \longrightarrow 0.
\end{align*}
We have a short exact sequence of complexes of $A^{1\!1}$-modules-$A^{1\!1}$:
\[
0 \longrightarrow A'^{1\!1}q \xrightarrow{a\mapsto  a(x_k- x_{k-1})\otimes 1 -a \otimes (x_k- x_{k-1})}
\theta \stackrel{s}{\longrightarrow} A^{1\!1}q^{-1} \longrightarrow 0.
\]
This implies that  for all $i$, the complex $\HH_i(C(A^{1}) \otimes_{(A^{1})^{\mathrm{en}}}( M \otimes_ {A^{1}} Y_2))$  is homotopically trivial (with respect to $d_T$). 
It follows that: 
\begin{align*}
\HT(\HH_i(A^{1\!1}, MI\otimes F^{-1})) 
\simeq&  \HT(\HH_i(C(A^{1}) \otimes_{(A^{1})^{\mathrm{en}}}( M \otimes_ {A^{1}} Y_1))). 
\end{align*}
This implies that the part of the differential $d^N$ coming from $X$ is equal to $0$ in $\HT(\HH_i(A^{1\!1}, MI\otimes F^{-1}))$, and therefore that this isomorphism commute with $d^N$.
The complex $(\HH_i(C(A^{1}) \otimes_{(A^{1})^{\mathrm{en}}}( M \otimes_ {A^{1}} Y_1)), d_T)$ has the same homology as the complex $(\HH_i(C(A^{1}) \otimes_{(A^{1})^{\mathrm{en}}}( M \otimes_ {A^{1}} A^1q^1))[+1]_H, d_T)$ which itself is equal to $\HH_i(A^{1}, M) [+1]_Hq^1$.

The argument for $F^{-1}$ is similar, with the following short exact sequence of bi-complexes:
\begin{align*}
 \begin{tikzpicture}[yscale=1.5, xscale =1.3]
  \node (O1)  at (-3.5,1.5) {$0$}; 
  \node (Y1)  at (-3.5,0.5) {$Z_1$}; 
  \node (FX)  at (-3.5,-1.5) {$F^{-1}\otimes_{\QQ[x_k]^{\mathrm{en}}}X$};
  \node (Y2)  at (-3.5,-3.5) {$Z_2$};
  \node (O2)  at (-3.5,-4.5) {$0$};  
 \begin{scope}
  \node (A1)  at (0,1) {$A^{1\!1}q^3$}; 
  \node (B1) at (2,1)   {$A^{1\!1}q^3$}; 
  \node (C1)  at (0,0) {$0$};         
  \node (D1) at (2,0)   {$0$};        
  \draw[-to] (A1)--(B1) node [midway,above] {\tiny{$\Id$}};
  \draw[-to] (A1)--(C1) node [midway,left] {};
  \draw[-to] (B1)--(D1) node [midway,right] {};
  \draw[-to] (C1)--(D1) node [midway,below] {};
  \draw [dotted, rounded corners] (-0.5,-0.3) -- ++(3,0) -- ++(0,1.6) -- ++(-3,0) -- cycle;
  \draw [dotted] (-0.5, 0.5) -- (Y1);
 \end{scope}
 \begin{scope}[yshift =0cm]
   \node (A2)  at (0,-1)  {$A^{1\!1}q^3$}; 
   \node (B2) at (2,-1)   {$\theta q^2$};  
   \node (C2)  at (0,-2)  {$A^{1\!1}q$}; 
   \node (D2) at (2,-2)   {$\theta $};  
  \draw[-to] (A2)--(B2) node [midway,above] {\tiny{$m$}};
  \draw[-to] (A2)--(C2) node [midway,left] {\tiny{$0$}};
  \draw[-to] (B2)--(D2) node [midway,right] {\tiny{$x_k\otimes 1 - 1\otimes x_k$}};
  \draw[-to] (C2)--(D2) node [midway,below]  {\tiny{$m$}};
  \draw[densely dotted, -to] (A2)  .. controls +(0.5,-0.50) and +(0.5,0.5) .. (C2);
  \draw[densely dotted, -to] (B2)  .. controls +(-0.5,-0.5) and +(-0.5,0.5) .. (D2);
  \draw [dotted, rounded corners] (-0.5,-2.3) -- ++(3,0) -- ++(0,1.6) -- ++(-3,0) -- cycle;
  \draw [dotted] (-0.5, -1.5) -- (FX);
 \end{scope}
 \begin{scope}[yshift = 0cm]
  \node (A3)  at (0,-3) {$0$};
  \node (B3) at (2,-3)  {$A'^{1\!1}q$};
  \node (C3)  at (0,-4) {$A^{1\!1}q$};
  \node (D3) at (2,-4) {$\theta$};
  \draw[-to] (A3)--(B3) node [midway,above] {};
  \draw[-to] (A3)--(C3) node [midway,left] {}; 
  \draw[-to] (B3)--(D3) node [midway,right] 
{\tiny{$a\mapsto
  \begin{array}{l}
    (x_{k}- x_{k-1})a\otimes 1 \\- a\otimes (x_k - x_{k-1})
  \end{array}$}};
  \draw[-to] (C3)--(D3) node [midway,below] {\tiny{$m$}};
  \draw [dotted, rounded corners] (-0.5,-4.3) -- ++(3,0) -- ++(0,1.6) -- ++(-3,0) -- cycle;
  \draw [dotted] (-0.5, -3.5) -- (Y2);
 \end{scope}
\draw[-to] (A1) .. controls +(-1,-1) and +(-1, 1) .. (A2) node[midway, left] {\tiny{$2\id $}};
\draw[-to] (B2) .. controls +(1,-1) and +(1, 1) .. (B3) node[midway, right] {\tiny{$a\otimes b \mapsto a\sigma_{k-1}(b)$}};
\draw[-to] (C2) .. controls +(-1,-1) and +(-1, 1) .. (C3) node[midway, left] {\tiny{$\id$}};
\draw[dashed, -to] (B3) .. controls +(-1,1) and +(-1, -1) .. (B2) node[near end, left] {\tiny{$\phi$}};
\draw[-to] (B1) .. controls +(1,-1) and +(1, 1) .. (B2) node[midway, right] 
{\tiny{$a\mapsto
  \begin{array}{l}
    (x_{k}- x_{k-1})a\otimes 1 \\- a\otimes (x_k - x_{k-1})
  \end{array}$}};
\draw[-to] (D2) .. controls +(1,-1) and +(1, 1) .. (D3) node[midway, right] {\tiny{$\Id$}};
\draw[->] (O1) -- (Y1);
\draw[->] (Y1) -- (FX);
\draw[->] (FX) -- (Y2);
\draw[->] (Y2) -- (O2);
\end{tikzpicture}
\end{align*}

\end{proof}

\subsection{An example of computation }
\label{sec:an-example-comp}
In this section, we compute the homology of the positive uncolored Hopf link in the non-equivariant setting (i.~e.\ evaluating the variables $T_\bullet$ on $0$).

First remark (see Remark~\ref{rmk:diff1}) that the pre-differential given by the foam $F_{D,s\to_x s'}$ for  $x$ of type $(1,1, +)$ (resp. of type $(1,1,-)$)  is always surjective (resp. injective). This implies in particular that if an uncolored braid diagram $D$ has $n$ crossings, then the homological length (for the topological degree) of $H(\widehat{S}(D)$ is at most $n$. If $D$ contains both positive and negative crossings then $H(\widehat{S}(D)$ has length at most $n-1$.  

The complex of resolutions of the positive uncolored Hopf link is given by:
\[
\NB{\tikz[scale =0.65]{\begin{scope}
  \coordinate (A0) at (-1, 0.5);
  \coordinate (B0) at (-1, 1.5);
  \coordinate (C1) at (-0.3, 1);
  \coordinate (C2) at ( 0.3, 1);
  \coordinate (A2) at (1, 0.5);
  \coordinate (B2) at (1, 1.5);
  \coordinate (D2) at (-1, -1.5);
  \coordinate (E2) at (-1, -0.5);
  \coordinate (F2) at (-0.3, -1);
  \coordinate (F1) at (0.3, -1);
  \coordinate (D0) at (1, -1.5);
  \coordinate (E0) at (1, -0.5);
  \coordinate (OOT) at (2.5,1);
  \coordinate (OOB) at (2.5,-1);
  \draw (A0) arc (90:270:0.5);
  \draw (B0) arc (90:270:1.5) coordinate[pos =0.8] (left);
  \draw (A2) arc (90:-90:0.5);
  \draw (B2) arc (90:-90:1.5) coordinate[pos =0.8] (right);
  \draw[>-]  (A0)  .. controls +(0.3,0) and +(-0.3, 0) .. (C1);
  \draw[>-]  (B0)  .. controls +(0.3,0) and +(-0.3, 0) .. (C1);
  \draw[->-, thick] (C1) -- (C2) node[midway, above ] {$\scriptstyle{2}$};
  \draw[->]  (C2)  .. controls +(0.3,0) and +(-0.3, 0) .. (A2);
  \draw[->]  (C2)  .. controls +(0.3,0) and +(-0.3, 0) .. (B2);
  \draw[>-] (D0)  .. controls +(-0.3,0) and +( 0.3, 0) .. (F1);
  \draw[>-] (E0)  .. controls +(-0.3,0) and +( 0.3, 0) .. (F1);
  \draw[->-, thick] (F1) -- (F2) node[midway, above ] {$\scriptstyle{2}$} coordinate[pos=0.5] (F0);
  \draw[->]  (F2)  .. controls +(-0.3,0) and +( 0.3, 0) .. (D2);
  \draw[->]  (F2)  .. controls +(-0.3,0) and +( 0.3, 0) .. (E2);
  \draw[red, very thin, <-] (F0) -- + (0, -1) node[below] {$\scriptstyle{\{m_\lambda|\lambda \in T(2,N-1)\}}$}; 
    \draw[red, very thin, <-] (left) -- + (0, -1.5) node[below] {$\scriptstyle{\{x^i| i \in \{0,1\}\}}$}; 
        \draw[red, very thin, <-] (right) -- + (0, -1.5) node[below] {$\scriptstyle{\{x^i| i \in \{0,1\}\}}$}; 
\node[blue] at (0,0) {$\scriptstyle{[N][N+1][2]}$};
\end{scope}

\begin{scope}[xshift= 6.5cm, yshift = -3cm]
  \coordinate (A0) at (-1, 0.5);
  \coordinate (B0) at (-1, 1.5);
  \coordinate (C1) at (-0.3, 1);
  \coordinate (C2) at ( 0.3, 1);
  \coordinate (A2) at (1, 0.5);
  \coordinate (B2) at (1, 1.5);
  \coordinate (D2) at (-1, -1.5);
  \coordinate (E2) at (-1, -0.5);
  \coordinate (F2) at (-0.3, -1);
  \coordinate (F1) at (0.3, -1);
  \coordinate (D0) at (1, -1.5);
  \coordinate (E0) at (1, -0.5);
  \coordinate (OIL) at (-2.7,0);
  \coordinate (OIR) at (2.7,0);
  \draw (A0) arc (90:270:0.5);
  \draw (B0) arc (90:270:1.5);
  \draw (A2) arc (90:-90:0.5);
  \draw (B2) arc (90:-90:1.5) coordinate[pos =0.8] (right);
  \draw[->-]  (A0)  -- (A2);
  \draw[->-]  (B0)  -- (B2);
  \draw[>-] (D0)  .. controls +(-0.3,0) and +( 0.3, 0) .. (F1);
  \draw[>-] (E0)  .. controls +(-0.3,0) and +( 0.3, 0) .. (F1);
  \draw[->-, thick] (F1) -- (F2) node[midway, above ] {$\scriptstyle{2}$}
coordinate[pos=0.5] (F0);
  \draw[->]  (F2)  .. controls +(-0.3,0) and +( 0.3, 0) .. (D2);
  \draw[->]  (F2)  .. controls +(-0.3,0) and +( 0.3, 0) .. (E2);
  \draw[red, very thin, <-] (F0) -- + (0, -1) node[below] {$\scriptstyle{\{m_\lambda|\lambda \in T(2,N-1)\}}$}; 
        \draw[red, very thin, <-] (right) -- + (0, -1.5) node[below] {$\scriptstyle{\{x^i| i \in \{0,1\}\}}$}; 
\node[blue] at (0,0) {$\scriptstyle{[N][N+1]}$};
\end{scope}

\begin{scope}[xshift= 6.5cm, yshift = 3cm]
  \coordinate (A0) at (-1, 0.5);
  \coordinate (B0) at (-1, 1.5);
  \coordinate (C1) at (-0.3, 1);
  \coordinate (C2) at ( 0.3, 1);
  \coordinate (A2) at (1, 0.5);
  \coordinate (B2) at (1, 1.5);
  \coordinate (D2) at (-1, -1.5);
  \coordinate (E2) at (-1, -0.5);
  \coordinate (F2) at (-0.3, -1);
  \coordinate (F1) at (0.3, -1);
  \coordinate (D0) at (1, -1.5);
  \coordinate (E0) at (1, -0.5);
  \coordinate (IOL) at (-2.7,0);
  \coordinate (IOR) at (2.7,0);
  \draw (A0) arc (90:270:0.5);
  \draw (B0) arc (90:270:1.5);
  \draw (A2) arc (90:-90:0.5);
  \draw (B2) arc (90:-90:1.5) coordinate[pos=0.2] (right);
  \draw[->-]  (D0)  -- (D2);
  \draw[->-]  (E0)  -- (E2);  
  \draw[>-]  (A0)  .. controls +(0.3,0) and +(-0.3, 0) .. (C1);
  \draw[>-]  (B0)  .. controls +(0.3,0) and +(-0.3, 0) .. (C1);
  \draw[->-, thick] (C1) -- (C2) node[midway, below ] {$\scriptstyle{2}$}
  coordinate[midway] (C0);
  \draw[->]  (C2)  .. controls +(0.3,0) and +(-0.3, 0) .. (A2);
  \draw[->]  (C2)  .. controls +(0.3,0) and +(-0.3, 0) .. (B2);
  \draw[red, very thin, <-] (C0) -- + (0, 1) node[above] {$\scriptstyle{\{m_\lambda|\lambda \in T(2,N-1)\}}$}; 
        \draw[red, very thin, <-] (right) -- + (0, 1.5) node[above] {$\scriptstyle{\{x^i| i \in \{0,1\}\}}$}; 
\node[blue] at (0,0) {$\scriptstyle{[N][N+1]}$};
\end{scope}

\begin{scope}[xshift =13cm]
  \coordinate (A0) at (-1, 0.5);
  \coordinate (B0) at (-1, 1.5);
  \coordinate (C1) at (-0.3, 1);
  \coordinate (C2) at ( 0.3, 1);
  \coordinate (A2) at (1, 0.5);
  \coordinate (B2) at (1, 1.5);
  \coordinate (D2) at (-1, -1.5);
  \coordinate (E2) at (-1, -0.5);
  \coordinate (F2) at (-0.3, -1);
  \coordinate (F1) at (0.3, -1);
  \coordinate (D0) at (1, -1.5);
  \coordinate (E0) at (1, -0.5);
  \coordinate (IIT) at (-2.5,1);
  \coordinate (IIB) at (-2.5,-1);
  \draw (A0) arc (90:270:0.5);
  \draw (B0) arc (90:270:1.5);
  \draw (A2) arc (90:-90:0.5);
  \draw (B2) arc (90:-90:1.5);
  \draw[->-]  (A0)  -- (A2);
  \draw[->-]  (B0)  -- (B2);
  \draw[->-]  (D0)  -- (D2);
  \draw[->-]  (E0)  -- (E2);  
  \draw[red, very thin, <-] (B2) -- + (0, 1.5) node[above] {$\scriptstyle{\{m_\lambda|\lambda \in T(1,N-1)\}}$}; 
        \draw[red, very thin, <-] (A0) -- + (0, 1.5) node[above] {$\scriptstyle{\{m_\lambda| \lambda \in T(1,N-1)\}}$}; 
\node[blue] at (0,0) {$\scriptstyle{[N][N]}$};
\end{scope}

\draw[->] (OOB) -- (OIL);
\draw[->] (OOT) -- (IOL);
\draw[->] (OIR) -- (IIB);
\draw[->] (IOR) -- (IIT);}}
\]

In this picture, the red labels are meant to represent a basis. For a given diagram $\Gamma$, pick a $\Gamma$-tree-like foam and decorate it with the elements given by red sets. The blue quantum numbers are meant to represent the quantum dimension of the spaces.

Observe that the two spaces in the middle are isomorphic and that the pre-differential on the left are equal up to this isomorphism, and therefore have the same kernel. On the other hand, the pre-differential on the right are surjective. Taking care of the different grading shifts, this gives:
 \[
 H_{i}(\widehat{S}(D_n)) = H_{i}(S(D_n))q^{-Nn} =
\begin{cases}
  \QQ[N][N+1] q^{-2N+1} & \textrm{if $i =0$,}\\
  \QQ[N]q^{-(N+1)} & \textrm{if $i=1$,}\\
  0 & \textrm{else.}
\end{cases}
\]
\begin{rmk}
  \label{rmk:N12}
  \begin{enumerate}
  \item Note in particular that the symmetric uncolored homology of
    links is not trivial for $N=1$ (for which the corresponding
    polynomial invariant is always $1$). We expect that a simple
    combinatorial description of this homology is achievable since the
    polynomial invariant of MOY graph has an especially simple form for $N=1$
    (see Lemma~\ref{lem:N1symmetriceval}).
  \item For $N=2$, this gives a different homology than the Khovanov homology and odd Khovanov homology.
  \end{enumerate}
\end{rmk}




\appendix

\section{Quantum link invariants and representations of $U_q(\gll_N)$.}
\label{sec:quant-link-invar}
In this appendix we provide details about the relation between the graphical MOY calculi defined in Section~\ref{sec:moy-calculi} and the representations of $U_q(\gll_N)$. The aim is to give explicit definitions the Reshetikhin--Turaev functors which associate with a MOY graph an intertwiner of $U_q(\gll_N)$-representations. We will define two such functors: one sends an edge labeled $k$ onto the identity of $\Lambda^k_qV_q$ where $V_q$ is the standard representation of $U_q(\sll_N)$. This yields the \emph{exterior} MOY calculus\footnote{This is the ``classical'' MOY calculus. MOY stands for Murakami--Ohtsuki--Yamada who gave the identities described in Section~\ref{sec:moy-calculi} in \cite{MR1659228}. }. The other one associates with such an edge the identity of $\mathrm{Sym}_q^kV_q$. This yields what we call the \emph{symmetric} MOY calculus. 

\subsection{The quantum group $U_q(\gll_N)$}
\label{sec:quant-groups-u_qsll}

In this section we follow the presentation of $U_q(\gll_N)$ given by Tubbenhauer--Vaz--Wedrich in \cite{tubbenhauer2015super}.

\begin{dfn} 
Let $N$ be a positive integer. The \emph{quantum general linear algebra} $U_q(\gll_N)$ is
the associative, unital $\CC(q)$-algebra generated by $L_i$, $L_i^{-1}$, $F_j$ and $E_j$, with $1\leq i \leq N$ and $1\leq j \leq N-1$ 
subject to the relations
\begin{gather*}
L_iL_j = L_jL_i,  \quad L_iL_i^{-1}= L_i^{-1}L_i= 1, \\ 
L_iF_i = q^{-1}F_iL_i, \quad L_{i+1}F_i = qF_iL_{i+1}, \quad L_iE_i = qE_iL_i, \quad L_{i+1}E_i = q^{-1}E_iL_{i+1}, \\ 
L_jF_i = F_iL_j, \quad L_jE_i = E_iL_j \qquad \textrm{for $j\neq i, i+1$,} \\
E_iF_j - F_jE_i = \delta_{ij}\frac{L_iL_{i+1}^{-1} - L_i^{-1}L_{i+1}}{q-q^{-1}},  \\
[2]_qF_i F_j F_i = F_i^2F_j + F_j F_i^2   \qquad \textrm{if $|i − j| = 1$,}  \\
[2]_qE_i E_j E_i = E_i^2 E_j + E_j E_i^2   \qquad \textrm{if $|i − j| = 1$,}  \\
E_i E_j = E_j E_i, \quad F_iF_j = F_jF_i \qquad  \textrm{if $|i − j| > 1$.}
\end{gather*}
\end{dfn}


\begin{prop} Defining $\Delta:U_q(\gll_N)\to U_q(\gll_N)^{\otimes 2}$, $S:U_q(\gll_N)^{\mathrm{op}} \to  U_q(\gll_N)$ and $\epsilon: U_q(\gll_N)\to \CC(q)$ to be the $\CC(q)$ algebra maps defined by:
  \begin{align*}
    &\Delta(L_i^{\pm 1}) = L_i^{\pm1}\otimes L_i^{\pm1}        & \quad &S(L_i^{\pm1})= L_i^{\mp1}               & \quad & \epsilon(L_i^{\pm 1}) =1  &  \\
    &\Delta(F_i) = F_i\otimes 1 + L_i^{-1}L_{i+1}\otimes F_i  & \quad &S(F_i) = - L_iL_{i+1}^{-1}F_i           & \quad  & \epsilon(F_i)=0 & \\
    &\Delta(E_i)= E_i\otimes 1 + L_iL_{i+1}^{-1}\otimes E_i   &\quad & S(E_i) = - E_iL_i^{-1}L_{i+1}           &  \quad &\epsilon(E_i)=0&\\
  \end{align*}
  endow $U_q(\gll_N)$ 
with a structure of Hopf algebras with antipode. Furthermore the category of finite-dimensional $U_q(\gll_N)$-modules is braided.
\end{prop}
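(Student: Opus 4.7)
The plan is to verify the Hopf algebra axioms directly on generators and then to exhibit a braiding via the universal $R$-matrix. Both steps are classical and parallel the corresponding story for $U_q(\sll_N)$.

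First I would check that $\Delta$, $S$ and $\epsilon$ are well-defined $\CC(q)$-algebra (respectively anti-algebra) maps on $U_q(\gll_N)$ by verifying that the given images respect each defining relation. The relations among the grouplike generators $L_i^{\pm 1}$ are preserved automatically, and the mixed relations of the $L_i$ with the $E_j, F_j$ reduce to a short calculation on the two tensor factors. Checking that $\Delta(E_iF_j - F_jE_i)$ matches $\delta_{ij}\Delta\!\left(\frac{L_iL_{i+1}^{-1} - L_i^{-1}L_{i+1}}{q-q^{-1}}\right)$ is a standard bookkeeping exercise using the $L$-$E$ and $L$-$F$ commutation relations. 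The technically heaviest step is the compatibility of $\Delta$ with the quantum Serre relations: one would expand $\Delta(E_i)^2\Delta(E_j)$ and $\Delta(E_j)\Delta(E_i)^2$, normal-order using $L_iL_{i+1}^{-1}E_j = q^{\pm 1}E_jL_iL_{i+1}^{-1}$, and compare with $[2]_q\Delta(E_i)\Delta(E_j)\Delta(E_i)$; the equality reduces to a scalar $q$-identity on each of the resulting tensor factors. The counit and antipode axioms are then routine on generators, and extend to all of $U_q(\gll_N)$ since they are (anti-)algebra identities.

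For the braided structure on finite-dimensional $U_q(\gll_N)$-modules, I would use the universal $R$-matrix, which takes the standard form $R = q^{\Omega_0}\prod_{\beta > 0}^{\prec}\exp_{q_\beta}\!\bigl((q-q^{-1})E_\beta \otimes F_\beta\bigr)$, where $\Omega_0$ is a Cartan contribution built from the $\log L_i \otimes \log L_j$, the $E_\beta, F_\beta$ are Lusztig's root vectors obtained from the braid group action, and the product is taken in a fixed convex order on the positive roots. Although $R$ lies only in a completion of $U_q(\gll_N)^{\otimes 2}$, on any pair of finite-dimensional modules the quasi-nilpotent factors act as finite sums, so $c_{V,W} := \tau \circ R\co V \otimes W \to W \otimes V$ is well defined. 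The hexagon and naturality axioms then follow from the quasi-triangularity identities $(\Delta\otimes\id)R = R_{13}R_{23}$, $(\id\otimes\Delta)R = R_{13}R_{12}$ and $R\Delta R^{-1} = \Delta^{\mathrm{op}}$.

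The main obstacle is purely bookkeeping: the Serre-relation verification and the quasi-triangularity identities both require careful tracking of $q$-powers produced by the grouplike factors $L_i$. For this reason I would in practice deduce the proposition by citing \cite{tubbenhauer2015super} and the classical references therein, where $U_q(\gll_N)$ is presented as the natural Hopf-algebraic extension of $U_q(\sll_N)$ by the $L_i$ compatible with the $R$-matrix, so that both the Hopf structure and the braiding on finite-dimensional modules are inherited directly from the $U_q(\sll_N)$ case.
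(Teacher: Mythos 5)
The paper states this proposition without proof, treating it as a standard fact about $U_q(\gll_N)$ inherited from the cited reference \cite{tubbenhauer2015super} and the classical literature, which is exactly the route you fall back on in your final paragraph. Your sketch of the direct verification (checking the Hopf axioms on generators, with the quantum Serre relations as the heaviest step, and producing the braiding from the quasi-triangular structure of the universal $R$-matrix acting on finite-dimensional modules) is accurate and standard, so the proposal is correct and consistent with the paper's treatment.
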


\begin{prop}
We define $V_q$ to be an $N\!$th dimensional $\CC(q)$-vector space with basis $(b_i)_{i=1,\dots, N}$. The formulas:
\begin{align*}
&L_i b_i = qb_i,& &L_i^{-1} b_{i} =q^{-1}b_{i},&  &L_i^{\pm 1} b_j = b_j \quad j \neq i, &\\
&E_{i-1} b_i = b_{i-1}& &E_i b_{j} =0,&  &\textrm{if $i\neq j-1$,} & \\
&F_i b_i = b_{i+1}& &F_i b_{j} =0,&  &\textrm{if $i\neq j$}& 
\end{align*}
endow $V_q$ with a structure of $U_q(\gll_N)$-modules. 
\end{prop}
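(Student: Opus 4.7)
The plan is to check directly that each defining relation of $U_q(\gll_N)$ holds on the basis $(b_1, \dots, b_N)$; since the formulas extend $\CC(q)$-linearly, this is sufficient. The verification breaks into three natural groups, each of which is a short case analysis.

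First, I would dispose of the easy relations. The $L_i^{\pm 1}$ act diagonally on the basis with eigenvalue $q^{\pm \delta_{ij}}$ on $b_j$, so $L_iL_j=L_jL_i$, $L_iL_i^{-1}=L_i^{-1}L_i=1$, and the commutation $L_jF_i=F_iL_j$, $L_jE_i=E_iL_j$ for $j\neq i,i+1$ are automatic (neither side changes the relevant eigenvalue). For the $q$-commutations $L_iF_i = q^{-1}F_iL_i$, $L_{i+1}F_i = qF_iL_{i+1}$, $L_iE_i=qE_iL_i$, $L_{i+1}E_i=q^{-1}E_iL_{i+1}$, the only nontrivial basis vector is $b_i$ (resp. $b_{i+1}$); evaluating both sides gives the same vector.

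Next, I would verify the Cartan relation $E_iF_j - F_jE_i = \delta_{ij}(L_iL_{i+1}^{-1}-L_i^{-1}L_{i+1})/(q-q^{-1})$. Using $F_jb_k=\delta_{jk}b_{j+1}$ and $E_ib_k=\delta_{k,i+1}b_i$, the commutator on $b_k$ equals $\delta_{ij}(\delta_{ik}b_i - \delta_{k,i+1}b_{i+1})$; for $i=j$ the right-hand side acts by the scalar
\[
\frac{q^{\delta_{ik}-\delta_{k,i+1}} - q^{-\delta_{ik}+\delta_{k,i+1}}}{q-q^{-1}},
\]
which evaluates to $+1$ for $k=i$, $-1$ for $k=i+1$, and $0$ otherwise. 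The two sides match.

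Finally, the Serre-type relations collapse essentially trivially on this small representation, thanks to the nilpotency $F_i^2=0=E_i^2$ on $V_q$ (since $F_ib_{i+1}=0$ and $E_ib_i=0$). For $|i-j|>1$, a direct computation shows that $F_jF_ib_k=\delta_{ik}\delta_{j,i+1}b_{j+1}$ vanishes identically (and similarly for $F_iF_j$, $E_iE_j$, $E_jE_i$). For $|i-j|=1$, both $F_i^2F_j$ and $F_jF_i^2$ vanish from nilpotency; a two-step check shows $F_iF_jF_i b_k=0$ as well (after $F_i$ and $F_j$ shift the index by $2$, the third $F_i$ cannot fire), so both sides of $[2]_qF_iF_jF_i=F_i^2F_j+F_jF_i^2$ are zero. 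The $E$-Serre relation is symmetric.

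The main obstacle is not conceptual but organizational: there are many relations, and the computations must be arranged so that the bookkeeping remains transparent. The key observation that streamlines everything is that $E_i$ and $F_i$ act on $V_q$ as a single nilpotent rank-one transition between $b_i$ and $b_{i+1}$, which makes both the Cartan commutator computation and the vanishing of the Serre expressions essentially a matter of tracking at most two basis vectors at a time.
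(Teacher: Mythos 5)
Your proof is correct. The paper states this proposition without proof (it is a standard fact in quantum group theory), so the direct verification on basis vectors is exactly the argument one would supply, and you carry it out completely: the diagonal action of the $L_i^{\pm1}$ handles the torus relations and $q$-commutations; the identification $F_jb_k=\delta_{jk}b_{j+1}$, $E_ib_k=\delta_{k,i+1}b_i$ makes the Cartan commutator a one-line case check; and the nilpotency $E_i^2=F_i^2=0$ on $V_q$ together with the observation that two successive lowerings (or raisings) by adjacent indices move out of the support of the third operator kills every Serre expression. The only very minor imprecision is in the phrase ``after $F_i$ and $F_j$ shift the index by $2$, the third $F_i$ cannot fire,'' which covers the case $j=i+1$; for $j=i-1$ the middle factor $F_{i-1}$ already annihilates $b_{i+1}$, so the product still vanishes, just one step earlier. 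The conclusion is unaffected.
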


Following \cite{1701.02932} we now consider the tensor algebra $T^\bullet V_q$. This algebra is naturally graded and endowed with an action of $U_q(\gll_N)$ which preserve the grading (i. e. for every integer $a$, $T^aV_q$ is a $U_q(\gll_N)$-submodule of $T^\bullet V_q$.). We consider two two-sided ideals $E^2V_q$ and $S^2V_q$ inside this algebra $TV_q$:
\begin{align*}
EV_q &:= \langle qb_i\otimes b_j - b_j\otimes b_i| \textrm{for $i<j$} \rangle \qquad\textrm{and}   \\
SV_q &:= \langle b_m\otimes b_m,  b_i\otimes b_j + qb_j\otimes b_i| \textrm{for all $m$ and for $i<j$} \rangle.
\end{align*}
Since these two ideals are homogeneous the quotient
\[
\Lambda^\bullet_qV_q := T^\bullet V_q / SV_q \quad \textrm{and} \quad \mathrm{Sym}^\bullet_qV_q := T^\bullet V_q / EV_q 
\]
inherits a grading from $T^\bullet V_q$. One easily checks that $E^2V_q$ and  $S^2V_q$ are stable under the action of $U_q(\gll_N)$ over $T^\bullet V_q$. This implies that for any non-negative integer $a$, $\Lambda^a_qV_q$ and $\mathrm{Sym}^a_qV_q$ inherit structure of $U_q(\gll_N)$-modules. One can show that for every  integer $a$, $\Lambda^a_qV_q$ and $\mathrm{Sym}^a_qV_q$ are simple modules. The image of a pure tensor $x_1\otimes \dots \otimes x_a$ is denoted by
\[
x_1\wedge \dots \wedge x_a\, \textrm{in $\Lambda^a_qV_q$}\quad \textrm{and by} \quad  x_1\otimes \dots \otimes x_a\, \textrm{in $\mathrm{Sym}^a_qV_q$}.
\]

The $\CC(q)$ vector space $\Lambda^a_qV_q$ has dimension $\left(\begin{smallmatrix} N \\a\end{smallmatrix}\right)$ and is spanned by the vectors
\[
(b_{i_1}\wedge b_{i_2}\wedge \dots \wedge b_{i_a})_{1\leq i_1< i_2< \dots < i_a\leq N}.
\]
If $1\leq i_1< i_2< \dots < i_a\leq N$ and $I= \{i_1,\dots i_a\}$, we write $b_I = b_{i_1}\wedge b_{i_2}\wedge \dots \wedge b_{i_a}$. 

\begin{dfn}
  Let $X$ be a set.
  A \emph{multi-subset  of $X$} is a map $Y: X\to \NN$. If $\sum_{x\in X}Y(x)< \infty$, the multi-subset $Y$ is said to be \emph{finite} and the sum is its \emph{cardinal} (denoted by $\#Y$). If $x$ is an element of $X$, the number $Y(x)$ is the \emph{multiplicity of $x$ in $Y$}. 
\end{dfn}

The $\CC(q)$ vector space $\mathrm{Sym}^a_qV_q$ has dimension $\left(\begin{smallmatrix} N+a-1 \\a\end{smallmatrix}\right)$ and is spanned by the vectors
\[
(b_{i_1}\otimes b_{i_2}\otimes \dots \otimes b_{i_a})_{1\leq i_1\leq i_2\leq \dots \leq i_a\leq N}.
\]
If $1\leq i_1\leq i_2\leq \dots \leq i_a\leq N$ and $I$ is the multi-subset of $I_N:=\{1,\dots,N\}$  $\{i_1,\dots i_a\}$, we write $b'_I = b_{i_1}\otimes b_{i_2}\otimes \dots \otimes b_{i_a}$. 

\subsection{Exterior MOY calculus}
\label{sec:ext-moy-calc-1}
In this subsection we work in the full subcategory $U_q(\gll_N)$-$\mathsf{mod}_{\Lambda}$ of finite-dimensional $U_q(\gll_N)$-modules generated (as a monoidal category) by the modules $\Lambda^a_qV_q$ for $0\leq a\leq N$ and their duals. We define a few morphisms:
\[
\begin{array}{crcl}
\Lambda_{a,b}  :& \Lambda_q^a V_q\otimes \Lambda_q^b V_q  &\to     & \Lambda^{a+b}_q V_q \\
  & b_I \otimes b_J & \mapsto &
  \begin{cases}
q^{-|J<I|} b_{I\sqcup J} &   \textrm{if $I\cap J =\emptyset$,}    \\
0 & \textrm{else.}
  \end{cases}
\end{array}
\]
\[
\begin{array}{crcl}
Y_{a,b}  :& \Lambda^{a+b}_q V_q   &\to     & \Lambda_q^a V_q\otimes \Lambda_q^b V_q  \\
  & b_K  &\mapsto &  \displaystyle{\sum_{I\sqcup J = K}  q^{|I<J|} b_I\otimes  b_J}
\end{array}
\]
\[
\begin{array}{crcl}
\stackrel{\leftarrow}{\cup}_{a}  :& \CC(q)   &\to     & \Lambda_q^a V_q\otimes (\Lambda_q^a V_q)^*  \\
  & 1  &\mapsto &  \displaystyle{\sum_{\#I=a}  b_I\otimes  b_I^*}
\end{array}
\]
\[
\begin{array}{crcl}
\stackrel{\leftarrow}{\cap}_{a}  :& (\Lambda_q^a V_q)^*\otimes \Lambda_q^a V_q    &\to     & \CC(q) \\
  &  f\otimes x &\mapsto &  f(x)
\end{array}
\]
\[
\begin{array}{crcl}
\stackrel{\rightarrow}{\cup}_{a}  :& \CC(q)   &\to     & (\Lambda_q^a V_q)^*\otimes \Lambda_q^a V_q  \\
  & 1  &\mapsto &  \displaystyle{\sum_{\#I=a}  q^{-|I<I_N| +|I_N<I| }b_I^*\otimes  b_I}
\end{array}
\]
\[
\begin{array}{crcl}
\stackrel{\rightarrow}{\cap}_{a}  :& \Lambda_q^a V_q\otimes( \Lambda_q^a V_q)^*    &\to     & \CC(q) \\
  &  b_I \otimes b_J^{*} &\mapsto & q^{|I<I_N| - |I_N<I|} \delta_{IJ}
\end{array}
\]
We should explain what $|J<I|$ and $|I<J|$ mean here. If $A$ and $B$ are two  subsets of an ordered set $C$, we define
\[
|A<B|:= \#\{(a,b)\in A\times B| a<b\}
\]


Using the Reshetikhin--Turaev functor one can interpret any MOY graph as a morphism in $U_q(\gll_N)$-$\mathsf{mod}_{\Lambda}$. Using identities~(\ref{eq:extcrossplus}) and (\ref{eq:extcrossminus}) we can extend this interpretation to MOY graph with crossings.

\subsection{Symmetric MOY calculus}
\label{sec:symm-moy-calc-1}

In this subsection we work in the full subcategory $U_q(\gll_N)$-$\mathsf{mod_S}$ of finite-dimensional $U_q(\gll_N)$-modules generated (as a monoidal category) by the modules $\mathrm{Sym}^a_qV_q$ for $a$ in $\NN$ and their duals. We define a few morphisms:
\[
\begin{array}{crcl}
\lambda_{a,b}  :& \mathrm{Sym}_q^a V_q\otimes \mathrm{Sym}_q^b V_q  &\to     & \mathrm{Sym}^{a+b}_q V_q \\
  & b'_I \otimes b'_J & \mapsto &
 q^{|J<I|} b'_{I\sqcup J}
\end{array}
\]
\[
\begin{array}{crcl}
Y_{a,b}  :& \mathrm{Sym}^{a+b}_q V_q   &\to     & \mathrm{Sym}_q^a V_q\otimes \mathrm{Sym}\Lambda_q^b V_q  \\
  & b'_K  &\mapsto &  \displaystyle{\sum_{I\sqcup J = K}  [I,J]_q q^{-|J<I|} b'_I\otimes  b'_J}
\end{array}
\]
\[
\begin{array}{crcl}
\stackrel{\leftarrow}{\cup}_{a}  :& \CC(q)   &\to     & \mathrm{Sym}_q^a V_q\otimes (\mathrm{Sym}_q^a V_q)^*  \\
  & 1  &\mapsto &  \displaystyle{\sum_{\#I=a}  q^{|I<J|} b'_I\otimes  (b'_I)^*}
\end{array}
\]
\[
\begin{array}{crcl}
\stackrel{\leftarrow}{\cap}_{a}  :& (\mathrm{Sym}_q^a V_q)^*\otimes \mathrm{Sym}_q^a V_q    &\to     & \CC(q) \\
  &  f\otimes x &\mapsto &  f(x)
\end{array}
\]
\[
\begin{array}{crcl}
\stackrel{\rightarrow}{\cup}_{a}  :& \CC(q)   &\to     & (\mathrm{Sym}_q^a V_q)^*\otimes \mathrm{Sym}_q^a V_q  \\
  & 1  &\mapsto &  \displaystyle{\sum_{\#I=a}  q^{-|I<I_N| +|I_N<I| }(b'_I)^*\otimes  b'_I}
\end{array}
\]
\[
\begin{array}{crcl}
\stackrel{\rightarrow}{\cap}_{a}  :& \mathrm{Sym}_q^a V_q\otimes( \mathrm{Sym}_q^a V_q)^*    &\to     & \CC(q) \\
  &  b'_I \otimes (b'_J)^{*} &\mapsto & q^{+|I<I_N| - |I_N<I|} \delta_{IJ}
\end{array}
\]


We should explain what $|J<I|$, $|I<J|$, and $[I,J]$ mean here. If $A$ and $B$ are two multi-subsets of an ordered set $X$, we define
\[
|A<B|:= \prod_{x<y \in X}A(x)B(y),
\]
\[
[A,B]=\prod_{x\in X} \qbin{A(x)}{B(x)}.
\]

Using the Reshetikhin--Turaev functor one can interpret any MOY graph as a morphism in $U_q(\gll_N)$-$\mathsf{mod_S}$. Using identities~(\ref{eq:symcrossplus}) and (\ref{eq:symcrossminus}) we can extend this interpretation to MOY graph with crossings. Note that this is \emph{not} consistent with the exterior MOY calculus: the braiding has been changed for its inverse.


\section{Koszul resolutions of polynomial algebras}
\label{sec:kosz-reosl-polyn}

In this appendix we recall the definition of the Koszul resolution of polynomial algebras. Then we describe a way to construct other differentials on the Koszul complex which anti-commute with the Koszul differential. 

For an introduction to Koszul resolution see \cite[Section 3.4]{MR1600246} and \cite{ KasselNoteKoszul,berger_lambre_solotar_2017}.


\subsection{Koszul resolutions}
\label{sec:koszul-resolutions}

Let $R$ be an unitary commutative ring and $V$ a free $R$-module of rank $k$.
Let us fix an ordered basis $(x_1,\dots x_k)$. We denote by $A$ the symmetric tensor algebra $SV$ and we will think of $A$ as the polynomial algebra $R[x_1, \dots, x_n]$. The algebra $\Lambda V$ is naturally graded (we speak of \emph{$H$-grading}): the non-zero elements of $V$ seen in $\Lambda V$ have $H$-degree equal to $1$. Let $C(A)$ be the $A$-module-$A$ $A\otimes \Lambda V \otimes A$. It inherits an $H$-grading from $\Lambda V$. We consider the following endomorphism of $A$-module-$A$ on $(C(A)_\bullet)$:
\[
\begin{array}{crcl}
d  \colon\thinspace & C(A) & \to & C(A) \\
  & 1\otimes v_1 \wedge \dots \wedge v_l\otimes 1 &\mapsto & \displaystyle{\sum_{i=1}^l (-1)^{i+1} \left( v_i \otimes  v_1 \wedge \dots \wedge \widehat{v_i}\wedge\dots \wedge v_l\otimes 1 \right.} \\ &&& \qquad \quad \displaystyle{\left.- 1\otimes  v_1 \wedge \dots \wedge \widehat{v_i}\wedge\dots \wedge v_l\otimes v_i\right).}
\end{array}
\]
\begin{lem}
  \label{lem:kozsul_resolution}
  The map $d$ is a differential on $C(A)_\bullet$, and $(C(A)_\bullet, d)$ is a projective resolution of $A$ as an $A$-module-$A$. The complex $C(A)_\bullet$ is called the \emph{Koszul resolution of $A$}.
\end{lem}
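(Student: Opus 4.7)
The plan is to verify three things in sequence: that $d$ squares to zero, that each $C(A)_l$ is projective as an $A$-module-$A$, and that the augmentation $\epsilon\colon C(A)_0 = A\otimes A \to A$ (multiplication) gives an exact augmented complex.

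First I would check $d^2=0$ directly. Since $d$ is $A$-bilinear, it is enough to evaluate it on $\omega := 1 \otimes v_1\wedge\cdots\wedge v_l\otimes 1$. Applying $d$ once and then again produces a double sum indexed by pairs $(i,j)$. One groups the terms by the resulting wedge factor $v_1\wedge\cdots\widehat{v_i}\cdots\widehat{v_j}\cdots\wedge v_l$. For each unordered pair $\{i,j\}$ the contributions come with the two orderings, and the usual sign $(-1)^{i+j}$ cancellation (together with the commutativity $v_iv_j=v_jv_i$ in $A$) kills the whole expression. This is the same sign combinatorics as for the classical Koszul differential; I expect no obstacle here beyond bookkeeping.

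Projectivity at each level is immediate: because $V$ is free over $R$ of rank $k$, $\Lambda^l V$ is free over $R$ of rank $\binom{k}{l}$, so $C(A)_l = A\otimes \Lambda^l V \otimes A \cong (A\otimes_R A)^{\binom{k}{l}}$ is a free $A$-module-$A$.

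For exactness I would use the tensor-product structure. For each index $i$, let $K(x_i)$ be the length-one complex of $R[x_i]$-bimodules
\[
0 \longrightarrow R[x_i]\otimes_R R[x_i] \xrightarrow{\,x_i\otimes 1 - 1\otimes x_i\,} R[x_i]\otimes_R R[x_i] \longrightarrow 0.
\]
Since $x_i\otimes 1 - 1\otimes x_i$ is a non-zero-divisor in the integral domain $R[x_i]\otimes_R R[x_i] = R[x_i,y_i]$ and its cokernel is $R[x_i]$ via multiplication, $K(x_i)$ augmented by $\epsilon_i$ is a free resolution of $R[x_i]$ as bimodule over itself. Identifying $A\otimes_R A$ with $\bigotimes_{i=1}^k (R[x_i]\otimes_R R[x_i])$ and $\Lambda V$ with $\bigotimes_{i=1}^k \Lambda(R\cdot x_i)$ (tensor of super-modules), the total complex $\bigotimes_{i=1}^k K(x_i)$ matches $C(A)_\bullet$ together with the differential $d$ (this is where the signs in the formula for $d$ appear — they are precisely the Koszul signs of a super tensor product). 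Now each $K(x_i)$ consists of flat $R$-modules, so the Künneth theorem over $R$ gives
\[
H_*\bigl(\bigotimes_{i=1}^k K(x_i)\bigr) \cong \bigotimes_{i=1}^k H_*(K(x_i)) = \bigotimes_{i=1}^k R[x_i] = A
\]
concentrated in degree $0$, and the induced map to $A$ is exactly $\epsilon$.

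The only subtle point, and thus the main thing to be careful with, is the sign identification between the differential $d$ written on the wedge basis and the differential on the super tensor product $\bigotimes_i K(x_i)$; once one checks the signs match on a generator $v_{i_1}\wedge\cdots\wedge v_{i_l}$ (taking $v_{i_j}$ to be the class of $x_{i_j}$ in the $i_j$-th factor), the Künneth argument finishes the proof with no further computation.
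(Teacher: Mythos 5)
Your argument is essentially the paper's: both proofs reduce to the one-variable case via the tensor decomposition $C(A)_\bullet = C(R[x_1])\otimes\cdots\otimes C(R[x_k])$ and deduce exactness of the $k$-variable Koszul complex from the rank-one resolutions (the paper leaves the K\"unneth step implicit where you spell it out). One small slip: $R[x_i]\otimes_R R[x_i]\cong R[x_i,y_i]$ need not be an integral domain for an arbitrary commutative $R$, but $x_i\otimes 1 - 1\otimes x_i = x_i - y_i$ is nonetheless a non-zero-divisor there (it is monic of degree one in $x_i$), so the exactness of each $K(x_i)$ and hence your conclusion still holds.
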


\begin{proof}
  If $V$ has dimension 1, then $A = R[x_1]$. It is easy to check, that the short exact sequence
\[
0\longrightarrow A\otimes A \xrightarrow{x_1\otimes1 - 1\otimes x_1} A\otimes A \stackrel{m}{\longrightarrow} A \longrightarrow 0
\]is exact. This proves that
\[ 0\longrightarrow A\otimes A \xrightarrow{x_1\otimes1 - 1\otimes x_1} A\otimes A 
\] is a projective resolution of $A$.
If $V$ is $k$ dimensional, then $C(A)$ is by definition the complex 
\[C(R[x_1]) \otimes C(R[x_2]) \otimes \cdots \otimes C(R[x_k]),\]
and we deduce the result from the one-dimensional case.
\end{proof}

\begin{rmk}
  \label{rmk:gradings}
  The vector space $V$ could be graded, we speak of \emph{$q$-grading}. In this case we suppose that the basis $(x_1, \dots, x_k)$ is homogeneous. The $H$-grading of $C(A)$ is not influenced by the $q$-grading. On the contrary, $q$-grading of $V$ induces a $q$ grading on $A$. Hence if all the $x_i$'s have positive $q$-degree, we have:
\[
\rk^{R}_q A = \prod_{i=1}^k \frac{1}{1- q^{\deg_q{x_i}}}
\]
\end{rmk}

\subsection{An homotopy equivalence}
\label{sec:an-homot-equiv}

We consider the algebra $A:= A_{1^k}=R[x_1, \dots, x_k]$. For consistency with the rest of the paper, we set the indeterminate $x_i$'s to have $q$-degree equal to $2$. If $\listk{k}=(k_1, \dots, k_l)$ is a finite sequence of positive integers of level $k$, we set $A_{\listk{k}{}}:=A^{\mathfrak{S}_{\listk{k}{}}}$, where $\mathfrak{S}_{\listk{k}{}}$ is equal to $\mathfrak{S}_{k_1} \times \dots \times \mathfrak{S}_{k_l}$ and acts naturally on the indeterminates $x_1, \dots, x_k$. Note that the algebras $A_{\listk{k}}$ can be thought of as polynomial algebras as well. One only need to consider some appropriate elementary symmetric polynomials.  

Let us fix two finite sequences $\listk{k^1}= (k^1_1, \dots, k^1_{l_1})$ and $\listk{k}_2 = (k^1_1, \dots, k^1_{l_2})$ of positive integers of levels $k$. We suppose furthermore, that $\listk{k^2}$ is obtained from $\listk{k^1}$ by merging two of consecutive elements of $\listk{k}_1$. For instance:
\[
\listk{k^1} = (2,3,1,1,5,4)\quad \textrm{and} \quad \listk{k^2} = (2,4,1,5,4).
\]
For simplicity of notations we will actually suppose that $\listk{k^1}= (a,b)$ and $\listk{k^2} = (a+b)$. 
In what follows we will be interested in $A^1:= A_{\listk{k^1}}$ and  $A^2:= A_{\listk{k^2}}$.

Since $A^2$ is a sub-algebra of $A^1$, $A^1$ can be consider as a $A^2$-module-$A^1$. As a module-$A^1$ it is free of rank $1$, as a $A^2$-module, it is free of rank ${a+b} \choose a$. Hence it is both a projective module-$A^1$ and a projective $A^2$-module. This implies that 
\[ 
C^1_\bullet := A^1 \otimes_{A^1} C(A^1)_\bullet \simeq C(A^1)_\bullet
\quad \textrm{and} \quad 
C^2_\bullet:=C(A^2)_\bullet \otimes_{A^2} A^1 
\]
are both projective resolutions of $A^1$ as $A^2$-module-$A^1$. Hence we know that these two complexes are homotopic. We denote by $d^1$ (resp. $d^2$) the differential of $C^1_\bullet$ (resp. $C^2_\bullet$).

We want to give an explicit homotopy equivalence of complexes of $A^2$-module-$A^1$ $\varphi: C^2_\bullet \to C^1_\bullet$. 
We consider the vector space $V_1 := <f_1, \dots, f_a, g_1,\dots, g_b>_R$ and $V_2:=<e_1, \dots, e_{a+b}>$. The element $f_i$ (resp. $g_i$, resp. $e_i$) is meant to represent the $i$th elementary symmetric polynomials in the first $a$ variables (resp. the last $b$ variables, resp.  $a+b$ variables).

Thanks to some standard argument of homological algebra (see e.~g.~\cite[Chapter 1, Lemma 7.3]{MR672956} ) we know that if $\varphi$ is a chain map such that
\begin{align}\NB{
\begin{tikzpicture}
    \node (A) at (0,1.5) {$C^2_\bullet$};
    \node (B) at (2,1.5) {$A^1$};
    \node (C) at (0,0) {$C^1_\bullet$};
    \node (D) at (2,0) {$A^1$};
    \draw[-to] (A)--(B) node [midway,above] {$\pipi$};
    \draw[-to] (A)--(C) node [midway,left] {$\varphi$};
    \draw[-to] (B)--(D) node [midway,right] {$\mathrm{id}_{A^1}$};
    \draw[-to] (C)--(D) node [midway,below] {$\pipi$};
\end{tikzpicture} \label{eq:cdvarphi}
}
\end{align}
commutes, then $\varphi$ is an homotopy equivalence.
By definition we have:
\[
C_{\bullet}^1 := A^1\otimes \Lambda V_1 \otimes A^1 \quad \textrm{and} \quad C_{\bullet}^2 := A^2\otimes \Lambda V_2 \otimes A^1.
\]
Since $C_{\bullet}^2$ is a free $A^2$-module-$A^1$ with a basis given by elements of the form 
\[
1_{A^2}\otimes e_{i_1}\wedge \dots \wedge e_{i_l}\otimes 1_{A^1},
\]
with $0\leq l\leq a+b$ and $i_1< i_2 < \dots < i_l$, we only need to define $\varphi$ on these elements. For $l=0$, we define:
\[
\varphi(1_{A^2}\otimes 1_R \otimes 1_{A^1})= 1_{A^1}\otimes 1_R \otimes 1_{A^1}.
\]
For $l=1$, we define:
\[
\varphi(1_{A^2}\otimes e_i \otimes 1_{A^1})= \sum_{j=0}^i f_{j}\otimes g_{i-j} \otimes 1_{A^1} +  1_{A^1}\otimes f_{j} \otimes g_{i-j},
\]
with the convention that the $j$th elementary polynomial in $c$ variables is equal to $0$ whenever $j>c$.
For $l>1$ we set:
\[\varphi (1_{A^2}\otimes e_{i_1}\wedge \dots \wedge e_{i_l}\otimes 1_{A^1}) = \prod_{h=1}^l \varphi(1_{A^2}\otimes e_{i_h}\otimes 1_{A^1}) \]
and extend this map $A^2\otimes (A^1)^{\mathrm{opp}}$-linearly.
In the last formula, the space $A\otimes \Lambda V \otimes A$ is endowed with the algebra structures given by:
\[
(a_1 \otimes v_1 \otimes b_1)\cdot(a_2 \otimes v_2 \otimes b_2) = a_1a_2 \otimes v_1 \wedge v_2 \otimes b_1b_2
\]
Note however that the product is ordered and that $\varphi$ is \emph{not} an algebra morphism.



\begin{prop}
  \label{prop:carphi_works}
  The map $\varphi$ is a morphism of complexes of $A^2$-modules-$A^1$ such that the diagram (\ref{eq:cdvarphi}) commutes.
\end{prop}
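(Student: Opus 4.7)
The plan is to verify, in order, that $\varphi$ is well-defined, that it respects the $(A^2,A^1)$-bimodule structure, that the diagram (\ref{eq:cdvarphi}) commutes, and finally that $\varphi$ is a chain map. Bimodule linearity is built into the definition. Well-definedness reduces to the observation that each $\varphi(1 \otimes e_i \otimes 1)$ lies in $A^1 \otimes \Lambda^1 V_1 \otimes A^1$ (odd wedge degree), so the elements $\varphi(1 \otimes e_{i_h} \otimes 1)$ graded-commute pairwise in the algebra $A^1 \otimes \Lambda V_1 \otimes A^1$; the resulting antisymmetry of their ordered product matches the antisymmetry of wedges in $\Lambda V_2$, so the definition extends unambiguously to arbitrary wedges $e_{i_1} \wedge \cdots \wedge e_{i_l}$. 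The commutativity of (\ref{eq:cdvarphi}) in homological degree $0$ follows from $\varphi(1 \otimes 1 \otimes 1) = 1 \otimes 1 \otimes 1$ combined with bimodule linearity, since both augmentations are the multiplication map to $A^1$.

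For the chain map property $d^1 \varphi = \varphi d^2$, I would first handle the degree-one generators $1 \otimes e_i \otimes 1 \in C^2_1$ by direct calculation. Expanding $d^1$ on each term of $\varphi(1 \otimes e_i \otimes 1) = \sum_{j=0}^i (f_j \otimes g_{i-j} \otimes 1 + 1 \otimes f_j \otimes g_{i-j})$ produces the telescoping sum $\sum_j f_j g_{i-j} \otimes 1 \otimes 1 - 1 \otimes 1 \otimes \sum_j f_j g_{i-j}$; the classical splitting identity $e_i(x_1, \dots, x_{a+b}) = \sum_{j=0}^{i} e_j(x_1, \dots, x_a)\, e_{i-j}(x_{a+1}, \dots, x_{a+b})$ then identifies this with $\varphi(d^2(1 \otimes e_i \otimes 1)) = e_i \otimes 1 \otimes 1 - 1 \otimes 1 \otimes e_i$.

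The extension to arbitrary wedge degree relies on two structural observations: both $d^1$ and $d^2$ are graded derivations of their respective algebras with respect to the wedge degree (the signs in the definition of $d$ are exactly those of the graded Leibniz rule), and the elements $(a \otimes 1 \otimes 1)$ and $(1 \otimes 1 \otimes b)$ are central in $A^1 \otimes \Lambda V_1 \otimes A^1$, since multiplying in the outer slots does not interact with the middle wedge. Writing $1 \otimes e_{i_1} \wedge \cdots \wedge e_{i_l} \otimes 1 = x_1 \cdots x_l$ with $x_h := 1 \otimes e_{i_h} \otimes 1$ and using $\varphi(x_1 \cdots x_l) = \varphi(x_1) \cdots \varphi(x_l)$ by definition, graded Leibniz on the $C^1$-side combined with the centrality of $(e_{i_h} \otimes 1 \otimes 1)$ and $(1 \otimes 1 \otimes e_{i_h})$, together with the $l=1$ case, yields the equality term by term against $\varphi(d^2(x_1 \cdots x_l))$. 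I expect the main obstacle to be bookkeeping: tracking the signs in the graded Leibniz rule and confirming that moving $e_{i_h}$-scalars past intervening $\varphi(x_{h'})$ factors inside $C^1_\bullet$ reproduces exactly what is demanded by the $(A^2, A^1)$-linear extension of $\varphi$ on the reduced wedges $e_{i_1} \wedge \cdots \wedge \widehat{e_{i_h}} \wedge \cdots \wedge e_{i_l}$.
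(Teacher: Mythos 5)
Your proof is correct and follows essentially the same route as the paper's: both reduce the chain-map property to the degree-one case via the graded Leibniz rule for $d^1$ applied to the ordered product $\prod_h\varphi(1\otimes e_{i_h}\otimes 1)$, and both handle the degree-one case directly by telescoping and the splitting identity $e_i = \sum_j f_j g_{i-j}$. You are somewhat more careful than the paper in making explicit the two ingredients that the $l>1$ step silently relies on — that the factors $\varphi(1\otimes e_{i_h}\otimes 1)$ are homogeneous of odd wedge degree so the Leibniz signs $(-1)^{k+1}$ are the right ones, and that the $H$-degree-zero elements $(a\otimes 1\otimes 1)$ and $(1\otimes 1\otimes b)$ are central so the sum $\sum_k(-1)^{k+1}\prod_{h<k}\varphi\cdot\varphi(d^2(x_k))\cdot\prod_{h>k}\varphi$ can be reassembled into $\varphi(d^2(x_1\cdots x_l))$ — which is a welcome clarification rather than a departure.
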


\begin{proof}
  The diagram (\ref{eq:cdvarphi}) obviously commutes thanks to the definition of $\varphi(1_{A^2}\otimes 1_R \otimes 1_{A^1})$. It remains to show that $\varphi$ is indeed a chain map. Thanks to the $A^2\otimes (A^1)^{\mathrm{opp}}$-linearity, it is enough to consider elements of the form
\[
1_{A^2}\otimes e_{i_1}\wedge \dots \wedge e_{i_l}\otimes 1_{A^1},
\]
Let us consider the case $l=1$. We have
\[\varphi (d^2(1\otimes e_i \otimes 1)) = e_i\otimes 1_R \otimes 1 - 1\otimes 1_R \otimes e_i\]
and 
\begin{align*}
  d^1(\varphi(1\otimes e_i \otimes 1))   
=&\sum_{j=0}^i (f_j g_{i-j}\otimes 1_R \otimes 1 - f_j \otimes 1_R \otimes g_{i-j}) \\
&\quad  
+\sum_{j=0}^i ( f_j \otimes 1_R \otimes g_{i-j} - 1 \otimes 1_R\otimes f_jg_{i-j}) \\
= &\sum_{j=0}^i (f_j g_{i-j}\otimes 1_R \otimes 1 - 1 \otimes 1_R \otimes f_jg_{i-j})  \\
= & e_i\otimes 1_R \otimes 1 -  1\otimes 1_R\otimes e_i  \\
\end{align*}

If $l>1$, we have:
\begin{align*}
 & d^1(\varphi (1\otimes e_{i_1}\wedge \dots \wedge e_{i_l}\otimes 1))=  
   d^1\left(\prod_{h=1}^l \varphi (1\otimes e_{i_h}\otimes 1)\right) \\ 
& =  \sum_{k=1}^l(-1)^{k+1}\left(\prod_{h=1}^{k-1} \varphi(1\otimes e_{i_h} \otimes 1)\right)\cdot d^1(\varphi((1\otimes e_{i_k} \otimes 1)) \left(\prod_{h=k+1}^{l} \varphi(1\otimes e_{i_h} \otimes 1)\right) \\
& =  \sum_{k=1}^l(-1)^{k+1}\left(\prod_{h=1}^{k-1} \varphi(1\otimes e_{i_h} \otimes 1)\right)\cdot (\varphi(d^2(1\otimes e_{i_k} \otimes 1)) \left(\prod_{h=k+1}^{l} \varphi(1\otimes e_{i_h} \otimes 1)\right) \\
&= \varphi(d^2 (1\otimes e_{i_1}\wedge \dots \wedge e_{i_l}\otimes 1)).\qedhere
\end{align*}

\end{proof}

\subsection{An additional differential}
\label{sec:an-addit-diff}

\begin{notation}
  \label{dfn:extra-diff}
  Suppose $D: A\to A$ is a derivation on $A$ which lets $A_{\listk{k}}$ stable for any finite sequence of positive $\listk{k}$ integers of level $k$. 
We consider the endomorphisms $\delta_{\listk{k}}$ of $C(A_{\listk{k}})$ given by:
\begin{align}
\begin{array}{crcl}
\delta_{\listk{k}}  \colon\thinspace & C(A_{\listk{k}}) & \to & C(A_{\listk{k}}) \\
  & 1\otimes v_1 \wedge \dots \wedge v_l\otimes 1 &\mapsto & \sum_{i=1}^l 
(-1)^{i+1} D(v_i) \otimes  v_1 \wedge \dots \wedge \widehat{v_i}\wedge\dots \wedge v_l\otimes 1  
\end{array} \label{eq:DN}
\end{align}
Both these maps are $H$-homogeneous of degree $-1$. For simplifying notations we denote the maps on $C(A^1)$ and $C(A^2)$ by $\delta^1$ and $\delta^2$.

If we fix a positive integer $N$ and if $R=\SP{N}$, an example of such a derivation is given by
  \[
  \begin{array}{crcl}
D^N   \colon\thinspace & A_{1^k} & \to & A_{1^k} \\
    & P(x_1,\dots,x_k)  &\mapsto & \sum_{i=1}^k \prod_{j=1}^N (x_i - T_j) \partial_{x_i}P(x_1, \dots, x_k).
  \end{array}
  \] 
The maps defined with $D^N$ by the formula~(\ref{eq:DN}) are denoted $d^N_{\listk{k}}$. These are precisely the differentials considered in Section~\ref{sec:an-extra-diff}.
\end{notation}

\begin{lem}
  \label{lem:deltadiff-commute-with-d}
  For $i=1,2$, the map $\delta^i$ anti-commutes with $d^i$ and is a differential on $C(A^i)$.
\end{lem}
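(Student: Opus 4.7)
My plan is to verify both claims—$(\delta^i)^2 = 0$ and $\delta^i d^i + d^i \delta^i = 0$—by direct computation on the $A^i$-bimodule generators $1 \otimes v_{j_1} \wedge \dots \wedge v_{j_l} \otimes 1$ of $C(A^i)$, exploiting $A^i$-bilinearity of both $\delta^i$ and $d^i$ and the commutativity of $A^i$. The key structural input is exactly that $D$ takes values in the commutative algebra $A^i$ (by hypothesis it stabilises $A_{\listk{k}}$), which makes all the elements that get pushed into the leftmost tensor factor commute there.

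For the first claim, I would compute $(\delta^i)^2(1 \otimes v_1 \wedge \dots \wedge v_l \otimes 1)$ by applying $\delta^i$ and then applying $\delta^i$ once more, using $A^i$-linearity to pull out the $D(v_j)$ factor. The result is a double sum indexed by ordered pairs $(j,k)$ with $j \neq k$. I would then group together the contributions of $(j,k)$ and $(k,j)$ for each unordered pair $\{j,k\}$. A careful sign bookkeeping (the sign when first dropping position $j$ and then dropping $v_k$ from the shortened wedge differs by exactly $-1$ from the sign for dropping $v_k$ first then $v_j$) combines with the commutativity $D(v_j) D(v_k) = D(v_k) D(v_j)$ in $A^i$ to make the two contributions cancel.

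For the anti-commutativity, I would expand $\delta^i d^i + d^i \delta^i$ on the same generator. Each term produces a coefficient from $\{v_j, D(v_j)\}$ sitting in either the left or the right factor, multiplied by a reduced wedge where two indices $j \neq k$ have been removed. The terms where the removed-to-the-right factor is $v_j$ (or $v_k$) pair up between $\delta d$ and $d\delta$ with opposite signs and identical wedge factors, giving a cancellation that is immediate. The terms where both removed factors end up on the left of the tensor involve $v_j D(v_k) + D(v_j) v_k$ from one pairing and $v_k D(v_j) + D(v_k) v_j$ from the swapped pairing; commutativity of $A^i$ identifies these two expressions, and the relative sign $(-1)^{j+k+1}$ versus $(-1)^{j+k}$ forces their sum to vanish.

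The entire argument is a bookkeeping of signs; there is no serious obstacle beyond making the sign conventions explicit (in particular that removing a letter from position $p$ contributes $(-1)^{p+1}$, and that the shift in position of $v_k$ after $v_j$ has been removed is encoded by comparing $k<j$ versus $k>j$). Since we never use any property of $D$ other than that it is a $\QQ$-linear endomorphism preserving $A^i$, the statement holds uniformly for both indices $i=1,2$ and for every admissible derivation $D$, including the specific choice $D = D^N$ that recovers Lemma~\ref{lem:d_Ndiff-commute-with-d}.
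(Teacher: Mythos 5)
Your proposal is correct and follows essentially the same route as the paper: both check $(\delta^i)^2=0$ and the anti-commutation $\delta^i d^i + d^i \delta^i = 0$ by expanding on the bimodule generators and cancelling terms in pairs via sign bookkeeping and the commutativity of $A^i$ (so that $v_a D(v_b) = D(v_b) v_a$). Your observation that only $\QQ$-linearity of $D$ and the stability of $A^i$ under $D$ are used (not the full derivation property, which only enters later in the construction of the homotopy $\eta$) is accurate and a nice point to make explicit.
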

\begin{proof}
  The fact that $\delta^1$ and $\delta^2$ are differentials follows as usual from the signs which forces every square to anti-commute. To check that $d^i$ and $\delta^i$ anti-commute is an easy computation. We assume $i=1$. We have:
  \begin{align*}
&    \delta^1(d^1 (1\otimes v_1 \wedge \dots \wedge v_l\otimes 1)) = \\ &\qquad\sum_{1\leq i_1 <i_2 \leq l} (-1)^{i_1+i_2+2} 
( v_{i_1} D(v_{i_2}) \otimes v_1 \wedge \dots \wedge \widehat{v_{i_1}} \wedge \dots \wedge \widehat{v_{i_2}} \wedge \dots \wedge v_l \otimes 1 \\
& \qquad \qquad \qquad \qquad \qquad \qquad \quad -  D(v_{i_2}) \otimes v_1 \wedge \dots \wedge \widehat{v_{i_1}} \wedge \dots \wedge \widehat{v_{i_2}} \wedge \dots \wedge v_l \otimes v_{i_1} \\
&\qquad \qquad \qquad \qquad \qquad \qquad \quad- D(v_{i_1}) v_{i_2} \otimes v_1 \wedge \dots \wedge \widehat{v_{i_1}} \wedge \dots \wedge \widehat{v_{i_2}} \wedge \dots \wedge v_l \otimes 1 \\
&\qquad \qquad  \qquad \qquad \qquad \qquad \quad+ D(v_{i_2}) \otimes v_1 \wedge \dots \wedge \widehat{v_{i_1}} \wedge \dots \wedge \widehat{v_{i_2}} \wedge \dots \wedge v_l \otimes v_{i_1})
  \end{align*}
and
  \begin{align*}
&    d^1(\delta^1( 1\otimes v_1 \wedge \dots \wedge v_l\otimes 1)) = \\ &\qquad\sum_{1\leq i_1 <i_2 \leq l} (-1)^{i_1+i_2+2} 
( D(v_{i_1}) v_{i_2} \otimes v_1 \wedge \dots \wedge \widehat{v_{i_1}} \wedge \dots \wedge \widehat{v_{i_2}} \wedge \dots \wedge v_l \otimes 1 \\
& \qquad \qquad \qquad \qquad \qquad \qquad \quad -  D(v_{i_1}) \otimes v_1 \wedge \dots \wedge \widehat{v_{i_1}} \wedge \dots \wedge \widehat{v_{i_2}} \wedge \dots \wedge v_l \otimes v_{i_2} \\
&\qquad \qquad \qquad \qquad \qquad \qquad \quad- v_{i_1}D(v_{i_2}) \otimes v_1 \wedge \dots \wedge \widehat{v_{i_1}} \wedge \dots \wedge \widehat{v_{i_2}} \wedge \dots \wedge v_l \otimes 1 \\
&\qquad \qquad  \qquad \qquad \qquad \qquad \quad+ D(v_{i_1}) \otimes v_1 \wedge \dots \wedge \widehat{v_{i_1}} \wedge \dots \wedge \widehat{v_{i_2}} \wedge \dots \wedge v_l \otimes v_{i_2}) \\
& \qquad \qquad  \qquad \qquad \qquad \qquad \qquad \qquad\qquad =-\delta^1(d^1(1\otimes v_1 \wedge \dots \wedge v_l\otimes 1))
\qedhere  \end{align*}
\end{proof}

We define $\eta: C^2_\bullet \to C^1_\bullet$ to be the $A^2\otimes (A^1)^{\mathrm{opp}}$-linear map defined by:
\[
  \eta( 1\otimes e_{i_1}\wedge \dots \wedge e_{i_l}\otimes 1) = D
  \otimes \id_{\Lambda V} \otimes \id_{A^1} \left( \varphi( 1\otimes e_{i_1}\wedge
  \dots \wedge e_{i_l}\otimes 1) \right)
\]
Note that we have:
\[
\eta (1\otimes e_{i_1}\wedge \dots \wedge e_{i_l}\otimes 1) = \sum_{k=1}^l\left(\prod_{h=1}^{k-1}\varphi (1\otimes e_{i_h}\otimes 1) \right) \eta (1\otimes e_{i_k}\otimes 1) \left(\prod_{h=k+1}^{l}\varphi (1\otimes e_{i_h}\otimes1 )\right),
\]
This follows from the fact that $D$ is a derivation. We can write a (complicated) explicit formula for $\eta$:

 \begin{align*}
 \eta (1\otimes e_{i_1}\wedge \dots \wedge e_{i_l}\otimes 1) = \sum_{\substack{j_1+j'_1 =i_1\\ \cdots \\j_l+j'_l= i_l}}\sum_{k=0}^l\sum_{\substack{A \sqcup B = \{1, \dots, l\}, |A|=k\\ A=\{a_1< \cdots < a_k\} \\ B=\{b_1< \cdots < b_{k-l} \} }} (-1)^{|B<A|} \\ \qquad \qquad D(f_{j_{a_1}}\cdots  f_{j_{a_k}})\otimes  g_{j'_{a_1}}\wedge \cdots \wedge g_{j'_{a_k}} \wedge f_{j_{b_1}}\wedge \cdots \wedge f_{i_{b_{l-k}}} \otimes  g_{j'_{b_1}} \cdots g_{j'_{a_{l-k}}},
 \end{align*}
where $|B<A| := \#\{(b,a) \in A\times B | b<a\}$.
\begin{lem}
  \label{lem:dN-and-phi-commute}
  We have the following identity:
  \[
\varphi \circ \delta^2 - \delta^1 \circ \varphi = \eta \circ d_2 + d_1 \circ \eta.
\]
\end{lem}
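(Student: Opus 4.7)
The strategy is direct: since $\varphi$ and $\eta$ are $A^2\otimes (A^1)^{\mathrm{opp}}$-linear by construction, and $d^i$ and $\delta^i$ are $A^i$-bilinear, both sides of the claimed identity are $A^2\otimes (A^1)^{\mathrm{opp}}$-linear maps $C^2\to C^1$. Thus the plan is to verify the identity on the basis elements $1\otimes e_{i_1}\wedge\cdots\wedge e_{i_l}\otimes 1$, which I would handle by induction on $l$.

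For $l=0$ the identity is trivial. For $l=1$, a direct computation using the explicit formula for $\varphi_i = \sum_j(f_j\otimes g_{i-j}\otimes 1 + 1\otimes f_j\otimes g_{i-j})$ gives
\[
(\varphi\delta^2 - \delta^1\varphi)(1\otimes e_i\otimes 1) = D(e_i)\otimes 1\otimes 1 - \delta^1(\varphi_i),
\]
while $\eta d^2(1\otimes e_i\otimes 1) = 0$ (since $\eta$ vanishes on the unit) and $d^1\eta(1\otimes e_i\otimes 1)=d^1(\eta_i)$. The base case then reduces to the Leibniz identity for $D$ applied to the factorization $e_i = \sum_j f_j g_{i-j}$: namely, $D(e_i) = \sum_j (D(f_j)g_{i-j} + f_j D(g_{i-j}))$. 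Both sides then equal $\sum_j(D(f_j)g_{i-j}\otimes 1\otimes 1 - D(f_j)\otimes 1\otimes g_{i-j})$.

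For the inductive step $l>1$, I would exploit the multiplicative structure: $\varphi(1\otimes \omega_1\wedge\omega_2\otimes 1) = \varphi_{\omega_1}\cdot\varphi_{\omega_2}$ in the algebra $A^1\otimes\Lambda V_1\otimes A^1$, together with the Leibniz-type rule $\eta_{\omega_1\wedge\omega_2} = \eta_{\omega_1}\varphi_{\omega_2} + \varphi_{\omega_1}\eta_{\omega_2}$ (which follows because $D\otimes\mathrm{id}\otimes\mathrm{id}$ is an even derivation of that algebra and $\eta=(D\otimes\mathrm{id}\otimes\mathrm{id})\circ\varphi$ on basis elements). Since $d^1,\delta^1$ are graded derivations of degree $-1$ on $A^1\otimes\Lambda V_1\otimes A^1$, and $d^2,\delta^2$ are graded derivations on $A^2\otimes\Lambda V_2\otimes A^2$, one can expand both $\varphi\delta^2-\delta^1\varphi$ and $\eta d^2 + d^1\eta$ as sums indexed by positions in the wedge $e_{i_1}\wedge\cdots\wedge e_{i_l}$; each term then reduces via the $l=1$ case, using crucially that the elements $d^1(\varphi_{i_k}) = e_{i_k}\otimes 1\otimes 1 - 1\otimes 1\otimes e_{i_k}$ are central in $A^1\otimes\Lambda V_1\otimes A^1$.

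The hard part is the careful sign bookkeeping in the iterated Leibniz expansions. Concretely, $\eta d^2$ produces ``cross terms'' $\sum_{k\ne j}(-1)^{k-1}\Pi(\varphi_{i_1},\ldots,d^1(\varphi_{i_k})_{@k},\ldots,\eta_{i_j}_{@j},\ldots,\varphi_{i_l})$ coming from rewriting $e_{i_k}H(T^{(k)})-H(T^{(k)})e_{i_k}=H(T^{(k)})\cdot d^1(\varphi_{i_k})$ via centrality, while the graded Leibniz expansion of $d^1\eta$ contains matching cross terms together with the diagonal contribution $\sum_k(-1)^{k-1}\Pi(\ldots,d^1(\eta_{i_k})_{@k},\ldots)$. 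The main task is to verify that the cross terms from the two expansions cancel (or combine) correctly, leaving exactly the diagonal expression, which equals $\varphi\delta^2-\delta^1\varphi$ by the base case. Getting the signs right—tracking the mixed $H$-degrees of $\varphi_{i_k}$ and $\eta_{i_k}$ and the sign $(-1)^{m-1}$ from the Koszul derivation rule applied to products of $H$-degree $1$ factors—is where the delicate part of the argument lies.
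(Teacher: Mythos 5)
Your proposal takes essentially the same route as the paper's proof: verify on the basis elements $1\otimes e_{i_1}\wedge\cdots\wedge e_{i_l}\otimes 1$, establish the $l=1$ case by Leibniz for $D$ applied to $e_i=\sum_j f_j g_{i-j}$, and handle $l>1$ via the multiplicative presentations of $\varphi$ and $\eta$ together with the graded-derivation property of $d^1,d^2,\delta^1,\delta^2$ and the centrality of $d^1(\varphi_{i_k})=e_{i_k}\otimes1\otimes1-1\otimes1\otimes e_{i_k}$. The one caveat is that you stop at the point where the cross terms from $\eta\circ d^2$ and $d^1\circ\eta$ must be matched and cancelled, which is precisely the computational content of the paper's proof: the double-sum expansion there makes the $(k,j)$ cross terms of $d^1\circ\eta$ coincide, up to sign, with those of $\eta\circ d^2$, so that only the diagonal $j=k$ terms survive and reduce to the $l=1$ identity factor-by-factor. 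Your plan is sound, but to count as a proof you would still need to write out that cancellation explicitly.
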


\begin{proof}

Suppose $l=1$. On the one hand, we have
\begin{align*}
  (\varphi \circ \delta^2 - \delta^1 \circ \varphi)(1\otimes e_i \otimes 1) =&
D(e_i)\otimes 1 \otimes 1\\ & - \sum_{j=0}^i \left(f_jD(g_{i-j})\otimes 1 \otimes 1 + D(f_j)\otimes 1 \otimes g_{i-j}\right) \\ 
=&  \sum_{j=0}^i D(f_j)g_{j-i} \otimes 1 \otimes 1 +  f_j D(g_{j-i} )\otimes 1 \otimes 1 \\ &-  \sum_{j=1}^i f_jD(g_{i-j})\otimes 1 \otimes 1 + D_N(f_j)\otimes 1 \otimes g_{i-j}  \\ 
=&  \sum_{j=0}^iD(f_j)g_{j-i} \otimes 1 \otimes 1  -  \sum_{j=0}^i D(f_j)\otimes 1 \otimes g_{i-j}.
\end{align*}
On the other hand, we have:
\begin{align*}
\eta \circ d^2 (1\otimes e_i \otimes 1 ) &= \eta(e_i \otimes 1 \otimes 1) - \eta (1 \otimes 1 \otimes e_i)
\\&= 0
\end{align*}
and 
\begin{align*}
d^1 \circ \eta(1\otimes e_i \otimes 1) &= d^1\left(\sum_{j=1}^i D(f_j)\otimes g_{i_j} \otimes 1 \right)\\ 
&= \sum_{j=1}^i D(f_j)g_{i_j}\otimes 1 \otimes 1- D(f_j)\otimes 1 \otimes g_{i-j} \\
&=(\varphi \circ \delta^2 - \delta^1 \circ \varphi)(1\otimes e_i \otimes 1).
\end{align*}
Suppose now that $l>1$. We have:
\begin{align*}
 & (\varphi \circ \delta^2 - \delta^1 \circ \varphi)(1\otimes e_{i_1}\wedge \dots \wedge e_{i_l}\otimes 1) \\ 
 &\qquad = \sum_{h=1}^l(-1)^{h+1}\prod_{k=1}^{h-1} \varphi(1\otimes e_{i_k}\otimes 1) (\varphi \circ \delta^2 - \delta^1 \circ \varphi)(1\otimes e_{i_h}\otimes 1) \\ &\qquad \qquad \qquad \cdot  \prod_{k=h+1}^{l} \varphi(1\otimes e_{i_k}\otimes 1)\\
& \qquad= \sum_{h=1}^l(-1)^{h+1}\prod_{k=1}^{h-1} \varphi(1\otimes e_{i_k}\otimes 1)  (d^1\circ \eta)(1\otimes e_{i_h}\otimes 1) \\ &\qquad \qquad \qquad \cdot \prod_{k=h+1}^{l} \varphi(1\otimes e_{i_k}\otimes 1)\\
\end{align*}
On the other hand, we have:
\begin{align*}
&  \eta \circ d^2(1\otimes e_{i_1}\wedge \dots \wedge e_{i_l}\otimes 1) \\
\quad &= \eta \left(\sum_{h=1}^l (-1)^{h+1}\prod_{k=1}^{h-1} (1\otimes e_{i_k} \otimes 1) d^2 (1\otimes e_{i_h}\otimes 1) \prod_{k=h+1}^{l} (1\otimes e_{i_k} \otimes 1)\right) \\
\quad &=  \sum_{h=1}^l  \sum_{j=1}^{h-1}(-1)^{h+1} \prod_{k=1}^{j-1} \varphi(1\otimes e_{i_k} \otimes 1) \eta(1\otimes e_{i_j} \otimes 1) 
\\ &\quad \qquad \qquad \qquad \qquad\cdot \prod_{k=j+1}^{h-1} \varphi(1\otimes e_{i_k} \otimes 1)
 \varphi(d^2 (1\otimes e_{i_h}\otimes 1)) \prod_{k=h+1}^{l} (1\otimes e_{i_k} \otimes 1) \\
&+ \sum_{h=1}^l  \sum_{j=h+1}^{l} (-1)^{h+1} \prod_{k=1}^{h-1} \varphi(1\otimes e_{i_k} \otimes 1) 
  \varphi(d^2 (1\otimes e_{i_h}\otimes 1))\\ 
&\quad \qquad \qquad \qquad \qquad\cdot \prod_{k=h+1}^{j-1} (1\otimes e_{i_k} \otimes 1) \eta(1\otimes e_{i_j} \otimes 1) \prod_{k=j+1}^{l} \varphi(1\otimes e_{i_k} \otimes 1) \\
&=  \sum_{h=1}^l \sum_{j=1}^{h-1} (-1)^{h+1} \prod_{k=1}^{j-1} \varphi(1\otimes e_{i_k} \otimes 1) \eta(1\otimes e_{i_j} \otimes 1) 
\\ &\quad \qquad \qquad \qquad \qquad\cdot \prod_{k=j+1}^{h-1} \varphi(1\otimes e_{i_k} \otimes 1) d^1(\varphi (1\otimes e_{i_h}\otimes 1)) \prod_{k=h+1}^{l} (1\otimes e_{i_k} \otimes 1) \\
&+ \sum_{h=1}^l \sum_{j=h+1}^{l}(-1)^{h+1}\prod_{k=1}^{h-1} \varphi(1\otimes e_{i_k} \otimes 1) 
  d^1(\varphi(1\otimes e_{i_h}\otimes 1)) 
\\ &\quad \qquad \qquad \qquad \qquad\cdot\prod_{k=h+1}^{j-1} (1\otimes e_{i_k} \otimes 1) \eta(1\otimes e_{i_j} \otimes 1) \prod_{k=j+1}^{l} \varphi(1\otimes e_{i_k} \otimes 1)
\end{align*}
and
\begin{align*}
& d^1\circ \eta (1\otimes e_{i_1}\wedge \dots \wedge e_{i_l}\otimes 1) \\
&\qquad \qquad = d^1 \left( \sum_{h=1}^l(-1)^{h+1}\prod_{k=1}^{h-1}\varphi (1\otimes e_{i_k}\otimes ) \eta (1\otimes e_{i_h}\otimes ) \prod_{k=h+1}^{l}\varphi (1\otimes e_{i_k}\otimes 1 ) \right) \\
&\qquad \qquad = - \eta \circ d^2((1\otimes e_{i_1}\wedge \dots \wedge e_{i_l}\otimes 1) \\
&\qquad \qquad \quad  +  \sum_{h=1}^l(-1)^{h+1}\prod_{k=1}^{h-1}\varphi (1\otimes e_{i_h}\otimes ) d^1(\eta (1\otimes e_{i_h}\otimes )) \prod_{k=h+1}^{l}\varphi (1\otimes e_{i_k}\otimes 1)  \\
& \qquad \qquad = - \eta \circ d^2(1\otimes e_{i_1}\wedge \dots \wedge e_{i_l}\otimes 1) +  \sum_{h=1}^l(-1)^{h+1}\prod_{k=1}^{h-1}\varphi (1\otimes e_{i_k}\otimes ) \\ 
& \qquad \qquad \quad \cdot(\varphi \circ \delta^2 - \delta^1 \circ \varphi)(1\otimes e_{i_h}\otimes 1)  \prod_{k=h+1}^{l}\varphi (1\otimes e_{i_k}\otimes 1 )  \\
& \qquad \qquad =- \eta \circ d^2(1\otimes e_{i_1}\wedge \dots \wedge e_{i_l}\otimes 1)  
\\ & \qquad \qquad \quad +   (\varphi \circ \delta^2 - \delta^1 \circ \varphi)(
1\otimes e_{i_1}\wedge \dots \wedge e_{i_l}\otimes 1)
\end{align*}
\end{proof}


\section{A pinch of algebraic geometry}
\label{sec:pinch-algebr-geom}

Algebraic geometry has been a very useful guideline for the definition of the (exterior) Khovanov--Rozansky homologies.  The exterior $\sll_N$-invariant of the unknot  labeled by $k$ is equal to $\qbinil{N}{k}$, which is the graded Euler characteristic of the cohomology ring of $\mathrm{Gr}_\CC(k,N)$ the Grassmannian variety of $k$-spaces in $\CC^N$ up to an overall grading shift. Indeed the Frobenius algebra associated with the unknot labeled by $k$ in the Khovanov--Rozansky homology is isomorphic to $H^*(\mathrm{Gr}_\CC(k,N))$. This can be extended to an equivariant\footnote{This motivates the term ``equivariant'' $\sll_N$-homology.} setting see \cite{MR2580427, RW1}.

The symmetric $\sll_N$ invariant of the unknot with a label $k$ is equal to $\qbinil{N+k-1}{k}$. This is (up to an overall grading shift) the graded Euler characteristic of $\mathrm{Gr}_\CC(k,k+N-1)$. However, this does not seem to be the correct point of view when categorifying the symmetric $\sll_N$-invariant\footnote{This remark is due to François Costantino.}. Indeed, in an equivariant setting, one expect to have a natural action of $\QQ[T_1, \dots T_N]$ or $\QQ[T_1, \dots T_N]^{\mathfrak{S}_N}$  on the Frobenius algebra associated with the unknot labeled by $k$. Instead, we believe that it is better to consider the space $S^k\left(\mathrm{Gr}_\CC(1,N)\right)$ of collections of $k$ lines (counted with multiplicity) in $\CC^N$:
\[
S^k\left(\mathrm{Gr}_\CC(1,N)\right) = \left.\left(\mathrm{Gr}_\CC(1,N)\right)^k\right/ \mathfrak{S}_k 
\]
There is a natural of $GL_N$ on this space and its (non-equivariant) cohomology ring is isomorphic\footnote{Note that if $N=2$, these two varieties are actually isomorphic.} to that of $\mathrm{Gr}_\CC(k,k+N-1)$:

\begin{thm}[{\cite[Theorem 2.4]{ELIZONDO200267}}]
  \label{thm:ES}
  There is a birrational map $f: \left(\mathrm{Gr}_\CC(1,N)\right)^k \to \mathrm{Gr}_\CC(k,k+N-1)$ such that the correspondence induced by the graph $\Gamma_f$ of $f$ induces an isomorphism
\[
(\Gamma_f)^\bullet: 
H^{\bullet}(\mathrm{Gr}_\CC(k,k+N-1), \QQ) \to
H^\bullet\left(S^k\left(\mathrm{Gr}_\CC(1,N)\right), \QQ\right) 
\]
of graded $\QQ$-algebras.
\end{thm}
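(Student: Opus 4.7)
The plan is to exhibit an explicit birational rational map $f\colon (\mathbb{P}^{N-1})^k \dashrightarrow \mathrm{Gr}_\CC(k,k+N-1)$ that descends, on the level of the induced correspondence, to the symmetric quotient, and then to transport the ring structure via the graph. First I would set up the numerics: the two varieties have the same complex dimension $k(N-1)$, and their rational cohomology algebras share the total Poincaré polynomial $\qbina{N+k-1}{k}$ since
\[
H^\bullet(S^k(\mathbb{P}^{N-1}),\QQ) \simeq \QQ[x_1,\dots,x_k]^{\mathfrak{S}_k}\big/\bigl(x_1^N,\dots,x_k^N\bigr)^{\mathfrak{S}_k}
\]
matches, in total dimension, the Borel presentation
\[
H^\bullet(\mathrm{Gr}_\CC(k,k+N-1),\QQ)\simeq \QQ[e_1,\dots,e_k]\big/(h_N,h_{N+1},\dots,h_{N+k-1}),
\]
where $e_i$ and $h_i$ are the elementary and complete symmetric functions in the formal Chern roots $x_1,\dots,x_k$ of the tautological subbundle.

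Next I would construct $f$ concretely. Fix a splitting $\CC^{k+N-1}=\CC^N\oplus\CC^{k-1}$ and a basis $(u_1,\dots,u_{k-1})$ of the second summand; given lines $\ell_1,\dots,\ell_k\subset\CC^N$ in general position and generators $v_i\in\ell_i$, send $(\ell_1,\dots,\ell_k)$ to the $k$-plane spanned by $v_1,\,v_2+u_1,\,\dots,\,v_k+u_{k-1}$. The resulting morphism is well-defined on a Zariski-dense open set, and two different choices of auxiliary basis differ by an element of $\mathrm{GL}_{k+N-1}$, so they induce the same pullback on cohomology; in particular the correspondence associated with the graph $\Gamma_f$ is $\mathfrak{S}_k$-invariant and descends to $S^k(\mathbb{P}^{N-1})$. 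Checking that $f$ is birational onto its image amounts to writing down an explicit inverse on a generic $k$-plane that intersects $\CC^N\subset\CC^{k+N-1}$ transversally.

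Third, I would compute the action of $(\Gamma_f)^\bullet$ on the Borel presentation. Using the standard Schubert calculus on $\mathrm{Gr}_\CC(k,k+N-1)$, the Chern roots of the tautological subbundle pull back under $f$ to (rescaled) copies of the hyperplane classes on the factors of $(\mathbb{P}^{N-1})^k$; the scaling is fixed by computing on a single generic fibre. Under this identification $(\Gamma_f)^\bullet$ sends $e_j$ to the $j$-th elementary symmetric polynomial in the hyperplane classes, and one verifies via the Newton--Girard recursions that each of the relations $h_N,\dots,h_{N+k-1}$ lands in the ideal $(x_1^N,\dots,x_k^N)^{\mathfrak{S}_k}$, because the power sums $p_N,p_{N+1},\dots$ manifestly do. Once this containment is established, $(\Gamma_f)^\bullet$ is a well-defined graded ring map between two finite-dimensional graded $\QQ$-algebras of equal Hilbert series, hence surjectivity (which follows from the birationality of $f$ and the projection formula) upgrades it to an isomorphism.

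The main obstacle is the precise comparison of the two ideals in step three. The computation for $N=k=2$ is already instructive: both quotients collapse to $\QQ[e_1]/(e_1^3)$, but one reaches this through $e_2=e_1^2$ while the other goes through $e_2=e_1^2/2$, so the natural presentations are genuinely different and the correspondence inserts non-trivial rational scalars. Tracking these scalars in the general case, and showing that the generators $h_{N+j}$ map exactly into the $\mathfrak{S}_k$-invariant part of $(x_1^N,\dots,x_k^N)$ (rather than into some strictly smaller sub-ideal), is where the heart of the argument of \cite{ELIZONDO200267} lies; it is essentially a Jacobi--Trudi-type manipulation on symmetric functions, but one that must be carried out uniformly in $N$ and $k$.
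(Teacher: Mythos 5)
The statement you are proving is cited in the paper verbatim from \cite[Theorem~2.4]{ELIZONDO200267}; the paper does not reprove it. Appendix~C states it only as motivation for an equivariant analogue that the authors then merely say they ``believe'' but could not locate in the literature. So there is no in-paper proof to compare your attempt against.

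On the merits of your sketch, the overall strategy (explicit birational map, descend the correspondence to $S^k(\mathbb{P}^{N-1})$, verify that the pulled-back Grassmannian relations lie in the correct ideal so that $(\Gamma_f)^\bullet$ is a genuine ring map, then conclude from equal Hilbert series) is a plausible reconstruction of the cited argument, but several steps are not yet solid. First, the map $f$ you write down is genuinely not $\mathfrak{S}_k$-equivariant: the line $\ell_1$ is recoverable from $f(\ell_1,\dots,\ell_k)$ as the kernel of the projection of the image $k$-plane onto the auxiliary $\CC^{k-1}$, so permuting the lines does not commute with $f$ up to $\mathrm{GL}_{k+N-1}$. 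The theorem is phrased in terms of the correspondence $\Gamma_f$ precisely for this reason, and the descent must be carried out at the level of the cycle (pushing $\Gamma_f$ forward to $S^k(\mathbb{P}^{N-1})\times\mathrm{Gr}_\CC(k,k+N-1)$ and averaging), not at the level of the rational map. Second, $S^k(\mathbb{P}^{N-1})$ is singular when $N\geq 3$ and $k\geq 2$, so the projection-formula argument must be routed through the identification $H^\bullet(S^k(\mathbb{P}^{N-1}),\QQ)\simeq H^\bullet((\mathbb{P}^{N-1})^k,\QQ)^{\mathfrak{S}_k}$ rather than applied naively. Third, the concluding step should be ``$f_\ast f^\ast=\mathrm{id}$ by the projection formula gives injectivity, and injectivity plus equal finite Hilbert series gives bijectivity''; surjectivity is not the direction that birationality hands you for free. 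Finally, you correctly flag that the containment of the images of $h_N,\dots,h_{N+k-1}$ in $(x_1^N,\dots,x_k^N)\cap\QQ[x_1,\dots,x_k]^{\mathfrak{S}_k}$, with the correct rational normalizations on the Chern roots, is the substance of the theorem — but your sketch defers exactly that calculation, so as written it is a credible outline rather than a proof.
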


As stated before, we are interested in the equivariant version of the cohomology ring of $S^k(\mathrm{Gr}(1,N))$. 

The group $G:=GL_N$ acts naturally on $\mathrm{Gr}(1,N)$ and diagonally on $S^k(\mathrm{Gr}(1,N))$. Since $H^\bullet(BG, \QQ)\simeq\QQ[T_1,\dots T_N]^{\mathfrak{S}_N}=: \SP{N}$ as a ring, the equivariant cohomologies of both $\mathrm{Gr}(1,N)$ and $S^k(\mathrm{Gr}(1,N))$ have structures of graded $\SP{N}$-algebras (the degrees of the variables $T_\bullet$ are all equal to $2$ ).  
There exists a presentation of $H^\bullet_{G}(\mathrm{Gr}(1,N))$:

\begin{lem}[{\cite[Lecture 3, Example 1.2]{FultonLecture}}]
  \label{lem:equivCPN}
  The $\SP{N}$-algebra $H^\bullet_{G}(\mathrm{Gr}(1,N),\QQ)$ is isomorphic to 
$
 \SP{N}[x]\left/J_{N,1}\right.$, where $x$ has degree $2$ and $J_{N,1}$ is the ideal of $\SP{N}[x]$ generated by $\prod_{i=1}^N(x-T_i)$.
\end{lem}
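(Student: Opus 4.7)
The plan is to compute $H_G^\bullet(\mathrm{Gr}(1,N), \QQ)$ via the Borel construction together with the projective bundle formula. Recall that $\mathrm{Gr}(1,N) = \PP(V)$ with $V = \CC^N$ the standard representation of $G = GL_N$, and that the Borel construction converts equivariant cohomology of $\PP(V)$ into ordinary cohomology of the associated bundle $\PP(\V) \to BG$, where $\V := EG \times_G V$ is the universal (tautological) rank-$N$ bundle over the classifying space $BG$.

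First I would recall the identification $H^\bullet(BG, \QQ) = \SP{N} = \QQ[T_1, \dots, T_N]^{\mathfrak{S}_N}$ (with each $T_i$ in degree $2$), obtained by realizing $BG$ as a fibration $BT \to BG$ with fiber $G/T$ and using that the $T_i$ are the Chern roots of $\V$, so that its Chern classes are the elementary symmetric polynomials in the $T_i$. Next I would invoke the projective bundle formula: for any rank-$n$ vector bundle $\mathcal{E} \to X$, the cohomology of the projectivization is
\[
H^\bullet(\PP(\mathcal{E}), \QQ) \;\simeq\; H^\bullet(X, \QQ)[x]\big/\bigl(x^n - c_1(\mathcal{E})x^{n-1} + \dots + (-1)^n c_n(\mathcal{E})\bigr),
\]
where $x$ is the first Chern class of the tautological line bundle of $\PP(\mathcal{E})$ (in the appropriate convention); equivalently, using the splitting principle, the relation is $\prod_{i=1}^n (x - T_i) = 0$ in terms of Chern roots. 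Applying this to $\mathcal{E} = \V$ on $X = BG$ and identifying $H^\bullet_G(\PP(V)) = H^\bullet(\PP(\V))$ yields exactly
\[
H^\bullet_G(\mathrm{Gr}(1,N), \QQ) \;\simeq\; \SP{N}[x]\Big/\Bigl(\prod_{i=1}^N (x - T_i)\Bigr) \;=\; \SP{N}[x]/J_{N,1}.
\]
Finally I would check the degree convention: $x$ is a first Chern class, so has cohomological degree $2$, matching the statement.

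The only non-routine step is verifying the sign/orientation conventions linking the geometric class $x$ with the algebraic generator in the lemma; this is a bookkeeping issue (whether one uses $\mathcal{O}(1)$ or $\mathcal{O}(-1)$, and how the $T_i$ are normalized as weights of the standard representation). As a sanity check one can verify the lemma by equivariant localization: the $T$-fixed points of $\PP(V)$ are the $N$ coordinate lines $[L_i]$, and the image of $x$ under localization is $(-T_1, \dots, -T_N) \in \bigoplus_i \QQ[T_1, \dots, T_N]$, so indeed $\prod_i (x - T_i)$ localizes to zero at every fixed point. By injectivity of the localization map (since $G$-equivariant cohomology of a compact smooth projective $G$-variety with finitely many fixed points is torsion-free over $\SP{N}$), this confirms the presentation, with both sides being free $\SP{N}$-modules of the correct graded rank $1 + q^2 + q^4 + \dots + q^{2(N-1)} = [N]_{q^2}$, which matches the graded dimension of the right-hand side.
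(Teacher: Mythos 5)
The paper does not actually prove this lemma; it is stated as a direct citation to Fulton's equivariant cohomology lecture notes, so there is no ``paper proof'' to compare against. Your argument via the Borel construction and the projective bundle formula, namely $H^\bullet_G(\PP(V)) = H^\bullet(\PP(EG\times_G V))$ and then the standard presentation $H^\bullet(\PP(\mathcal{E}),\QQ) \simeq H^\bullet(X,\QQ)[x]/\bigl(\sum_{i=0}^{N}(-1)^i c_i(\mathcal{E})x^{N-i}\bigr)$ with the relation factoring as $\prod_i(x-T_i)$ in terms of Chern roots, is the standard proof and is surely what Fulton's example records, so you have effectively reproduced the cited argument. One small caveat in your localization sanity check: with the convention you describe, the $T$-fixed point $[L_i]$ restricts the tautological class $x$ to $T_i$ (not $-T_i$) precisely so that the factor $(x-T_i)$ vanishes at $[L_i]$; as written, $\prod_j(x-T_j)$ would not localize to zero. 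You already flag the sign/orientation conventions as bookkeeping, and indeed the fix is only to align the sign of $x$ with the sign of the weights $T_i$; the structure of the argument (free $\SP{N}$-module of graded rank $1+q^2+\cdots+q^{2(N-1)}$, injective localization) is sound.
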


It follows immediately that $H^\bullet_G(\mathrm{Gr}(1,N),\QQ)$ is a free graded $\SP{N}$-module and that the family $(1, x, \dots, x^{N-1})$ forms an homogeneous $\SP{N}$-basis of this space. The space $M_{N,k}$ considered in Section~\ref{sec:one-quotient-two} is isomorphic to
\[
\mathrm{Sym}^k\left(H^\bullet_G(\mathrm{Gr}(1,N), \QQ)\right).
\]
We believe that this is the same as
\[
H^\bullet_G\left(\mathrm{Sym}^k(\mathrm{Gr}(1,N)), \QQ)\right)
\]
but could not locate such a statement in the literature.   

\bibliographystyle{alphaurl}
\bibliography{biblio}

\end{document}